\newtheorem{theorem}{Theorem}[section]
\newtheorem*{theorem*}{Theorem}
\newtheorem{lemma}[theorem]{Lemma}
\newtheorem{claim}[theorem]{Claim}
\newtheorem*{claim*}{Claim}
\newtheorem{cor}[theorem]{Corollary}
\newtheorem{prop}[theorem]{Proposition}
\renewcommand{\le}{\leqslant}
\renewcommand{\ge}{\geqslant}
\renewcommand{\leq}{\leqslant}
\renewcommand{\geq}{\geqslant}
\newcommand{\parag}[1]{\vspace{2mm}

\noindent{\bf #1} }
\theoremstyle{remark}
\newcommand{\definition}
      {\medskip\noindent {\bf Definition:\hspace{0em}}}
\def\qed{\ifvmode\mbox{ }\else\unskip\fi\hskip 1em plus 10fill$\Box$}
\def\Ddots{\mathinner{\mkern1mu\raise\p@
\vbox{\kern7\p@\hbox{.}}\mkern2mu
\raise4\p@\hbox{.}\mkern2mu\raise7\p@\hbox{.}\mkern1mu}}
\newcommand{\mb}{\mathbf}
\newcommand{\mbt}[1]{\widetilde{\mathbf{#1}}}
\newcommand{\ms}{\mathsf}
\newcommand{\sep}{\,;}
\def\R{\mathbb R}
\def\Z{\mathbb Z}
\def\F{\mathbb F}
\def\N{\mathbb N}
\def\E{\mathbb E}
\def\mc{\mathcal }
\newcommand{\slope}{\mathrm{slope}}
\def\T{\mathbb T}
\DeclareMathOperator{\Err}{Err}
\DeclareMathOperator{\Area}{Area}
\DeclareMathOperator{\Dir}{Dir}
\DeclareMathOperator{\supp}{supp}
\title{ Lower bounds for incidences }
\author{Alex Cohen}
\thanks{Department of Mathematics, Massachusetts Institute of Technology, Cambridge, MA. Email: {\tt alexcoh@mit.edu}. Research supported by an NSF GRFP Fellowship and a Hertz Foundation fellowship. Part of this work was completed while AC was an intern at Microsoft Research New England.}
\author{Cosmin Pohoata}
\thanks{Department of Mathematics, Emory University, Atlanta, GA. Email: {\tt cosmin.pohoata@emory.edu}. Research supported by NSF Award DMS-2246659.}
\author{Dmitrii Zakharov}
\thanks{Department of Mathematics, Massachusetts Institute of Technology, Cambridge, MA. Email: {\tt  zakhdm@mit.edu}. Research supported by the Jane Street Graduate Fellowship.}
\date{}
\begin{document}
\begin{abstract}
Let $p_1,\ldots,p_n$ be a set of points in the unit square and let $T_1,\ldots,T_n$ be a set of $\delta$-tubes such that $T_j$ passes through $p_j$. We prove a lower bound for the number of incidences between the points and tubes under a natural regularity condition (similar to Frostman regularity). As a consequence, we show that in any configuration of points $p_1,\ldots, p_n \in [0,1]^2$ along with a line $\ell_j$ through each point $p_j$, there exist $j\neq k$ for which $d(p_j, \ell_k) \lesssim n^{-2/3+o(1)}$. 

It follows from the latter result that any set of $n$ points in the unit square contains three points forming a triangle of area at most $n^{-7/6+o(1)}$. This new upper bound for Heilbronn's triangle problem attains the high-low limit established in our previous work arXiv:2305.18253.

\end{abstract}

\maketitle 

\section{Introduction}\label{sec:introduction}

\subsection{Main results}
Given a set of points $P$ and a set of geometric objects $L$ in $\R^d$, an \textit{incidence} is a pair $(p, \ell) \in P\times L$ where $p$ lies on $\ell$. We denote by $I(P, L)$ the number of incidences in $P\times L$. 

Proving upper bounds for incidences is a fundamental problem in combinatorics and analysis. For example, the Szemer\'edi-Trotter theorem \cite{ST83} says that if $P$ is a set of points and $L$ a set of lines in $\R^2$,  
\begin{align}\label{eq:ST}
    I(P, L) \lesssim |P|^{2/3} |L|^{2/3} + |P| + |L|. 
\end{align}
The notation $X \lesssim Y$ means $X \leq CY$ for some universal constant $C > 0$. This is a sharp bound---see \cite{guthST} and the references therein. In a recent breakthrough result (building on papers by Bourgain \cite{bourgain2003erdHos}, Orponen-Shmerkin \cite{OrponenShmerkinFurstenberg2023,OrponenShmerkinABC2023}, and Orponen-Shmerkin-Wang \cite{OrponenShmerkinWang}), Ren and Wang \cite{RenWang2023} proved the Furstenberg set conjecture, a continuous analogue about points and tubes. 

This present paper is about \textit{lower bounds} for incidences between a set of points $P$ and $\delta$-tubes $\T$ (a $\delta$-tube is the $\delta$-neighborhood of a line). If the points and tubes are placed uniformly at random in the unit square, the expected number of incidences is around $\delta |P||\T|$. However, if the points all lie in the top half of $[0,1]^2$ and the tubes all lie in the bottom half, there are no incidences. In order to rule out this example we consider a configuration of points $p_1, \ldots, p_n$ along with a $\delta$-tube $T_j$ through each point (by this we mean the central line $\ell_{T} \subset T$ passes through $p$). There are $n$ trivial incidences because $p_j \in T_j$, and we are interested in finding nontrivial incidences. The constraint that $T_j$ passes through $p_j$ arises naturally in applications and forces the points and tubes to be spatially mixed together.
Here is one of our main theorems.
\begin{theorem}\label{thm:extra_inc_one_tube}
For all $\varepsilon > 0$ the following holds for $\delta < \delta_0(\varepsilon)$. Let $p_1,\ldots, p_n$ be a set of points in $[0,1]^2$ along with a $\delta$-tube $T_j$ through each point. If $n \geq \delta^{-3/2-\varepsilon}$, there is some nontrivial incidence $p_j \in T_k$ where $j \neq k$. 
\end{theorem}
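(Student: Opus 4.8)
The plan is to argue by contradiction: suppose there are no nontrivial incidences, i.e.\ $p_j \notin T_k$ for all $j \neq k$. The natural strategy is to combine a packing/pigeonhole argument with the Szemer\'edi--Trotter bound \eqref{eq:ST}, applied at scale $\delta$. First I would dyadically pigeonhole to find a regular sub-configuration: partition the directions of the tubes and the positions of the points so that on a large subset $J \subseteq [n]$ of size $m \gtrsim n / \mathrm{polylog}$, the tubes have comparable direction-density and the points have comparable spatial density. The point of this regularization is to be able to run a clean counting argument; the excerpt hints that this is exactly the ``natural regularity condition'' from the main incidence theorem, so I would invoke that machinery.

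**The main counting step.**

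With a regular family in hand, the key dichotomy is: either many tubes concentrate in a small region (in which case two tube-cores nearly coincide and the point on one tube lies in a slightly fattened version of the other, forcing an incidence for a slightly larger $\delta$), or the tubes are genuinely spread out. In the spread-out case I would discretize: replace each $\delta$-tube $T_k$ by the collection of $\sim \delta^{-1}$ dual points, or dually replace the configuration by points and $O(1)$ many lines, and apply Szemer\'edi--Trotter. The crucial numerology is that $n$ points and $n$ tubes, each incidence being a pair, and no incidence besides the trivial ones means that the $\delta$-neighborhood of each core $\ell_{T_k}$ avoids all $p_j$ with $j \neq k$. Covering $[0,1]^2$ by $\sim \delta^{-2}$ cells of side $\delta$, a typical cell meets $\sim n\delta^2$ points, and a typical tube passes through $\sim \delta^{-1}$ cells; if a tube's corridor of cells is point-free except for its own point, a double count of (tube, cell) pairs against (point, cell) pairs yields $n \cdot \delta^{-1} \lesssim \delta^{-2}$, i.e.\ $n \lesssim \delta^{-1}$ — far from enough. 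The improvement to $\delta^{-3/2}$ must come from a second moment / Szemer\'edi--Trotter input: counting \emph{pairs} of tubes through a common cell and using \eqref{eq:ST} to bound how many such pairs are possible forces $n^2 \delta \lesssim (n^2 \delta)^{\,\cdot} + \ldots$, and unwinding the exponents in \eqref{eq:ST} with the constraint ``no nontrivial incidence'' produces the threshold $n \asymp \delta^{-3/2}$ up to the $\delta^{-\varepsilon}$ loss absorbed by the pigeonholing.

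**Extracting the incidence.**

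Concretely, I expect the endgame to look like this: from the regular family one shows that if $n \geq \delta^{-3/2-\varepsilon}$ then the tubes $\{T_k : k \in J\}$ must have two cores $\ell_{T_j}, \ell_{T_k}$ with $\dist(p_j, \ell_{T_k}) < \delta$ — because otherwise the ``forbidden regions'' $T_k$ around each core carve out too much structure to be compatible with $m^2 \delta$-many potential tube-tube coincidences bounded via Szemer\'edi--Trotter. One packages the ``no incidence'' hypothesis as: the map $k \mapsto \ell_{T_k}$ sends $J$ to a set of lines that is $\delta$-separated from the point set $\{p_j\}_{j \in J}$ in the appropriate sense, and then the regularity + \eqref{eq:ST} gives the contradiction once $m\delta^{3/2}$ is large enough.

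**Main obstacle.**

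The hard part will be the regularization step and making the Szemer\'edi--Trotter application robust to the $\delta$-scale: \eqref{eq:ST} is a statement about exact incidences with lines, whereas here we have $\delta$-tubes, so one must either discretize carefully (grouping tubes into $\sim\delta^{-o(1)}$ direction classes and applying \eqref{eq:ST} within each class) or quote a $\delta$-discretized Furstenberg-type estimate. Controlling the logarithmic/$\delta^{-\varepsilon}$ losses through the pigeonholing — so that the final bound is $\delta^{-3/2-\varepsilon}$ and not $\delta^{-3/2}\log^{C}(1/\delta)$ — and handling the degenerate case where a positive proportion of tubes are nearly parallel and nearly coincident (which is where the extra incidence actually comes from) are the two places I expect to spend the most effort. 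I would treat the near-coincident case first as a clean ``spatial'' pigeonhole, then reduce the general case to it via the direction-class decomposition.
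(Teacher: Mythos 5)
There is a genuine gap at the heart of the proposal: the step that is supposed to upgrade the trivial count $n\lesssim \delta^{-1}$ to the threshold $\delta^{-3/2}$ is never actually specified, and the tool you name for it --- Szemer\'edi--Trotter --- points in the wrong direction. Szemer\'edi--Trotter is an \emph{upper} bound on incidences; what the theorem requires is a \emph{lower} bound $I(P,\T)\gtrsim \delta^{3/2+\varepsilon}n^2$ (which exceeds the $n$ trivial incidences exactly when $n\geq \delta^{-3/2-\varepsilon}$), and no amount of bounding rich cells from above produces such a lower bound. The finite-field Hermitian unital example in \cref{sec:finite_fields} shows the threshold is sharp, so any argument must be tight at $n=\delta^{-3/2}$; the mechanism that achieves this in the paper is a two-step scheme: an \emph{initial estimate} giving $I(w_i)\gtrsim w_i|P||\T|$ at a coarse scale $w_i=\delta^{o(1)}$ (proved via ``direction stability'': the slopes of tubes through a small square must be comparably numerous to the slopes globally), followed by the \emph{high-low inequality} (\Cref{thm:high_low_ineq}), an $L^2$-orthogonality estimate controlling $|B(w)-B(w/2)|$ by $(\ms M_{w\times w}(P)\,\ms M_{1\times w}(L)\,w^{-3}/(|P||L|))^{1/2}$, which propagates the coarse-scale count down to scale $\delta$. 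Your proposal contains nothing playing the role of the initial estimate, and your ``second moment'' step is not identified with any inequality that could serve as the inductive one.

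The second, equally serious gap is the regularization. A dyadic pigeonhole to comparable densities does not give the non-concentration needed to run the high-low inequality at \emph{every} intermediate scale; one needs Frostman-type control, and in fact the joint condition over $u\times uw\times w$ phase-space rectangles of all aspect ratios with $\alpha+\beta>3$ --- the marginal conditions on points and tubes separately are not enough (for a single tube per point, $s=0$, the hypothesis $t+s>2$ of \Cref{thm:inc_lower_bd_t_set} is unattainable, which is why \Cref{thm:extra_inc_one_tube} needs the genuinely stronger \Cref{thm:incidence_lower_bd}). The paper obtains this by blowing up into the rectangle maximizing $|\mb X\cap\mb R|u^{-\alpha}w^{-\beta}$, and then the hardest part of the whole proof is locating a scale that is simultaneously direction-stable and high-low regular, which is the Lipschitz branching-function analysis of \cref{sec:pf_lip_func_thm}. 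Neither the blow-up nor any substitute for the scale-selection problem appears in your outline, so the proposal as written cannot be completed along the lines you describe.
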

The following corollary is equivalent to \Cref{thm:extra_inc_one_tube}.  
\begin{cor}\label{cor:dist_P_L}
Let $n \geq 2$ be an integer and let $p_1,\ldots, p_n$ be a set of points in $[0,1]^2$ along with a line $\ell_j$ through each point. Then there is some $j\neq k$ for which $d(p_j, \ell_k) \lesssim_{\varepsilon} n^{-2/3+\varepsilon}$. 
\end{cor}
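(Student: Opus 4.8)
The plan is to deduce \Cref{cor:dist_P_L} directly from \Cref{thm:extra_inc_one_tube} by contraposition; since the argument is reversible, the two statements are in fact equivalent, and I would record the (equally routine) converse implication as well. All of the substance lies in the incidence lower bound \Cref{thm:extra_inc_one_tube}; the corollary is a packaging statement obtained by translating between the scale $\delta$ of a tube and the number of points $n$, using that the exponents $3/2$ and $2/3$ are reciprocals.

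For the forward direction, fix $\varepsilon \in (0,2/3)$ and let $p_1,\dots,p_n \in [0,1]^2$ together with lines $\ell_j \ni p_j$ be given. Set $\delta := n^{-2/3+\varepsilon}$ and let $T_j$ be the $\delta$-neighbourhood of $\ell_j$, so $T_j$ is a $\delta$-tube through $p_j$ and $d(p_j,\ell_k)\le \delta$ holds precisely when $p_j \in T_k$. Choose an auxiliary parameter $\varepsilon' = \varepsilon'(\varepsilon) > 0$ small enough that $(2/3-\varepsilon)(3/2+\varepsilon') \le 1$; a direct computation shows $\varepsilon' = \tfrac{3\varepsilon/2}{2/3-\varepsilon}$ gives equality, and any smaller value works. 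Then $\delta^{-3/2-\varepsilon'} = n^{(2/3-\varepsilon)(3/2+\varepsilon')} \le n$, so once $n$ is large enough that $\delta = n^{-2/3+\varepsilon} < \delta_0(\varepsilon')$, the hypotheses of \Cref{thm:extra_inc_one_tube} (applied with $\varepsilon'$ in place of $\varepsilon$) are satisfied and we get $j\neq k$ with $p_j \in T_k$, i.e. $d(p_j,\ell_k) \le n^{-2/3+\varepsilon}$. For the finitely many remaining small $n$, the trivial bound $d(p_j,\ell_k) \le \sqrt 2 \le C_\varepsilon\, n^{-2/3+\varepsilon}$ holds once $C_\varepsilon$ is taken large, which is exactly what the implied constant in $\lesssim_\varepsilon$ absorbs.

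For the converse direction, suppose $p_1,\dots,p_n \in [0,1]^2$ carry $\delta$-tubes $T_j \ni p_j$ with $n \ge \delta^{-3/2-\varepsilon}$ and $\delta < \delta_0(\varepsilon)$. Apply \Cref{cor:dist_P_L} to the central lines $\ell_j \subset T_j$ with a parameter $\varepsilon'' = \varepsilon''(\varepsilon) > 0$ chosen so that $(3/2+\varepsilon)(2/3-\varepsilon'') \ge 1 + \varepsilon/3$ (e.g. $\varepsilon'' = \tfrac{\varepsilon/3}{3/2+\varepsilon}$). Since $\delta < 1$ and $n^{-1} \le \delta^{3/2+\varepsilon}$, the corollary produces $j \neq k$ with $d(p_j,\ell_k) \le C_{\varepsilon''}\, n^{-2/3+\varepsilon''} \le C_{\varepsilon''}\, \delta^{1+\varepsilon/3} < \delta$, the last inequality holding because $\delta$ is below a threshold depending only on $\varepsilon$. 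Hence $p_j \in T_k$ is a nontrivial incidence, recovering \Cref{thm:extra_inc_one_tube}.

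The only places requiring care are the exponent bookkeeping linking $n$ and $\delta$ (the reciprocity of $3/2$ and $2/3$ leaves a positive margin, so the $\varepsilon$'s can always be matched with a power of $\delta$ to spare) and the transition to scales below $\delta_0(\varepsilon)$, which the implied constant handles. There is no genuine obstacle in the corollary itself—the entire difficulty is in proving \Cref{thm:extra_inc_one_tube}.
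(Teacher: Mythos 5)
Your proposal is correct and is essentially the paper's own argument: the paper states \Cref{cor:dist_P_L} as "equivalent to" \Cref{thm:extra_inc_one_tube} and leaves precisely this rescaling of exponents ($\delta = n^{-2/3+\varepsilon}$, adjusting $\varepsilon$ versus $\varepsilon'$, absorbing small $n$ and the threshold $\delta_0$ into the implied constant) implicit. Your exponent bookkeeping in both directions checks out, so nothing further is needed.
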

By a subsampling argument, we also get the following incidence lower bound:
\begin{cor}\label{cor:incidence_lower_bound}
    Let $p_1, \ldots, p_n$ be a set of points in $[0,1]^2$ along with a $\delta$-tube $T_j$ through each point. Then for $P= \{p_1, \ldots, p_n\}$ and $\T = \{T_1, \ldots, T_n\}$ we have $I(P, \T) \gtrsim_\varepsilon \delta^{3/2+\varepsilon} |P| |\T|$.
\end{cor}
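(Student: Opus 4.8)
\medskip

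The plan is to derive \Cref{cor:incidence_lower_bound} from \Cref{thm:extra_inc_one_tube} by a Tur\'an-type amplification of the ``one extra incidence'' statement; this is the subsampling argument alluded to above. Fix $\varepsilon>0$, assume $\delta<\delta_0(\varepsilon)$, and set $m_0:=\lceil \delta^{-3/2-\varepsilon}\rceil$. We may assume $n\ge 2m_0$: otherwise $\delta^{3/2+\varepsilon}|P||\T|\le \delta^{3/2+\varepsilon}n^2\lesssim n\le I(P,\T)$ already from the $n$ trivial incidences $p_j\in T_j$. Introduce the auxiliary graph $G$ on the vertex set $\{1,\dots,n\}$ in which $j$ is joined to $k$ precisely when $j\ne k$ and ($p_j\in T_k$ or $p_k\in T_j$); thus $G$ records exactly the nontrivial incidences of the configuration. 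Since distinct edges of $G$ witness distinct off-diagonal incidences (we may assume the $p_j$, and the $T_j$, are distinct), $I(P,\T)\ge n+|E(G)|$, so it suffices to prove $|E(G)|\gtrsim_\varepsilon \delta^{3/2+\varepsilon}n^2$.

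Suppose for contradiction that $|E(G)|\le n^2/(4m_0)$. Include each vertex of $G$ independently with probability $p:=2m_0/n\le 1$, let $V'$ be the resulting random vertex set, and delete one endpoint of every edge of $G$ lying inside $V'$; the surviving set $S$ is independent in $G$. Then $\E|S|\ge pn-p^2|E(G)|\ge 2m_0-(2m_0/n)^2\cdot n^2/(4m_0)=m_0$, so some realization satisfies $|S|\ge m_0\ge \delta^{-3/2-\varepsilon}$. By the definition of $G$, the sub-configuration formed by the points $(p_j)_{j\in S}$ together with the tubes $(T_j)_{j\in S}$ has \emph{no} nontrivial incidence at all. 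But $|S|\ge \delta^{-3/2-\varepsilon}$ and $\delta<\delta_0(\varepsilon)$, so \Cref{thm:extra_inc_one_tube} applied to this sub-configuration produces a nontrivial incidence --- a contradiction. Hence $|E(G)|>n^2/(4m_0)\ge \tfrac18\delta^{3/2+\varepsilon}n^2$, and therefore $I(P,\T)\ge |E(G)|\gtrsim_\varepsilon \delta^{3/2+\varepsilon}n^2\ge \delta^{3/2+\varepsilon}|P||\T|$, as desired.

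I do not anticipate a substantive obstacle: once \Cref{thm:extra_inc_one_tube} is in hand, the corollary is a soft combinatorial amplification. The one point that must be respected is that the thinning has to keep each point together with its own tube, so that the hypothesis of \Cref{thm:extra_inc_one_tube} survives on the sub-configuration; this is exactly why one passes to the incidence graph $G$ and applies a Tur\'an/Caro--Wei (random-deletion) bound there, rather than sampling points and tubes independently --- the latter would cost an extra square and give only $\delta^{3+O(\varepsilon)}$. The remaining care is pure bookkeeping in the degenerate ranges (small $n$, or $\delta$ bounded away from $0$), which are absorbed into the implied constant --- for small $n$ by the $n$ diagonal incidences as above. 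Finally, note that the exponent $3/2$ in the conclusion falls out precisely because $\delta^{-3/2-\varepsilon}$ is the size threshold in \Cref{thm:extra_inc_one_tube}: an incidence-free sub-configuration can have at most that many points, so the incidence graph must be correspondingly dense.
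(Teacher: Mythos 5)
Your argument is correct and is essentially the subsampling argument the paper alludes to for this corollary (the paper supplies no further details): \Cref{thm:extra_inc_one_tube} bounds the independence number of the nontrivial-incidence graph by $\delta^{-3/2-\varepsilon}$, and your random-deletion (Tur\'an/Caro--Wei) step converts that into the quadratic edge bound, which injects into $I(P,\T)$ once the points and tubes are taken distinct. The parenthetical distinctness normalization is the only spot deserving a word of justification if one insists on allowing repeated points or tubes, but it matches the intended reading of the statement and your bookkeeping of the degenerate range $n<2m_0$ via the diagonal incidences is fine.
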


Indeed, note that for a uniformly random set of points $P' \subset P$ such that $|P'| \geq \delta^{-3/2-\varepsilon}$ and the corresponding set of $\delta$-tubes $\T' \subset \T$ through these points, Theorem \ref{thm:extra_inc_one_tube} tells us that there is at least one extra incidence between $P'$ and $\T'$. Moreover, by applying Theorem \ref{thm:extra_inc_one_tube} to every subset $P'' \subset P'$ of size $|P'|/2$ we find at least $|P'|/2$ distinct extra incidences between $P'$ and $\T'$. On the other hand, the expected number of extra incidences determined by $P'$ and $\T'$ is at most $I(P,\T)\frac{|P'|}{|P|}\frac{|\T'|}{|\T|}$. 

We can generalize \Cref{thm:extra_inc_one_tube} by putting a whole family of tubes through each point. In order to state this generalization we need some notions that will be important throughout the paper. We say a set of points $P \subset [0,1]^2$ is a $(\delta, t, C)$-set if the points are $\delta$-separated\footnote{Often the definition of a $(\delta, t, C)$-set does not include $\delta$-separation, but this is a technically useful condition for dealing with incidence counts.} and satisfy
\begin{equation}\label{eq:points_frostman}
    |P \cap B_w(x_0)| \leq C w^t |P|\qquad \text{for every $w$-ball $B_w(x_0)$, $w \in [\delta, 1]$.}
\end{equation}
Applying this inequality to a ball $B_{\delta}(p)$ centered at a point of $P$ yields $|P| \geq \frac{1}{C}\delta^{-t}$.
We used the Euclidean metric on the set of points to define $(\delta, t, C)$-sets. Here is a natural metric on the space of lines. We let $d(0, \ell)$ be the distance to the origin, $\theta(\ell) \in \R/\pi\Z$ be the angle, and we set
\begin{equation}\label{eq:line_metric_all_slope}
    d(\ell_1, \ell_2) = |d(0, \ell_1) - d(0, \ell_2)| + |\theta(\ell_1) - \theta(\ell_2)|.
\end{equation}
Throughout this paper we restrict attention to lines with $d(0, \ell) \leq 10$ because we only care about lines that intersect $[-1,1]^2$. 
We say a set of lines $L$ is a $(\delta, t, C)$-set if they are $\delta$-separated and satisfy 
\begin{equation}\label{eq:lines_frostman}
    |L \cap B_w(\ell_0)| \leq C w^t |L|\qquad \text{for every $w$-ball $B_w(\ell_0)$, $w \in [\delta, 1]$.}
\end{equation}
The \textit{Frostman regularity} hypotheses \eqref{eq:points_frostman} and \eqref{eq:lines_frostman} bound how much points/lines concentrate in balls/tubes. It is often better to think in terms of covering numbers. Let $|P|_w$ be the minimal number of $w$-balls needed to cover $P$. If $P$ is a non-empty $(\delta, t, C)$-set then $|P|_w \gtrsim w^{-t}$ for all $w \geq \delta$. Moreover, if $P' \subset P$ is a subset with $|P'| \gtrsim |P|$, then \eqref{eq:points_frostman} implies $|P'|_w \gtrsim w^{-t}$ as well. The Frostman hypothesis gives a robust lower bound on the covering number at every scale. 
\bigskip\\
Let $P$ be a set of points in the unit square and for each $p \in P$ let $\T_p$ be a $(\delta, s, C)$-set of tubes through $p$. The set of tubes $\T_p$ is determined by the set of directions $\Theta_p \subset S^1$ that they span. If the tubes $\T_p$ are a $(\delta, s, C)$-set then $\Theta_p$ is a $(\delta/20, s, 20C)$-set as a subset of $S^1$. 

\begin{theorem}\label{thm:exists_extra_incidence}
Fix $t \in [1, 2]$ and $s \in [0,1]$ such that $2t+s>3$. There exists $\eta(t, s) > 0$ such that the following holds for all $\delta < \delta_0(t, s)$. 
Let $P \subset [0,1]^2$ be a set of $\delta^{-t}$ many points. For each $p \in P$ let $\T_p$ be a $(\delta, s, \delta^{-\eta})$-set of $\delta$-tubes through $p$ and let $\T = \bigsqcup_{p} \T_p$. Then there is some nontrivial incidence between $P$ and $\T$, meaning there is a point $p \in P$ and a tube $T \in \T \setminus \T_{p}$ so that $p \in T$.  
\end{theorem}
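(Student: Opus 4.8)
\medskip
\noindent\textbf{Proof plan.}
The plan is to argue by contradiction. Suppose there is no nontrivial incidence. Then every $T\in\T$ contains exactly one point of $P$, namely the unique $p$ with $T\in\T_p$, so every incidence is trivial and $I(P,\T)=\sum_{p\in P}|\T_p|=|\T|=:N$. If $t>3/2$ this is already impossible: picking one tube from each bush $\T_p$ gives a configuration of $\delta^{-t}\ge\delta^{-3/2-\varepsilon}$ points with a single tube through each, and \Cref{thm:extra_inc_one_tube} (applied with a suitable $\varepsilon>0$ and $\delta$ small) produces a nontrivial incidence $p_j\in T_k$, $j\neq k$, which is also nontrivial in the present sense. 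So we may assume $1<t\le 3/2$, whence $s>3-2t>0$ and the extra tubes through each point must genuinely be used. As passing to a sub-configuration can only destroy nontrivial incidences, the usual dyadic pigeonholing lets us also assume $|\T_p|\approx\delta^{-s}$ for all $p$ (hence $N\approx\delta^{-t-s}$), with $P$ still a $(\delta,t,\delta^{-O(\eta)})$-set and each $\T_p$ a $(\delta,s,\delta^{-O(\eta)})$-set. The task then reduces to the quantitative incidence lower bound
\[
 I(P,\T)\ \gtrsim\ \delta^{-c(t,s)+O(\eta)}\,N,\qquad\text{with } c(t,s)>0 \text{ whenever } 2t+s>3,
\]
which contradicts $I(P,\T)=N$ once $\eta<\eta(t,s)$.

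To prove this bound I would run a high--low frequency decomposition, pushing the method of our previous work arXiv:2305.18253 to its natural limit; the line $2t+s=3$ is exactly the intrinsic barrier of that method, which is why the hypothesis is a strict inequality. Write $f=\sum_{T\in\T}\mathbbm{1}_T$, so $I(P,\T)=\sum_{p\in P}f(p)$, and split $f=f_{\mathrm{lo}}+f_{\mathrm{hi}}$ at a frequency scale $R\in[1,\delta^{-1}]$ to be optimized last. The high part is handled by Cauchy--Schwarz, after smoothing at scale $\delta$, as $\bigl|\sum_{p\in P} f_{\mathrm{hi}}(p)\bigr|\lesssim \delta^{-1}|P|^{1/2}\,\|f_{\mathrm{hi}}\|_{2}$, together with a C\'ordoba-type $L^2$ estimate bounding $\|f_{\mathrm{hi}}\|_2^2$ by a sum of pairwise high-frequency tube overlaps; these overlaps are controlled via the Frostman bound \eqref{eq:lines_frostman} inside each bush $\T_p$ and \eqref{eq:points_frostman} on $P$, which together limit how many tubes can meet a given small ball. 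The low part carries the main term: since every point of $P$ is the center of a bush of $\approx\delta^{-s}$ tubes, and by \eqref{eq:points_frostman} together with a Cauchy--Schwarz count of close pairs of $P$ these bushes must pile up across the square, one shows $\sum_{p\in P}f_{\mathrm{lo}}(p)$ exceeds $N$ by a power of $\delta^{-1}$ provided $R$ is not too large. Optimizing $R$ balances the low main term against the high error term, and the break-even point of this optimization is precisely $2t+s=3$.

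The step I expect to be the crux is the high-frequency estimate --- equivalently, an $L^2$ (or $L^p$) bound on the overlap of the family $\T=\bigsqcup_p\T_p$ strong enough to reach the line $2t+s=3$ and not merely a weaker threshold. The obstruction is structural rather than computational. By point--line duality, having no nontrivial incidence is equivalent to the pieces of a $(\delta,t,s)$-Furstenberg set lying on pairwise disjoint lines, and the sharp Furstenberg set estimate of Ren and Wang does not rule this out: a disjoint union of the maximal possible size $\approx\delta^{-(t+s)}$ still obeys its lower bound. Similarly, the Szemer\'edi--Trotter inequality \eqref{eq:ST} and a direct comparison of $\delta$-covering numbers are consistent with an incidence-free configuration for every $t,s$ in our range. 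Since the theorem is in fact sharp --- incidence-free configurations do exist at $2t+s=3$ --- any proof must locate the threshold exactly, so a single pass of the high--low argument is not enough: one must iterate the decomposition through all dyadic scales between $\delta$ and $1$, carrying the Frostman regularity (up to $\delta^{O(\eta)}$ losses) through each step. Organizing this induction on scales, and setting things up so that the cases where some ball of $P$ is unusually heavy or some bush $\T_p$ unusually large only help, is the technical heart of the argument.
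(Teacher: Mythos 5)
There is a genuine gap, and it sits exactly where the paper's main work lies. Your reduction asserts that after dyadic pigeonholing you may take $P$ to be a $(\delta,t,\delta^{-O(\eta)})$-set, but the theorem makes no Frostman assumption on $P$ whatsoever: the hypothesis is only $|P|=\delta^{-t}$, and no passage to subsets turns an arbitrary (possibly heavily concentrated) point set into a Frostman-regular one of comparable size. This is not a technicality --- handling arbitrary $P$ is the reason the paper works in the phase space $\Omega$ with rectangles of \emph{all} side ratios $u\times uw\times w$: the no-incidence hypothesis gives $\delta$-scale regularity $|\mb X\cap\mb R|\lesssim (w/\delta)^s$ for the thinnest rectangles, one then defines $f(x,y)$ from concentration numbers, chooses exponents $\alpha=t-\rho$, $\beta=t+s-\rho$ (this is where $2t+s>3$ enters, giving $\alpha+\beta>3$), and blows up into the rectangle minimizing $f(x_0,y_0)-\alpha x_0-\beta y_0$ to manufacture a genuine $(\tilde\delta,\alpha,\beta,C)$-set; the contradiction then comes from playing \Cref{thm:incidence_lower_bd} against the incidence \emph{upper} bound $\lesssim\delta^{-s(1-y_0)-\eta}$ per point that the no-incidence hypothesis forces after rescaling, using $t>1$. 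None of this renormalization appears in your plan, and without it the quantitative bound $I(P,\T)\gtrsim\delta^{-c}N$ you aim for is not even well-posed against concentrated examples. (A smaller structural problem: invoking \Cref{thm:extra_inc_one_tube} for the case $t>3/2$ is circular here, since in this paper that theorem is the $s=0$ case of the statement you are proving.)

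The second gap is the core estimate itself. Your high--low split with one frequency scale $R$, optimized at the end, is essentially \Cref{thm:high_low_ineq} plus an initial estimate, and you correctly sense it cannot reach $2t+s=3$ in one pass; but the fix is not merely ``iterate over all dyadic scales carrying Frostman regularity.'' The low-frequency ``main term'' is the initial estimate, and it is \emph{not} automatically a power of $\delta^{-1}$ better than the trivial count at a given scale: by \Cref{prop:phase_space_initial_estimate} it is governed by the ratio of direction covering numbers, so it is only useful at direction-stable scales, while the high-low errors are only summable at high-low regular scales. Producing a single triple $(t\sep x,y)$ that is simultaneously direction stable and high-low regular is the content of the Lipschitz/branching-function theorem (\Cref{thm:lip_func_theorem}), proved via uniform subsets, microscopic blowups to a good line, the measure $t^{-1}\,dx\,dt$, the submodularity/direction-set inequality $d(t+s\sep x,y)\ge d(t\sep x,y)+d(s\sep x,y+t)$, and point--line duality for steep good lines. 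Your proposal has no counterpart to any of this; ``organizing the induction on scales'' is named as the technical heart but no mechanism is offered for locating the scale and location where the induction can start and close. As written, the argument would stall at a threshold strictly weaker than $2t+s>3$.
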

By taking $s = 0$, \Cref{thm:exists_extra_incidence} implies \Cref{thm:extra_inc_one_tube}. 
Using the framework we set up to prove \Cref{thm:exists_extra_incidence}, we can naturally recover a weaker version of an incidence lower bound that was proved earlier this year by D{\k a}browski, Goering, and Orponen \cite{dabrowski2024}. See \S\ref{sec:prior_work_R2} for a more detailed comparison.
\begin{theorem}\label{thm:inc_lower_bd_t_set}
Fix $t \in [1,2]$ and $s \in [0,1]$ so that $t+s > 2$. For every $\varepsilon > 0$ there exists $\eta(t, s, \varepsilon) > 0$ such that the following holds for all $\delta < \delta_0(t,s,\varepsilon)$. Let $P \subset [0,1]^2$ be a $(\delta, t, \delta^{-\eta})$-set. For each $p \in P$ let $\T_p$ be a $(\delta, s, \delta^{-\eta})$-set of tubes through $p$, and let $\T = \bigsqcup_p \T_p$. Then 
\begin{align*}
    I(P, \T) \geq \delta^{1+\varepsilon} |P|\, |\T|.
\end{align*}
\end{theorem}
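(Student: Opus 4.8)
The plan is to derive this from the machinery developed for \Cref{thm:exists_extra_incidence}, together with a localization-and-iteration argument that exploits the extra hypothesis (absent from \Cref{thm:exists_extra_incidence}) that $P$ is itself Frostman. First I would regularize: by dyadic pigeonholing, pass to a large subset of $P$ and to subfamilies $\T_p' \subseteq \T_p$ so that all $|\T_p'|$ are comparable to a single value $M$, and so that $P$ is uniform across dyadic scales, i.e.\ for each dyadic $w \in [\delta,1]$ a positive proportion of points $p$ satisfy $|P \cap B_w(p)| \sim w^t |P|$. By the covering-number reformulation in the introduction, these refinements preserve the Frostman conditions up to worsening $\eta$ by a controlled amount. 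Writing $N = |P|$, the goal becomes $\sum_{T \in \T} |P \cap T| \gtrsim \delta^{1+\varepsilon} N^2 M$; since each $T$ contributes its trivial incidence, $\sum_{T} |P\cap T| \ge NM$ for free, so the content is that a typical point's tube family sees a near-random number of nontrivial incidences with $P$.

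The core is an induction on scales. Fix small $\kappa = \kappa(t,s,\varepsilon) > 0$ and run through the scales $\Delta_j = \delta^{\kappa j}$. At step $j$ one passes from the configuration living at scales $[\Delta_{j+1},1]$ to its $\Delta_{j+1}$-coarsening — replace each point by the $\Delta_{j+1}$-ball containing it and each tube by its $\Delta_{j+1}$-neighborhood — together with, inside each coarse ball, a rescaled copy of the fine configuration, which by scale-uniformity is again a $(\delta/\Delta, t, \delta^{-O(\eta)})$-set of points carrying $(\delta/\Delta, s, \delta^{-O(\eta)})$-sets of tubes. One then runs a dichotomy: either the coarse incidence count already beats the random count, and one recurses inside the coarse balls; or it does not, in which case a two-ends / concentration step forces the coarse tubes to bunch along sub-configurations of lower complexity, and feeding this into the extra-incidence machinery yields a contradiction. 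The hypothesis $t+s>2$ enters exactly as the threshold that makes this single step gain; it is genuinely weaker than the $2t+s>3$ of \Cref{thm:exists_extra_incidence} because here the Frostman control on $P$ replaces part of what the bare count $|P| = \delta^{-t}$ had to supply there.

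I expect the main obstacle to be closing the recursion quantitatively rather than any single estimate. One must check that incidences found at a coarse scale refine to distinct fine incidences without overcounting the ``thickness'' of the coarse tubes; that the loss per scale is at most $\delta^{O(\eta)}$, so that over the $O(1/\kappa)$ scales the cumulative loss is absorbed into $\delta^{\varepsilon}$ once $\eta$ is chosen small in terms of $t,s,\varepsilon$; and that the two Frostman conditions — on the points and on the directions — both survive the rescalings cleanly. The delicate point is the ``few coarse incidences'' branch of the dichotomy: turning a deficit in the incidence count into a genuine clustering of tubes that is incompatible with \eqref{eq:lines_frostman} (or with the conclusion of the extra-incidence argument applied at a smaller scale), which is where the bookkeeping, and the precise use of $t+s>2$, is heaviest.
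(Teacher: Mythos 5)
There is a genuine gap. The entire weight of your argument rests on the ``few coarse incidences'' branch of the dichotomy, and you never say what mechanism closes it; in particular you never identify how $t+s>2$ is actually used. In the paper this is exactly where a deep external input enters: after reducing (via limiting branching functions, Rademacher differentiation, and microscopic blowups) to the case where the point set behaves Ahlfors--David regularly at the relevant scales, the $s+t>2$ regime of the Ren--Wang Furstenberg set estimate is invoked to upgrade the $s$-dimensional direction families to an $(s+1)$-dimensional bound on the covering number of the tube family inside every square. That is what makes the phase-space set $(\delta,t,s+1)$-Frostman with $t+(s+1)>3$, so that the main incidence theorem (\Cref{thm:incidence_lower_bd}, via the horizontal-good-line case of \Cref{thm:lip_func_theorem}) applies. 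Without the Furstenberg estimate, the raw hypotheses only give a $(\delta,t,s,C)$-set in phase space, and $t+s>2$ is strictly weaker than the $\alpha+\beta>3$ threshold that the high-low machinery needs; a generic ``two-ends / concentration'' step does not bridge this gap, and your proposal offers no substitute for it.

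A second, quantitative problem: you suggest that the deficit branch can be refuted by ``feeding into the extra-incidence machinery,'' i.e.\ \Cref{thm:exists_extra_incidence}. But that theorem only produces a single nontrivial incidence, and even after subsampling it yields bounds of the shape $I \gtrsim \delta^{3/2+\varepsilon}|P|\,|\T|$ (as in \Cref{cor:incidence_lower_bound}), which is far short of the target $\delta^{1+\varepsilon}|P|\,|\T|$. To reach the near-random count the paper does not argue by a per-scale dichotomy at all: the initial estimate comes from a direction-stability pigeonhole (telescoping the direction numbers along the good line), and the inductive step is the high-low inequality, both run on a carefully chosen blowup; the Frostman hypothesis on $P$ is consumed by the reduction to the AD-regular case, not by a clustering-versus-Frostman contradiction. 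So while your multiscale skeleton is not unreasonable in spirit, the two steps you flag as ``bookkeeping'' are in fact the missing core of the proof.
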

Notice that $\delta |P|\, |\T|$ is the roughly the expected number of incidences if the points and tubes are distributed uniformly at random. 

\subsection{Background on Heilbronn's triangle problem}
Given an integer $n \geq 3$, Heilbronn's triangle problem asks for the smallest number $\Delta = \Delta(n)$ such that in every configuration of $n$ points in the unit square one can find three among them which form a triangle of area at most $\Delta$. It is easy to prove $\Delta \lesssim 1/n$: just split the unit square into fewer than $n/2$ strips and choose three points in the same strip. In 1951 Roth \cite{Roth1951} showed that $\Delta = o(1/n)$. Roth's proof consisted of a remarkable density increment argument that quantitatively showed $\Delta \lesssim n^{-1} (\log \log n)^{-1/2}$. In 1971 Schmidt \cite{Schmidt1971} improved on Roth's upper bound by showing $\Delta \lesssim n^{-1} (\log n)^{-1/2}$ with a much simpler argument. In the subsequent year, Roth \cite{Roth1972_1} came back to Heilbronn's problem and proved $\Delta \lesssim n^{-\mu}$ holds for an absolute constant $\mu > 1$. Roth wrote a few papers developing his method \cite{Roth1972_2,Roth1973} eventually getting $\mu \approx 1.117$. In 1981 Koml\'os, Pintz and Szemer\'edi \cite{KomlosUpperBound} improved this exponent to $\mu = 8/7 - o(1) \approx 1.14285$. More precisely they showed that $\Delta \leq \exp(c\sqrt{\log n})n^{-8/7}$ holds for some absolute constant $c>0$. 
In \cite{CPZ} we recently improved their estimate by a polynomial factor. 
\begin{theorem}[\cite{CPZ}] \label{old} 
For sufficiently large $n$, any set of $n$ points in the unit square contains three points forming a triangle of area at most $$\Delta \leq n^{-8/7-1/2000}.$$
\end{theorem}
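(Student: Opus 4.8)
The plan is to deduce \Cref{old} from the incidence lower bound \Cref{cor:incidence_lower_bound}; in fact this route yields the stronger bound $\Delta\le n^{-7/6+o(1)}$, and since $7/6-8/7=1/42>1/2000$ we get $n^{-7/6+\varepsilon}\le n^{-8/7-1/2000}$ for small $\varepsilon>0$ and large $n$, which is all we need. (This is not the argument of \cite{CPZ}, but it is the most direct one given the tools of the present paper.) So it suffices to prove: for every $\varepsilon>0$ there is $n_0(\varepsilon)$ such that every set of $n\ge n_0$ points in $[0,1]^2$ spans a triangle of area at most $n^{-7/6+\varepsilon}$. I would argue by contradiction. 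Suppose $p_1,\dots,p_n\in[0,1]^2$ span only triangles of area $>\Delta:=n^{-7/6+\varepsilon}$ (in particular no three are collinear, else a degenerate triangle finishes us off), and set $R:=n^{-1/2+\varepsilon/3}$ and $\delta:=2\Delta/R=2n^{-2/3+2\varepsilon/3}$.

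First I would prune the configuration to manufacture many close pairs. The points $p_i$ admitting no other point of the configuration within distance $R$ form an $R$-separated set, hence number $\lesssim R^{-2}=n^{1-2\varepsilon/3}=o(n)$; deleting them leaves a set $P'$ with $m:=|P'|\ge n/2$, and for each $p\in P'$ a partner $\pi(p)\ne p$ with $|p-\pi(p)|\le R$. Next, for $p\in P'$ let $T_p$ be the $\delta$-tube whose central line passes through $p$ and $\pi(p)$, and put $\T:=\{T_p:p\in P'\}$. The key point is that these tubes are not rich: if $q\in P'\cap T_p$ with $q\notin\{p,\pi(p)\}$, then $p,\pi(p),q$ are three distinct points spanning a triangle of area $\tfrac12|p-\pi(p)|\cdot\dist(q,\overline{p\pi(p)})\le\tfrac12 R\delta=\Delta$, contradicting the hypothesis. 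Hence $|P'\cap T_p|\le 2$ for every $p\in P'$, so $I(P',\T)=\sum_{T\in\T}|P'\cap T|\le 2|\T|$.

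Then I would invoke \Cref{cor:incidence_lower_bound} for the configuration $P'$ with the $\delta$-tubes $\T$ (legitimate since $\delta\to 0$, so $\delta<\delta_0(\varepsilon)$ for $n$ large):
\[
 2|\T|\ \ge\ I(P',\T)\ \gtrsim_\varepsilon\ \delta^{3/2+\varepsilon}\,|P'|\,|\T|\ =\ \delta^{3/2+\varepsilon}\,m\,|\T| .
\]
Cancelling $|\T|$ gives $\delta^{3/2+\varepsilon}\lesssim_\varepsilon 1/m\le 2/n$. But $\delta^{3/2+\varepsilon}=2^{3/2+\varepsilon}\,n^{(-2/3+2\varepsilon/3)(3/2+\varepsilon)}=2^{3/2+\varepsilon}\,n^{-1+\varepsilon/3+(2/3)\varepsilon^2}$, which is $\gg 1/n$ for all large $n$ --- a contradiction. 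This yields $\Delta\le n^{-7/6+\varepsilon}$, hence \Cref{old}.

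The hard part is of course \Cref{thm:extra_inc_one_tube} (equivalently \Cref{cor:incidence_lower_bound}), the incidence lower bound itself, which I am allowed to assume here --- producing it is where the real difficulty lies. Within the deduction above the only steps that need (routine) attention are the pruning, which must leave $\Omega(n)$ points each with a partner at distance $\lesssim n^{-1/2}$, and the check that the three points in the middle step are genuinely distinct. The mechanism is the exponent identity $n^{-1/2}\cdot n^{-2/3}=n^{-7/6}$: at scale $R\approx n^{-1/2}$ the ``no small triangle'' hypothesis forces every tube of width $\delta\approx\Delta/R$ through one of $\Omega(n)$ points to carry only $O(1)$ of those points, which is incompatible with the incidence lower bound unless $\delta\lesssim n^{-2/3}$, i.e.\ $\Delta\lesssim n^{-7/6}$.
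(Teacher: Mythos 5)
Your argument is correct and is essentially the paper's own route: the quoted statement is only cited from \cite{CPZ}, and the present paper supersedes it via \Cref{thm:heilbronn_result} in \cref{HeilbronnApp}, where one likewise pairs up points at distance $\lesssim n^{-1/2}$ and applies the main point--tube incidence result to the lines through those pairs, yielding a triangle of area $n^{-7/6+\varepsilon}\le n^{-8/7-1/2000}$ for large $n$. The only difference is cosmetic: you run the argument by contradiction through the counting form \Cref{cor:incidence_lower_bound} (noting each tube carries at most two surviving points), whereas the paper applies the distance form \Cref{cor:dist_P_L} directly to the lines through the pairs.
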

Roth's 1972 paper introduced a two-step approach to the problem that all subsequent work has followed. 
First he related the triangle problem to an incidence lower bounds problem. Let $P \subset [0,1]^2$ be the set of points. Pick some parameters $u >0$ and $w>0$, and let 
\begin{align*}
    \T = \{\text{the $w$-tube $T_{p_1,p_2}$ connecting $p_1$ and $p_2$ for all $p_1, p_2 \in P$ with $|p_1-p_2| \leq u$}\}. 
\end{align*}
There is a trivial incidence between the points $p_1,p_2$ and the tube $\T_{p_1,p_2}$. If there is any nontrivial incidence $p_3 \in T_{p_1,p_2}$, then $p_1,p_2,p_3$ forms a triangle of area $\leq uw$. Roth proved that there are lots of incidences between $P$ and $\T$, so in particular there is some nontrivial incidence. First he proved an \textit{initial estimate} showing that there are lots of incidences at a large scale $w_i \sim n^{-\varepsilon}$, and then an \textit{inductive step} showing that there are still lots of incidences at a much smaller final scale $w_f$. 

In order to execute the inductive step Roth proved a multiscale inequality about incidences---we will state the version of this inequality we proved in \cite{CPZ}. The number of incidences between $P$ and $L$ at scale $w$ is 
\begin{equation}\label{eq:hard_inc_count}
   \#\{(p, \ell) \in P\times L\, :\, d(p, \ell) \leq w\}.
\end{equation}
For technical reasons, we use a \textit{smoothed} incidence count 
\begin{equation*}
    I(w) = I(w; P, L) = \sum_{(p,\ell)\in P\times L} \eta(d(p, \ell)/w)
\end{equation*}
where $\eta$ is a certain smooth bump function (see \cref{subsec:notation}, if we took $\eta = \chi_{[-1,1]}$ we would get the count \eqref{eq:hard_inc_count}).
For $P$ a set of points and $L$ a set of lines, let 
\begin{align*}
    \ms M_{w\times w}(P) &= \max_{x_0} |P\cap B_w(x_0)|, \\ 
    \ms M_{1\times w}(L) &= \max_{\ell_0} |L\cap B_w(\ell_0)|. 
\end{align*}
These numbers measure how much $P$ and $L$ concentrate in subregions. 
\begin{theorem}[High-low inequality]\label{thm:high_low_ineq}
Let $P \subset [-1, 1]^2$ be a set of points and $L$ a set of lines intersecting $[-1,1]^2$. We have 
\begin{align*}
    \Bigl|\frac{I(w)}{w|P||L|} - \frac{I(w/2)}{(w/2)|P||L|}\Bigr| &\lesssim \Bigl(\frac{\ms M_{w\times w}(P)}{|P|} \frac{\ms M_{1\times w}(L)}{|L|}w^{-3}\Bigr)^{1/2}.
\end{align*}
\end{theorem}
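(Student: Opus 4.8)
The plan is to prove the high-low inequality by a Fourier-analytic argument, treating the normalized incidence count as a pairing between a ``point measure'' and a ``line measure'' in the plane, and splitting this pairing into high- and low-frequency parts. Write $\mu = \sum_{p \in P} \delta_p$ and, for a line $\ell$, let $\sigma_\ell$ denote arc-length measure (or a smooth approximation) on $\ell$ restricted to $[-1,1]^2$; set $\nu = \sum_{\ell \in L} \sigma_\ell$. The key observation is that $I(w)$ is, up to normalization, $\langle \mu * \phi_w, \nu\rangle$ for a smooth bump $\phi_w$ at scale $w$ (a two-dimensional mollifier whose profile matches $\eta$), because convolving $\mu$ with $\phi_w$ and integrating against $\sigma_\ell$ counts points at distance $\lesssim w$ from $\ell$ with the smooth weight. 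Consequently the left-hand side of the inequality is $\frac{1}{w|P||L|}$ times $\langle \mu * (\phi_w - 2\phi_{w/2}), \nu\rangle$, and $\psi_w := \phi_w - 2\phi_{w/2}$ has Fourier transform supported (essentially) at frequencies $|\xi| \sim w^{-1}$ — this is the annular ``high-low'' decomposition. (One should be slightly careful: because of the $w$ versus $w/2$ normalization, one subtracts a suitably rescaled bump so that the difference is a genuine Littlewood–Paley-type piece; this is a routine adjustment.)

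The main step is then to estimate $|\langle \mu * \psi_w, \nu\rangle|$ by Plancherel: it equals $\bigl|\int \widehat{\mu}(\xi)\, \widehat{\psi_w}(\xi)\, \overline{\widehat{\nu}(\xi)}\, d\xi\bigr|$, and since $\widehat{\psi_w}$ lives on the annulus $|\xi| \sim w^{-1}$, Cauchy–Schwarz gives a bound by $w^{O(1)} \bigl(\int_{|\xi|\sim w^{-1}} |\widehat\mu(\xi)|^2 d\xi\bigr)^{1/2} \bigl(\int_{|\xi| \sim w^{-1}} |\widehat\nu(\xi)|^2 d\xi\bigr)^{1/2}$. Now I would estimate each annular $L^2$ mass of the Fourier transform in physical space. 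For the point measure, $\int_{|\xi| \sim w^{-1}} |\widehat\mu(\xi)|^2\, d\xi \approx \|\mu * \phi_w\|_2^2 \approx w^{-2}\sum_{p, p'} \mathbbm{1}[|p - p'| \lesssim w] \lesssim w^{-2}|P|\,\ms M_{w\times w}(P)$, because for each $p$ the number of $p'$ within $w$ is at most $\ms M_{w \times w}(P)$. For the line measure, the analogous computation gives $\int_{|\xi|\sim w^{-1}}|\widehat\nu(\xi)|^2\, d\xi \approx \|\nu * \phi_w\|_2^2$, and $\|\sigma_\ell * \phi_w\|_2^2 \approx w^{-1}$ while the cross terms $\langle \sigma_\ell * \phi_w, \sigma_{\ell'}*\phi_w\rangle$ are $\lesssim w^{-1}$ times $1$ when $d(\ell, \ell') \lesssim w$ (the two $w$-tubes overlap in a region of bounded length) and are tiny otherwise; summing, this is $\lesssim w^{-1}|L|\,\ms M_{1\times w}(L)$. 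Multiplying the two square roots and tracking the powers of $w$ (the $w^{O(1)}$ from $\widehat{\psi_w}$ on the annulus, plus $w^{-2}$ and $w^{-1}$ from the two physical-space bounds, all under square roots, divided by the $w$ in the normalization) yields exactly the claimed exponent $w^{-3/2}$ inside the square root, i.e. the stated bound $\bigl(\frac{\ms M_{w\times w}(P)}{|P|}\frac{\ms M_{1\times w}(L)}{|L|} w^{-3}\bigr)^{1/2}$.

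The step I expect to be the main obstacle — or at least the one requiring the most care — is the line-measure Fourier estimate: controlling $\int_{|\xi| \sim w^{-1}} |\widehat\nu(\xi)|^2\, d\xi$ cleanly. The Fourier transform of arc-length measure on a line segment is concentrated on the dual line (the line through the origin perpendicular to $\ell$, thickened at scale depending on the segment length), so $\widehat\nu$ is a sum of ``plate'' functions and one must show that on the annulus $|\xi|\sim w^{-1}$ the overlaps of these plates are governed precisely by the metric $d(\ell_1,\ell_2) = |d(0,\ell_1) - d(0,\ell_2)| + |\theta(\ell_1) - \theta(\ell_2)|$ from \eqref{eq:line_metric_all_slope}, so that the pairing is nonnegligible exactly when $d(\ell,\ell') \lesssim w$. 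Equivalently, in physical space one checks that two $w$-tubes whose central lines are $d$-close at scale $w$ meet in a set of measure $\approx w \cdot w$ (bounded length times width $w$), giving the $\ms M_{1 \times w}(L)$ factor. Handling the smoothing profile $\eta$ honestly (so that $\psi_w$ really is frequency-localized up to rapidly-decaying tails) and making sure the truncation of lines to $[-1,1]^2$ does not spoil the annular support are the remaining technical points, but these are standard once the measures and mollifiers are set up correctly.
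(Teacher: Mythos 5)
Your overall strategy is essentially the paper's: its proof is exactly this high-low (Littlewood--Paley) decomposition followed by Cauchy--Schwarz, carried out in physical space with mean-zero tube functions instead of on the Fourier side. The point-measure estimate and the final power count are fine. The genuine gap is in your line-measure estimate: the chain $\int_{|\xi|\sim w^{-1}}|\widehat\nu|^2\,d\xi\approx\|\nu*\phi_w\|_2^2\lesssim w^{-1}|L|\,\ms M_{1\times w}(L)$ fails at both links, and the failure is polynomial. First, $\|\nu*\phi_w\|_2^2=\int|\widehat\nu(\xi)|^2|\widehat\phi(w\xi)|^2\,d\xi$ retains all frequencies $|\xi|\lesssim w^{-1}$, and near $\xi=0$ one has $|\widehat\nu|\sim|L|$, so the low-frequency bulk is not controlled by $w^{-1}|L|\ms M_{1\times w}(L)$. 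Second, in physical space the transversal cross terms are not ``tiny'': two $w$-tubes meeting at angle $\theta\gg w$ overlap in a parallelogram of area $\sim w^2/\theta$ on which both factors are $\sim w^{-1}$, so $\langle\sigma_\ell*\phi_w,\sigma_{\ell'}*\phi_w\rangle\sim\theta^{-1}$, which is positive and does not decay fast enough. Concretely, take $L$ to be a maximal $w$-separated set of lines meeting the square: then $|L|\sim w^{-2}$, $\ms M_{1\times w}(L)\sim 1$, and $\nu*\phi_w\sim w^{-2}$ pointwise on the unit square, so $\|\nu*\phi_w\|_2^2\sim w^{-4}$ while your claimed bound is $w^{-3}$. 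Feeding $w^{-4}$ into your Cauchy--Schwarz loses a factor $w^{-1/2}$ and does not give the theorem.

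The estimate you actually need, $\int_{|\xi|\sim w^{-1}}|\widehat\nu|^2\lesssim w^{-1}|L|\,\ms M_{1\times w}(L)$, is true, but only because of the restriction to the annulus: there the dual plates of two segments are disjoint unless the angles differ by $O(w)$, and the radial oscillation kills pairs whose offsets differ by $\gg w$. You correctly flag this as the main obstacle in your final paragraph, but the justification actually offered in the body (physical overlap of $w$-tubes, with transversal cross terms ``tiny'') is precisely what breaks, so the high-pass localization must genuinely be used on the $\nu$ side and not discarded. For comparison, the paper stays in physical space but replaces $w^{-1}1_{\T_\ell(w)}$ by the mean-zero difference $\Phi_\ell=\chi_w*w^{-1}1_{\T_\ell(w)}-\chi_{w/4}*(w/2)^{-1}1_{\T_\ell(w/2)}$; exact orthogonality $\int_{\R^2}\Phi_{\ell_1}\Phi_{\ell_2}=0$ together with a second-order Taylor expansion of the spatial cutoff (the linear term cancels by symmetry about the intersection point) upgrades the transversal cross term from $\theta^{-1}$ to $w^2\theta^{-3}$, after which the dyadic sum $\sum_{\theta\geq w}w^2\theta^{-3}\ms M_{1\times\theta}(L)\lesssim w^{-1}\ms M_{1\times w}(L)$ closes the argument. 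To repair your version, either carry out the annular plate computation honestly or perform the analogous high-pass cancellation on the tube functions before estimating in physical space.
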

For completeness we include a proof in \S\ref{sec:high_low_proof}. The normalized incidence count $\frac{I(w)}{w|P||L|}$ is $\sim 1$ if $P$ and $L$ are placed uniformly at random (when $P$ and $L$ are sufficiently large). The left hand side is the change in the normalized incidence count, and the right hand side is small if $P$ and $L$ do not concentrate too much in subregions. In 1972 Roth introduced a version of this inequality in order to prove lower bounds for incidences. In 2017 Guth, Solomon, and Wang \cite{GuthSolomonWang2019} independently proved a version of this inequality in order to prove upper bounds for incidences. We call it the high-low inequality because of their paper.

Roth observed that if the point set $P$ has extra concentration in subregions of $[0,1]^2$, we can exploit this by zooming into those subregions and inductively finding a small triangle there. If $P$ does not have too much concentration, we can exploit that through the high-low inequality. 
In his 1971 paper Roth \cite{Roth1972_1} introduced an initial estimate and an inductive step to get the first power saving bound. In his subsequent papers Roth \cite{Roth1972_2,Roth1973} kept the same initial estimate and improved the inductive step. In 1981, Komlos, Pintz, and Szemer\'edi \cite{KomlosUpperBound} kept the initial estimate and executed a sharp inductive step to prove $\Delta \lesssim n^{-8/7+o(1)}$. 

In our paper \cite{CPZ} we gave a new approach to the initial estimate. Our new approach related the initial estimate to direction set estimates from projection theory. Using classical direction set estimates we recovered the $8/7$ exponent, and using a very strong recent improvement due to Orponen, Shmerkin, and Wang \cite{OrponenShmerkinWang} we improved the bound. 

We also proved in \cite[Theorem 1.2]{CPZ} that if $P$ is \textit{homogeneous}, meaning any $n^{-1/2}\times n^{-1/2}$ square has $\lesssim 1$ point, then $P$ has a triangle of area at most $n^{-7/6+o(1)}$. In this special case it is easy to prove an initial estimate, and the bottleneck is the inductive step. The high-low method has a barrier at the exponent $7/6$ due to a Szemer\'edi-Trotter type example, see \cite[\S3]{CPZ}.

\subsection{Application to Heilbronn's triangle problem} \label{HeilbronnApp}
\Cref{cor:dist_P_L} leads to a new bound for Heilbronn's problem that achieves the high-low barrier described in the last section. This is a new approach to Heilbronn's problem that departs from Roth's setup. At a high level, we solve a more natural and general incidence lower bounds problem and then apply our results to Heilbronn's triangle problem. 
\begin{theorem}\label{thm:heilbronn_result}
For any $\varepsilon > 0$, every set of $n$ points in the unit square contains a triangle of area 
\begin{align*}
    \Delta \lesssim_{\varepsilon} n^{-7/6+\varepsilon}.
\end{align*}
\end{theorem}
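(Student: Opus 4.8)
\medskip
\noindent\textbf{Proof proposal for \Cref{thm:heilbronn_result}.}
The plan is to deduce the theorem from the incidence lower bound \Cref{cor:incidence_lower_bound} (equivalently \Cref{cor:dist_P_L}), which carries essentially all of the content; the deduction itself is a short pigeonholing argument. Fix $\varepsilon>0$ and let $P\subset[0,1]^2$ be a set of $n$ points; we may assume $n\geq n_0(\varepsilon)$, since for bounded $n$ one has $\Delta\leq 1\lesssim_\varepsilon n^{-7/6+\varepsilon}$. Set $u=C_0 n^{-1/2}$ for a suitably large absolute constant $C_0$, and $\delta=n^{-2/3+\varepsilon}$. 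Call $p\in P$ \emph{social} if some other point of $P$ lies within distance $u$ of it, and let $P^*\subseteq P$ be the set of social points. Non-social points are pairwise more than $u$ apart, so there are at most $O(u^{-2})=O(n/C_0^2)\leq n/2$ of them once $C_0$ is large; hence $|P^*|\geq n/2$. For each $p\in P^*$ fix a witness $q(p)\in P$ with $0<|p-q(p)|\leq u$; then $q(p)$ is itself social, i.e.\ $q(p)\in P^*$. Let $\ell_p$ be the line through $p$ and $q(p)$, and let $T_p$ be the $\delta$-tube with central line $\ell_p$.

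Apply \Cref{cor:incidence_lower_bound} to the point set $P^*$ together with the tubes $\{T_p:p\in P^*\}$ (this is legitimate because $T_p$ passes through $p$, and both $p$ and $q(p)$ lie in $P^*$):
\[
 I\bigl(P^*,\{T_p\}_{p\in P^*}\bigr)\ \gtrsim_\varepsilon\ \delta^{3/2+\varepsilon}|P^*|^2\ \gtrsim_\varepsilon\ \delta^{3/2+\varepsilon}n^2\ =\ n^{1+\frac56\varepsilon+\varepsilon^2},
\]
which exceeds $3n$ for $n$ large. Each incidence is a pair $(p_j,T_{p_k})$ with $d(p_j,\ell_k)\leq\delta$. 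At most $|P^*|\leq n$ of them are \emph{trivial} ($p_j=p_k$) and at most $|P^*|\leq n$ are \emph{semi-trivial} ($p_j=q(p_k)$, in which case $p_j\in\ell_k\subset T_{p_k}$ automatically). Discarding these leaves an incidence $(p_j,T_{p_k})$ for which $p_j,p_k,q(p_k)$ are three \emph{distinct} points with $d(p_j,\ell_k)\leq\delta$. The triangle $p_jp_kq(p_k)$ has the side $p_kq(p_k)$ of length $|p_k-q(p_k)|\leq u$ and corresponding height $d(p_j,\ell_k)\leq\delta$, so
\[
 \Area\bigl(\triangle\,p_j\,p_k\,q(p_k)\bigr)\ \leq\ \tfrac12\,u\,\delta\ =\ \tfrac{C_0}{2}\,n^{-7/6+\varepsilon}\ \lesssim_\varepsilon\ n^{-7/6+\varepsilon},
\]
which is the claimed bound.

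I do not anticipate a real obstacle in this deduction: the difficulty is entirely inside \Cref{cor:incidence_lower_bound}. The one point of care is that the incidence produced might a priori be a coincidence $p_j=q(p_k)$ rather than a genuine new triangle — this is exactly why one works with the counting form \Cref{cor:incidence_lower_bound}, so that the $O(n)$ trivial and semi-trivial incidences are dominated by the surplus $n^{1+\frac56\varepsilon+\varepsilon^2}-2n$, rather than with the bare existence statement \Cref{thm:extra_inc_one_tube}; and if one insists on a strictly nondegenerate triangle, a preliminary generic perturbation of $P$ of size $n^{-100}$ changes every triangle area negligibly and may be assumed. Finally, the shape of the bound is forced by the numerology: $u\asymp n^{-1/2}$ is the scale at which a constant fraction of points acquire a near neighbour, $\delta=n^{-2/3+\varepsilon}$ is the smallest scale at which \Cref{cor:incidence_lower_bound} still guarantees more than the trivial number of incidences, and $\tfrac12+\tfrac23=\tfrac76$.
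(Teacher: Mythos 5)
Your proof is correct and follows essentially the same route as the paper: pair a constant fraction of the points with a partner at distance $\lesssim n^{-1/2}$, take the line through each pair, and invoke the paper's point--line incidence result at scale $\delta = n^{-2/3+\varepsilon}$ to find a third point within $\delta$ of one of these lines, giving a triangle of area $\lesssim n^{-7/6+\varepsilon}$. The only difference is bookkeeping: the paper greedily extracts $\lfloor n/4\rfloor$ \emph{disjoint} close pairs, so the existence statement \Cref{cor:dist_P_L} alone already produces three distinct points, whereas you allow overlapping pairs and instead use the counting form \Cref{cor:incidence_lower_bound} to dominate the $O(n)$ trivial and semi-trivial incidences.
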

\begin{proof}
In any set of $n \geq 2$ points in the unit square there are two points with distance $<5\sqrt{2}/\sqrt{n}<10/\sqrt{n}$. To see this split $[0,1]^2$ into an even grid of $\frac{5}{\sqrt{n}}\times \frac{5}{\sqrt{n}}$ squares and use the pigeonhole principle. 

Let $P\subset [0,1]^2$ be a set of $n$ points. Choose two points $p_1, p_1'$ with $d(p_1,p_1') \leq 10/\sqrt{n}$. Remove them and find two more points $p_2,p_2'$ with $d(p_2,p_2') \leq 10/\sqrt{n-2}$. Continuing in this way we can find $\lfloor n/4\rfloor $ pairs $\{(p_j,p_j')\}_{j=1}^{\lfloor n/4\rfloor }$ with $d(p_j,p_j') \leq 20/\sqrt{n}$. 

Let $\ell_j$ be the line through $p_j, p_j'$. 
By \Cref{cor:dist_P_L} there is some $j\neq k$ such that $d(p_j, \ell_k) \lesssim_{\varepsilon} n^{-2/3+\varepsilon}$. The triangle formed by $p_k, p_k', p_j$ has area $\lesssim_{\varepsilon} n^{-7/6+\varepsilon}$. 
\end{proof}

Our proof is different from the framework Roth setup in 1972. That said, we still use an initial estimate and a high-low inductive step to prove \Cref{thm:extra_inc_one_tube}. In the context of Heilbronn's problem, the main advantages of the new approach are that we more efficiently exploit extra concentration of the point/line set in subregions, and we add flexibility to the initial estimate. 

\subsection{Frostman regularity over rectangles and the main incidence lower bound}\label{subsec:intro_statement_main_thm}
The starting point for this paper is the observation that \Cref{thm:extra_inc_one_tube} is symmetric under blowing up into subrectangles. 

Let $R$ be a $u\times uw$ rectangle in $[-1,1]^2$ with $\slope(R) \in [-1,1]$. We say a point-line pair $p\in \ell$ lies in $R$ if $p\in R$ and $|\slope(\ell) - \slope(R)| \leq w$. Although this isn't precisely true, we can think of this slope condition as saying that $\ell$ intersects the short ends of $R$ and not the long ends. Let $\psi_R$ be the affine isomorphism mapping $R \mapsto [-1,1]^2$. 
Given a set of point-line pairs $X = \{(p_j, \ell_j)\}_{j=1}^n$ with $p_j \in \ell_j$, we define the \textit{blowup} into $R$ by 
\begin{align*}
    X^{R} = \psi_{R}(X\cap R)
\end{align*}
where $X\cap R$ denotes the point-line pairs lying in $R$ in the sense described above.
If $T$ is a $\delta$-tube passing through $R$ and having $|\slope(T) - \slope(R)| \leq w$, then $\psi_R(T)$ is a $\delta/uw$ tube. 
It follows that if $X$ has no nontrivial incidences at scale $\delta$, then $X^R$ has no nontrivial incidences at scale $\delta/uw$, see \Cref{fig:rectrescale}.
\begin{figure}
    \centering
    \includegraphics[width=0.7\linewidth]{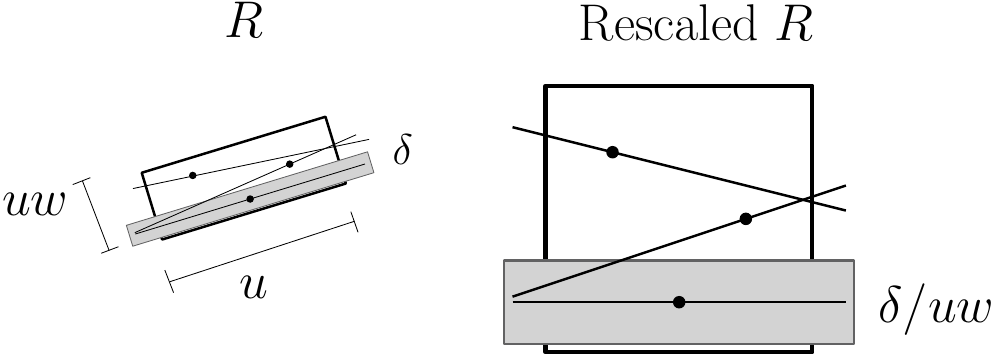}
    \caption{If we rescale a $u\times uw$ rectangle to the unit square, a $\delta$-tube pointing in the long direction of the rectangle maps to a $\delta/uw$-tube.}
    \label{fig:rectrescale}
\end{figure}
This symmetry leads us to analyze a set of point-line pairs in terms of concentration over rectangles of all side ratios. That is one of the main new ideas of our paper. 

Let $p = (a,b) \in [-1,1]^2$ be a point and $\ell$ a line through $p$ with slope $c \in [-1,1]$. We parametrize this pair with the triple $(a,b,c) \in  [-1,1]^3$ and denote our configuration space by $\Omega = [-1,1]^3$. In this notation we let $\mb X = \{\omega_1, \ldots, \omega_n\}$ be a set of point-line pairs, and for $R$ a $u\times uw$ rectangle we let 
\begin{align*}
    \mb R = R \times [\slope(R) - w, \slope(R)+w] \subset \Omega
\end{align*}
be the lift of $R$ to the configuration space. To be precise, for $(a_0, b_0, c_0) \in [-1,1]^3$ we define the \textit{phase space rectangle centered at $(a_0, b_0, c_0)$} by
\begin{align}\label{eq:intro_phase_space_rect}
    \mb R = \mb R_{u\times uw\times w}(a_0, b_0, c_0) = \{(a_0 + t, b_0 + c_0t + r, c_0+s)\, :\, (t,r,s)\in [-u,u]\times [-uw,uw]\times [-w,w]\}
\end{align}
Now the set of pairs lying in $R$ is just the set intersection $\mb X \cap \mb R$. The blowup of $\mb X$ into $\mb R$ is given by 
\begin{align*}
    \mb X^{\mb R} &= \{\psi_{\mb R}(a,b,c)\, :\, (a,b,c) \in \mb X\cap \mb R\}, \\
    \psi_{\mb R} &= \text{The affine map taking $\mb R$ to $[-1,1]^3$.}
\end{align*}
Our goal is to prove that for any $\gamma > 3/2$, if $|\mb X| \geq \delta^{-\gamma}$ then the point set $P[\mb X]$ and the line set $L[\mb X]$ have a nontrivial incidence at scale $\delta$ (as long as $\delta < \delta_0(\gamma))$. If we are trying to prove this statement at scale $\delta_*$, we might suppose inductively that we already proved it for all $\delta > 2\delta_*$. Let $\mb X \subset \Omega$ be a set of size $\delta_*^{-\gamma}$ and suppose $\mb X$ has no nontrivial incidence at scale $\delta_*$. Then for any $u\times uw\times w$ rectangle $\mb R$, the blowup $\mb X^{\mb R}$ has no nontrivial incidence at scale $\delta_* / uw$. By the inductive hypothesis $|\mb X\cap \mb R| \leq (\delta_*/uw)^{-\gamma}$, so 
\begin{equation}\label{eq:concentration_X_squares}
    |\mb X \cap \mb R| \leq u^{\gamma}w^{\gamma}|\mb X|. 
\end{equation}
This regularity condition controls how much $\mb X$ concentrates in rectangles with all side ratios. 
As a special case we can choose $\mb R$ to be a $u\times u\times 1$ square and we find $P[\mb X]$ is a $(\delta, \gamma, C)$-set. If we choose $\mb R$ to be a $1\times u\times u$ tube we find $L[\mb X]$ is a $(\delta, \gamma, C)$-set. This means that the high-low error (the right hand side of \Cref{thm:high_low_ineq}) is bounded by 
\begin{align*}
    \Bigl(\frac{\ms M_{w\times w}(P)}{|P|} \frac{\ms M_{1\times w}(L)}{|L|}w^{-3}\Bigr)^{1/2} \lesssim w^{\gamma-3/2}.
\end{align*}
If $\gamma > 3/2$ then the right hand side is small, so the high-low error is small, and we have a chance to execute the inductive step. 

Inspired by inequality \eqref{eq:concentration_X_squares} we say a set $\mb X \subset \Omega$ is a $(\delta, \alpha, \beta, C)$-set if it is $\delta$-separated in $\R^3$ and satisfies the $(\alpha, \beta)$-Frostman condition 
\begin{align*}
    |\mb X\cap \mb R| \leq C u^{\alpha}w^{\beta} |\mb X|\qquad \text{for every $u\times uw\times w$ rectangle $\mb R$ with $uw\geq \delta$.}
\end{align*}
The high-low error is $w^{(\alpha+\beta-3)/2}$, so if $\alpha + \beta > 3$ we can hope to use an inductive step to find lots of incidences. 

Our main theorem is an incidence lower bound under this Frostman condition. Given a set $\mb X \subset \Omega$, we define the incidence count at scale $\delta$ to be 
\begin{align*}
    I(\delta; \mb X) = \sum_{\omega_1, \omega_2 \in \mb X} \eta(d(p_{\omega_1}, \ell_{\omega_2})/\delta)
\end{align*}
where $\eta$ is the smooth bump function described in \cref{subsec:notation}. Here if $\omega = (a,b,c)$ is a phase space point, $p_{\omega} = (a,b)$ is the point defined by $\omega$ and $\ell_{\omega} = \{(a+t,b+ct)\, :\, t \in \R\}$ is the line through $p_{\omega}$ with slope $c$. 
\begin{theorem}\label{thm:incidence_lower_bd}
Let $\alpha, \beta \in [1,2]$ satisfy $\alpha+\beta > 3$, and let $\varepsilon > 0$. There exists $\eta = \eta(\alpha, \beta, \varepsilon) > 0$ such that the following holds for all  $\delta < \delta_0(\alpha, \beta, \varepsilon)$. Suppose $\mb X \subset \Omega$ is a $(\delta, \alpha, \beta, \delta^{-\eta})$-set. Then 
\begin{align*}
    I(\delta; \mb X) \geq \delta^{1+\varepsilon}|\mb X|^2.
\end{align*}
\end{theorem}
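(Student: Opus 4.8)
The plan is to prove \Cref{thm:incidence_lower_bd} by induction on $\delta$ (through dyadic scales), running a high-low scheme along the lines of Roth's inductive step as reformulated in \cite{CPZ}, but with the crucial new feature that the inductive hypothesis is leveraged at \emph{every side ratio} of rectangle in phase space, not just squares and tubes. Concretely, fix $\alpha,\beta$ with $\alpha+\beta>3$ and $\varepsilon>0$; we will choose $\eta$ small depending on these. Set up a statement $P(\delta)$: every $(\delta,\alpha,\beta,\delta^{-\eta})$-set $\mb X$ satisfies $I(\delta;\mb X)\geq \delta^{1+\varepsilon}|\mb X|^2$. For the base case we take $\delta$ comparable to a constant depending on $\alpha,\beta,\varepsilon$, where the bound is trivial (the trivial incidences $p_j\in T_j$ already contribute $\gtrsim|\mb X|$, and $\delta^{1+\varepsilon}|\mb X|^2\lesssim|\mb X|$ when $|\mb X|\lesssim\delta^{-1-\varepsilon}$, which must be arranged in the inductive bookkeeping — one carries along the hypothesis $|\mb X|\leq\delta^{-2}$ or similar from $\delta$-separation in $[-1,1]^3$). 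The normalization by $|\mb X|^2$ rather than $|P||L|$ is convenient because the point-line pairs are in bijection with $\mb X$.

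The heart is the inductive step from scale $2\delta$ to $\delta$. Starting from $I(2\delta;\mb X)$, which by induction is $\geq (2\delta)^{1+\varepsilon}|\mb X|^2$ (after checking $\mb X$ is a $(2\delta,\alpha,\beta,(2\delta)^{-\eta})$-set, which follows since Frostman constants only improve when coarsening and $\delta$-separation implies $2\delta$-separation up to constants), we telescope the high-low inequality \Cref{thm:high_low_ineq} down from scale $w\sim 1$ (or $w\sim 2\delta$) to scale $\delta$. The point is that the normalized count $\frac{I(w)}{w|P||L|}$ changes by at most $\sum_w w^{(\alpha+\beta-3)/2}$ over the dyadic scales, which is a convergent geometric sum since $\alpha+\beta>3$, hence $O_\varepsilon(\delta^{c})$ for some $c>0$ — but this requires the point set $P[\mb X]$ and line set $L[\mb X]$ to be globally Frostman, which is exactly the square ($\mb R$ a $u\times u\times1$ box) and tube ($\mb R$ a $1\times u\times u$ box) special cases of the $(\alpha,\beta)$-Frostman condition. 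The subtlety is that the high-low error involves $\ms M_{w\times w}(P)$ and $\ms M_{1\times w}(L)$, the concentration of $P$ in a \emph{single} $w$-ball and $L$ in a \emph{single} $w$-tube, and after blowing up into a subregion one must re-verify that the restricted configuration inherits a good Frostman constant — this is where one loses a factor $\delta^{-\eta}$ at each of $O(\log1/\delta)$ scales, so $\eta$ must be chosen so that $\delta^{-\eta\log(1/\delta)}$ — wait, this would be too lossy; instead the losses must be organized so that only an $O(1)$ or $O(\log)$ number of them accumulate, or the $\eta$-loss is absorbed into the $\delta^{\varepsilon}$ slack. The bound $\delta^{1+\varepsilon}$ versus the truth $\sim\delta$ gives exactly a $\delta^{\varepsilon}$ buffer to absorb $\delta^{-O(\eta)}$ and $\log(1/\delta)^{O(1)}$ factors, forcing $\eta\ll\varepsilon$.

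In more detail, the inductive step should run as a dichotomy, following Roth/\cite{CPZ}: either $\mb X$ has some subrectangle $\mb R$ where it concentrates nearly extremally, i.e. $|\mb X\cap\mb R|\gtrsim u^{\alpha}w^{\beta}|\mb X|\cdot\delta^{-\eta'}$ for the given Frostman constant — in which case we pass to the blowup $\mb X^{\mb R}$, which after rescaling (tube of width $\delta$ becomes width $\delta/uw\geq\delta$) is a $(\delta/uw,\alpha,\beta,(\delta/uw)^{-\eta})$-set of size $\geq(uw)^{-\alpha-\beta}\cdot\ldots$, apply the inductive hypothesis at the larger scale $\delta/uw$, and transport the incidences back (an incidence at scale $\delta/uw$ for $\mb X^{\mb R}$ is an incidence at scale $\delta$ for $\mb X$, by the blowup compatibility noted before \Cref{thm:incidence_lower_bd}), checking the arithmetic closes; or $\mb X$ has no such concentration, meaning it is ``uniformly spread'' at the relevant scales, in which case the high-low telescoping argument above applies cleanly because all the $\ms M$ quantities are controlled by the Frostman bound with no extra loss, yielding $I(\delta;\mb X)\geq (1-o(1))\cdot\delta\cdot|P||L|\cdot(\text{normalized count at scale }1)\geq\delta^{1+\varepsilon}|\mb X|^2$. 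One must also handle the initial estimate — a lower bound on $I(w_i;\mb X)$ at some fixed large scale $w_i$ — but at scale $w\sim 1$ this is essentially automatic: the trivial incidences $p_j\in T_j$ each contribute, or more robustly, for tubes of width $\sim1$ every point is incident to every tube, so $I(1;\mb X)\gtrsim|\mb X|^2$.

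The main obstacle I anticipate is the bookkeeping of the Frostman constant $\delta^{-\eta}$ through the induction: each blowup step both improves the scale and potentially degrades the constant, and one must verify that the blowup of a $(\delta,\alpha,\beta,C)$-set is again a $(\delta/uw,\alpha,\beta,C')$-set with $C'$ not much larger than $C$ — this uses the self-similar form of the rectangle family $\mb R$ under the affine maps $\psi_R$, which is exactly why the three-parameter phase space $\Omega=[-1,1]^3$ with the $u\times uw\times w$ rectangles was introduced. Getting the dependencies $\eta(\alpha,\beta,\varepsilon)$ and $\delta_0(\alpha,\beta,\varepsilon)$ to be consistent — i.e. ensuring the recursion on $\eta$ at each of $O(\log1/\delta)$ levels does not blow up — is the delicate quantitative core, and likely requires either that the recursion terminates after $O(1)$ nontrivial blowups or that the per-level loss is $1+O(\eta/\log(1/\delta))$ so the product stays bounded; this is the step where the argument of \cite{CPZ} must be genuinely upgraded rather than merely cited.
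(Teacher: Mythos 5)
There is a genuine gap, and it sits exactly where you wave your hands: the initial estimate. Your plan is to start the telescoping at $w\sim 1$, where $B(1)\sim 1$ is indeed trivial, and run \Cref{thm:high_low_ineq} down to $\delta$. But under the $(\alpha,\beta)$-Frostman hypothesis the error per dyadic step at scale $w$ is only bounded by $\delta^{-\eta}w^{(\alpha+\beta-3)/2}$, and the sum of these over $\delta\le w\le 1$ is dominated by the top scales $w\sim 1$, where each term is of size $\delta^{-\eta}\gtrsim 1$. So the accumulated high-low error is $\Omega(1)$ (indeed $\Omega(\delta^{-\eta})$) and completely swamps the trivial starting value $B(1)\sim 1$; the convergence of $\sum_w w^{(\alpha+\beta-3)/2}$ buys you a constant, not an $o(1)$. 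This is precisely why Roth-type arguments, and this paper, need a nontrivial initial estimate $B(w_i)\gtrsim \delta^{o(1)}$ at a genuinely small scale $w_i=\delta^{t}$, after blowing up into a carefully chosen $\delta^{x}\times\delta^{x+y}\times\delta^{y}$ rectangle. Producing such a scale is the actual core of the proof: the paper's initial estimate (\Cref{prop:phase_space_initial_estimate}) gives $B(\delta^{t})\gtrsim \delta^{d(t\sep x,y)-d(t\sep x+t,y)}$ in terms of direction numbers, and one must then locate a triple $(t\sep x,y)$ that is simultaneously direction stable and high-low regular. That existence statement is \Cref{thm:lip_func_theorem}, proved via uniformization, limiting branching functions, microscopic blowups to produce good lines, slope-minimal scales, and the density argument with the measure $t^{-1}\,dx\,dt$ — none of which has a counterpart in your sketch, and no soft argument at scale $\sim 1$ can substitute for it. (Relatedly, the paper does not induct on $\delta$ at all; it passes to uniform subsets and compactness limits and reduces to a statement about Lipschitz functions.)

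The concentration horn of your dichotomy also does not close arithmetically. Suppose $|\mb X\cap\mb R|=A\,u^{\alpha}w^{\beta}|\mb X|$ for a $u\times uw\times w$ rectangle with excess $A\ge\delta^{-\eta'}$, and apply the inductive hypothesis to the blowup at scale $\delta/uw$. Transporting incidences back gives roughly
\begin{align*}
  I(\delta\sep\mb X)\;\gtrsim\;\Bigl(\tfrac{\delta}{uw}\Bigr)^{1+\varepsilon}\bigl(A\,u^{\alpha}w^{\beta}|\mb X|\bigr)^{2}
  \;=\;\delta^{1+\varepsilon}|\mb X|^{2}\cdot A^{2}\,u^{2\alpha-1-\varepsilon}w^{2\beta-1-\varepsilon},
\end{align*}
and since $\alpha,\beta\ge 1$ the factor $u^{2\alpha-1-\varepsilon}w^{2\beta-1-\varepsilon}$ is at most $1$ and typically a positive power of $uw$; to beat it you would need $A\gtrsim (uw)^{-(1-\varepsilon)/2}$, i.e.\ an excess that is a power of the eccentricity, not the mere $\delta^{-\eta'}$ your dichotomy provides. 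So neither horn of the proposed induction works as stated: the "spread" case lacks an initial estimate, and the "concentrated" case loses a polynomial factor in $uw$. The blowup-into-a-maximizing-rectangle device you describe is in fact how the paper deduces \Cref{thm:exists_extra_incidence} from \Cref{thm:incidence_lower_bd}, but it cannot by itself prove \Cref{thm:incidence_lower_bd}.
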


In \cref{sec:space_pt_line_pairs} we setup our phase space formalism in more detail, and in \cref{subsec:pf_thm_inc_lower_bd} we use this formalism we restate the definition of a $(\delta, \alpha, \beta, C)$-set and the statement of \Cref{thm:incidence_lower_bd}.
Notice that $\delta|\mb X|^2$ is the expected incidence count\footnote{We count $P[\mb X]$ and $L[\mb X]$ with multiplicity, so $|\mb X| = |P[\mb X]| = |L[\mb X]|$.} if the points and lines are placed uniformly at random. We remark that if $\mb X$ is a $(\delta, \alpha, \beta, \delta^{-\eta})$-set for $\alpha+\beta>3$ then the matching upper bound $I(\delta; \mb X) \le \delta^{1-\varepsilon} |\mb X|^2$ follows from the high-low inequality. On the other hand, for any $\gamma < 3/2$ there exists a $(\delta, \gamma, \gamma, C)$-set $\mb X$ with $I(\delta; \mb X) \ge \delta^{1-\varepsilon} |\mb X|^2$, $\varepsilon = \varepsilon(\gamma)>0$.\footnote{Such an ${\mb X}$ can be defined as the set of incident point-line pairs in the Szemer\'edi--Trotter example, see e.g. \cite[Section 3]{CPZ}.}

To prove \Cref{thm:exists_extra_incidence} using \Cref{thm:incidence_lower_bd} we blowup into a rectangle in which $\mb X$ is $(\alpha,\beta)$-Frostman. To do so we choose the $u\times uw\times w$ rectangle $\mb R$ which maximizes the quantity $|\mb X\cap \mb R| u^{-\alpha}w^{-\beta}$, and then find that $\mb X^{\mb R}$ is $(\alpha, \beta)$-Frostman. Part of the work is checking that the maximizing rectangle $\mb R$ is not too small and that the blowup set $\mb X^{\mb R}$ is large. 

\subsection{Finite fields}\label{sec:finite_fields}
In the finite field setting, Vinh \cite{Vinh} proved the following theorem in 2011. 
\begin{theorem}[\cite{Vinh}]\label{thm:vinh}
Let $\F_q$ be a finite field, $P \subset \F_q^2$ a set of points, and $L$ a set of lines in $\F_q^2$. Then 
\begin{equation*} \label{eq:vinh}
    \Bigl|\, I(P, L) - \frac{1}{q}|P||L|\, \Bigr| \leq q^{1/2}|P|^{1/2} |L|^{1/2}.
\end{equation*}
\end{theorem}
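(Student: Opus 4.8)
The plan is to deduce this from the expander mixing lemma applied to the full point--line incidence graph of the affine plane over $\F_q$. Let $\mathcal{P}$ be the set of all $q^2$ points of $\F_q^2$ and $\mathcal{L}$ the set of all $q^2+q$ affine lines, and let $A$ be the $|\mathcal{P}|\times|\mathcal{L}|$ incidence matrix with $A_{p,\ell}=1$ iff $p\in\ell$. Since every point lies on exactly $q+1$ lines and every line contains exactly $q$ points, we have $A\mathbf{1}=(q+1)\mathbf{1}$ in $\R^{q^2}$ and $A^{T}\mathbf{1}=q\mathbf{1}$ in $\R^{q^2+q}$.

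The key step is the spectral gap. Because any two distinct points lie on exactly one common line, the matrix $AA^{T}$ has all diagonal entries equal to $q+1$ and all off-diagonal entries equal to $1$, i.e.
\begin{equation*}
    AA^{T}=qI+J,
\end{equation*}
where $J$ is the $q^2\times q^2$ all-ones matrix. Hence the eigenvalues of $AA^{T}$ are $q^2+q$ (with eigenvector $\mathbf{1}$) and $q$ (with multiplicity $q^2-1$). Consequently the singular values of $A$ are $\sqrt{q^2+q}$ (once) and $\sqrt{q}$, and since $A^{T}\mathbf{1}=q\mathbf{1}$ the top right-singular vector is the constant vector $\mathbf{1}\in\R^{q^2+q}$; therefore $\|Af\|\le\sqrt{q}\,\|f\|$ whenever $f\perp\mathbf{1}$ in $\R^{q^2+q}$.

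Now fix $P\subseteq\mathcal{P}$ and $L\subseteq\mathcal{L}$ and write $I(P,L)=\mathbf{1}_{P}^{T}A\,\mathbf{1}_{L}$. Decompose $\mathbf{1}_{L}=\tfrac{|L|}{q^2+q}\mathbf{1}+f$ with $f\perp\mathbf{1}$, so that $\|f\|^{2}=|L|-\tfrac{|L|^{2}}{q^2+q}\le|L|$. The main term is
\begin{equation*}
    \tfrac{|L|}{q^2+q}\,\mathbf{1}_{P}^{T}A\,\mathbf{1}=\tfrac{|L|}{q^2+q}\,(q+1)\,|P|=\tfrac{1}{q}\,|P|\,|L|,
\end{equation*}
and by Cauchy--Schwarz together with the spectral gap the error term satisfies $|\mathbf{1}_{P}^{T}Af|\le\|\mathbf{1}_{P}\|\,\|Af\|\le\sqrt{|P|}\cdot\sqrt{q}\,\sqrt{|L|}=q^{1/2}|P|^{1/2}|L|^{1/2}$, which is exactly the claimed inequality.

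The only substantive input is the identity $AA^{T}=qI+J$, and this is immediate from the incidence axioms of the affine plane (two points determine a line, every line has $q$ points, every point is on $q+1$ lines), so there is no genuine obstacle; the one point requiring a little care is bookkeeping the two distinct ambient dimensions ($q^2$ for points, $q^2+q$ for lines) and confirming that the constant vector is the relevant top singular vector on both sides. One could also symmetrize by additionally projecting $\mathbf{1}_{P}$ onto $\mathbf{1}^{\perp}$ in $\R^{q^2}$, which yields the same bound with $\|\mathbf{1}_{P}\|$ replaced by $(|P|-|P|^{2}/q^{2})^{1/2}$.
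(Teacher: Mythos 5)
Your proof is correct. Note that the paper does not prove this statement at all --- it is quoted from Vinh's paper \cite{Vinh} --- and your argument is essentially Vinh's original one: the expander mixing lemma for the bipartite point--line incidence graph of the affine plane, with the spectral gap coming from $AA^{T}=qI+J$ (your key computation $A^{T}A\mathbf{1}=(q^2+q)\mathbf{1}$, using both the row and column sums, is what actually certifies that the constant vector spans the top singular space, so the bound $\|Af\|\le\sqrt{q}\,\|f\|$ for $f\perp\mathbf{1}$ is justified). No gaps.
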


Vinh proved Theorem \ref{thm:vinh} in order to establish an analogue of the Szemer\'edi-Trotter upper bound \eqref{eq:ST} over $\mathbb{F}_{q}^{2}$ in the case where $|P|$ and $|L|$ are large. However, Theorem \ref{thm:vinh} also gives a lower bound for the number of incidences. In hindsight Vinh's estimate can be seen as a high-low estimate over finite fields.
Remember that in the continuous setting a set of points and lines have no incidences if the points concentrate in the upper half of the square and lines concentrate in the lower half. Over finite fields points cannot concentrate in macroscopic subregions, which is why any large enough set of points and lines must have incidences over $\F_q$. 

Let $p_1, \ldots, p_n$ be a set of points in $\F_q^2$ and let $\ell_j$ be a line through $p_j$ for each $j$. Vinh's estimate implies that if $n > q^{3/2}+q$, there is some $j\neq k$ for which $p_j \in \ell_k$. This is a finite field analogue of Theorem \ref{thm:extra_inc_one_tube} and is sharp up to constants because of the following example.
Let $p$ be a prime and choose $q = p^2$. For $a \in \F_q$ let $\bar a= a^p$ be the Galois conjugate and let $N(a) = a \bar a = a^{p+1}$ be the norm. Consider the affine version of the {\em Hermitian unital} in $\F_q^2$,
$$
P = \{ (a, b) \in \F_q^2:~ N(a) + N(b) = 1 \}.
$$
We have $|P| \sim q^{3/2}$, and for each point $x = (a,b) \in P$ there exists a unique `tangent' $\F_q$-line $\ell_x \subset \F_q^2$ such that $\ell_x \cap P = \{x\}$. This tangent line $\ell_{x}$ is given by 
$$
\ell_x = \{(a + t \bar b, b - t \bar a), ~ t \in \F_q\}.
$$
Letting $L = \left\{\ell_{x}:\ x \in P\right\}$, we thus get a configuration of points $P$ and lines $L$ in $\mathbb{F}_{q}^{2}$ with the desired property. We refer to \cite{PZskew} for the proof that $\ell_{x} \cap P = \left\{x\right\}$.
The Hermitian unital was also recently used by Mattheus and Verstraete \cite{mattheus2023asymptotics} to give lower bound constructions for the off-diagonal Ramsey number $R(4,t)$. 
The Hermitian unital example helps explain why it is hard to improve on \Cref{thm:extra_inc_one_tube} using the high-low method. 
  
\subsection{Prior work}\label{sec:prior_work_R2}
In the continuous setting D{\k a}browski, Goering, and Orponen \cite{dabrowski2024} recently proved an incidence lower bounds theorem which is related to the present paper. 
Say $N \subset \R^2$ is an $s$-Nikod\'ym set if for every $x \in N$, there is $s$-dimensional family of lines through $x$ which do not intersect any other point of $N$. 
\begin{theorem}[\cite{dabrowski2024}]\label{thm:nikodym}
Any $s$-Nikod\'ym set $N \subset \R^2$ has $\dim_H N \leq 2-s$. 
\end{theorem}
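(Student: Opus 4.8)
The plan is to derive Theorem~\ref{thm:nikodym} from the incidence lower bound machinery, in the same spirit as the deduction of Theorem~\ref{thm:inc_lower_bd_t_set}. Suppose for contradiction that $N \subset \R^2$ is an $s$-Nikod\'ym set with $\dim_H N = t > 2-s$, so that $t + s > 2$. After a standard reduction we may discretize: fix a small scale $\delta$, and by Frostman's lemma (together with pigeonholing to pass to a single dyadic scale) we may find a $(\delta, t', \delta^{-\eta})$-set $P$ of $\delta$-balls inside $N$ for some $t' > 2-s$ with $t'$ still close to $t$, where $\eta$ can be taken as small as we like. For each $p \in P$, the $s$-dimensional family of lines through $p$ missing $N$ gives, after discretization, a $(\delta, s', \delta^{-\eta})$-set of $\delta$-tubes $\T_p$ through $p$ with $s'$ close to $s$ and $t' + s' > 2$; crucially, each tube in $\T_p$ contains no point of $P$ other than $p$ itself (up to the usual $O(\delta)$ losses, which is where one must be slightly careful — the line misses $N$ exactly, so the $\delta$-tube meets $P$ only in $\delta$-balls within $O(\delta)$ of that line, and by choosing the Nikod\'ym lines to miss a $\delta$-neighborhood we avoid this).

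With this setup in hand, Theorem~\ref{thm:inc_lower_bd_t_set} applies to $P$ and $\T = \bigsqcup_p \T_p$: it yields $I(P, \T) \geq \delta^{1+\varepsilon} |P|\,|\T|$ for suitable small $\varepsilon$. But every incidence in $I(P,\T)$ other than the trivial ones $p \in T$, $T \in \T_p$, is forbidden by the Nikod\'ym property. The number of trivial incidences is at most $\sum_p |\T_p| = |\T|$ (or a bounded multiple thereof once we account for the smoothed count and the $\delta$-fattening). Comparing, we need $\delta^{1+\varepsilon}|P|\,|\T| \lesssim |\T|$, i.e. $|P| \lesssim \delta^{-1-\varepsilon}$; but $|P| \gtrsim \delta^{-t'}$ with $t' > 1$ (indeed $t' > 2 - s \geq 1$), and since we are free to take $\varepsilon < t' - 1$, this is a contradiction for $\delta$ small. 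Hence $t \leq 2 - s$, as claimed.

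The main obstacle I anticipate is the discretization of the Nikod\'ym hypothesis into the clean combinatorial statement used above: one must pass from "for every $x \in N$ there is an $s$-dimensional family of lines through $x$ missing $N$" to "there is a single scale $\delta$, a single Frostman exponent pair, and a single regularity constant $\delta^{-\eta}$, uniform over a positive-density subset of $P$." This requires a pigeonholing over the branching scales in the definition of $\dim_H$, over the dimensions $s'$ of the surviving line families (which a priori depend on $x$), and over the regularity constants, and one must verify that after all these losses $t' + s' > 2$ still holds. A secondary technical point is matching the exact incidence notion: the Nikod\'ym lines miss $N$ as a set of points, while Theorem~\ref{thm:inc_lower_bd_t_set} counts $\delta$-incidences, so one either shrinks the line family slightly (removing lines that pass within $\delta$ of another point of $N$, which costs only a bounded fraction by a simple double-counting argument) or absorbs the discrepancy into the constants. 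Neither step is deep, but both need to be carried out carefully to make the clean contradiction above go through.
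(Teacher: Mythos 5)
The paper does not actually prove \Cref{thm:nikodym}: it is quoted from \cite{dabrowski2024}, and the authors explicitly note in \S\ref{sec:prior_work_R2} that their \Cref{thm:inc_lower_bd_t_set} is a \emph{weaker} statement than the discretized result of \cite{dabrowski2024}. Your proposal deduces \Cref{thm:nikodym} from \Cref{thm:inc_lower_bd_t_set}, and the deduction breaks exactly at the point where the two results differ. The step ``every incidence other than the trivial ones is forbidden by the Nikod\'ym property'' is false at scale $\delta$: the hypothesis only says the exact line through $x$ misses $N\setminus\{x\}$, which gives no control on how many points of the $\delta$-discretization $P$ lie within $\delta$ of that line. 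Indeed, when $t>1$ a typical $\delta$-tube through a point of $P$ contains roughly $\delta^{1-t}\gg 1$ points of $P$, and nothing prevents the tubes around the Nikod\'ym lines from doing the same; your fixes (``choose the lines to miss a $\delta$-neighborhood'', ``remove lines passing within $\delta$ of another point, costing only a bounded fraction'') assume what has to be proven and may delete the entire family at any fixed scale. The scale below which the tube around a given Nikod\'ym line separates from $N$ off a ball $B(x,c)$ depends on the line, and the Egorov/countable-stability pigeonholing needed to make it uniform (over $x$, over the line families, and over the Frostman constants) only yields tubes avoiding $P\setminus B(p,c)$ for some fixed constant $c>0$.

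This exposes the structural obstruction: after that reduction you need the incidence theorem to supply nontrivial incidences at separation $\gtrsim c$, i.e.\ at \emph{constant} separation, and \Cref{thm:inc_lower_bd_t_set} does not give this. Pairs $(p',T)$ with $T\in\T_p$ and $|p-p'|\sim r$ can contribute on the order of $r^{t-s}\delta^{s}|P|\,|\T|$ incidences without violating any Frostman hypothesis, and since $s\le 1$ this already exceeds the guaranteed $\delta^{1+\varepsilon}|P|\,|\T|$ for constant $r$; so all the incidences furnished by \Cref{thm:inc_lower_bd_t_set} could a priori occur at distances $\le \delta^{\kappa}$, and as $\delta\to 0$ such incidences degenerate to the trivial one and yield no contradiction with the exact Nikod\'ym property. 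This is precisely why \cite{dabrowski2024} prove the stronger discretized statement producing a large density of $1$-separated incidences, and why the present paper claims only to recover a weaker version of their incidence bound rather than \Cref{thm:nikodym} itself. To complete your argument you would need either that stronger separated-incidence statement (e.g.\ by combining the lower bound with incidence upper bounds at smaller scales and then upgrading $\delta^{\varepsilon}$-separation to constant separation, which is not automatic) or a genuinely different route.
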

The paper \cite{dabrowski2024} proceeds by proving a discretized version of the result and then taking a limit. The discretized version is morally similar to our \Cref{thm:inc_lower_bd_t_set}, but their result is stronger because they find a large density of $1$-separated incidences. By contrast, our result can be used to find $\delta^{\varepsilon}$-separated incidences after combining the lower bound with an incidence upper bound on a smaller scale. Under the hood though, the proof of \Cref{thm:nikodym} and \Cref{thm:inc_lower_bd_t_set} is quite similar. They use a similar notion to \textit{direction stability}, which they call \textit{tightness}, to prove an initial estimate. Then they use Sobolev regularity in place of the high-low estimate to prove an inductive step. Our work was done independently of theirs. We note that this circle of ideas relates to the special case \cref{sec:horizontal_good_line} of \Cref{thm:incidence_lower_bd}, and the general case is harder and requires different ideas (in particular, the idea of considering phase space rectangles of all side ratios). 

Also in the continuous setting, Kuca, Orponen, and Sahlsten \cite{Kuca2023} proved the following:
\begin{theorem}[\cite{Kuca2023}]\label{thm:continuous_sarkozy}
There exists an $\varepsilon > 0$ such that if $K \subset \R^2$ has Hausdorff dimension $\geq 2-\varepsilon$, there exists an $x \in K$ and a $z \neq 0$ so that $x+(z,z^2)\in K$.  
\end{theorem}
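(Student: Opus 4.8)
The plan is to linearize the parabola and then feed the resulting configuration to the point--line incidence bound \Cref{cor:dist_P_L}. The key observation is purely algebraic: the polynomial map $\Psi(u,v)=(u,\,v-u^2)$ is a diffeomorphism carrying the family of ``parabolas'' $\gamma_x:=\{x+(z,z^2):z\in\R\}$ ($x\in\R^2$) onto a family of \emph{lines} --- if $x=(a,b)$ then $\Psi(\gamma_x)$ is the line of slope $-2a$ through $\Psi(x)=(a,\,b-a^2)$. Since $\Psi$ is bi-Lipschitz on bounded sets, hence preserves Hausdorff dimension, and since we may assume $K\subset[0,1]^2$ (replace $K$ by $K\cap Q$ for a unit box $Q$ on which the dimension hypothesis persists --- this only makes the statement harder), we may work with $K':=\Psi(K)$ in place of $K$. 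Unwinding the definitions, ``$x,y\in K$ with $y=x+(z,z^2)$ and $z\neq0$'' holds if and only if there are two distinct points $p,q\in K'$ such that $q$ lies on the line $\ell_p$ of slope $-2p_1$ through $p$. In other words: assign to each point $p$ of the plane the specific line $\ell_p$ through it of slope $-2p_1$; then a pattern in $K$ is exactly a nontrivial incidence $q\in\ell_p$ inside $K'$, which is precisely the ``point plus a line through it'' configuration to which \Cref{cor:dist_P_L} applies.

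Next, discretize at a small scale $\delta>0$. By Frostman's lemma, $\dim_H K'\geq 2-\varepsilon_0$ yields a probability measure on $K'$ of exponent $2-\varepsilon_0'$ (with $\varepsilon_0'$ close to $\varepsilon_0$), so a Frostman-regular $\delta$-separated subset $P\subset K'$ has $|P|\gtrsim\delta^{-(2-\varepsilon_0')}$. Equip each $p\in P$ with its line $\ell_p$. Applying \Cref{cor:dist_P_L} with a small parameter $\kappa>0$ produces distinct $p,q\in P$ with
\[
d(q,\ell_p)\ \lesssim_{\kappa}\ |P|^{-2/3+\kappa}\ \lesssim_{\kappa}\ \delta^{(2-\varepsilon_0')(2/3-\kappa)}.
\]
Since $(2-\varepsilon_0')(2/3-\kappa)\to 4/3>1$ as $\varepsilon_0',\kappa\to 0$, choosing $\kappa$ and then $\varepsilon_0$ to be sufficiently small absolute constants makes the exponent exceed $1$ --- this is exactly why the theorem needs only the existence of \emph{some} $\varepsilon>0$ --- so $d(q,\ell_p)\leq\delta^{1+c}\ll\delta$ for a fixed $c>0$. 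Since $\Psi$ is bi-Lipschitz, this pulls back to distinct $x,y\in K$ with $|x-y|\gtrsim\delta$ and $d(y,\gamma_x)\leq\delta^{1+c}$, i.e.\ an approximate pattern at scale $\delta$. Running this along a sequence $\delta_m\to 0$ and using compactness of $K$, we may pass to a subsequence along which $x_m\to x_\infty$ and $y_m\to y_\infty$ in $K$; since the curves $\gamma_x$ are closed and depend continuously on $x$, we get $y_\infty\in\gamma_{x_\infty}$, and if $x_\infty\neq y_\infty$ then $y_\infty=x_\infty+(z,z^2)$ with $z=(y_\infty)_1-(x_\infty)_1\neq 0$, which is the desired pattern.

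The crux --- and the step I expect to be genuinely hard --- is to rule out the degenerate outcome $x_\infty=y_\infty$, i.e.\ to prevent the approximate solutions from collapsing onto the diagonal of $K\times K$. A crude count only forces $|x_m-y_m|\gtrsim\delta_m^{3/2}$, which still tends to $0$, so one cannot extract a pattern with $|z|$ bounded below by a soft compactness argument alone; one must genuinely use that $K$ is ``large'' at all scales simultaneously. One avenue is to replace the single-incidence input by the quantitative bound \Cref{cor:incidence_lower_bound} (equivalently, to iterate \Cref{thm:extra_inc_one_tube}), which furnishes a \emph{positive proportion} $\gtrsim\delta^{3/2+\kappa}$ of incident pairs; combined with the Frostman-regularity of $P$ one then has to argue that this family of scale-$\delta_m$ approximate patterns is too spread out (in a suitable quantitative sense) to be swallowed by a shrinking neighborhood of the diagonal, so a non-degenerate configuration persists in the limit. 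Making this rigorous is delicate because the intrinsic symmetry of the problem is the \emph{parabolic} rescaling $(u,v)\mapsto(\lambda u,\lambda^2 v)$: it preserves the parabola family (hence the linearization above is compatible with zooming), but it is anisotropic and therefore distorts Hausdorff dimension, so one cannot simply zoom into a small ball to amplify the separation without losing the dimension hypothesis. Quantifying exactly how much dimension survives parabolic zooming, or instead running the whole argument in a measure-theoretic form that is stable under this rescaling, is where the real effort lies; the linearization and the appeal to the incidence theory of this paper in the first two steps are the conceptual heart but are otherwise routine.
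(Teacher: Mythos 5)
There is no proof of this statement in the paper for your argument to be measured against: \Cref{thm:continuous_sarkozy} is quoted from Kuca--Orponen--Sahlsten \cite{Kuca2023}, and the authors explicitly remark that the methods there are quite different from the incidence framework of this paper (no initial estimate/inductive step; a Fourier-analytic argument specific to dimension close to $2$). So the real question is whether your route through \Cref{cor:dist_P_L} closes, and it does not: the gap you flag at the end is not a technical loose end but the entire content of the theorem. Your linearization $\Psi(u,v)=(u,v-u^2)$ and the discretization are fine, and at each scale $\delta_m$ you do get distinct $\delta_m$-separated points $p,q$ with $d(q,\ell_p)\lesssim \delta_m^{1+c}$. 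But \Cref{cor:dist_P_L}, \Cref{thm:extra_inc_one_tube}, and even the quantitative \Cref{cor:incidence_lower_bound} give no lower bound on $|p-q|$ that is uniform in $\delta$. Running the count you allude to with the Frostman/separation structure of $P$, the most one can extract is a pair at distance roughly $\delta_m^{1/2+}$ (or $\delta_m^{3/2}$ with your cruder count); either way the separation tends to $0$, so the compactness limit may land on the diagonal, producing $z=0$, which the theorem excludes. Nothing in the incidence lower bounds of this paper rules this out --- indeed the incidence statements are scale-by-scale and blind to whether the incident pairs concentrate near the diagonal --- so the step ``a non-degenerate configuration persists in the limit'' is exactly the missing idea, not a deferred routine verification.

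This is also why the cited proof looks so different: \cite{Kuca2023} does not find one approximate incidence per scale and pass to a limit, but instead bounds from below a pattern-counting integral of the form $\iint \mu(\text{patterns with } |z|\ge z_0)\,$ against a Frostman measure, using $L^2$-smoothing/Fourier decay for the averaging operator along the parabola; positivity of that integral directly yields an exact pattern with $|z|$ bounded away from $0$, which is precisely the quantitative non-degeneracy your compactness argument cannot supply. If you want to salvage your approach you would need an incidence statement with a built-in separation threshold (a positive proportion of incidences between points at distance $\ge r_0$ for some $r_0$ independent of $\delta$), and the paper's results do not provide that; the paper even notes (\cref{sec:prior_work_R2}) that its lower bounds only yield $\delta^{\varepsilon}$-separated incidences after extra input, which is still scale-dependent. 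As it stands, the proposal proves only an ``approximate Sárközy'' statement at every scale, not the exact-pattern theorem.
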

A discretized version of this Theorem gives an incidence lower bound for a set of points along with a parabola through each point. The methods are quite different from those previously discussed---it does not involve an initial estimate and inductive step and it is specific to the regime very close to two dimensions. This result was significantly generalized by Bruce and Pramanik \cite{bruce2023}.

\subsection{Paper outline}
In \cref{sec:pf_sketch} we sketch the proof of our main incidence lower bounds result, \Cref{thm:incidence_lower_bd}. In \cref{sec:space_pt_line_pairs} we setup the phase space formalism for analyzing point-line pairs. In \cref{sec:two_step_inc_lower_bd} we setup our incidence lower bounds framework and reduce proving incidence lower bounds to proving something about Lipschitz functions. In \cref{sec:pf_main_results} we state the main Lipschitz function theorem of the paper, \Cref{thm:lip_func_theorem}, and using this Theorem we prove the main result \Cref{thm:incidence_lower_bd}, the main consequence \Cref{thm:exists_extra_incidence}, and the auxiliary result \Cref{thm:inc_lower_bd_t_set}. In \cref{sec:pf_lip_func_thm} we prove \Cref{thm:lip_func_theorem} about Lipschitz functions. In Appendix \ref{sec:misc_proofs} we prove several auxiliary results used throughout the paper, and in Appendix \ref{sec:heilbronn_kgon} we show how \Cref{thm:extra_inc_one_tube} relates to Heilbronn's problem for $k$-gons. 

\subsection{Notation}\label{subsec:notation}
We use $A\lesssim B$ to mean $A \leq CB$ for a universal constant $C$. We use $A \sim B$ to mean $A \leq CB$ and $B \leq CA$ for a universal constant $C$. We use $A\sim_K B$ to mean $A \leq CKB$ and $B \leq CKA$ for a universal constant $C$. 

Throughout the paper we use the uniform convergence topology when dealing with Lipschitz functions. 

Let $P \subset [-1,1]^2$ be a set of points and $L$ a set of lines. Following \cite{CPZ}, we define a smoothed incidence count between $P$ and $L$ as follows. We construct a certain smooth function $\eta: \R \to [0,1]$ satisfying 
\begin{align*}
    \supp \eta &\subset [-1/2 - 1/10, 1/2+1/10] \\ 
    \eta(x) &= 1 \qquad\text{for $x\in [-1/2+1/10, 1/2-1/10]$}\\ 
    \int \eta &= 1. 
\end{align*}
The function $\eta$ is specially designed to make \Cref{thm:high_low_ineq}, the high-low inequality, work. 
We define a smoothed incidence count
\begin{equation}\label{eq:smoothed_incidence_count}
    I(w; P, L) = \sum_{(p, \ell) \in P\times L} \eta(d(p, \ell)/w).
\end{equation}
Given a phase-space set $\mb X \subset \Omega = [-1,1]^3$, we define 
\begin{align*}
    I(w; \mb X) = \sum_{\omega_1, \omega_2 \in \mb X} \eta(d(p_{\omega_1}, \ell_{\omega_2})/w).
\end{align*}
Here if $\omega = (a,b,c)$ is a phase space point, $p_{\omega} = (a,b)$ is the point defined by $\omega$ and $\ell_{\omega} = \{(a+t,b+ct)\, :\, t \in \R\}$ is the line through $p_{\omega}$ with slope $c$. We also use the normalized incidence count
\begin{align*}
    B(w\sep \mb X) = \frac{I(w\sep \mb X)}{w|\mb X|^2},
\end{align*}
which is normalized so that if $\mb X$ is chosen uniformly at random, $B(w; \mb X)\sim 1$ (for large enough $w$). 
In this paper we let $P[\mb X]$ denote the underlying point set of $\mb X$ counted with multiplicity and $L[\mb X]$ denote the underlying line set counted with multiplicity, so $I(w; \mb X) = I(w; P[\mb X], L[\mb X])$.

\subsection*{Acknowledgements}
Thanks to Tuomas Orponen and Damian D{\k a}browski for stimulating conversations about their recent paper \cite{dabrowski2024} and connections to the present work. Thanks to Henry Cohn and Larry Guth for helpful conversations during the writing of the paper. Thanks to Noah Kravitz for helpful comments about the exposition. Many thanks to an anonymous referee for detailed comments that improved the paper.

\section{Proof sketch}\label{sec:pf_sketch}
\subsection{Setup}
We sketch the proof of \Cref{thm:incidence_lower_bd}, the main result of our paper. 

Recall from \cref{subsec:intro_statement_main_thm} that $\Omega = [-1,1]^3$ is our phase space of point-line pairs and $\mb X \subset \Omega$ is a $(\delta, \alpha, \beta, C)$ set, meaning it is $\delta$-separated in $\Omega$ and satisfies the $(\alpha, \beta)$-Frostman condition 
\begin{align}\label{eq:frostman_cond_sketch}
    |\mb X\cap \mb R| \leq C u^{\alpha}w^{\beta} |\mb X|
\end{align}
for every $u\times uw\times w$ phase space rectangle $\mb R$ with $uw\geq \delta$.
The goal is to prove a lower bound on the normalized incidence count, $B(\delta \sep \mb X) \geq \delta^{\varepsilon}$.

\subsection{Two-step strategy for finding incidences}\label{subsec:two_step_strat_sketch}
To prove \Cref{thm:incidence_lower_bd} we will carefully choose a $u\times uw\times w$ phase space rectangle $\mb R$ such that we can execute an initial estimate and an inductive step for the blowup $\mb X^{\mb R}$. The rectangle $\mb R$ will be pretty large, $uw\sim \delta^{o(1)}$. The two steps are
\begin{itemize}
    \item \textbf{Initial estimate.} For some large initial scale $w_i \sim \delta^{o(1)}$, $B(w_i\sep \mb X^{\mb R}) \gtrsim 1$.
    \item \textbf{Inductive step.} By the high-low inequality, 
    \begin{align*}
        |B(w_i\sep \mb X^{\mb R}) - B(\delta/uw\sep \mb X^{\mb R})| \lesssim (\log 1/\delta) \sup_{w_i < w < \delta/uw} \Bigl(\frac{\ms M_{w\times w}(P[\mb X^{\mb R}])}{|P[\mb X^{\mb R}]|} \frac{\ms M_{1\times w}(L[\mb X^{\mb R}])}{|L[\mb X^{\mb R}]|}w^{-3}\Bigr)^{1/2}.
    \end{align*}
    If the right hand side is small, we find $B(\delta/uw\sep \mb X^{\mb R})\gtrsim 1$ as well. We will have $|\mb X^{\mb R}| \gtrsim \delta^{o(1)} |\mb X|$, so we find that $B(\delta \sep \mb X) \gtrsim \delta^{o(1)}$ as well. 
\end{itemize}
The first step of the proof is technical but crucial. Given a set $\mb X \subset \Omega$, we extract a subset ${\mb X'} \subset {\mb X}$ which is \textit{uniform}. This means that for every $u\times uw\times w$ phase space rectangle $\mb R$ centered at a point of $\mb X$, 
\begin{align*}
    |\mb X\cap \mb R| \sim r(u, w)
\end{align*}
for some function $r(u,w)$. 
In practice we can only ask for this property on a relatively sparse set of scales, for instance those of the form $\delta^{x}\times \delta^{x+y}\times \delta^{y}$ where $x,y \in m^{-1}\Z_{\geq 0}$. 
Actually, uniformity is a stronger property that involves a larger set of rectangles in $\Omega$. 

From now on we assume $\mb X$ is uniform. Let $|\mb X|_{u\times v\times w}$ be the minimal number of $u\times v\times w$ phase space rectangles needed to cover $\mb X$. 
We define the \textit{branching function} $f(x,y)$ of $\mb X$ to be the covering numbers on a log-log scale,
\begin{align*}
    \delta^{-f(x,y)} = |\mb X|_{\delta^{x}\times \delta^{x+y}\times \delta^{y}}.
\end{align*}
For uniform sets, the branching function completely describes how $\mb X$ concentrates in subrectangles. 
If $\mb R$ is a $\delta^{x}\times \delta^{x+y}\times \delta^y$ rectangle that intersects $\mb X$ significantly then the blowup $\mb X^{\mb R}$ has the branching function 
\begin{align*}
    f(x',y'\sep x, y) = f(x+x',y+y') - f(x,y). 
\end{align*}
Throughout the paper we use the notation $(\bullet \sep x, y)$ to denote the value of some quantity after blowing up into a $\delta^x\times \delta^{x+y}\times \delta^y$ rectangle.
The hypothesis that $\mb X$ is a $(\delta, \alpha, \beta, C)$-set implies 
\begin{align}\label{eq:branching_func_lower_bd_sketch}
    f(x,y) \geq \alpha x + \beta y - o(1)\qquad \text{for $x+y\leq 1$.}
\end{align}
Let $\slope(\mb X)$ be the set of slopes of lines in $L[\mb X]$. 
Uniformity of $\mb X$ implies that $\slope(\mb X)$ is uniform, meaning that for every interval $I \subset [-1,1]$ of length $w$ centered at a point in $\slope(\mb X)$, 
\begin{align*}
   |\slope(\mb X)\cap I| \sim r(w)
\end{align*}
for some function $r(w)$. 
Moreover, the same property holds after blowing up into any subrectangle. We define a direction number $d(t\sep x, y)$ so that 
\begin{align*}
    \delta^{-d(t\sep x, y)} &= |\slope(\mb X^{\mb R})|_{\delta^t}
\end{align*}
where $\mb X^{\mb R}$ is the blowup into a $\delta^{x}\times \delta^{x+y}\times \delta^y$ rectangle and $|\cdot|_{\delta^t}$ is the $\delta^t$-covering number. 
Again because of uniformity, these covering numbers are the same for every one of these blowups. 
The next claim gives an initial estimate in terms of the direction numbers of $\mb X$. 
\begin{claim*}[Initial estimate]\label{claim:direction_stability_initial_estimate}
We have 
\begin{align*}
    B(\delta^t\sep \mb X) \gtrsim \delta^{d(t\sep 0, 0) - d(t\sep t, 0)}.
\end{align*}
Moreover, let $x,y \in (0, 1/5)$. Let $\mb R$ be a $\delta^{x}\times \delta^{x+y}\times \delta^y$ rectangle in $\Omega$ which intersects $\mb X$ significantly, and let $\mb X^{\mb R}$ be the blowup into $\mb R$. Then 
\begin{align*}
    B(\delta^t\sep \mb X^{\mb R}) \gtrsim \delta^{d(t\sep x,y) - d(t\sep x+t,y)}.
\end{align*}
\end{claim*}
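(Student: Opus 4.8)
The second assertion is obtained by applying the first one to the blowup $\mb X^{\mb R}$. Since uniformity is preserved under blowups, and since the direction numbers of $\mb X^{\mb R}$ at scales $(0,0)$ and $(t,0)$ are, by definition and by the composition rule for blowups (relative scales add, exactly as for the branching function), equal to $d(t\sep x,y)$ and $d(t\sep x+t,y)$ respectively, the inequality $B(\delta^t\sep \mb X^{\mb R})\gtrsim \delta^{d(t\sep x,y)-d(t\sep x+t,y)}$ is literally the first assertion for the set $\mb X^{\mb R}$. (The hypothesis $x,y\in(0,1/5)$ is used only to ensure $\mb R$ is large enough for the blowup to be defined.) So from now on I prove $B(\delta^t\sep \mb X)\gtrsim \delta^{d(t\sep 0,0)-d(t\sep t,0)}$, writing $D_g=\delta^{-d(t\sep 0,0)}$ for the number of $\delta^t$-directions of $L[\mb X]$, $D_l=\delta^{-d(t\sep t,0)}$ for the number of $\delta^t$-directions of lines rooted in a fixed $\delta^t$-ball (the same for every ball, by uniformity), and $F=\delta^{-f(0,t)}=|\mb X|_{1\times \delta^t\times\delta^t}$, so that every non-empty $1\times\delta^t\times\delta^t$ phase-space rectangle centered at a point of $\mb X$ contains $\sim |\mb X|/F$ points of $\mb X$.

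First, a bush-type lower bound for the incidence count. Fix $p_j\in P[\mb X]$ and let $B$ be the $\delta^t$-ball around it. For each of the $D_l$ many $\delta^t$-directions $\theta$ realized by a line of $L[\mb X]$ rooted in $B$, let $\tau_\theta$ be the $\tfrac{1}{10}\delta^t$-tube through $B$ in direction $\theta$. The pairs $(p_k,c_k)$ with $c_k$ in the $\delta^t$-interval around $\theta$ and $p_k\in\tau_\theta$ form a non-empty sub-configuration inside a $1\times\delta^t\times\delta^t$ phase-space rectangle (re-centered at one of its points so that uniformity applies), so there are $\gtrsim |\mb X|/F$ of them; and each such $\ell_k$ is trapped in $\tau_\theta$, so $d(p_j,\ell_k)\le 0.4\delta^t$. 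Distinct $\theta$ give disjoint index sets (the $c_k$ lie in disjoint $\delta^t$-intervals), hence $\#\{k:\ d(p_j,\ell_k)\le 0.4\delta^t\}\gtrsim D_l\cdot |\mb X|/F$. Summing over $j$ bounds the smoothed count from below (recall $\eta\equiv 1$ on $[-0.4,0.4]$), giving $I(\delta^t\sep \mb X)\gtrsim D_l|\mb X|^2/F$, i.e.
\begin{align*}
B(\delta^t\sep \mb X)=\frac{I(\delta^t\sep\mb X)}{\delta^t|\mb X|^2}\gtrsim \frac{D_l}{\delta^t F}.
\end{align*}
(Counting only lines rooted in $B$ itself would give merely $B(\delta^t\sep\mb X)\gtrsim\delta^{f(t,0)-t}$, which is not enough in general, e.g. when $L[\mb X]$ is a full grid of directions and translates; propagating along whole tubes is essential.)

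It remains to check $\dfrac{D_l}{\delta^t F}\gtrsim \dfrac{D_l}{D_g}$, that is $F\lesssim \delta^{-t}D_g$. Up to constants $F$ is the number of $\delta^t$-separated lines in $L[\mb X]$, and by the product structure of the line metric \eqref{eq:line_metric_all_slope} together with the restriction $d(0,\ell)\le 10$, this is at most (the number of $\delta^t$-separated slopes of $L[\mb X]$) times $O(\delta^{-t})$ translates per slope, i.e. $F\lesssim \delta^{-t}D_g$. This proves the case $x=y=0$, and with it the claim.

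I expect the main obstacle to be the bookkeeping around \emph{uniformity} rather than the geometry: one must verify that uniformity (in the stronger sense of the formalism, not merely Frostman regularity over rectangles) controls the $1\times\delta^t\times\delta^t$ rectangles used in the bush argument, that it is inherited by the blowups $\mb X^{\mb R}$, and that all of the above counts can be run at the single scale $\delta^t$ with only $\delta^{o(1)}$ losses, which is what the final $\varepsilon$ absorbs. Once these are in place the two geometric inputs -- the bush count and the estimate $F\lesssim \delta^{-t}D_g$ -- are short.
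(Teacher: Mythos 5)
Your argument is correct and is essentially the paper's own proof: the bush count $I(\delta^t)\gtrsim D_l\,|\mb X|^2/F$ is exactly \Cref{prop:phase_space_initial_estimate} (there the same double count is organized as the product of the $\delta^t\times\delta^t\times 1$- and $1\times\delta^t\times\delta^t$-neighborhoods of each cell of a $\delta^t\times\delta^t\times\delta^t$-separated set, rather than point-by-point over local directions), and your final step $F\lesssim\delta^{-t}D_g$ is \Cref{lem:dir_set_props}\ref{lemitem:dir_lower_bd}. The only correction needed is in the constants: with slope tolerance $\delta^t$ and points spread across the unit square the trapped lines satisfy only $d(p_j,\ell_k)\lesssim\delta^t$, not $\le 0.4\delta^t$, so both the direction window and the tube width must be shrunk by a fixed constant factor (the paper's $2^{-10}$ dilates), which changes none of the covering numbers up to constants.
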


\begin{proof}[Proof sketch]
Let $p \in P[\mb X]$ and let $Q$ be the $\delta^t\times \delta^t$-square centered at $p$.
Let 
\begin{align*}
    S_{big} &= \{c \in \delta^t\Z\, :\, [c, c+\delta^t] \cap \slope(\mb X) \neq \emptyset\} \\ 
    S_{little} &= \{c \in \delta^t\Z\, :\, [c, c+\delta^t] \cap \slope(\mb X\cap Q) \neq \emptyset\}. 
\end{align*}
Think of $S_{big}$ as the slope set of $\mb X$ up to $\delta^t$ tolerance and $S_{little}$ as the slope set of $\mb X \cap \mb Q$ up to $\delta^t$ tolerance. We have $S_{little} \subset S_{big}$ and 
\begin{align*}
    |S_{big}| &\sim \delta^{-d(t\sep 0, 0)}, \\ 
    |S_{little}| &\sim \delta^{-d(t\sep t, 0)}.
\end{align*}
Select a line $\ell \in L[\mb X]$ uniformly at random, and round $\slope(\ell)$ down to the nearest value in $\delta^t\Z$. This rounded down slope is roughly uniformly distributed over $S_{big}$ because $\slope(\mb X)$ is uniform. The chance that the rounded down slope lands in $S_{little}$ is
\begin{align*}
    \frac{|S_{little}|}{|S_{big}|} = \delta^{d(t\sep 0, 0) - d(t\sep t, 0)}.
\end{align*}
If the slope does land in $S_{little}$, there is a $\gtrsim \delta^t$ chance that $\ell$ passes through $Q$. Overall, the probability that $\ell$ passes through $Q$ is $\gtrsim \delta^{t+d(t\sep 0, 0) - d(t\sep t, 0)}$. We can think of $B$ as 
\begin{align*}
    B(\delta^t) = \E_{p\in P} \frac{\Pr_{\ell\in L}[d(p, \ell) \leq \delta^t]}{\delta^t},
\end{align*}
so $B(\delta^t) \gtrsim \delta^{d(t\sep 0, 0) - d(t\sep t, 0)}$. 
\end{proof}

Let $\mb R$ be a $\delta^{x}\times \delta^{x+y}\times \delta^y$ rectangle which intersects $\mb X$ significantly, and let $\mb X^{\mb R}$ be the blowup. The high-low error of $\mb X^{\mb R}$ at scale $\delta^s$ is given by 
\begin{align*}
    \Bigl(\frac{\ms M_{\delta^s\times \delta^s}(P[\mb X^{\mb R}])}{|\mb X^{\mb R}|} \frac{\ms M_{1\times \delta^s}(L[\mb X^{\mb R}])}{|\mb X^{\mb R}|}\delta^{-3s}\Bigr)^{1/2} \lesssim \delta^{\frac{1}{2}[f(s, 0\sep x, y) + f(0, s\sep x, y) - 3s]}. 
\end{align*}
We are ready to combine the initial estimate and inductive step. Say a triple $(t\sep x, y)$ is \textit{direction stable} with parameter $\rho$ if 
\begin{align}\label{eq:direction_stable}
    d(t\sep x, y) - d(t\sep x+t,y) &\leq \rho t. 
\end{align}
If $(t\sep x, y)$ is direction stable, then after blowing up into a $\delta^x\times \delta^{x+y}\times \delta^y$ rectangle $B(\delta^t \sep \mb X^{\mb R}) \gtrsim \delta^{\rho t}$, giving a good initial estimate. 
Say a triple $(t;x, y)$ is \textit{high-low regular} with parameter $\rho$ if 
\begin{align} \label{eq:slope_controlled}
    \frac{1}{2}[f(s, 0\sep x, y) + f(0, s\sep x, y) - 3s] &\geq 2\rho t \qquad \text{for all $t \leq s \leq 1-x-y$} .
\end{align}
If a triple is high-low regular then the error term in the high-low estimate is $\lesssim \delta^{2\rho t}$ for all scales between $\delta^t$ and $\delta^{1-x-y}$. We say a triple is {\em effective} if it is direction stable and high-low regular with parameter $\rho = o(1)$. If a triple is effective then we can prove an incidence lower bound by blowing up into a $\delta^{x}\times \delta^{x+y}\times \delta^y$ rectangle, getting an initial estimate at scale $\delta^t$, and then using the high-low estimate to find lots of incidences at the bottom scale. 

The rest of the argument is proving there exists an effective triple, which is a problem about the Lipschitz functions $f(x,y)$ and $d(t;x, y)$. 
Here is all the information about $f$ and $d$ that we need. 
\begin{itemize}
    \item For $x', y' \geq 0$ we have 
    \begin{align*}
        0 \leq f(x+x',y+y') - f(x,y) \leq 2(x'+y') + o(1). 
    \end{align*}
    In other words, $f$ is monotone and Lipschitz. 

    \item For $t', x', y' \geq 0$, the direction function satisfies
    \begin{align*}
        0 \leq d(t+t'\sep x, y) - d(t\sep x, y) &\leq t' + o(1), \\ 
        0 \leq d(t\sep x, y)  - d(t\sep x+x', y) &\leq 2x' + o(1), \\ 
        |d(t\sep x, y+y') - d(t\sep x, y)| &\leq 2y' + o(1).
    \end{align*}
    In particular, $d$ is monotone with respect to $t$ and $x$ and Lipschitz with respect to all three parameters. 

    \item The direction function satisfies 
    \begin{align*}
        d(t+s\sep x, y) \geq d(t\sep x, y) + d(s\sep x, y+t) - o(1),
    \end{align*}
    see \Cref{fig:directions} for an illustration.

    \begin{figure}
    \centering
    \includegraphics[width=0.4\linewidth]{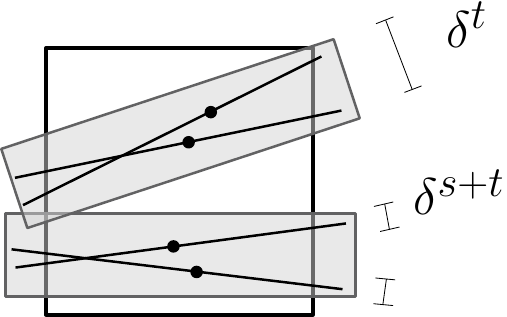}
    \caption{Each $1\times \delta^t$ rectangle contributes $\delta^{-d(s\sep 0, t)}$ directions on scale $\delta^{t+s}$. Since there are at least $\delta^{-d(t\sep 0,0)}$ many $1\times \delta^t$-tubes with non-overlapping directions, this gives a lower bound on the total number of directions.}
    \label{fig:directions}
\end{figure}
    
    \item For technical reasons, we also need to consider the dual direction function $d^\vee(t\sep x, y)$ which satisfies the same properties as $d(t\sep x, y)$ but with the roles of $x$ and $y$ swapped. This function has a simple meaning: $d^\vee(t\sep 0,0)$ is the $\delta^t$-covering number of the set of $x$-coordinates of $P[\mb X]$.
\end{itemize}
Using these properties the goal is to find some triple $(t\sep x, y)$ that is direction stable \eqref{eq:direction_stable} and high-low regular \eqref{eq:slope_controlled}.

In reality $f = f(x,y,z)$ is a function of three variables rather than just two variables, although we ignore the $z$ variable in this section. The $z$ variable contains direction information and $d$ is defined in terms of $f(x,y,z)$. In addition to being monotonic and Lipschitz, $f$ satisfies a \textit{submodularity} property as well. All the properties of $f$ and $d$ listed above follow from combining monotonicity, Lipschitz, and submodularity of $f$. 

Our branching function analysis is delicate, so before launching into the argument we describe two important examples of $(\delta, \alpha, \beta, C)$-sets.

\subsection{Examples of $(\delta, \alpha, \beta, C)$-sets}\label{section-examples}

\begin{enumerate}
    \item First we need a definition. A set $P \subset [-1,1]^2$ is  $(\delta, t, C)$-regular if it is $\delta$-separated and satisfies the Ahlfors-David regularity condition
    \begin{equation*}
        C^{-1} w^{t} |P| \leq |P \cap B_w(x_0)| \leq C w^{t} |P| \qquad \text{for all $x_0 \in P$ and $w\in [\delta, 1]$.}
    \end{equation*}
    Let $P$ be a $(\delta, t, C)$-regular set and for each $p \in P$, let $L_p$ be a $(\delta, s, C)$-set of lines. Assume $t+s > 2$ and let $\mb X \subset \Omega$ be the corresponding set in phase space, after passing to a subset we may assume $\mb X$ is uniform. 
    
    Because of Ahlfors-David regularity,  
    \begin{align*}
        f(x,0) &= tx + o(1). 
    \end{align*}
    By the $s+t>2$ regime of the Furstenberg set estimate (see \cref{subsec:limiting_branching_func} \cref{exampleitem:info_L_furstenberg}), the number of $\delta$-tubes needed to cover $L$ is $\gtrsim \delta^{-s-1}$. Because we assumed $P$ is Ahlfors-David regular, we can apply this estimate inside of any $\delta^x\times \delta^x$ sub-square. It follows that for any $x+y\leq 1$, 
    \begin{align*}
        f(x,y) &\geq t x + (s+1)y - o(1).
    \end{align*}
    This branching function lower bound combined with uniformity implies that $\mb X$ is $(\alpha, \beta)$-Frostman with constant $\delta^{-\varepsilon}$ (meaning $\mb X$ satisfies \eqref{eq:frostman_cond_sketch}). Because $\mb X$ is $\delta$-separated, it is a $(\delta, t, s+1, \delta^{-\varepsilon})$-set. 

    \item \label{example2} Let $\varphi: [-1,1]^2 \to [-1,1]$ be a smooth function with $|(1, \varphi(x,y)) \cdot \nabla \varphi(x,y)| \sim 1$ everywhere. Let 
    \begin{align*}
       \mb X = \{(x,y,\varphi(x,y))\, :\, (x,y) \in \delta \Z^2 \cap [-1,1]^2\}. 
    \end{align*}  
    The underlying point set is a $\delta$-grid, and the line through the point $(x, y)$ has slope $\varphi(x,y)$. 
    We claim 
    \begin{align*}
        f(x,y) = \max\{2x+y,x+2y\} + o(1),
    \end{align*}
    so $\mb X$ is a $ (\delta, \alpha, \beta, C)$-set for any $\alpha, \beta\ge 1$ with $\alpha+\beta=3$. In particular it is a $(\delta, \frac{3}{2}, \frac{3}{2}, C)$-set which is the case we focus on. 
    \bigskip\\
    Here is a quick justification. For concreteness we will take $\varphi(x,y) = x$. Let $\omega = (x_0, y_0, x_0)$ be fixed, and let $\mb R$ be the $u\times uw\times w$ phase space rectangle centered at $\omega$ (see \eqref{eq:intro_phase_space_rect}). A point $(x_1, y_1, x_1)$ lands in $\mb R$ if 
    \begin{align*}
        |x_0 - x_1| &\leq \min\{u,w\} &&\text{so the $x$-coordinate and slopes are close enough,}\\ 
        |y_1 - (y_0 + x_0(x_1-x_0)) | &\leq uw && \text{so the vertical height is close enough.}
    \end{align*}
    There are $\sim \min\{u^2w,uw^2\}\delta^{-2}$ many points in $\mb X$ satisfying these constraints, so the covering number by these rectangles is $\sim \max\{u^{-2}w^{-1}, u^{-1}w^{-2}\}$. 
\end{enumerate}

The above examples are important and they motivate our analysis of branching functions. However, it turns out that for branching functions that are either smooth, piecewise linear, or have other natural regularity properties (with fixed regularity parameters as $\delta \to 0$), it is much easier to prove \Cref{thm:incidence_lower_bd}. A challenge in the analysis is that Lipschitz functions can be very rough, and we do not have examples describing this difficulty. 

\subsection{The analysis of Lipschitz functions}\label{subsec:pf_sketch_analysis_lip}
The examples described above motivate a definition. Say $(x,y)$ is an $(\alpha, \beta)$-good scale for the branching function $f$ if 
\begin{align*}
    f(x',y'\sep x,y) = f(x+x',y+y') - f(x,y) \geq \alpha x' + \beta y'. 
\end{align*}
In other words, the blowup of $\mb X$ into a $\delta^{x}\times \delta^{x+y}\times \delta^y$ rectangle is $(\alpha, \beta)$-Frostman. In the first example, $(x, 0)$ is a $(t, s+1)$-good scale for every $x \in [0, 1]$. In the second example, $(x, x)$ is a $(\frac{3}{2}, \frac{3}{2})$-good scale for every $x \in [0,1/2]$. 
Both examples have a whole line of good scales. It turns out that in general we can reduce to that case, see \cref{sec:reduce_lip_func_good_line}. 

\parag{Case 1: There is a horizontal line of good scales.}
Suppose that $(x,0)$ is an $(\alpha, \beta)$-good scale for every $x\in [0,1]$. 
Let $\rho \le (\alpha + \beta - 3)/50$ and $t = \rho/50$. Let $m = \lfloor 0.1/t\rfloor $, and consider the telescoping sum 
\begin{align*}
    (d(t\sep 0, 0) - d(t\sep t, 0)) + \dots + (d(t\sep (m-1)t, 0) - d(t\sep mt, 0)) = d(t\sep 0, 0) - d(t\sep mt, 0).
\end{align*}
Each of the $m$ summands on the left is nonnegative, and the right hand side is $\leq t$. So there must be some $0 \leq j \leq m-1$ so that $d(t\sep jt, 0) - d(t\sep (j+1)t, 0) \leq \frac{1}{m} t < \rho t$. The triple $(t\sep jt, 0)$ is direction stable with parameter $\rho$, and it is also high-low regular with parameter $\rho$ because of the assumption that $(jt, 0)$ is an $(\alpha,\beta)$-good scale. Thus $(t; jt, 0)$ is effective, as desired.

\parag{Case 2: There is a diagonal line of good scales.} Suppose that $(x,x)$ is an $(\alpha, \beta)$-good scale for every $x\in [0,1/2]$. 
In the last case we searched for an effective triple based at a good scale, so high-low regularity came for free. The current case is harder because we need to locate a scale that is both direction stable and high-low regular. Indeed, in Example \ref{section-examples} (\ref{example2}), every $(\frac{3}{2}, \frac{3}{2})$-good scale $(x, x)$ is high-low regular but not direction stable (for any $t$). 
We look at candidate triples of the form $(t\sep x,x+t)$, which are parametrized by 
\begin{align*}
    \ms B = \{(x,t)\in [0,0.1]\times (0,0.1]\}. 
\end{align*}
Let $\rho > 0$ be a small constant and let $E_{hlr}$ be the points in $\ms B$ corresponding to a high-low regular triple, $E_{ds}$ the points corresponding to a direction stable triple. Given $(x,t) \in \ms B$, recall that $(t; x,x+t)$ is high-low regular with parameter $\rho$ if 
\begin{align*}
    \frac{1}{2}[f(s, 0\sep x, x+t) + f(0, s\sep x, x+t) - 3s] &\geq 2\rho t \qquad \text{for all $t \leq s \leq 1-x-(x+t)$},
\end{align*}
and $(t; x,x+t)$ is direction stable if 
\begin{align*}
    d(t\sep x, x+t) - d(t\sep x+t,x+t) &\leq \rho t. 
\end{align*}
The key idea is to consider the measure 
\begin{align*}
    \mu = \frac{1}{t}\,dx\,dt 
\end{align*}
on $\ms B$. This measure integrates to infinity and is concentrated near $t = 0$. For each $\varepsilon > 0$, let  
\begin{align*}
   \mu_{\varepsilon} = \frac{1_{\{(x,t)\, :\, t \geq \varepsilon\}}}{\mu(\{(x,t)\, :\, t \geq \varepsilon\})} \mu 
\end{align*}
be the normalized restriction of $\mu$ to $\{t \geq \varepsilon\}$. 
We prove that 
\begin{itemize}
    \item For small enough $\rho$, $E_{hlr}$ has positive density under the measure $\mu$. This means that $\mu_{\varepsilon}(E_{hlr}) \geq c - o_{\varepsilon}(1)$ for some constant $c > 0$. 
    \item For all $\rho$, $E_{ds}$ has density one under the measure $\mu$. This means that $\mu_{\varepsilon}(E_{ds}) \geq 1 - o_{\varepsilon}(1)$. 
\end{itemize}
Combining these two facts proves $E_{hlr}$ and $E_{ds}$ have an intersection point for some choice of $\rho$. 

We prove $E_{hlr}$ has positive density one $x$-coordinate at a time. If 
\begin{align}\label{eq:slope-assumption}
    f(0, t\sep x,x) = s_x\,  t 
\end{align}
was a linear function of $t$ for each $x$, then every $(x,t) \in \ms B$ would be high-low regular. Actually, this is the motivation for considering triples of the form $(t\sep x, x+t)$. We must have $s_x \geq \beta$ because we assumed $(x,x)$ is an $(\alpha, \beta)$-good point. We have 
\begin{align*}
    f(t, 0\sep x,x+t) + f(0, t\sep x,x+t) - 3t &= f(t,t\sep x, x) - 2 f(0, t\sep x, x) + f(0, 2t\sep x, x) - 3t \\ 
    &\geq (\alpha+\beta-3)t.
\end{align*}
where the second and third terms cancel because we assumed $f(0, t\sep x, x)$ is a linear function in $t$ and we use that $(x,x)$ is an $(\alpha,\beta)$-good point to get $f(t,t;x,x) \ge (\alpha + \beta)t$. 
For $t' \geq t$ we can estimate the high-low error using the hypothesis that $(x+t,x+t)$ is a good point,
\begin{align*}
    [f(t', 0\sep x,x+t) + f(0, t'\sep x,x+t)] - [f(t, 0\sep x,x+t) + f(0, t\sep x,x+t)] &= \\
    = f(t'-t,0; x+t,x+t) + f(0,t+t'; x,x) - f(0,2t; x,x) &\geq (\alpha +\beta)(t'-t).
\end{align*}
This shows that under the simplifying assumption (\ref{eq:slope-assumption}) any triple of the form $(t; x, x+t)$ is high-low regular. 
For general $f$, we can write 
\begin{align*}
    f(0, t\sep x, x) = s_x(t)t
\end{align*}
where the slope $s_x$ now depends on the point $t$. 
Say $(x, t)$ is \textit{slope minimal} if 
\begin{align*}
   s_x(t+t') \geq s_x(t) - \gamma \qquad \text{for all $t \leq t' \leq H t$}
\end{align*}
where $\gamma, H$ are parameters. The computation above implies that if $(x,t)$ is slope minimal then $(x,t')\in E_{hlr}$ for $t\leq t' \leq (1+\eta)t$ and some $\eta >0$. By taking successive minima of the function $s_x(t)$, it is easy to find lots of slope minimal scales, and overall we get a lower bound on the $\mu$-density of $E_{hlr}$. The main takeaway is that our choice of candidate scales along with the hypothesis that $(x, x)$ is a good line lets us find high-low regular triples one $x$-coordinate at a time. 

We move on to proving $E_{ds}$ has density one. 
We show 
\begin{align*}
   \int_{\ms B} t^{-1} (d(t\sep x,x+t) - d(t\sep x+t,x+t))\, d\mu_{\varepsilon} = o_{\varepsilon}(1). 
\end{align*}
The integral on the left hand side is $\geq \rho\, \mu_{\varepsilon}(E_{ds}^c)$, so the estimate implies $\mu_{\varepsilon}(E_{ds}^c) = o_{\varepsilon}(1)$ and $E_{ds}$ has $\mu$-density $1$. In order to estimate this integral we have to make use of some cancellation. 
To do so we use the crucial direction set inequality 
\begin{align*}
    d(t+s\sep x, y) \geq d(t\sep x, y) + d(s\sep x,y+t).
\end{align*}
With this inequality in hand, we can estimate 
\begin{align*}
    d(t\sep x,x+t) - d(t\sep x+t,x+t) \leq d(2t\sep x, x) - d(t\sep x, x) - d(t\sep x+t,x+t). 
\end{align*}
The right hand side has the advantage that it only involves direction numbers on the good line.  We have 
\begin{align*}
    \int_{\ms B} t^{-1} (d(2t\sep x, x) - d(t\sep x, x) - d(t\sep x+t,x+t)) d\mu_{\varepsilon} &\leq \int_{\ms B} t^{-1} (d(2t\sep x, x) - 2d(t\sep x, x)) d\mu_{\varepsilon} + o_{\varepsilon}(1) \\ 
    &\leq o_{\varepsilon}(1).
\end{align*}
The first inequality comes from integrating out the $x$-coordinate, and the second inequality comes from integrating out the $t$-coordinate and using change of variables.

\parag{General good lines.}
The argument we described in Case 2 works whenever there is a line of good scales $\{(x, mx)\, :\, 0 \leq x \leq 1/10\}$ with slope $m \in (0, 1]$. In order to deal with good lines that have slope $> 1$ we use point-line duality. The dual notion for the slope set of $L[\mb X]$ is the set of $x$-coordinates of $P[\mb X]$. By replacing lines with points and slopes with $x$-coordinates, the argument works in the same way. 

\subsection{The space of branching functions}
In recent years, careful analysis of branching functions has become a significant tool in fractal geometry. An early example of this method can be found in Keleti and Shmerkin's~\cite{KeletiShmerkin} paper on the Falconer distance problem. While prior work has used branching functions of a single variable to study sets of points, it is new to apply branching functions of multiple variables to study sets of point/line pairs.

Our branching functions are Lipschitz functions of three variables. We define the set of functions $\mc L$ to be limits of branching functions of phase-space sets, see \cref{subsec:limiting_branching_func}. The functions in $\mc L$ have a few basic properties: monotonicity, Lipschitz regularity, and submodularity. Our main Lipschitz functions result, \Cref{thm:lip_func_theorem}, is a fact about $\mc L$ that just uses these properties. Deeper facts from continuous incidence geometry, such as the Furstenberg set estimate, can also be stated as information about the space $\mc L$. We believe many interesting statements and conjectures in continuous two dimensional incidence geometry can be phrased as information about $\mc L$ and we hope this will be a useful framework for further study.

\section{The space of point-line pairs}\label{sec:space_pt_line_pairs}

\subsection{Metrics}
Let $\Omega = [-1,1]^3$. 
Given $\mb \omega = (a, b, c) \in \Omega$ we let $p_\omega = (a, b)$ and $\ell_\omega = \{ (a+t,b+ct) \, :\, ~t \in \R \}$. We think of $\Omega$ as the space of point-line pairs $p\in \ell$ where $\ell$ makes a slope $c \in [-1,1]$ with the $x$-axis. 
Given a subset $\mb X \subset \Omega$ we associate to it the set of points $P[\mb X] = \{p_\omega\}_{\omega \in \mb X}$ and the set of lines $L[\mb X] = \{\ell_\omega\}_{\omega \in \mb X}$ (if there are collisions, we treat these as multisets). 

Our plan is to study how subsets in $\Omega$ interact with rectangles ${\mb R}$ of various aspect ratios $u\times v\times w$, their covering, rescaling and uniformity properties. A convenient approach to doing this is to introduce a family of metrics $d_{u\times v\times w}$ on $\Omega$ so that rectangles ${\mb R}$ correspond to unit balls in these metrics. This allows us to prove geometric properties of these rectangles via direct computations with the corresponding metrics.

Say a triple of scales $u\times v\times w$ is \textit{admissible} if $v \geq uw$. In what follows we denote $\mb \omega_j = (a_j, b_j, c_j)$. 
For any admissible $u\times v\times w$ define an asymmetric distance function on $\R^3$ by 
\begin{equation}\label{eq:dist_func_defn}
    d_{u\times v\times w}(\mb \omega_0 \to \omega_1) = \max\Bigl\{\frac{|a_0-a_1|}{u},  \frac{|b_1 - (b_0+c_0(a_1-a_0))|}{v}, \frac{|c_0-c_1|}{w}\Bigr\}. 
\end{equation}
When the triple $u\times v\times w$ is clear from the context we simply write $d(\mb \omega_0 \to \omega_1)$. Notice that $d$ is homogeneous in the sense that 
\begin{equation}\label{eq:dist_homogeneity}
    d_{u\times v\times w}(\omega\to (\omega + \lambda \omega')) = |\lambda|\, d_{u\times v\times w}(\omega \to (\omega + \omega')),\qquad \lambda \in \R.
\end{equation} 
Sometimes we use the notation $d_{1\times \square \times \square}$, $d_{1\times 1\times \square}$, etc. Parse the $\square$ as $\infty$ in \eqref{eq:dist_func_defn}, or in other words, take the $\max$ in \eqref{eq:dist_func_defn} over a subset of the three quantites. For instance, 
\begin{align*}
    d_{1\times \square\times \square}(\omega_0\to \omega_1) &= |a_0-a_1| = \text{$x$-coordinate distance between $p_{\omega_0}$ and $p_{\omega_1}$}, \\ 
    d_{\square\times \square\times 1}(\omega_0\to \omega_1) &= |c_0-c_1| = \text{slope distance between $\ell_{\omega_0}$ and $\ell_{\omega_1}$}\\ 
    d_{\square\times 1\times \square}(\omega_0\to \omega_1) &= |b_1 - (b_0+c_0(a_1-a_0))| = \text{height from the line $\ell_{\omega_0}$ to the point $p_{\omega_1}$}.
\end{align*}
Usefully, $d_{1\times \square \times \square }$ and $d_{\square\times \square \times 1}$ are symmetric and transitive. On the other hand $d_{\square\times 1\times \square}$ does not satisfy symmetry or transitivity. However we have the following substitutes. This lemma uses admissibility of a triple $u \times v\times w$. 
\begin{lemma}\label{lem:approx_sym_trans}\leavevmode
\begin{enumerate}[label=(\roman*) ]
    \item We have an approximate transitivity property for $d_{\square\times1\times\square}$,
    \begin{align*}
    d_{\square\times 1\times \square}(\omega_0 \to \omega_2) &\leq d_{\square\times 1\times \square}(\omega_0 \to \omega_1) + d_{\square\times 1\times \square}(\omega_1\to \omega_2) + \\ 
    &\qquad d_{\square\times \square\times 1}(\omega_0\to \omega_1)d_{1\times \square\times \square}(\omega_1\to \omega_2).
    \end{align*}
    As a consequence, for any admissible scale $u\times v\times w$,
    \begin{equation*}
        d(\omega_0 \to \omega_2) \leq d(\omega_0 \to \omega_1) + d(\omega_1\to \omega_2) + d(\omega_0 \to \omega_1)d(\omega_1\to \omega_2).
    \end{equation*}

    \item There is an approximate symmetry property for $d_{\square\times1\times\square}$
    \begin{equation*}
        |d_{\square\times 1\times \square}(\omega_0 \to \omega_1) - d_{\square\times 1\times \square}(\omega_1\to \omega_0)| \leq d_{1\times \square\times \square}(\omega_0 \to \omega_1) d_{\square\times \square\times 1}(\omega_1\to \omega_0). 
    \end{equation*}
    As a consequence, for any admissible scale $u\times v\times w$,
    \begin{equation*}
        |d(\mb \omega_0 \to \mb \omega_1) - d(\mb \omega_1 \to \mb \omega_0)| \leq d(\mb \omega_0 \to \mb \omega_1)^2.
    \end{equation*}
\end{enumerate}
\end{lemma}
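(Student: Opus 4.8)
The plan is to reduce the whole lemma to a short algebraic computation with the signed height
\[
h(\omega_0\to\omega_1) \;=\; b_1 - \bigl(b_0 + c_0(a_1-a_0)\bigr),
\]
which is exactly the quantity inside the absolute value in $d_{\square\times1\times\square}(\omega_0\to\omega_1)$. First I would record two exact identities: the cocycle-type relation
\[
h(\omega_0\to\omega_2) \;=\; h(\omega_0\to\omega_1) + h(\omega_1\to\omega_2) + (c_1-c_0)(a_2-a_1),
\]
obtained by expanding both sides, and --- setting $\omega_2=\omega_0$ in it --- the defect-antisymmetry relation
\[
h(\omega_0\to\omega_1) + h(\omega_1\to\omega_0) \;=\; (c_1-c_0)(a_1-a_0).
\]

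For part (i) I would take absolute values in the first identity and apply the triangle inequality; since $d_{1\times\square\times\square}$ and $d_{\square\times\square\times1}$ are symmetric, the error term $|c_1-c_0|\,|a_2-a_1|$ is literally $d_{\square\times\square\times1}(\omega_0\to\omega_1)\,d_{1\times\square\times\square}(\omega_1\to\omega_2)$, which is the claimed bound. For part (ii) I would apply the reverse triangle inequality to the second identity: $\bigl|\,|h(\omega_0\to\omega_1)| - |h(\omega_1\to\omega_0)|\,\bigr| \le |h(\omega_0\to\omega_1)+h(\omega_1\to\omega_0)| = |c_1-c_0|\,|a_1-a_0| = d_{1\times\square\times\square}(\omega_0\to\omega_1)\,d_{\square\times\square\times1}(\omega_1\to\omega_0)$.

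To pass to the statements for an admissible triple $u\times v\times w$, I would use $d(\omega_0\to\omega_1)=\max\{|a_0-a_1|/u,\ |h(\omega_0\to\omega_1)|/v,\ |c_0-c_1|/w\}$. The $a$- and $c$-entries satisfy the ordinary triangle inequality, hence are $\le d(\omega_0\to\omega_1)+d(\omega_1\to\omega_2)$. Dividing the pointwise bound of (i) by $v$, the only new term is
\[
\frac{|c_1-c_0|\,|a_2-a_1|}{v} \;=\; \frac{|c_1-c_0|}{w}\cdot\frac{|a_2-a_1|}{u}\cdot\frac{uw}{v}\;\le\; d(\omega_0\to\omega_1)\,d(\omega_1\to\omega_2),
\]
where $uw/v\le1$ is exactly admissibility; taking the maximum over the three coordinates yields the transitivity consequence. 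For the symmetry consequence, $d(\omega_0\to\omega_1)$ and $d(\omega_1\to\omega_0)$ have identical $a$- and $c$-entries and differ only in the $b$-entry, so by $|\max\{x,y\}-\max\{x,z\}|\le|y-z|$ together with (ii) and $uw/v\le1$,
\[
|d(\omega_0\to\omega_1)-d(\omega_1\to\omega_0)| \;\le\; \frac{\bigl|\,|h(\omega_0\to\omega_1)|-|h(\omega_1\to\omega_0)|\,\bigr|}{v} \;\le\; \frac{|c_1-c_0|}{w}\cdot\frac{|a_1-a_0|}{u}\cdot\frac{uw}{v} \;\le\; d(\omega_0\to\omega_1)^2.
\]

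I do not expect a genuine obstacle here --- the lemma is a few lines of algebra plus the triangle inequality. The only things to be careful about are the bookkeeping of indices in the error terms (the slope difference is taken between $\omega_0$ and $\omega_1$, the $x$-coordinate difference between $\omega_1$ and $\omega_2$, and these are not interchangeable) and invoking admissibility $v\ge uw$ at precisely the step where the factor $uw/v$ appears when converting the pointwise bounds into bounds for the metric $d_{u\times v\times w}$.
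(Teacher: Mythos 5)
Your proposal is correct and follows essentially the same route as the paper: both rest on the two exact identities for the vertical height (the cocycle relation with error $(c_1-c_0)(a_2-a_1)$ and the antisymmetry defect $(c_1-c_0)(a_1-a_0)$), followed by dividing by $v$ and invoking admissibility $uw\le v$ to absorb the factor $uw/v$. The only cosmetic difference is that you spell out the $1$-Lipschitz property of the coordinatewise maximum when passing to the metric statement in part (ii), which the paper leaves implicit.
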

\begin{proof}\leavevmode
\begin{enumerate}[label=(\roman*)]
    \item Approximate transitivity follows from the identity 
    \begin{align*}
    [b_2 - (b_0 + c_0(a_2 -a_0))] - [b_2 - (b_1 + c_1(a_2 - a_1)) + (b_1 - (b_0 + c_0(a_1 - a_0)))] \\
    = (c_1-c_0)(a_2-a_1).
    \end{align*}

    It follows that 
    \begin{align*}
        v^{-1} d_{\square\times 1\times \square}(\omega_0\to \omega_2) &\leq v^{-1}d_{\square\times 1\times \square}(\omega_0\to \omega_1) + v^{-1} d_{\square\times 1 \times \square}(\omega_1\to \omega_2) + \\ 
        &\qquad v^{-1}d_{\square\times \square\times 1}(\omega_0\to \omega_1)d_{1\times \square\times \square}(\omega_1\to \omega_2) \\ 
        &\leq d_{u\times v\times w}(\omega_0 \to \omega_1) + d_{u\times v\times w}(\omega_1 \to \omega_2) + \frac{uw}{v} d_{u\times v\times w}(\omega_0 \to \omega_1)d_{u\times v\times w}(\omega_1 \to \omega_2).
    \end{align*}
    If $u\times v\times w$ is admissible then $uw/v \leq 1$ and the second property follows.

    \item Approximate symmetry follows from the identity
    \begin{align*}
        (b_1 - (b_0+c_0(a_1-a_0))) + (b_0 - (b_1 + c_1(a_0-a_1))) &= (c_1-c_0)(a_1-a_0).
    \end{align*}

    If follows that 
    \begin{align*}
        v^{-1}|d_{\square\times 1\times \square}(\omega_0\to \omega_1) - d_{\square\times 1\times \square}(\omega_1\to \omega_0)| \leq  \frac{uw}{v} d_{u\times v\times w}(\omega_0\to \omega_1)^2
    \end{align*}
    and if $u\times v\times w$ is admissible the second property follows. 
\end{enumerate}
\end{proof}
Notice that 
\begin{equation}\label{eq:dist_rel_euclidean}
    d_{u\times v\times w}(\omega_0\to \omega_1) \leq \max\{u^{-1},2v^{-1},w^{-1}\} d_{\R^3}(\omega_0, \omega_1)
\end{equation}
and 
\begin{align*}
    d_{1\times 1\times 1}(\omega_0\to \omega_1) \sim d_{\R^3}(\omega_0, \omega_1)
\end{align*}
where $d_{\R^3}$ is the Euclidean distance. 
We can evaluate the point and line metrics in terms of the space $\Omega$:
\begin{equation}\label{eq:metric_compare_Omega}
\begin{split}
    d(p_{\omega_0}, p_{\omega_1}) &\sim d_{1\times 1\times \square}(\omega_0 \to \omega_1), \\
    d(\ell_{\omega_0}, \ell_{\omega_1}) &\sim d_{\square\times 1\times 1}(\omega_0 \to \omega_1),
\end{split}
\end{equation}
It follows from \eqref{eq:dist_rel_euclidean} that for any $u,w  > 0$ we have 
\begin{equation}\label{eq:metrics_pt_line_Omega}
\begin{split}
    \frac{1}{2u}d(p_{\omega_0}, p_{\omega_1}) &\leq d_{u\times u\times 1}(\omega_0 \to \omega_1) \leq \frac{2}{u}d(p_{\omega_0}, p_{\omega_1}) + 2,\\ 
    \frac{1}{2w}d(\ell_{\omega_0}, \ell_{\omega_1}) &\leq d_{1\times w\times w}(\omega_0 \to \omega_1) \leq \frac{2}{w}d(\ell_{\omega_0}, \ell_{\omega_1}) + 2.
\end{split}
\end{equation}

\subsection{Incidences}
For $\mb X \subset \Omega$, the incidence count $I(w; \mb X)$ is defined in \cref{subsec:notation}.
For $p\in P$ and $\ell \in L$, $d(\ell, p)$ denotes the Euclidean distance. On the other hand, $d_{\square\times 1\times \square}(\omega_1 \to \omega_2)$ is the vertical distance from the line $\ell_{\omega_1}$ to the point $p_{\omega_2}$. So
\begin{equation}\label{eq:incidence_value_metric}
    d(\ell_{\omega_1}, p_{\omega_2}) = d_{\square\times 1\times \square}(\omega_1\to \omega_2)(1+\mathrm{slope}(\ell_{\omega_1})^2)^{-1/2}.
\end{equation}
Because we restrict to slopes in $[-1,1]$, the Euclidean distance is off from the vertical distance by at most a factor of $\sqrt{2}$.

\subsection{Covering numbers}
For $\mb \omega \in \R^3$ and $u\times v\times w$ admissible we let 
\begin{align*}
    \mb R_{u\times v\times w}(\omega) &= \{\omega'\, :\, d_{u\times v\times w}(\omega\to \omega') \leq 1\}.
\end{align*}
Because of homogeneity,
\begin{align*}
    \lambda \cdot \mb R_{u\times v\times w}(\omega) &= \{\omega'\, :\, d_{u\times v\times w}(\omega\to \omega') \leq \lambda\}
\end{align*}
where the left hand side is the dilation in $\R^3$ centered at $\omega$. 
We use the label $\mb R$ because these sets are skewed rectangles: 
\begin{align*}
    \mb R_{u\times v\times w}(\omega) = \{(a_0 + t, b_0 + c_0t + r, c_0+s)\, :\, (t,r,s)\in [-u,u]\times [-v,v]\times [-w,w]\}.
\end{align*}

If $\lambda \geq 1$ and $\lambda \cdot \mb R_{u\times v\times w}(\omega_0)\cap \lambda \cdot \mb R_{u\times v\times w}(\omega_1) \neq \emptyset$ then $d(\omega_0 \to \omega') \leq \lambda$ and $d(\omega_1 \to \omega') \leq \lambda$ for some $\omega'$, so by approximate transitivity and symmetry (Lemma \ref{lem:approx_sym_trans})
\begin{equation}\label{eq:intersecting_rects}
\begin{split}
    d(\omega_0\to \omega_1) &\leq d(\omega_0\to \omega') + d(\omega'\to \omega_1)  + d(\omega_0\to \omega')d(\omega'\to \omega_1) \\ 
    &\leq d(\omega_0\to \omega') + d(\omega_1\to \omega')+ d(\omega_1\to \omega')^2  + d(\omega_0\to \omega')d(\omega_1\to \omega') + d(\omega_0\to \omega')d(\omega_1\to \omega')^2 \\ 
    &\leq 5\lambda^3.
\end{split}
\end{equation}
For any admissible $u,v,w \in 2^{-\N}$ consider the collection of dyadic centers and dyadic rectangles 
\begin{align*}
    \mc C_{u\times v\times w} &= \{(n_1u, n_2 v, n_3 w),\, n_j \in 2^{-10}\Z\}, \\ 
    \mc R_{u\times v\times w} &= \{\mb R_{u\times v\times w}(\omega)\, :\, \omega \in \mc C_{u\times v\times w}\}.
\end{align*}
We let $\mc R$ be the union of $\mc R_{u\times v\times w}$ over admissible and dyadic $u\times v\times w$. 

\begin{lemma}
For any admissible $u\times v\times w$, the collection $\mc R_{u\times v\times w}$ covers $\R^3$ with bounded overlap. 
\end{lemma}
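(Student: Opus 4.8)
The plan is to show that the skewed rectangles $\mb R_{u\times v\times w}(\omega)$ with centers on the dyadic lattice $\mc C_{u\times v\times w}$ both cover $\R^3$ and have bounded overlap, for any fixed admissible triple $u\times v\times w$. For the covering property I would argue directly from the definition of $\mb R_{u\times v\times w}(\omega)$ as the skewed box $\{(a_0+t, b_0+c_0 t + r, c_0+s)\,:\,(t,r,s)\in [-u,u]\times[-v,v]\times[-w,w]\}$. Given an arbitrary point $(a,b,c)\in\R^3$, I would round $c$ to the nearest multiple $c_0$ of $2^{-10}w$, round $a$ to the nearest multiple $a_0$ of $2^{-10}u$, and then round $b - c_0(a-a_0)$ to the nearest multiple $b_0$ of $2^{-10}v$. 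The resulting center $\omega_0=(a_0,b_0,c_0)$ lies in $\mc C_{u\times v\times w}$, and by construction $|a-a_0|\le 2^{-11}u$, $|c-c_0|\le 2^{-11}w$, and $|b - (b_0 + c_0(a-a_0))|\le 2^{-11}v$, so $d_{u\times v\times w}(\omega_0\to(a,b,c))\le 2^{-11} < 1$ and the point lies in $\mb R_{u\times v\times w}(\omega_0)$. (The slight asymmetry of $d_{u\times v\times w}$ — it is the distance \emph{from} the center — is exactly why one rounds the coordinates in this order and uses $c_0$, the center's slope, in the middle coordinate.)

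For the bounded-overlap property I would fix a point $\omega'\in\R^3$ and count how many centers $\omega\in\mc C_{u\times v\times w}$ have $\omega'\in\mb R_{u\times v\times w}(\omega)$, i.e. $d_{u\times v\times w}(\omega\to\omega')\le 1$. If $\omega$ is such a center, then by the approximate symmetry part of Lemma~\ref{lem:approx_sym_trans} (applied with the admissible triple $u\times v\times w$), $d_{u\times v\times w}(\omega'\to\omega)\le d(\omega\to\omega') + d(\omega\to\omega')^2 \le 2$. Unpacking this, $|a-a_0|\le 2u$, $|c-c_0|\le 2w$ and $|b_0 - (b' + c'(a_0-a'))|\le 2v$, where $\omega'=(a',b',c')$ and $\omega = (a_0,b_0,c_0)$. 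Each of the three coordinates of $\omega$ is therefore confined to an interval of length $O(u)$, $O(v)$, $O(w)$ respectively (the middle one after the affine shift by $c'(a_0-a')$, which does not change the count), and since the lattice $\mc C_{u\times v\times w}$ has spacing $2^{-10}u$, $2^{-10}v$, $2^{-10}w$ in those three coordinates, the number of admissible centers is $O(1)$, uniformly in $\omega'$ and in $u,v,w$.

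The one point requiring a little care — and the main (minor) obstacle — is the interaction between the asymmetry of $d_{u\times v\times w}$ and the fact that the middle coordinate of a rectangle centered at $\omega$ is \emph{sheared} by the slope $c_0$ of that center rather than by a fixed slope. When we say ``$b_0$ ranges over an interval of length $O(v)$,'' we are implicitly fixing $a_0$ and $c_0$ first; since $a_0$ and $c_0$ each take only $O(1)$ values, iterating the count over those choices still gives $O(1)$ total, and the shear term $c'(a_0-a')$ is bounded by $|a_0-a'|\le 2u$ in absolute value but it is a \emph{fixed} affine function of the (already-constrained) coordinate $a_0$, so it merely translates the allowed window for $b_0$ without enlarging it. This is exactly the role of admissibility ($v\ge uw$, equivalently $uw/v\le 1$) in Lemma~\ref{lem:approx_sym_trans}: it keeps the quadratic error terms in the approximate triangle and symmetry inequalities bounded, so that $d(\omega\to\omega')\le 1$ really does force $d(\omega'\to\omega) = O(1)$ with a universal constant. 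With that settled, both claims follow from elementary lattice-point counting.
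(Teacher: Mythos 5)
Your proof is correct and follows essentially the same route as the paper: round the coordinates to the dyadic lattice (shearing the middle coordinate) for coverage, and use approximate symmetry plus lattice-point counting for bounded overlap. The only cosmetic difference is that in the covering step you compute $d(\omega_0\to(a,b,c))$ directly by rounding $b-c_0(a-a_0)$, whereas the paper bounds the reverse distance and then invokes approximate symmetry; both work.
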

\begin{proof}
First we show that the collection covers $\R^3$. Let $(a,b,c) \in \R^3$. Choose $a_0 \in 2^{-10}u\Z$ and $c_0 \in2^{-10}w\Z $ so that $|a_0 - a| \leq 2^{-10}u$ and $|c_0 - c| \leq 2^{-10}w$. Then choose $b_0 \in 2^{-10}v\Z $ so that $|b_0 - (b+c(a_0-a))| \leq 2^{-10}v$. For this choice, 
\begin{align*}
    d((a,b,c)\to (a_0, b_0, c_0)) \leq 2^{-10},
\end{align*}
so by approximate symmetry $d((a_0, b_0, c_0)\to (a,b,c)) \leq 2^{-9}$ and $(a,b,c) \in \mb R(a_0, b_0, c_0)$. 

Also by approximate symmetry the collection has bounded overlap because 
\begin{align*}
    \#\{(a_0, b_0, c_0) \in \mc C_{u\times v\times w}\, :\, d((a_0,b_0,c_0) \to (a,b,c)) \leq 1\} &\leq \\ 
    \#\{(a_0, b_0, c_0) \in \mc C_{u\times v\times w}\, :\, d((a,b,c) \to (a_0,b_0,c_0)) \leq 2\} \leq 2^{50}
\end{align*}
where in the last line we think about choosing the $a_0, c_0$ coordinates first and the $b_0$ coordinate after. 
\end{proof}

For a subset $\mb X \subset \R^3$, let $\mc R_{u\times v\times w}(\mb X)$ be the set of $\mb R \in \mc R_{u\times v\times w}$ which intersect $\mb X$. For $\mb X \subset \Omega$ and $u\times v\times w$ an admissible dyadic scale define the covering number 
\begin{align*}
    |\mb X|_{u\times v\times w} = |\mc R_{u\times v\times w}(\mb X)|. 
\end{align*}
Similarly, we define the concentration number 
\begin{align*}
    \ms M_{u\times v\times w}(\mb X) = \max_{\mb R \in \mc R_{u\times v\times w}} |\mb X \cap\mb R|. 
\end{align*}
Recall that $|P|_w$ is the $w$-covering number in the point metric and $\ms M_{w\times w}(P)$ is the maximum number of points in a $w$-ball. Because the point metric on $P[\mb X]$ is comparable to the $d_{w\times w\times 1}$ distance on $\mb X$ and the line metric on $L[\mb X]$ is comparable to the $d_{1\times w\times w}$ distance (see \eqref{eq:metrics_pt_line_Omega}), we have 
\begin{equation}\label{eq:covering_num_comp_PL}
\begin{split}
    |P[\mb X]|_w \sim |\mb X|_{w\times w\times 1},\qquad \ms M_{w\times w}(P[\mb X]) \sim \ms M_{w\times w\times 1}(\mb X), \\ 
    |L[\mb X]|_w \sim |\mb X|_{1\times w\times w},\qquad \ms M_{1\times w}(L[\mb X]) \sim \ms M_{1\times w\times w}(\mb X).
\end{split}
\end{equation}

\subsection{Rescaling}
Given a rectangle $\mb R = \mb R_{u_0\times u_0w_0\times w_0}(\omega_0)$ we consider the affine rescaling map 
\begin{align*}
    \psi_{\mb R}(a,b,c) = \Bigl(\frac{a-a_0}{u_0}, \frac{b - (b_0 + c_0(a - a_0))}{u_0w_0}, \frac{c-c_0}{w_0}\Bigr),
\end{align*}
which gives a bijective map:
\begin{align*}
    \psi_{\mb R}:\, \mb R_{u_0\times u_0w_0\times w_0}(\omega_0) \mapsto \Omega.
\end{align*}

In the next lemma, we show that our rectangle formalism behaves well with respect to rescaling maps. In fact, this was one of the main motivations for our setup.

\begin{lemma}\label{lem:rescaling_isometry}
For any $u,v,w\in 2^{-\N}$ we have the isometry property
\begin{align*}
    d_{u\times v\times w}(\psi_{\mb R}(\omega_1) \to \psi_{\mb R}(\omega_2)) = d_{uu_0\times vu_0w_0\times ww_0}(\omega_1\to \omega_2). 
\end{align*}
As a consequence, the map $\psi_{\mb R}^{-1}$ induces a bijection 
\begin{align*}
    \psi_{\mb R}^{-1}\, :\, \mc R_{u\times v\times w} \to \mc R_{uu_0\times vu_0w_0\times ww_0}.
\end{align*}
\end{lemma}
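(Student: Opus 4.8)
The plan is to get the isometry identity by a short coordinate computation, and then read off the statement about covering collections as a formal consequence, using that $\psi_{\mb R}$ is an affine bijection of $\R^3$.

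First I would unwind the definitions. Write $\mb R = \mb R_{u_0\times u_0w_0\times w_0}(\omega_0)$ with $\omega_0 = (a_0,b_0,c_0)$ and set $\psi_{\mb R}(\omega_i) = (a_i',b_i',c_i')$, so $a_i' = (a_i-a_0)/u_0$, $b_i' = (b_i-b_0-c_0(a_i-a_0))/(u_0w_0)$ and $c_i' = (c_i-c_0)/w_0$. Plugging these into the three terms of $d_{u\times v\times w}(\psi_{\mb R}(\omega_1)\to\psi_{\mb R}(\omega_2))$ from \eqref{eq:dist_func_defn}: the first term is $|a_1'-a_2'|/u = |a_1-a_2|/(uu_0)$ and the third is $|c_1'-c_2'|/w = |c_1-c_2|/(ww_0)$, which are exactly the first and third terms of $d_{uu_0\times vu_0w_0\times ww_0}(\omega_1\to\omega_2)$. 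The only computation with any content is the middle ("height") term: since $a_2'-a_1' = (a_2-a_1)/u_0$ and $c_1' = (c_1-c_0)/w_0$, clearing the denominator $u_0w_0$ reduces $b_2'-(b_1'+c_1'(a_2'-a_1'))$ to
\begin{align*}
    \bigl(b_2-b_0-c_0(a_2-a_0)\bigr) - \bigl(b_1-b_0-c_0(a_1-a_0)\bigr) - (c_1-c_0)(a_2-a_1) = b_2 - \bigl(b_1+c_1(a_2-a_1)\bigr),
\end{align*}
which is the same cancellation used in the proof of \Cref{lem:approx_sym_trans}(i). Hence the middle term of $d_{u\times v\times w}(\psi_{\mb R}(\omega_1)\to\psi_{\mb R}(\omega_2))$ equals $|b_2-(b_1+c_1(a_2-a_1))|/(vu_0w_0)$, and taking the maximum of the three terms gives the isometry identity. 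This computation is valid for all $\omega_1,\omega_2\in\R^3$, not only inside $\mb R$.

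For the consequence, since $\psi_{\mb R}$ is a bijection of $\R^3$, the isometry identity immediately gives $\psi_{\mb R}^{-1}(\mb R_{u\times v\times w}(\omega')) = \mb R_{uu_0\times vu_0w_0\times ww_0}(\psi_{\mb R}^{-1}(\omega'))$ for every $\omega'$; that is, $\psi_{\mb R}^{-1}$ carries each unit ball of $d_{u\times v\times w}$ to a unit ball of $d_{uu_0\times vu_0w_0\times ww_0}$ of the matching scale. What remains is to see that the dyadic center lattice is respected, so that the collection $\mc R_{u\times v\times w}$ is sent onto $\mc R_{uu_0\times vu_0w_0\times ww_0}$. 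Here I would write out $\psi_{\mb R}^{-1}(a',b',c') = (a_0+u_0a',\; b_0+c_0u_0a'+u_0w_0b',\; c_0+w_0c')$ and check, using that $u,v,w,u_0,w_0$ are powers of $2$ at most $1$ and that $\omega_0$ lies on the lattice $\mc C_{u_0\times u_0w_0\times w_0}$, that the image of $\mc C_{u\times v\times w}$ matches $\mc C_{uu_0\times vu_0w_0\times ww_0}$ up to the $O(1)$ slack already built into the $\mc R$ grids (by the bounded-overlap lemma), the reverse correspondence coming from applying the same reasoning to $\psi_{\mb R}$. One also records that admissibility is preserved: $v\ge uw$ forces $vu_0w_0\ge(uu_0)(ww_0)$, so $uu_0\times vu_0w_0\times ww_0$ is again admissible and $d$ is one of our metrics.

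The algebra behind the isometry is genuinely short — it is the one identity above — so I expect the only delicate point to be the last step: reconciling the affine change of variables with the $2^{-10}$-granular center lattices $\mc C$, where the cross term $c_0u_0a'$ in $\psi_{\mb R}^{-1}$ is exactly the piece that needs care, and where one may need to absorb a bounded multiplicative loss, which is harmless for the covering numbers $|\mb X|_{u\times v\times w}$ and $\ms M_{u\times v\times w}$ that this lemma will be applied to.
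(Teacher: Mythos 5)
Your first half is exactly the paper's proof: the paper likewise treats the outer coordinates as immediate and verifies only the middle term, performing the same cancellation that reduces it to $b_2-(b_1+c_1(a_2-a_1))$, so on the isometry identity you and the paper coincide. The difference is in the second assertion. The paper disposes of it in one line, claiming it is "easy to check" that $\psi_{\mb R}^{-1}$ maps the center collection $\mc C_{u\times v\times w}$ bijectively onto $\mc C_{uu_0\times vu_0w_0\times ww_0}$, whereas you only claim a correspondence up to $O(1)$ slack and single out the shear term $c_0u_0a'$ as the delicate point. Your caution is warranted: the exact lattice statement fails in general. Writing $\omega_0=(m_1u_0,\,m_2u_0w_0,\,m_3w_0)$ and a center $(n_1u,\,n_2v,\,n_3w)$ with $m_j,n_j\in 2^{-10}\Z$, the $b$-coordinate of $\psi_{\mb R}^{-1}$ acquires the shear contribution $c_0u_0\cdot n_1u=m_3n_1\,uu_0w_0$, and dividing by the target spacing $2^{-10}vu_0w_0$ gives $2^{10}m_3n_1u/v$, which need not be an integer; concretely, $u_0=w_0=u=v=w=1/2$, $a_0=b_0=0$, $c_0=2^{-11}$, $(n_1,n_2,n_3)=(2^{-10},0,0)$ produces the image point $(2^{-12},2^{-23},2^{-11})$, whose $b$-coordinate is not a multiple of the spacing $2^{-13}$.

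So neither your sketch nor the paper's one-liner yields the literal bijection of the collections $\mc R$; the honest statement is the one you propose, namely that $\psi_{\mb R}^{-1}$ carries each $d_{u\times v\times w}$-unit ball to a $d_{uu_0\times vu_0w_0\times ww_0}$-unit ball (this part is an exact consequence of the isometry), and that these off-lattice rectangles are comparable to boundedly many members of $\mc R_{uu_0\times vu_0w_0\times ww_0}$ by the bounded-overlap covering lemma together with approximate symmetry and transitivity (\Cref{lem:approx_sym_trans}). That up-to-constants version is all that the quantities $|\mb X|_{u\times v\times w}$ and $\ms M_{u\times v\times w}$, and hence \Cref{lem:branching_struct_subrect} and everything downstream, ever use, since all such comparisons in the paper carry $\sim$ or $\lesssim$. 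In short: your proposal is correct in substance and, on the one point where the paper is glib, more careful than the paper's own proof; just be aware that as written it proves a bounded-overlap correspondence rather than the literal bijection in the lemma statement, and the literal statement would need either a modified choice of center collections or an explicit comparability constant.
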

\begin{proof} 
The most complicated term is $d_{\square\times 1\times \square}(\psi_{\mb R}(\omega_1), \psi_{\mb R}(\omega_2))$, which we compute as
\begin{align*}
    (u_0w_0)^{-1}[b_2 - (b_0 + c_0(a_2 - a_0)) - (b_1 - (b_0 + c_0(a_1 - a_0)))] - u_0^{-1}w_0^{-1}(c_1-c_0)(a_2-a_1) =\\ 
    (u_0w_0)^{-1}[b_2 - b_1 + c_0(a_1 - a_2) - (c_1-c_0)(a_2-a_1)] = \\ 
    (u_0w_0)^{-1}[b_2 - b_1 - c_1(a_2-a_1)].
\end{align*}
For the second part, it is easy to check that $\psi_{\mb R}^{-1}$ bijectively maps 
\begin{align*}
    \psi_{\mb R}^{-1}: \mc C_{u\times v\times w} \to \mc C_{uu_0\times vu_0w_0\times ww_0}
\end{align*}
giving the result. 
\end{proof}

\subsection{Basic covering properties}
We say a set $\mc P \subset \Omega$ is \textit{$\lambda$-separated} for the distance $d = d_{u\times v\times w}$ if $d(\omega \to \omega') \geq \lambda$ for all distinct $\omega , \omega' \in \mc P$.
We say $u_0\times v_0\times w_0\leq u_1\times v_1\times w_1$ if $u_0\leq u_1$, $v_0\leq v_1$, and $w_0 \leq w_1$. This relation corresponds to domination between the metrics:
\[
d_{u_1\times v_1\times w_1}(\omega \to \omega') \le d_{u_0\times v_0\times w_0}(\omega \to \omega').
\]

\begin{lemma}\label{lem:covering_props}\leavevmode
\begin{enumerate}[label=(\roman*)]
    \item Let $u_0\times v_0\times w_0\leq u_1\times v_1\times w_1$ be admissible scales. Let $\mb R = \mb R_{u_1\times v_1\times w_1}(\omega)$ with $\omega \in 10\Omega$. Then 
    \begin{align*}
        |\mb R|_{u_0\times v_0\times w_0} &\lesssim \frac{u_1v_1w_1}{u_0v_0w_0}. 
    \end{align*} 
    Moreover, for $\lambda \ge 1$
    \begin{align*}
        |\lambda\cdot \mb R|_{u_0\times v_0\times w_0} &\lesssim \lambda^3\frac{u_1v_1w_1}{u_0v_0w_0}. 
    \end{align*}
    \label{lemitem:covering_bd}

    \item Let $\mb X\subset \Omega$. Let $\lambda \geq 1$ and $\mc P_{\lambda}$ be a maximal $\lambda$-separated in the metric $d_{u\times v\times w}$ set of points in $\mb X$. Then 
    \begin{align*}
        |\mb X|_{u\times v\times w} &\sim_{\lambda^6} |\mc P_{\lambda}|. 
    \end{align*}
    \label{lemitem:covering_num_sep_set}

    \item Let $\mb R \in \mc R_{u\times v\times w}$ and let $\mb X \subset \mb R$. Let $u'\times v'\times w'$ be an admissible scale and set $u''\times v'' \times w'' = \min(u\times v\times w, u'\times v'\times w')$ where we take the $\min$ coordinate-wise. Then $u''\times v''\times w''$ is also an admissible scale and 
    \begin{align*}
        |\mb X|_{u''\times v''\times w''} \sim |\mb X|_{u'\times v'\times w'}.
    \end{align*}\label{lemitem:cover_subset_rect}
\end{enumerate}
\end{lemma}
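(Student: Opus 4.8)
All three parts will rest on two tools already in hand: the bounded-overlap covering property of the collection $\mc R_{u_0\times v_0\times w_0}$, and the approximate symmetry and transitivity of the asymmetric distances $d_{u\times v\times w}$ for \emph{admissible} triples (\Cref{lem:approx_sym_trans}). I will also use throughout that $\mb R_{u\times v\times w}(\omega)$ is the image of the box $[-u,u]\times[-v,v]\times[-w,w]$ under a unipotent affine map, hence has Euclidean volume $8uvw$, and that $u_0\times v_0\times w_0\le u_1\times v_1\times w_1$ forces the domination $d_{u_1\times v_1\times w_1}\le d_{u_0\times v_0\times w_0}$.

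For part~\ref{lemitem:covering_bd} the plan is a volume--packing count. Fixing $\lambda\ge1$ and recalling $\lambda\cdot\mb R=\mb R_{\lambda u_1\times\lambda v_1\times\lambda w_1}(\omega)$, I will argue that every $\mb R'\in\mc R_{u_0\times v_0\times w_0}$ meeting $\lambda\cdot\mb R$ is in fact contained in $C\lambda\cdot\mb R$ for a universal $C$: picking $p\in\mb R'\cap\lambda\cdot\mb R$, domination turns $d_{u_0\times v_0\times w_0}(\mathrm{center}(\mb R')\to p)\le1$ into the same bound for $d_{u_1\times v_1\times w_1}$, and chaining this with $d_{u_1\times v_1\times w_1}(\omega\to p)\le\lambda$ through approximate symmetry and transitivity (valid since $u_1\times v_1\times w_1$ is admissible) controls $d_{u_1\times v_1\times w_1}(\omega\to q)\lesssim\lambda$ for every $q\in\mb R'$. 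Since $C\lambda\cdot\mb R$ has Euclidean volume $\sim\lambda^3u_1v_1w_1$, each $\mb R'$ has volume $\sim u_0v_0w_0$, and the $\mb R'$ overlap boundedly, there are only $\lesssim\lambda^3\,u_1v_1w_1/(u_0v_0w_0)$ of them; the case $\lambda=1$ gives the first bound.

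Part~\ref{lemitem:covering_num_sep_set} is the usual covering--packing comparison, carried out in the asymmetric metric. To get $|\mc P_\lambda|\lesssim|\mb X|_{u\times v\times w}$ I will show each $\mb R\in\mc R_{u\times v\times w}$ meeting $\mb X$ contains $O(1)$ points of $\mc P_\lambda$: by part~\ref{lemitem:covering_bd}, $\mb R$ is covered by $O(1)$ sub-rectangles of scale $\tfrac1{10}u\times\tfrac1{10}v\times\tfrac1{10}w$, and any two points of $\mc P_\lambda$ in a common sub-rectangle would be within $d_{u\times v\times w}$-distance $\tfrac1{10}\cdot5=\tfrac12<1\le\lambda$, a contradiction; summing over $\mc R_{u\times v\times w}(\mb X)$ gives the bound. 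For the reverse $|\mb X|_{u\times v\times w}\lesssim\lambda^6|\mc P_\lambda|$, maximality of $\mc P_\lambda$ together with approximate symmetry gives $\mb X\subset\bigcup_{\omega'\in\mc P_\lambda}(\lambda+\lambda^2)\cdot\mb R_{u\times v\times w}(\omega')$, and part~\ref{lemitem:covering_bd} bounds the covering number of each dilate by $\lesssim(\lambda+\lambda^2)^3\lesssim\lambda^6$. For part~\ref{lemitem:cover_subset_rect}, admissibility of $u''\times v''\times w''$ is immediate from $u''w''=\min(u,u')\min(w,w')\le\min(uw,u'w')\le\min(v,v')=v''$, and $|\mb X|_{u'\times v'\times w'}\lesssim|\mb X|_{u''\times v''\times w''}$ holds for free since $u''\times v''\times w''\le u'\times v'\times w'$. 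The reverse is where confinement $\mb X\subset\mb R\in\mc R_{u\times v\times w}$ enters, via the identity
\begin{equation*}
  d_{u''\times v''\times w''}(\omega_1\to\omega_2)=\max\bigl(d_{u\times v\times w}(\omega_1\to\omega_2),\,d_{u'\times v'\times w'}(\omega_1\to\omega_2)\bigr),
\end{equation*}
valid coordinatewise because $1/u''=\max(1/u,1/u')$ and likewise for $v,w$. It then follows that for any $\mb R'\in\mc R_{u'\times v'\times w'}$, any two points of $\mb R\cap\mb R'$ are within $d_{u''\times v''\times w''}$-distance $\le5$ (each of $d_{u\times v\times w},d_{u'\times v'\times w'}$ being $\le5$ on a unit ball by \Cref{lem:approx_sym_trans}), so $\mb R\cap\mb R'$ lies in a $5$-dilate of a $u''\times v''\times w''$-rectangle and is covered by $O(1)$ members of $\mc R_{u''\times v''\times w''}$ by part~\ref{lemitem:covering_bd}; summing over the $|\mb X|_{u'\times v'\times w'}$ rectangles $\mb R'$ that cover $\mb X$ yields $|\mb X|_{u''\times v''\times w''}\lesssim|\mb X|_{u'\times v'\times w'}$.

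The recurring obstacle is the bookkeeping forced by the asymmetry and the merely approximate transitivity of $d_{u\times v\times w}$: every estimate must be routed through \Cref{lem:approx_sym_trans}, and it is precisely the quadratic error terms there that produce the powers $\lambda^3$ and $\lambda^6$ (rather than $\lambda$) in parts~\ref{lemitem:covering_bd} and~\ref{lemitem:covering_num_sep_set}. Part~\ref{lemitem:cover_subset_rect} is the most delicate: its engine is the ``$\min$ of scales $=$ $\max$ of metrics'' identity above, which is the exact sense in which being trapped inside a single coarse rectangle makes the two finer covering numbers comparable.
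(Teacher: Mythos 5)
Your plan is correct; all three parts go through, and the toolkit is the one the paper uses (domination of metrics, approximate symmetry/transitivity from \Cref{lem:approx_sym_trans}, bounded overlap of $\mc R_{u\times v\times w}$, and in (iii) the identity $d_{u''\times v''\times w''}=\max(d_{u\times v\times w},d_{u'\times v'\times w'})$ together with the fact that confinement to $\mb R$ forces $d_{u\times v\times w}\le 5$ on pairs). The differences are in implementation. For (i) the paper, after the same chaining step that reduces matters to $d_{u_1\times v_1\times w_1}(\omega\to\omega_0)\lesssim\lambda$, simply counts the admissible dyadic centers coordinate by coordinate ($\lesssim\lambda u_1/u_0$ choices of $a_0$, $\lesssim\lambda w_1/w_0$ of $c_0$, then $\lesssim\lambda v_1/v_0$ of $b_0$), whereas you package the same count as a volume/bounded-overlap argument using that the skewed rectangles are unipotent images of boxes; both are fine, the paper's version avoids invoking volumes at all, yours avoids the explicit lattice bookkeeping. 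Part (ii) is essentially the paper's proof, with your sub-rectangle pigeonhole making explicit the "packing argument" the paper only gestures at. For (iii) the paper routes through part (ii): a maximal $10$-separated set in $d_{u'\times v'\times w'}$ inside $\mb X\subset\mb R$ is automatically maximal $10$-separated in $d_{u''\times v''\times w''}$ (because distances exceeding $5$ agree), so both covering numbers are comparable to $|\mc P|$; you instead cover $\mb X$ directly by intersections $\mb R\cap\mb R'$ and apply (i) to each, which is equally valid and arguably more hands-on, at the cost of the small extra check that a set covered by $N$ rectangles of $\mc R_{u''\times v''\times w''}$ meets only $O(N)$ dyadic rectangles of that scale. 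Two cosmetic points: take the sub-scale in (ii) to be $u/16\times v/16\times w/16$ rather than $u/10$ so it stays dyadic, and note that the $\lambda^3$ in (i) is purely the three-dimensional scaling of the dilate (only the $\lambda^6$ in (ii) is attributable to the quadratic error in approximate symmetry).
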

\begin{proof}
\parag{Proof of \ref{lemitem:covering_bd}.}
Using approximate transitivity and symmetry,
\begin{align*}
    |\lambda \cdot \mb R|_{u_0\times v_0\times w_0} &= \#\{\omega_0 \in \mc C_{u_0\times v_0\times w_0}\, :\, d_{u_1\times v_1\times w_1}(\omega_0 \to \omega') \leq 1 \text{ and } d_{u_0\times v_0\times w_0}(\omega \to \omega') \leq \lambda\text{ for some $\omega'$}\} \\ 
    &\leq \#\{\omega_0 \in \mc C_{u_0\times v_0\times w_0}\, :\, d_{u_1\times v_1\times w_1}(\omega \to \omega_0) \leq 3\lambda+2\} \lesssim \lambda^3 \frac{u_1v_1w_1}{u_0v_0w_0}.
\end{align*}
In the last line we use the fact that there are $\lesssim \lambda\frac{u_1}{u_0}$-many choices for $a_0$ given $a$, $\lesssim \lambda\frac{w_1}{w_0}$-many choices for $c_0$ given $c$, and $\lesssim \lambda\frac{v_1}{v_0}$-many choices for $b_0$ given $a_0$, $c_0$ and $b$. 

\parag{Proof of \ref{lemitem:covering_num_sep_set}.} 
Let $\omega \in \mb X$. Then there is some $\omega' \in \mc P_{\lambda}$ so that either $d(\omega\to \omega') \leq \lambda$ or $d(\omega'\to \omega) \leq \lambda$. In the first case, approximate symmetry implies $d(\omega'\to \omega) \leq 2\lambda^2$. So 
\begin{align*}
    \mb X &\subset \bigcup_{\omega \in \mc P_{\lambda}} 2\lambda^2\cdot \mb R_{u\times v\times w}(\omega) \\ 
    |\mb X|_{u\times v\times w} &\lesssim \lambda^6 |\mc P_{\lambda}|. 
\end{align*}
In the other direction, if $\mb R \in \mc R_{u\times v\times w}$ then by a packing argument in the metric $d_{u\times v\times w}$ and approximate transitivity and symmetry we get $|\mb R \cap \mc P_{\lambda}| \lesssim 1$, so $|\mc P_{\lambda}|\lesssim |\mb X|_{u\times v\times w}$. 
\end{proof}

\parag{Proof of \ref{lemitem:cover_subset_rect}.}
First of all $|\mb X|_{u''\times v''\times w''} \gtrsim |\mb X|_{u'\times v'\times w'}$ because $u''\times v''\times w'' \leq u'\times v'\times w'$. We have 
\begin{align*}
    d_{u''\times v''\times w''}(\omega \to \omega') = \max\{d_{u\times v\times w}(\omega \to \omega'), d_{u'\times v'\times w'}(\omega \to \omega')\},
\end{align*}
so for $\omega, \omega' \in \mb R$, 
\begin{align*}
    d_{u''\times v''\times w''}(\omega \to \omega') \leq \max\{5, d_{u'\times v'\times w'}(\omega \to \omega')\}.
\end{align*}
If $\mc P \subset \mb X$ is a maximal $10$-separated set in $d_{u'\times v'\times w'}$ then it is also a maximal $10$-separated set in $d_{u''\times v''\times w''}$, and \ref{lemitem:covering_num_sep_set} gives the result.

\subsection{Uniform structure}\leavevmode

\begin{definition}
Let $\mc S = \{u_j\times v_j\times w_j\}_{j=1}^M$ be a set of admissible scales. We say $\mb X \subset \Omega$ is $K$-uniform on the scales $\mc S$ if for every $u\times v\times w \in \mc S$ and $\omega \in \mb X$,
\begin{equation}\label{eq:uniformity_rect_around_pt_defn}
    |\mb X\cap \mb R_{u\times v\times w}(\omega)| \geq \frac{1}{K} \ms M_{u\times v\times w}(\mb X).
\end{equation}
If we specify $\mc S = \{\Delta_j\}$ to be a set of numbers rather than scales, we mean uniformity on all the admissible scales $\Delta_i\times \Delta_j\times \Delta_k$. 
\end{definition}

If $\mb R \in \mc R_{u\times v\times w}(\mb X)$ then there is some $\omega \in \mb R\cap \mb X$ and $4 \cdot \mb R \supset \mb R_{u\times v\times w}(\omega)$ by approximate symmetry and transitivity, so
\begin{equation}\label{eq:uniformity_dilate_defn}
    |\mb X\cap 4\cdot \mb R| \geq \frac{1}{K} \ms M_{u\times v\times w}(\mb X). 
\end{equation}
We will prove that any finite configuration in $\Omega$ has a large uniform subset.
In the proof we use the following graph theory lemma. 
Let $A_1, \ldots, A_t$ be finite sets and $H \subset A_1\times \ldots \times A_t$ an arbitrary subset. 
For $a \in A_i$ we denote by $\deg_{H}a$ the number of $h \in H$ which have $a$ as their $i$-th coordinate.

\begin{lemma}\label{lem_regular_graph}
Let $N, t \in \N$ and denote $K = (Ct \log N)^t$.
Let $A_1, \ldots, A_t$ be sets of size at most $N$ and let $H \subset A_1 \times \ldots \times A_t$ be a subset. Then there exist subsets $A_i' \subset A_i$ such that the restriction $H' = H \cap (A_1' \times \ldots \times A_t')$ has size at least $K^{-1} |H|$ and there are numbers $d_i \ge 0$ such that $\deg_{H'} a \in [d_i/K, d_i]$ for every $a \in A_i'$.
\end{lemma}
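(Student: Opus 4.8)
The statement is a standard multidimensional regularization lemma for a subset of a box, and the natural approach is iterated dyadic pigeonholing, one coordinate direction at a time, repeated until stability is reached. First I would reduce to the case $t=1$: here $H \subset A_1$ and we want $H' \subset H$ with $|H'| \geq (C\log N)^{-1}|H|$ on which all degrees (which for $t=1$ are just $0$ or $1$, so this case is trivial) — so really the base content is the case where we pigeonhole $H$ according to the degree of the $i$-th coordinate. The key subroutine is: given $H \subset A_1 \times \cdots \times A_t$ and a fixed index $i$, partition $A_i$ into $O(\log N)$ dyadic blocks $A_i^{(k)} = \{a \in A_i : \deg_H a \in [2^{k}, 2^{k+1})\}$ according to the $H$-degree, and by pigeonhole pick one block $A_i^{(k_0)}$ such that $H^{(k_0)} := H \cap (A_1 \times \cdots \times A_i^{(k_0)} \times \cdots \times A_t)$ retains a $\gtrsim 1/\log N$ fraction of $|H|$. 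After this step every $a \in A_i^{(k_0)}$ has $\deg_{H^{(k_0)}} a \in [2^{k_0}, 2^{k_0+1})$, i.e. degrees in direction $i$ are regularized up to a factor $2$, at the cost of a factor $O(\log N)$ in size and — crucially — this only \emph{decreases} degrees, so it does not destroy regularity already achieved in other directions... except it might, because removing elements of $H$ lowers the degrees of coordinates in the other $A_j$'s. So the real argument has to iterate and argue termination.

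**Main steps.** I would run the following loop. Maintain current sets $A_1', \ldots, A_t'$ and $H' = H \cap \prod A_i'$, initialized to $A_i' = A_i$, $H' = H$. At each stage, for each direction $i$ in turn: apply the dyadic-block subroutine above to regularize the $i$-th coordinate degrees of $H'$, replacing $A_i'$ by the chosen block and $H'$ by the restriction. One full pass over all $t$ directions costs a factor $(C\log N)^t$ in the size of $H'$ and leaves each coordinate regularized \emph{at the moment it was processed}, but processing direction $i+1$ shrinks $H'$ and so perturbs the direction-$i$ degrees downward. The standard fix is to observe that a single pass is \emph{not} enough in general, but a bounded number of passes is: here one instead argues more carefully. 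Actually the cleanest route, which I would take, is the classical one: do a single pass, but note that although processing later directions decreases the degrees in earlier directions, it decreases them by at most the total fraction removed, and one can re-set up the dyadic intervals to be half-open intervals $[d_i/K, d_i]$ rather than $[d_i, 2d_i]$ — i.e. allow the lower end of each degree window to be a factor $K$ below the upper end rather than a factor $2$. Then since each of the subsequent $t-1$ direction-processing steps removes at most a factor $C\log N$ of the mass, and degrees can only drop, after all $t$ steps each coordinate in direction $i$ still has degree at least (its post-processing lower bound) $/ (C\log N)^{t}$ and at most its original upper bound $d_i$; choosing $K = (Ct\log N)^t$ absorbs all these losses. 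This gives the numbers $d_i$ (take $d_i$ to be the upper endpoint of the window chosen for direction $i$) and the degree bound $\deg_{H'} a \in [d_i/K, d_i]$, and the size bound $|H'| \geq K^{-1}|H|$ from the product of the $t$ pigeonholing losses.

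**The obstacle.** The one genuinely delicate point is the bookkeeping in the previous paragraph: making sure that the downward drift of the already-regularized degrees, caused by processing the remaining directions, is bounded by the same factor $K$ that appears in the size loss, so that a \emph{single} pass suffices and the constants close. Concretely one must check that if after processing direction $i$ every $a \in A_i'$ has $\deg a \in [D_i, 2D_i)$, and the subsequent steps remove at most a $(C\log N)^{-(t-1)}$ fraction of the remaining mass, then the surviving $a \in A_i'$ still satisfy $\deg_{H'} a \geq D_i - (\text{mass removed})$ — but this is only useful if we retain the $a$'s whose degree didn't collapse, which requires a final cleanup: discard from each $A_i'$ those elements whose degree in the final $H'$ has dropped below $d_i/K$, and check (by a counting/Markov argument) that this discards at most half the mass of $H'$, say, so the size bound survives with an adjusted constant. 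I would fold this cleanup into the constant $C$. The rest — the dyadic partition of degrees into $O(\log N)$ classes using $|A_i| \le N$, and the pigeonhole selection — is routine.
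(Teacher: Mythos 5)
Your proposal follows essentially the same route as the paper's proof: one pass of per-coordinate dyadic pigeonholing to fix degree windows with upper endpoints $d_i$ (note the number of dyadic classes is $O(t\log N)$, since degrees can be as large as $N^{t-1}$, which is exactly why $K=(Ct\log N)^t$), followed by a pruning of low-degree vertices whose total removed mass is bounded by a counting argument, with all losses folded into $K$. The only point to tighten is the cleanup: a one-shot discard of vertices whose final degree is below $d_i/K$ neither closes the mass count (it could remove up to $\sim t|H''|$) nor yields the pointwise lower bound, since deleting vertices lowers the remaining degrees further; as in the paper one should \emph{iteratively} delete any vertex whose current degree is below roughly $|H''|/(10t|A_i''|)$ (a threshold comparable to $d_i/K$ after adjusting $C$), whereupon each deletion removes at most that threshold of mass, so the total loss is at most $|H''|/2$ and every surviving vertex satisfies the stated two-sided degree bound.
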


\begin{proof}
The first step is to pass to subsets $A_i''\subset A_i$ so the maximum degree is bounded in terms of the average degree. Let $H_0 = H$ and for $i = 1, \ldots, t$ do the following. Given a subset $H_{i-1} \subset A_1\times \ldots \times A_t$ define 
$$
A_i'' = \{a \in A_i:~ \deg_{H_{i-1}} a \in [d_i/2, d_i]\},~~ H_i = H_{i-1} \cap (A_1 \times \ldots \times A_i'' \times \ldots \times A_t) 
$$
for some dyadic $d_i \ge 1$. Since the degree is bounded by $|H| \le N^t$, there is a choice of $d_i$ so that $|H_i| \gtrsim (t\log N)^{-1} |H_{i-1}|$. Fix this choice of $H_i$ and proceed to choosing $H_{i+1}$. 

After this step is complete we obtain subsets $A_i'' \subset A_i$ and a set $H'' = H_t = H \cap (A_1'' \times \ldots \times A_t'')$ so that $|H''| \gtrsim (C t \log N)^{-t} |H|$. Also note that for each $i$ we have $|A_i''| d_i \sim |H_i| \lesssim (Ct \log N)^{t} |H''|$ and the $H''$-degree of any vertex $a \in A''_i$ is bounded by $d_i$. 

The next step is to prune low degree vertices so the min degree is bounded in terms of the average degree. 
Let $A_i^{(0)} = A_i''$ for $i = 1, \ldots, t$ and let $H^{(0)} = H''$.
For $j=0, 1, \ldots $ do the following. If for some $i\in \{1, \ldots, t\}$ there is an $a \in A^{(j)}_i$ with $\deg_{H^{(j)}} a \le \frac{|H''|}{10 t |A_i''|}$ then define $A^{(j+1)}_i = A^{(j)}_i \setminus \{a\}$ and for $i'\neq i$ let $A^{(j+1)}_{i'} = A^{(j)}_{i'}$ and put $H^{(j+1)} = H\cap (A^{(j+1)}_1\times \ldots \times A^{(j+1)}_t)$. If for all $i \in \{1,\ldots, t\}$ and $a \in A_i^{(j)}$ there is a degree lower bound $\deg_{H^{(j)}} a \geq \frac{|H''|}{10t |A_i''|}$, stop the procedure. 

Let $A_i' = A^{(j)}_i$ denote the remaining subsets after terminating the algorithm, and let $H' = H^{(j)}$.
The degree of any $a \in A_i'$ is lower bounded by the termination condition,
\[
\deg_{H'}a = \deg_{H^{(j)}}a \ge \frac{|H''|}{10 t |A_i''|} \ge (C't\log N)^{-t} d_i
\]
and there is a corresponding degree upper bound $\deg_{H'}a \le \deg_{H''}a \le d_i$.
We estimate $|H'|$ by
$$
|H'| \ge |H''| - \sum_{i=1}^t |A_i'' \setminus A_i'| \frac{|H''|}{10 t |A_i''|} \ge |H''| /2 \geq (C't \log N)^{-t} |H|.
$$
\end{proof}

In the following Lemma we make use of dyadic cubes. Let $\delta = 2^{-k}$ be dyadic, and let $\mc D_{\delta}$ be the set of half-open dyadic cubes in $\R^3$ with side length $\delta$. For a set $\mb X \subset \R^3$, let $\mc D_{\delta}(\mb X)$ be the dyadic cubes intersecting $\mb X$. We will use the collection $\mc D_{\delta}$ instead of the similar collection $\mc R_{\delta\times\delta\times\delta}$ because the cubes in $\mc D_{\delta}$ are disjoint. 
\begin{lemma}\label{lem:exists_uniform_subset}
Let $\mc S = \{u_j\times v_j\times w_j\}_{j=1}^M$ be a set of admissible scales with $u_j,v_j,w_j \geq \delta$ for all $j$. Let $K = (CM\log(1/\delta))^{M+2}$. For every finite $\mb X \subset \Omega$ there exists a subset $\mb X'\subset \mb X$ such that $|\mb X'| \ge K^{-1} |\mb X|$ and $\mb X'$ is $K$-uniform. 
\end{lemma}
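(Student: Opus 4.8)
\medskip
\noindent\textbf{Proof sketch.}
The plan is to record, for each point of $\mb X$, which dyadic rectangle it lies in at every scale of $\mc S$ together with one auxiliary fine scale, and then feed this list of ``addresses'' into the regularization Lemma~\ref{lem_regular_graph}. Write $s_j = u_j\times v_j\times w_j$ for $j = 1,\dots,M$ and set $s_0 = \delta\times\delta\times\delta$, which is admissible since $\delta\ge\delta^2$. For $0\le j\le M$ let $A_j = \mc R_{s_j}(\mb X)$ be the collection of dyadic rectangles of scale $s_j$ meeting $\mb X$; since $\mc R_{s_j}$ covers $\Omega$ with bounded overlap using $\lesssim (u_jv_jw_j)^{-1}\le\delta^{-3}$ rectangles, each $A_j$ has size at most $N := C\delta^{-3}$, so $\log N\lesssim\log(1/\delta)$.

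First I would choose, for each $\omega\in\mb X$ and each $j$, a rectangle $R_j(\omega)\in A_j$ containing $\omega$ (boundedly many are eligible; fix one). We may assume $\mb X$ is $\delta$-separated, which is the case in all our applications; then each rectangle of $A_0$ is contained in an $O(\delta)$-ball and so meets only $O(1)$ points of $\mb X$, whence the map $\Phi\colon\omega\mapsto(R_0(\omega),R_1(\omega),\dots,R_M(\omega))$ is $O(1)$-to-one. Put $H = \Phi(\mb X)\subseteq A_0\times A_1\times\cdots\times A_M$, so that $|H|\sim|\mb X|$. Applying Lemma~\ref{lem_regular_graph} with $t = M+1$ yields subsets $A_j'\subseteq A_j$ and the restriction $H' = H\cap(A_0'\times\cdots\times A_M')$ with $|H'|\ge K_0^{-1}|H|$ and numbers $d_j\ge 0$ such that $\deg_{H'}R\in[d_j/K_0,\,d_j]$ for every $R\in A_j'$, where $K_0 = (C(M+1)\log N)^{M+1}$. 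Setting $\mb X' = \Phi^{-1}(H')$ gives $|\mb X'|\sim|H'|\ge K_0^{-1}|H|\sim K_0^{-1}|\mb X|$, which is the required lower bound on $|\mb X'|$ once one absorbs the universal factor and the bound $N\le\delta^{-3}$ into the constant $K = (CM\log(1/\delta))^{M+1}$ (note $M+1\le 2M$, so the exponent is unaffected).

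It then remains to check that $\mb X'$ is $K$-uniform on $\mc S$. Fix $s_j\in\mc S$ and $\omega\in\mb X'$, and let $R = R_j(\omega)\in A_j'$. Because $\Phi$ is $O(1)$-to-one, the number of points $\omega'\in\mb X'$ with $R_j(\omega') = R$ is comparable (up to a universal constant) to $\deg_{H'}R\in[d_j/K_0,\,d_j]$; all such $\omega'$ lie in $R$, so $|\mb X'\cap R|\gtrsim d_j/K_0$. Conversely, any $\omega'\in\mb X'$ lying in a given dyadic rectangle $R'\in\mc R_{s_j}$ has $R_j(\omega')\in A_j'$ overlapping $R'$, and there are $O(1)$ such rectangles, each contributing $\lesssim d_j$ points of $\mb X'$; hence $|\mb X'\cap R'|\lesssim d_j$ for every $R'\in\mc R_{s_j}$, and in particular $\ms M_{s_j}(\mb X')\lesssim d_j$. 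Combining the two estimates, $|\mb X'\cap R|\gtrsim K_0^{-1}\,\ms M_{s_j}(\mb X')$, and passing from the dyadic rectangle $R$ to the ball $\mb R_{s_j}(\omega)$ centered at $\omega$ costs only the standard $O(1)$-dilation recorded around \eqref{eq:uniformity_dilate_defn} (via approximate symmetry and transitivity, Lemma~\ref{lem:approx_sym_trans}). This yields the uniformity estimate \eqref{eq:uniformity_rect_around_pt_defn} with $K$ a universal multiple of $K_0$, hence of the claimed form.

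The main obstacle I anticipate is bookkeeping rather than a conceptual difficulty: one must be careful that (i) the auxiliary fine scale $s_0$ genuinely cuts $\mb X$ into $O(1)$-sized clusters, so that degrees in $H'$ translate back into point counts in $\mb X'$ --- this is where $\delta$-separation of $\mb X$ enters --- and (ii) all the constants produced by bounded overlap, by the finitely many choices of $R_j(\omega)$, and by the dilation passing from dyadic rectangles to the balls $\mb R_{s_j}(\omega)$ remain absorbable into $K$ without inflating the exponent past $M+1$. It is precisely in order to carry the extra coordinate $A_0$ that one applies Lemma~\ref{lem_regular_graph} with $t = M+1$ rather than $t = M$.
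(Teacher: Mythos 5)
Your overall skeleton matches the paper's: encode each piece of $\mb X$ by its tuple of dyadic addresses at the scales of $\mc S$ and regularize with Lemma~\ref{lem_regular_graph}. But there is a genuine gap: you prove a weaker statement than the lemma. The lemma is asserted for \emph{every} finite $\mb X\subset\Omega$, with no separation hypothesis, and your argument leans on the assumption that $\mb X$ is $\delta$-separated precisely at the step where tuple degrees are converted into point counts (your map $\Phi$ being $O(1)$-to-one). Without separation this conversion fails outright: if one $\delta$-cube carries $10^6$ points and the others carry one point each, regularizing the degrees of the address tuples says nothing about how many \emph{points} of $\mb X'$ sit in a coarse rectangle, and the final inequality \eqref{eq:uniformity_rect_around_pt_defn} can be false for the set you construct. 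The paper's proof handles exactly this by an initial dyadic pigeonholing: it first passes to $\mb X''\subset\mb X$ with $|\mb X''\cap\mb Q|\in[r/2,r]$ for every dyadic $\delta$-cube $\mb Q$ meeting $\mb X''$, and then runs the graph lemma on one tuple per \emph{cube} (so $t=M$, not $M+1$); the lost factor $\log(1/\delta)$ from this pigeonholing is exactly why $K=(CM\log(1/\delta))^{M+1}$ has exponent $M+1$. Your use of the extra slot for an auxiliary fine-scale coordinate $A_0$ is a workable substitute only under the separation hypothesis, which is not available in the statement (and verifying it in every application, e.g.\ for the rescaled sets $\psi_{\mb R}(\mb X\cap\mb R)$ fed into this lemma in Proposition~\ref{prop:effective_incidence_lower_bd}, is itself nontrivial and would force restating the lemma). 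Either add the per-cube pigeonholing step, or restate and reprove the lemma with a separation hypothesis and then justify it at each point of use.

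A smaller, fixable issue: in the uniformity check you place the relevant points in the dyadic rectangle $R=R_j(\omega)$ containing $\omega$ and then invoke an ``$O(1)$-dilation'' to pass to $\mb R_{u_j\times v_j\times w_j}(\omega)$. Definition \eqref{eq:uniformity_rect_around_pt_defn} requires the count in the rectangle centered exactly at $\omega$, and knowing $|\mb X'\cap C\cdot\mb R_{u_j\times v_j\times w_j}(\omega)|$ is large is a strictly weaker statement; \eqref{eq:uniformity_dilate_defn} goes in the opposite direction. The remedy is the one the paper builds in: assume $\delta\le 2^{-40}\min_j\{u_j,v_j,w_j\}$ and choose the address rectangles so that the cube (or point) sits inside the shrunken copy $2^{-10}\cdot\mb R_j$, which the $2^{-10}$-spaced grid of centers $\mc C_{u\times v\times w}$ permits; then approximate symmetry and transitivity (Lemma~\ref{lem:approx_sym_trans}) place all the counted points inside $\mb R_{u_j\times v_j\times w_j}(\omega)$ itself.
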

\begin{remark}
The proof guarantees $\mb X'$ has the following additional property which will be useful in the proof of \Cref{thm:exists_extra_incidence}. Let $\delta$ be the largest dyadic number less than $2^{-40} \min_j\{u_j,v_j,w_j\}$. 
\begin{itemize}
    \item There is some number $r > 0$ such that for every cube $\mb Q \in \mc D_{\delta}(\mb X')$, $|\mb X'\cap \mb Q| \in [r/2, r]$. 
    \item No two dyadic cubes in $\mc D_{\delta}(\mb X')$ are adjacent (including diagonal adjacencies). 
    \item For every $\omega \in \mb X'$ and $u\times v\times w\in \mc S$, not only is there a lower bound $|\mb X'\cap \mb R_{u\times v\times w}(\omega)| \geq \frac{1}{K} \ms M_{u\times v\times w}(\mb X')$, but this inequality remains true if we restrict the left hand side to the dyadic cubes completely contained inside $\mb R_{u\times v\times w}(\omega)$. To be precise, 
    \begin{align*}
        \sum_{\mb Q \in \mc D_{\delta}, \mb Q \subset \mb R_{u\times v\times w}(\omega)} |\mb X'\cap \mb Q| \geq \frac{1}{K} \ms M_{u\times v\times w}(\mb X'). 
    \end{align*}
\end{itemize}
\end{remark}
\begin{proof}
Without loss of generality we take $\delta$ to be the largest dyadic number less than $2^{-40}\min_j\{u_j,v_j,w_j\}$.
For $j \in \{1, \ldots, M\}$ let $A_j$ be the set of boxes $\mc R_{u_j\times v_j\times w_j}(\Omega)$. The cardinality of $A_j$ is bounded by 
\begin{align*}
    |A_j| \lesssim |\Omega|_{u_j\times v_j\times w_j} \lesssim \delta^{-3}.
\end{align*}

First we pigeonhole to a subset $\mb X'' \subset \mb X$ of size $\gtrsim |\log \delta|^{-1} |\mb X|$ such that $|\mb X''\cap \mb Q| \in [r/2, r]$ for some $r \ge 0$ and every  $\mb Q \in \mc D_{\delta}(\mb X'')$. Our final set will correspond to a union of sets $\mb X''\cap \mb Q$ over a subset of dyadic cubes. By passing to a constant fraction subset of $\mc D_{\delta}(\mb X'')$, we may assume no two cubes in $\mc D_{\delta}(\mb X'')$ are adjacent (including diagonal adjacencies). 

Now define a subset $H \subset \prod_{j=1}^M A_{j} \times \mc D_{\delta}(\mb X'')$ as follows. For each box $\mb Q \in \mc D_{\delta}(\mb X'')$ and each $j \in \{1,\ldots, M\}$, let $e_{\mb Q,j} \in \mc R_{u_j\times v_j\times w_j}$ be a rectangle such that $\mb Q \subset 2^{-10}\cdot e_{\mb Q,j}$. Such a rectangle exists because $\delta \leq 2^{-40}\min_j\{u_j,v_j,w_j\}$. There may be several possible choices for $e_{\mb Q,j}$, so we break ties arbitrarily. 
We let $e_{\mb Q}$ be the tuple $(e_{\mb Q,j})_{j=1}^M \in \prod_{j=1}^M A_j$, and define the edge set to be the union of these tuples with $\mb Q$ attached on the last coordinate (this ensures that all edges are distinct),
\begin{equation*}
    H = \{(e_{\mb Q}, {\mb Q})\, :\, \mb Q \in \mc D_{\delta}(\mb X'')\}. 
\end{equation*}
We apply \Cref{lem_regular_graph} to $H$ to obtain subsets $A'_j \subset A_j$ satisfying the following with $K' = (C M \log (1/\delta))^{M+1}$.
\begin{itemize}
    \item $H' = H \cap (A'_1 \times \ldots \times A'_M \times \mc D_{\delta})$ has size at least $(K')^{-1} |H|$. We associate to $H'$ the set of cubes 
    \begin{align*}
        \mc Q' = \{\mb Q \in \mc D_{\delta}(\mb X'')\, :\, (e_{\mb Q}, \mb Q) \in H'\}.
    \end{align*}
    \item For each $j \in \{1,\dots, M\}$ there is a number $d_j$ such that for every $a \in A_j'$, $\deg_{H'} a \in [d_j / K', d_j]$. 
\end{itemize}
We put
$$
\mb X' = \mb X'' \cap \bigcup_{\mb Q \in \mc Q'} \mb Q.
$$
By the lower bound on $|H'|$, 
\begin{align}\label{eq:lower_bd_size_Xprime}
    |\mb X'| \geq (r/2) |\mc Q'| \geq (CK')^{-1} |\mb X|. 
\end{align}
For each $\omega \in \mb X'$ we will prove a lower bound for $|\mb X' \cap \mb R_{u_j\times v_j\times w_j}(\omega)|$. Let $\mb Q$ be the dyadic $\delta$-cube containing $\omega$, so $e_{\mb Q} \in H'$ and $e_{\mb Q,j} \in A_j'$. By the degree lower bound on $H'$, 
\begin{align*}
    \#\{\mb Q' \in \mc Q'\, :\, e_{\mb Q',j} = e_{\mb Q,j}\} \geq d_j/K'. 
\end{align*}
All the cubes above are contained in $\mb R_{u_j\times v_j\times w_j}(\omega)$. 
Indeed, if $e_{\mb Q',j} = e_{\mb Q,j}$ then $\mb Q' \subset 2^{-10} \cdot e_{\mb Q,j}$, and because $\omega \in 2^{-10} e_{\mb Q,j}$, by approximate symmetry and transitivity $2^{-10} e_{\mb Q,j} \subset \mb R_{u_j\times v_j\times w_j}(\omega)$. Thus
\begin{align}\label{eq:rect_itnersect_lower_bd}
    |\mb X'\cap \mb R_{u_j\times v_j\times w_j}(\omega)| \geq \sum_{\{\mb Q' \in \mc Q'\, :\, e_{\mb Q',j} = e_{\mb Q,j}\}} |\mb Q'\cap \mb X'|  \geq \frac{1}{2K'} rd_{j}.
\end{align}
Moreover, as described in the remark before this proof, this sum only counts dyadic cubes that are entirely contained in the rectangle $\mb R_{u_j\times v_j\times w_j}(\omega)$. 
Now we prove a matching upper bound. We start by writing $\mb X'$ as
\begin{align*}
    \mb X' &=  \bigcup_{\mb R \in A_j'}\bigcup_{\{\mb Q\, :\, e_{\mb Q,j} = \mb R\}}\mb X'' \cap \mb Q \\ 
    &=: \bigcup_{\mb R \in A_j'} S_{\mb R}. 
\end{align*}
Each set $S_{\mb R}$ is contained in $\mb R$, and 
\begin{align*}
    |S_{\mb R}| \leq rd_j. 
\end{align*}
For any rectangle $\mb R = \mb R_{u_j\times v_j\times w_j}(\omega')$ with scale in $\mc S$, 
\begin{align*}
    \mb R \cap \mb X \subset \bigcup_{\mb R'\cap \mb R \neq \emptyset, \mb R' \in A_j'} S_{\mb R'}.
\end{align*}
There are $\leq C$ many rectangles $\mb R' \in A_j'$ intersecting $\mb R$, so 
\begin{align}\label{eq:rect_intersect_upper_bd}
    |\mb R\cap \mb X| \leq Crd_j.
\end{align}
Setting $K = CK'$ and combining \eqref{eq:rect_itnersect_lower_bd} and \eqref{eq:rect_intersect_upper_bd} gives that for any $\omega \in \mb X'$,
\begin{align*}
    |\mb X' \cap \mb R_{u_j\times v_j\times w_j}(\omega)| \geq \frac{1}{K} \ms M_{u_j\times v_j\times w_j}(\mb X'),
\end{align*}
so $\mb X'$ is $K$-uniform. 
\end{proof}

\subsection{Weak uniformity}
For any set $\mb X$, there is an inequality
\begin{align*}
    |\mb X| \leq \ms M_{u\times v\times w}(\mb X) |\mb X|_{u\times v\times w}.
\end{align*}
If $\mb X$ is $K$-uniform at scale $u\times v\times w$ then there is a partial converse.
Let $\mc P \subset \mb X$ be a maximal set of $10$-separated points in the $d_{u\times v\times w}$ distance. 
By \Cref{lem:covering_props}\ref{lemitem:covering_num_sep_set} we know $|\mc P| \sim |\mb X|_{u\times v\times w}$. 
The rectangles $\mb R_{u\times v\times w}(\omega)$ are disjoint for each $\omega \in \mc P$, so by $K$-uniformity, 
\begin{align*}
    \sum_{\omega \in \mc P} |\mb X\cap \mb R_{u\times v\times w}(\omega)| &\geq K^{-1} \ms M_{u\times v\times w}(\mb X)|\mc P|. 
\end{align*}
On the other hand 
\begin{align*}
    \sum_{\omega \in \mc P} |\mb X\cap \mb R_{u\times v\times w}(\omega)| \leq |\mb X|,
\end{align*}
and combining these gives 
\begin{equation}\label{eq:weak_uniform}
    |\mb X| \geq \frac{1}{CK} \ms M_{u\times v\times w}(\mb X) |\mb X|_{u\times v\times w}
\end{equation}
for some universal $C$. In general if \eqref{eq:weak_uniform} holds for an admissible $u\times v\times w$ with constant $K$ we say that $\mb X$ is \textit{weakly $K$-uniform} at that scale.  
Weak uniformity is a useful property because of the following Lemma. 

\begin{lemma}\label{lem:branching_struct_subrect}
Suppose that $\mb X$ is $K$-uniform on the scales $\mc S$. Let $u\times v\times w \in \mc S$ and $\mb R_0 \in \mc R_{u\times v\times w}(\mb X)$, set $\mbt X = \mb X\cap \mb R_0$. If $|\mbt X| \geq C^{-1} \ms M_{u\times v\times w}(\mb X)$ then $\mbt X$ is weakly $CK^2$-uniform on scales $\mc S' = \{u'\times v'\times w' \in \mc S\, :\, u'\times v'\times w' \leq u\times v\times w\}$, and
\begin{align*}
    |\mbt X|_{u'\times v'\times w'} &\sim_{CK^3} \frac{|\mb X|_{u'\times v'\times w'}}{|\mb X|_{u\times v\times w}} \qquad \text{for all $u'\times v'\times w' \in \mc S'$.}
\end{align*}
\end{lemma}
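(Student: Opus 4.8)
The plan is to deduce both conclusions from the two-sided control that $K$-uniformity gives at the single scale $u\times v\times w$ together with the elementary identity $|\mb X| \le \ms M_{u'\times v'\times w'}(\mb X)\,|\mb X|_{u'\times v'\times w'}$ applied at the finer scales. First I would record that, since $\mbt X = \mb X\cap \mb R_0$ with $\mb R_0 \in \mc R_{u\times v\times w}(\mb X)$ and $|\mbt X| \ge C^{-1}\ms M_{u\times v\times w}(\mb X)$, $K$-uniformity (in the dilated form \eqref{eq:uniformity_dilate_defn}) gives
\[
  C^{-1} \ms M_{u\times v\times w}(\mb X) \le |\mbt X| \le \ms M_{u\times v\times w}(\mb X),
\]
so $|\mbt X| \sim_C \ms M_{u\times v\times w}(\mb X)$. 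Combining with weak $CK$-uniformity of $\mb X$ at scale $u\times v\times w$ (inequality~\eqref{eq:weak_uniform}, which follows from $K$-uniformity) yields $\ms M_{u\times v\times w}(\mb X) \sim_{CK} |\mb X|/|\mb X|_{u\times v\times w}$, hence
\[
  |\mbt X| \sim_{CK} \frac{|\mb X|}{|\mb X|_{u\times v\times w}}.
\]

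Next, fix an admissible finer scale $u'\times v'\times w' \in \mc S$ with $u'\times v'\times w' \le u\times v\times w$. For the covering number bound, one direction is easy: covering $\mbt X$ at scale $u'\times v'\times w'$ can be done by covering each rectangle of an optimal $u\times v\times w$-cover of $\mb X$ internally, and by Lemma~\ref{lem:covering_props}\ref{lemitem:cover_subset_rect} (applied with $\mbt X \subset \mb R_0$, noting $\min(u\times v\times w,\,u'\times v'\times w') = u'\times v'\times w'$ since the second scale is smaller) together with Lemma~\ref{lem:covering_props}\ref{lemitem:covering_bd} applied to each rectangle of the cover. More precisely, $|\mbt X|_{u'\times v'\times w'} \lesssim \ms M_{u'\times v'\times w'}(\mb X)\cdot 1$ when restricted inside one $u\times v\times w$-box, and summing over the $|\mb X|_{u\times v\times w}$ boxes of an optimal cover gives the upper bound $|\mbt X|_{u'\times v'\times w'} \lesssim$ (nothing useful directly) — so instead the cleaner route is: write $\mb X$ as a disjoint-ish union over its $u\times v\times w$-cover, so $|\mb X|_{u'\times v'\times w'} \lesssim \sum_{\mb R} |\mb X\cap \mb R|_{u'\times v'\times w'} \lesssim |\mb X|_{u\times v\times w}\cdot \ms M^{cov}$, where each term is $\sim_{CK^3} |\mbt X|_{u'\times v'\times w'}$ by applying the lower-bound half of the argument to each box. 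For the genuinely needed lower bound on $|\mbt X|_{u'\times v'\times w'}$, I use \eqref{eq:weak_uniform}-type reasoning downward: by $K$-uniformity of $\mb X$ at the scale $u'\times v'\times w'\in\mc S$ we again get $\ms M_{u'\times v'\times w'}(\mb X)\sim_{CK}|\mb X|/|\mb X|_{u'\times v'\times w'}$, and $\ms M_{u'\times v'\times w'}(\mbt X) \le \ms M_{u'\times v'\times w'}(\mb X)$, so
\[
  |\mbt X|_{u'\times v'\times w'} \ge \frac{|\mbt X|}{\ms M_{u'\times v'\times w'}(\mbt X)} \ge \frac{|\mbt X|}{\ms M_{u'\times v'\times w'}(\mb X)} \sim_{CK} \frac{|\mbt X|\,|\mb X|_{u'\times v'\times w'}}{|\mb X|} \sim_{CK^2} \frac{|\mb X|_{u'\times v'\times w'}}{|\mb X|_{u\times v\times w}},
\]
using the formula for $|\mbt X|$ from the first paragraph. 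The matching upper bound comes by the same chain run in the other direction using that $\mbt X$ is a union of $\lesssim_{CK^3}$... — concretely, $|\mb X|_{u'\times v'\times w'} \le \sum_{\mb R\in\mc R_{u\times v\times w}(\mb X)} |\mb X\cap \mb R|_{u'\times v'\times w'}$ and each summand is $\le \ms M_{u\times v\times w}(\mb X)/\ms M_{u'\times v'\times w'}(\mb X)\cdot(\text{const})$ by uniformity, which rearranges to the claim. Finally, weak $CK^3$-uniformity of $\mbt X$ at each $u'\times v'\times w'\le u\times v\times w$ follows by combining the two-sided estimate on $|\mbt X|_{u'\times v'\times w'}$ just obtained with the two-sided estimate $\ms M_{u'\times v'\times w'}(\mbt X) \sim \ms M_{u'\times v'\times w'}(\mb X)\sim_{CK}|\mb X|/|\mb X|_{u'\times v'\times w'}$ — one checks $|\mbt X| \gtrsim (CK^3)^{-1}\ms M_{u'\times v'\times w'}(\mbt X)\,|\mbt X|_{u'\times v'\times w'}$ by multiplying through and cancelling.

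I expect the main obstacle to be bookkeeping the constants so that a single power $CK^3$ suffices uniformly over all scales in $\mc S'$, and in particular making the "union over the $u\times v\times w$-cover" step rigorous despite the bounded-overlap (rather than exact-disjointness) of $\mc R_{u\times v\times w}$ — this is handled by Lemma~\ref{lem:covering_props}\ref{lemitem:covering_num_sep_set}, passing to a maximal $10$-separated subset so the relevant rectangles are genuinely disjoint, at the cost of an absolute constant. The submultiplicative inequality $|\mb X|_{u'\times v'\times w'} \lesssim |\mb X|_{u\times v\times w}\,\ms M_{u'\times v'\times w'\text{-within-a-box}}$ also needs Lemma~\ref{lem:covering_props}\ref{lemitem:cover_subset_rect} to justify that covering $\mb X\cap\mb R$ at the finer scale is the same whether or not we intersect the finer rectangles with $\mb R$, which is exactly what part \ref{lemitem:cover_subset_rect} provides.
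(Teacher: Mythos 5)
Your proposal is correct and is essentially the paper's own argument: the lower bound via $|\mbt X|_{u'\times v'\times w'}\ge |\mbt X|/\ms M_{u'\times v'\times w'}(\mb X)$ plus the weak-uniformity relations $\ms M\sim_{CK}|\mb X|/|\mb X|_{\,\cdot}$ at both scales, the upper bound via a maximal $10$-separated (in $d_{u'\times v'\times w'}$) subset of $\mb X\cap\mb R_0$ whose disjoint fine rectangles each carry $\ge K^{-1}\ms M_{u'\times v'\times w'}(\mb X)$ points of $\mb X$ and all lie in a bounded dilate of $\mb R_0$ of mass $\lesssim \ms M_{u\times v\times w}(\mb X)$, and weak uniformity of $\mbt X$ by multiplying the two-sided estimates, landing on $K^3$ exactly as in the paper. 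The only inessential difference is your detour through summing over the whole coarse cover: the per-box bound applied to $\mb R_0$ alone (this is the paper's first display) already yields the upper bound, and note that its constant is $CK$ rather than absolute, since fine-scale uniformity is what supplies the $\ge K^{-1}\ms M_{u'\times v'\times w'}(\mb X)$ points per fine rectangle.
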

\begin{proof}
Let $u'\times v'\times w' \in \mc S'$, and let $\mc P\subset \mbt X$ be a maximal $10$-separated set for $d_{u'\times v'\times w'}$. As in the proof that uniformity implies weak uniformity, 
\begin{align*}
    K^{-1} |\mc P| \ms M_{u'\times v'\times w'}(\mb X) \leq |\mb X \cap 10\cdot \mb R_0| \lesssim \ms M_{u\times v\times w}(\mb X). 
\end{align*}
Rearranging and using \Cref{lem:covering_props}\ref{lemitem:covering_num_sep_set} and $K$-uniformity of $\mb X$ gives 
\begin{align*}
    |\mbt X|_{u'\times v'\times w'} &\lesssim K^2 \frac{|\mb X|_{u'\times v'\times w'}}{|\mb X|_{u\times v\times w}}. 
\end{align*}
In the other direction  
\begin{align*}
    |\mbt X|_{u'\times v'\times w'} &\geq \frac{|\mbt X|}{\ms M_{u'\times v'\times w'}(\mb X)}  
    \geq C^{-1} \frac{\ms M_{u\times v\times w}(\mb X)}{\ms M_{u'\times v'\times w'}(\mb X)} 
    \geq (CK)^{-1} \frac{|\mb X|_{u'\times v'\times w'}}{|\mb X|_{u\times v\times w}}.
\end{align*}
Using the lower bound on $|\mbt X|$ stated in the Lemma's hypothesis and weak uniformity of $\mb X$, in particular \eqref{eq:weak_uniform}, we find 
\begin{align*}
    |\mbt X|  \geq C^{-1} \ms M_{u\times v\times w}(\mb X) \geq C^{-1} \frac{|\mb X|}{|\mb X|_{u\times v\times w}} 
    \gtrsim C^{-1} K^{-2} \ms M_{u'\times v'\times w'}(\mbt X) |\mbt X|_{u'\times v'\times w'}
\end{align*}
giving weak uniformity of $\mbt X$. 
\end{proof}

\subsection{Branching function}\label{subsec:branching_func}
Let $m, T \geq 1$ be integers and set $\delta = 2^{-mT}$. We say $\mb X$ is $(m, T)$-uniform if $\mb X$ is $K$-uniform on the sequence $\{2^{-jT}\}_{j=0}^m$ with $K = (Cm^3\log(1/\delta))^{10m^3+2}$. We will take $m$ to be a large constant and let $T \to \infty$ with $m$ fixed. We will sometimes write $o(1)$ errors in what follows. These are errors that go to zero for fixed large $m$ as $T \to \infty$. 

We define the branching function $f(x,y,z) = f_{\mb X}(x,y,z)$ so that 
\begin{align*}
    |\mb X|_{\delta^x\times \delta^{z}\times \delta^y} = \delta^{-f(x,y,z)}.
\end{align*}

Notice the change in the order of indices from `x-z-y' to `x-y-z'. Up to this point in the paper the order `x-z-y' had the advantage that the middle variable is related to both the `point' and the `line' components of the point-line space. In the rest of the paper we will often omit the `z' variable and write $f(x,y) = f(x,y,x+y)$ so we changed the order to make this more natural. 

Let
\begin{align*}
    \ms D &= \{(x,y,z)\, :\, x,y\in [0,1],\, z \in [0, \min(1, x+y)]\} \\ 
    \ms D_m &= \ms D\cap \frac{1}{m}\Z^3.
\end{align*}
We set $\ms D_m$ to be the domain of $f_{\mb X}$. We restrict attention to $\ms D_m$ because those are the inputs along which $\mb X$ has branching structure. Note that the restriction $z \le x+y$ corresponds to admissible triples. We write 
\begin{equation*}
    f(x,y) = f(x,y,x+y).
\end{equation*}
Say $(x,y)\in \ms D$ if $x,y\geq 0$ and $x+y\leq 1$ and similarly for $(x,y)\in \ms D_m$. We also write
\begin{align*}
    f(x,y,z\sep x_0, y_0) = f(x_0+x,y_0+y,x_0+y_0+z) - f(x_0, y_0).
\end{align*}
By Lemma \ref{lem:branching_struct_subrect}, up to $o(1)$ errors this is the branching function of $\psi_{\mb R}(\mb X \cap \mb R)$ where $\mb R$ is a $\delta^{x_0}\times \delta^{x_0+y_0} \times \delta^{y_0}$ rectangle intersecting $\mb X$ significantly. 

\begin{lemma}\label{lem:f_is_lip}
Let $(x,y,z) \in \ms D_m$ and $(x+x',y+y',z+z')\in \ms D_m$ with $x',y',z'\geq 0$. Then 
\begin{align*}
    0 \leq f(x+x', y+y', z+z') - f(x,y,z) &\leq x'+y'+z' + o(1).
\end{align*}
In particular, $f$ is monotone with respect to each coordinate and Lipschitz. 
\end{lemma}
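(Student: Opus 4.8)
The goal is to establish the two-sided bound
\[
0 \le f(x+x',y+y',z+z') - f(x,y,z) \le x'+y'+z' + o(1)
\]
for admissible lattice points, where $f$ is defined by $|\mb X|_{\delta^x \times \delta^z \times \delta^y} = \delta^{-f(x,y,z)}$. Translating through the definition, this is the same as the covering-number comparison
\[
|\mb X|_{\delta^{x+x'}\times \delta^{z+z'}\times \delta^{y+y'}} \le |\mb X|_{\delta^x\times \delta^z\times \delta^y} \le \delta^{-(x'+y'+z')} \cdot \delta^{o(1)} \cdot |\mb X|_{\delta^{x+x'}\times\delta^{z+z'}\times\delta^{y+y'}},
\]
i.e. passing from the scale $u\times v\times w = \delta^x\times\delta^z\times\delta^y$ to the finer scale $u'\times v'\times w' = \delta^{x+x'}\times\delta^{z+z'}\times\delta^{y+y'}$ can only increase the covering number, and by a factor no larger than (essentially) $\frac{uvw}{u'v'w'} = \delta^{-(x'+y'+z')}$.

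\textbf{Proof plan.} The lower bound (monotonicity) is immediate: since $x',y',z'\ge 0$ we have $u'\times v'\times w' \le u\times v\times w$ in the partial order of scales, hence $d_{u\times v\times w}(\omega\to\omega') \le d_{u'\times v'\times w'}(\omega\to\omega')$ pointwise, so every $u'\times v'\times w'$-rectangle is contained in a bounded dilate of a $u\times v\times w$-rectangle; a covering of $\mb X$ by $u\times v\times w$-rectangles therefore refines to one by $u'\times v'\times w'$-rectangles with only a bounded blowup in cardinality, giving $|\mb X|_{u'\times v'\times w'} \gtrsim |\mb X|_{u\times v\times w}$, i.e. $f(x+x',y+y',z+z') - f(x,y,z) \ge -o(1)$; since both sides are of the form $\log_{1/\delta}$ of an integer comparable up to universal constants, and we only need the inequality up to $o(1)$, this suffices. (Alternatively, and more cleanly, use \Cref{lem:covering_props}\ref{lemitem:cover_subset_rect} scale-by-scale, or just note the covering number is monotone in the scale directly from the definition of $\mc R_{u\times v\times w}$.) For the upper bound, fix a cover of $\mb X$ by $N = |\mb X|_{u\times v\times w}$ rectangles $\mb R \in \mc R_{u\times v\times w}$. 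Each such $\mb R = \mb R_{u\times v\times w}(\omega)$ is a skewed box with side lengths $u,v,w$, and by \Cref{lem:covering_props}\ref{lemitem:covering_bd} (applied with the roles $u_0\times v_0\times w_0 = u'\times v'\times w'$ and $u_1\times v_1\times w_1 = u\times v\times w$, which is legitimate since $u'\times v'\times w' \le u\times v\times w$ and both are admissible) we have
\[
|\mb R|_{u'\times v'\times w'} \lesssim \frac{uvw}{u'v'w'} = \delta^{-(x'+y'+z')}.
\]
Summing over the $N$ rectangles in the cover and using subadditivity of covering numbers under unions gives
\[
|\mb X|_{u'\times v'\times w'} \lesssim N \cdot \delta^{-(x'+y'+z')} = \delta^{-(x'+y'+z')} |\mb X|_{u\times v\times w},
\]
which upon taking $\log_{1/\delta}$ yields $f(x+x',y+y',z+z') - f(x,y,z) \le x'+y'+z' + O(\log_{1/\delta} C) = x'+y'+z' + o(1)$, since the implied constant $C$ is universal and $\delta \to 0$.

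\textbf{One technical point to verify.} We must check that $(x+x',y+y',z+z') \in \ms D_m$ together with $(x,y,z)\in\ms D_m$ and $x',y',z'\ge 0$ really does guarantee that both $u\times v\times w$ and $u'\times v'\times w'$ are \emph{admissible} (i.e. $v\ge uw$, equivalently $z \le x+y$), since \Cref{lem:covering_props}\ref{lemitem:covering_bd} requires admissibility of both scales and the containment $u_0\times v_0\times w_0 \le u_1\times v_1\times w_1$. Admissibility of each scale is exactly the hypothesis $z\le x+y$ built into the definition of $\ms D$, and the componentwise inequality $\delta^{x+x'}\le \delta^x$ etc. is immediate from $x',y',z'\ge 0$; so there is nothing further to do.

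\textbf{Main obstacle.} There is no serious obstacle — this lemma is a direct bookkeeping consequence of the covering bound \Cref{lem:covering_props}\ref{lemitem:covering_bd} and subadditivity. The only mild subtlety is tracking the $o(1)$: the covering numbers are only defined up to universal multiplicative constants (and in fact only on dyadic scales, so $\delta^{x}$ etc. should be read as the nearest dyadic scale), and one should be slightly careful that the accumulated universal constants contribute $O(1/(mT)) = o(1)$ to $f$ rather than something larger — but since we apply \Cref{lem:covering_props}\ref{lemitem:covering_bd} only once (not iteratively) and $\lambda$ can be taken to be a fixed universal constant, this is harmless.
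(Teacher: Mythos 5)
Your proposal is correct and follows essentially the same route as the paper: monotonicity from the comparison of scales/rectangle containment, and the upper bound by covering each coarse rectangle with $\lesssim \delta^{-(x'+y'+z')}$ fine rectangles via \Cref{lem:covering_props}\ref{lemitem:covering_bd} and summing, with the universal constants absorbed into the $o(1)$. The only cosmetic difference is that your lower bound is stated as $\geq -o(1)$ rather than $\geq 0$ (the paper gets exact monotonicity from the containment of dyadic rectangles), but this is harmless since the lemma is only ever used up to $o(1)$ errors and in the limiting branching functions.
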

\begin{proof}
First of all, every element of $\mc R_{u\times v\times w}$ is contained in an element of $\mc R_{u'\times v'\times w'}$ if $u\leq u'$, $v\leq v'$, $w\leq w'$, giving monotonicity. 
For $a'\leq a$, $b'\leq b$, $c'\leq c$ and $b \leq a+c$, $b'\leq a'+c'$, \Cref{lem:covering_props}\ref{lemitem:covering_bd} gives
\begin{align*}
|\mb X|_{\delta^a\times \delta^b\times \delta^c} \lesssim \delta^{a'-a}\delta^{b'-b}\delta^{c'-c} |\mb X|_{\delta^{a'}\times \delta^{b'}\times \delta^{c'}},
\end{align*}
take logs to obtain the result. 
\end{proof}

In the next lemma we show that the branching functions have a submodularity property. Later we use this fact to
deduce the crucial direction set inequalities needed for our proof.
\begin{lemma}[Submodularity]\label{lem:relative_covering_ineq}
Let $\mb X$ be $(m, T)$-uniform and let $f$ be the branching function. Let $a, b \in \ms D_m$ and let $\max(a,b)$ and $\min(a,b)$ denote the coordinate-wise $\max$ and $\min$ of the tuples. It follows automatically that $\max(a,b) \in \ms D_m$, assume in addition that $\min(a,b) \in \ms D_m$. Then
\begin{align*}
    f(\max(a,b)) + f(\min(a,b)) \leq f(a) + f(b) + o(1). 
\end{align*}
\end{lemma}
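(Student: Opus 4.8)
The plan is to reduce submodularity of the branching function to a purely combinatorial statement about covering numbers: for any $\mb X \subset \Omega$ and any two admissible scales $u_0\times v_0\times w_0$ and $u_1\times v_1\times w_1$, writing $u_\vee\times v_\vee\times w_\vee = \max$ and $u_\wedge\times v_\wedge\times w_\wedge = \min$ coordinatewise, one has
\begin{equation*}
    |\mb X|_{u_\vee\times v_\vee\times w_\vee}\cdot |\mb X|_{u_\wedge\times v_\wedge\times w_\wedge} \lesssim |\mb X|_{u_0\times v_0\times w_0}\cdot |\mb X|_{u_1\times v_1\times w_1}.
\end{equation*}
Taking $u_i = \delta^{x_i}$, $v_i = \delta^{z_i}$, $w_i = \delta^{y_i}$ and using $(m,T)$-uniformity together with \Cref{lem:covering_props}\ref{lemitem:covering_num_sep_set} to pass between covering numbers and separated sets, the $\log$ of this inequality is exactly $f(\max) + f(\min) \le f(a) + f(b) + o(1)$ after dividing by $\log(1/\delta)$ (the $o(1)$ absorbing the universal constant and the uniformity constant $K = \delta^{o(1)}$). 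One must first check $\max(a,b), \min(a,b) \in \ms D$, i.e. that the $\min$ scale $u_\wedge\times v_\wedge\times w_\wedge$ is still admissible: since $v_\wedge = \min(v_0,v_1) \ge \min(u_0 w_0, u_1 w_1) \ge \min(u_0,u_1)\min(w_0,w_1) = u_\wedge w_\wedge$, admissibility is preserved (the $\max$ scale is trivially admissible).

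The core combinatorial inequality I would prove by a fibering argument. Let $\mc R_\vee = \mc R_{u_\vee\times v_\vee\times w_\vee}$ be the coarse rectangles and $\mc R_\wedge$ the fine ones. The idea is that a fine rectangle $\mb R_\wedge$ is obtained by intersecting a $u_0\times v_0\times w_0$ rectangle with a $u_1\times v_1\times w_1$ rectangle (up to bounded dilation constants, by \Cref{lem:covering_props}\ref{lemitem:cover_subset_rect} and approximate symmetry/transitivity). So: cover $\mb X$ by $|\mb X|_{u_\vee\times v_\vee\times w_\vee}$ coarse rectangles $\mb R_\vee$; inside each coarse rectangle $\mb R_\vee$, the set $\mb X \cap \mb R_\vee$ is covered by at most $|\mb X \cap \mb R_\vee|_{u_0\times v_0\times w_0}$ rectangles at scale $u_0\times v_0\times w_0$ and at most $|\mb X \cap \mb R_\vee|_{u_1\times v_1\times w_1}$ rectangles at scale $u_1\times v_1\times w_1$; and the number of fine rectangles meeting $\mb X \cap \mb R_\vee$ is at most the product of these two (since a fine rectangle lives in the common refinement of a $0$-rectangle and a $1$-rectangle, and each $0$-rectangle inside $\mb R_\vee$ meets at most $O(1)$ distinct $1$-rectangles per fine cell — more carefully, every point of $\mb X\cap\mb R_\vee$ determines a $0$-rectangle and a $1$-rectangle and hence a fine rectangle, so the fine count is bounded by the number of (0-rectangle, 1-rectangle) pairs that actually occur, which is at most the product of the two counts). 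Summing over the coarse cover:
\begin{equation*}
    |\mb X|_{u_\wedge\times v_\wedge\times w_\wedge} \lesssim \sum_{\mb R_\vee} |\mb X\cap \mb R_\vee|_{u_0\times v_0\times w_0}\, |\mb X\cap \mb R_\vee|_{u_1\times v_1\times w_1}.
\end{equation*}
By $(m,T)$-uniformity each coarse rectangle $\mb R_\vee$ meeting $\mb X$ has the \emph{same} covering numbers at scales $0$ and $1$ up to $\delta^{o(1)}$, namely $|\mb X|_{u_i\times v_i\times w_i}/|\mb X|_{u_\vee\times v_\vee\times w_\vee}$ by \Cref{lem:branching_struct_subrect}; plugging this in and multiplying by the number of coarse cells gives the claim.

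The main obstacle, and the step I would be most careful about, is the middle geometric claim that the fine rectangles refining $\mb X$ inside a single coarse cell are controlled by the \emph{product} of the two intermediate covering numbers — i.e. that a fine cell really is the common refinement of a $0$-cell and a $1$-cell, up to bounded overlap and bounded dilation constants. This requires a clean use of the asymmetric-metric formalism: if $\omega, \omega'$ lie in the same $0$-rectangle and in the same $1$-rectangle, then $d_{u_0\times v_0\times w_0}(\omega\to\omega')$ and $d_{u_1\times v_1\times w_1}(\omega\to\omega')$ are both $O(1)$, hence $d_{u_\wedge\times v_\wedge\times w_\wedge}(\omega\to\omega') = \max$ of the two is $O(1)$, so they lie in a bounded dilate of a common fine rectangle — and then \Cref{lem:covering_props}\ref{lemitem:covering_num_sep_set} converts this back to a covering-number bound with only constant loss. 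The bookkeeping of the various dilation factors (the $2^{-10}$'s, the $\lambda^6$ losses, and the $o(1)$ from uniformity) is routine given the lemmas already established, but it is where all the care goes; once the metric inequality $d_\wedge = \max(d_0, d_1)$ is invoked the argument is essentially a two-line double count.
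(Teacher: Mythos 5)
Your argument is sound and proves the lemma, but it is organized differently from the paper's proof. The paper argues locally at a single point: fix $\omega_*\in\mb X$ and consider the two nested rectangles $\mb R_0\supset \mb R_1$ centered at $\omega_*$ at the scales of $\min(a,b)$ and of $b$; uniformity makes both intersections $\mb X\cap\mb R_j$ near-maximal, so \Cref{lem:branching_struct_subrect} identifies $f(a)-f(\min(a,b))$ and $f(\max(a,b))-f(b)$ with the covering numbers of $\mb X\cap\mb R_0$ at scale $a$ and of $\mb X\cap\mb R_1$ at scale $\max(a,b)$, respectively; then \Cref{lem:covering_props}\ref{lemitem:cover_subset_rect}, applied to $\mb X\cap\mb R_1\subset\mb R_1$, replaces the scale $\max(a,b)$ by the scale $a$, and the inclusion $\mb R_1\subset\mb R_0$ finishes. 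Your proof is the global counterpart: cover $\mb X$ by coarse cells at the $\min(a,b)$ scale, bound the fine covering number inside each cell by the product of the $a$- and $b$-scale covering numbers of that cell using the identity $d_{u_\wedge\times v_\wedge\times w_\wedge}=\max(d_0,d_1)$ (the same identity that powers \Cref{lem:covering_props}\ref{lemitem:cover_subset_rect}), and then use uniformity through \Cref{lem:branching_struct_subrect} to equalize the per-cell counts before summing. The ingredients coincide, so the difference is structural rather than conceptual: the paper needs only one well-chosen cell and no summation, while your double count makes visible exactly where uniformity is indispensable.

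Three small corrections. First, the opening reduction is overstated: the inequality $|\mb X|_{u_\vee\times v_\vee\times w_\vee}\,|\mb X|_{u_\wedge\times v_\wedge\times w_\wedge}\lesssim |\mb X|_{u_0\times v_0\times w_0}\,|\mb X|_{u_1\times v_1\times w_1}$ is false for general (non-uniform) sets: one coarse cell carrying a full $M\times M$ grid of $(0\text{-cell},1\text{-cell})$ pairs together with $N$ coarse cells carrying one point each violates it by an unbounded factor once $M\ll N\ll M^2$. Your proof does not actually rely on the general statement, since the last step invokes uniformity, but the phrase ``for any $\mb X\subset\Omega$'' should be dropped. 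Second, the parenthetical on admissibility is backwards: the fine scale $u_\wedge\times v_\wedge\times w_\wedge$ is the one you verified (correctly), whereas it is the coarse scale $u_\vee\times v_\vee\times w_\vee$ whose admissibility can genuinely fail, e.g.\ $a=(1,0,1)$, $b=(0,1,1)$ in exponent coordinates; when it fails, $f(\min(a,b))$ is undefined and the lemma must be read as implicitly assuming $\min(a,b)\in\ms D$ (the paper is equally silent on this). Third, \Cref{lem:branching_struct_subrect} gives the two-sided comparison $|\mb X\cap\mb R_\vee|_i\sim |\mb X|_i/|\mb X|_\vee$ only for cells of near-maximal mass; for an arbitrary coarse cell only the upper bound holds (a cell may contain a single point). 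That upper bound is all your summation needs, and it is exactly what the proof of that lemma establishes for every cell, so this is an imprecision of citation rather than a gap.
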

\begin{proof}
Denote 
\begin{align*}
    a &= (x_0', y_0', z_0'), \\ 
    b &= (x_1, y_1, z_1), \\ 
    \max(a,b) &= (x_1', y_1', z_1'),\\
    \min(a, b) &= (x_0, y_0, z_0).
\end{align*}
Let $\mb \omega_* \in \mb X$ be fixed. Let $\mb R_j = \mb R_{\delta^{x_j}\times \delta^{z_j}\times \delta^{y_j}}(\omega_*)$ for $j = 0,1$. Uniformity implies
\begin{align*}
    |\mb X \cap \mb R_0| &\geq K^{-1}\ms M_{\delta^{x_0}\times \delta^{z_0}\times \delta^{y_0}}(\mb X), \\ 
    |\mb X \cap \mb R_1| &\geq K^{-1}\ms M_{\delta^{x_1}\times \delta^{z_1}\times \delta^{y_1}}(\mb X).
\end{align*}
By \Cref{lem:branching_struct_subrect},
\begin{align*}
    f(x_0',y_0',z_0') - f(x_0,y_0,z_0) &= \log_{\delta^{-1}} |\mb X\cap \mb R_0|_{\delta^{x_0'}\times \delta^{z_0'}\times \delta^{y_0'}} \pm o(1) \\ 
    f(x_1',y_1',z_1') - f(x_1,y_1,z_1) &= \log_{\delta^{-1}} |\mb X\cap \mb R_1|_{\delta^{x_1'}\times \delta^{z_1'}\times \delta^{y_1'}} \pm o(1).
\end{align*}
The claim has been reduced to proving $|\mb X\cap \mb R_0|_{\delta^{x_0'}\times \delta^{z_0'}\times \delta^{y_0'}} \gtrsim |\mb X\cap \mb R_1|_{\delta^{x_1'}\times \delta^{z_1'}\times \delta^{y_1'}}$. 
This estimate follows from applying \Cref{lem:covering_props}\ref{lemitem:cover_subset_rect} to $\mb X\cap \mb R_1\subset \mb R_1$.
\end{proof}

An important quantity in our proof is the covering number of the slope set. We let 
\begin{align*}
    \Dir(\eta) &= |\mb X|_{1\times 1\times \eta}, \\ 
    d(t) &= f(0, t, 0) \\ 
    d(t\sep x, y) &= f(0, t, 0\sep x, y).
\end{align*}
Up to constants $\Dir(\eta)$ is the $\eta$-covering number of the slope set of $L[\mb X]$, $d(t)$ is essentially $\log_{\delta^{-1}} \Dir(\delta^t)$ and $d(t;x, y)$ is the analogous quantity in the rescaled set $\psi_{\mb R}(\mb X \cap \mb R)$ for $\mb R$ a $\delta^x\times \delta^{x+y} \times \delta^y$ rectangle which intersects $\mb X$ significantly. 
Similarly, we define dual quantities
\begin{align*}
    \Dir^{\vee}(\eta) &= |\mb X|_{\eta\times 1\times 1} \\ 
    d^{\vee}(t) &= f(t, 0, 0) \\ 
    d^{\vee}(t\sep x, y) &= f(t, 0, 0\sep x, y).
\end{align*}
Up to constants $\Dir^{\vee}(\eta)$ is the $\eta$-covering number of the set of $x$-coordinates of $P[\mb X]$. 
\begin{lemma}\label{lem:dir_set_props}
Let $\mb X$ be $(m, T)$-uniform. Let $x,y,s,t \in m^{-1}\Z_{\geq 0}$ be such that the inputs below are defined. Then we have
\begin{enumerate}[label=(\roman*)]
    \item $d(t+s\sep x, y) \geq d(t\sep x, y)+d(s\sep x, y+t) - o(1)$\label{lemitem:dir_compare_scales}
    \item $d(t\sep x, y) \geq f(0, t\sep x,y) - t - o(1)$ \label{lemitem:dir_lower_bd}
    \item The function $d$ satisfies the following bounds for $t', x', y' \geq 0$,
    \begin{align*}
        0 \leq d(t+t'\sep x, y) - d(t\sep x, y) &\leq t' + o(1), \\ 
        -o(1) \leq d(t\sep x, y)  - d(t\sep x+x', y) &\leq 2x' + o(1), \\ 
        |d(t\sep x, y+y') - d(t\sep x, y)| &\leq 2y' + o(1).
    \end{align*}
    In particular, $d$ is monotone with respect to $t$ and $x$ and Lipschitz with respect to all three parameters. 
    \label{lemitem:dir_lip}
\end{enumerate}
Similarly for the dual quantities, 
\begin{enumerate}[label=(\roman*)]
    \item $d^{\vee}(t+s\sep x, y) \geq d(t\sep x, y) + d(s\sep x+t, y) - o(1)$
    \item $d^{\vee}(t\sep x, y) \geq f(t, 0\sep x, y) - t - o(1)$
    \item The function $d^{\vee}$ satisfies the following bounds for $t', x', y' \geq 0$,
    \begin{align*}
        0 \leq d^{\vee}(t+t'\sep x, y) - d^{\vee}(t\sep x, y) &\leq t' + o(1), \\ 
        0\leq |d^{\vee}(t\sep x, y)  - d^{\vee}(t\sep x+x', y)| &\leq 2x' + o(1), \\ 
        -o(1) \leq d^{\vee}(t\sep x, y) - d^{\vee}(t\sep x, y+y') &\leq 2y' + o(1).
    \end{align*}
    In particular, $d^{\vee}$ is monotone with respect to $t$ and $y$ and Lipschitz with respect to all three parameters. 
\end{enumerate}
\end{lemma}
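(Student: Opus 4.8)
\emph{Plan.} My plan is to express every quantity in the statement directly in terms of the branching function $f$ and then read off all the bounds from monotonicity and Lipschitz regularity (\Cref{lem:f_is_lip}) and from submodularity (\Cref{lem:relative_covering_ineq}). Using $f(x,y,z\sep x_0,y_0)=f(x_0+x,y_0+y,x_0+y_0+z)-f(x_0,y_0)$ and $f(x,y)=f(x,y,x+y)$, the definitions unwind to
\begin{align*}
 d(t\sep x,y)&=f(x,y+t,x+y)-f(x,y,x+y),\\
 f(0,t\sep x,y)&=f(x,y+t,x+y+t)-f(x,y,x+y),\\
 d^{\vee}(t\sep x,y)&=f(x+t,y,x+y)-f(x,y,x+y).
\end{align*}
Granting that the quantities in the statement are defined, every triple of coordinates I write below lies in $\ms D_m$; I would dispatch this at the outset as a routine consequence of $x+y\le1$ (and, where it occurs, $x+y+t\le1$) together with the fact that $s,t,x,y$ and the increments $t',x',y'$ are multiples of $1/m$.

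\emph{Parts (i) and (ii).} For (i) I would expand $d(t+s\sep x,y)-d(t\sep x,y)-d(s\sep x,y+t)$ and cancel the common term $f(x,y,x+y)$, which leaves
\[
 \bigl[f(x,y+t+s,x+y)+f(x,y+t,x+y+t)\bigr]-\bigl[f(x,y+t,x+y)+f(x,y+t+s,x+y+t)\bigr];
\]
this is $\ge-o(1)$ by \Cref{lem:relative_covering_ineq} applied to $a=(x,y+t,x+y+t)$ and $b=(x,y+t+s,x+y)$, for which $\max(a,b)=(x,y+t+s,x+y+t)$ and $\min(a,b)=(x,y+t,x+y)$. This submodularity inequality is the analytic content of \Cref{fig:directions} and is the main engine of the paper. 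For (ii) I would subtract the first two identities above to get $d(t\sep x,y)-f(0,t\sep x,y)=f(x,y+t,x+y)-f(x,y+t,x+y+t)$; the two triples agree in their first two coordinates and the last drops by $t$, so \Cref{lem:f_is_lip} bounds this difference below by $-(t+o(1))$.

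\emph{Part (iii).} The first bound is immediate: $d(t+t'\sep x,y)-d(t\sep x,y)=f(x,y+t+t',x+y)-f(x,y+t,x+y)$ moves only the middle coordinate of $f$, by $t'$, so it lies in $[0,t'+o(1)]$ by \Cref{lem:f_is_lip}. For the second bound I would put $A=f(x,y+t,x+y)$, $B=f(x,y,x+y)$, $C=f(x+x',y+t,x+x'+y)$, $D=f(x+x',y,x+x'+y)$, so that $d(t\sep x,y)-d(t\sep x+x',y)=(D-B)-(C-A)$; since $C-A$ and $D-B$ each raise two coordinates of $f$ by $x'$, \Cref{lem:f_is_lip} gives $0\le C-A,\,D-B\le2x'+o(1)$, which yields the upper bound $2x'+o(1)$ at once, while the lower bound $(D-B)-(C-A)\ge-o(1)$ is precisely \Cref{lem:relative_covering_ineq} applied to $a=(x+x',y,x+x'+y)$ and $b=(x,y+t,x+y)$ (so that $\max(a,b)=(x+x',y+t,x+x'+y)$ and $\min(a,b)=(x,y,x+y)$). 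For the third bound I would set $A'=f(x,y+t+y',x+y+y')$ and $B'=f(x,y+y',x+y+y')$; then $d(t\sep x,y+y')-d(t\sep x,y)=(A'-A)-(B'-B)$ with $0\le A'-A,\,B'-B\le2y'+o(1)$ by \Cref{lem:f_is_lip}, so its absolute value is at most $2y'+o(1)$.

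\emph{Dual statements and the main obstacle.} I would obtain the statements for $d^{\vee}$ by repeating all of the above with the first two arguments of $f$ interchanged, which is legitimate since \Cref{lem:f_is_lip}, \Cref{lem:relative_covering_ineq}, and the domain $\ms D$ are symmetric under that swap, and since it interchanges $d$ with $d^{\vee}$. Concretely, the only places using submodularity are the $d^{\vee}$-analogue of (i), via \Cref{lem:relative_covering_ineq} at $a=(x+t,y,x+t+y)$, $b=(x+t+s,y,x+y)$, and the lower bound $d^{\vee}(t\sep x,y)-d^{\vee}(t\sep x,y+y')\ge-o(1)$, via \Cref{lem:relative_covering_ineq} at $a=(x+t,y,x+y)$, $b=(x,y+y',x+y+y')$; everything else is an application of \Cref{lem:f_is_lip}. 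None of these computations is long, so the obstacle is not computational: it is (a) recognizing that each genuinely two-sided estimate --- above all the direction-set inequality (i) --- is a disguised submodularity inequality and picking the correct pair $(a,b)$, and (b) keeping the three arguments of $f$ straight under the convention $|\mb X|_{\delta^x\times\delta^z\times\delta^y}=\delta^{-f(x,y,z)}$, which stores the slope variable in the last slot of $f$ even though it is geometrically the middle one.
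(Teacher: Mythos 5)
Your proof is correct and follows essentially the same route as the paper's: every estimate is read off from \Cref{lem:f_is_lip} for the one-sided bounds and from \Cref{lem:relative_covering_ineq} for part (i) and for the monotonicity of $d$ in $x$, with exactly the same choices of submodularity pairs (the paper merely writes them in the $f(\,\cdot\,,\cdot\,,\cdot\sep x,y)$ notation and treats the dual case as "similar"). One remark: your dual version of (i) establishes the inequality with $d^{\vee}$ on the right-hand side, which is the intended statement and the one used later in the paper; the occurrences of $d$ there in the lemma as printed appear to be a typo, so no correction to your argument is needed.
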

\begin{proof}
We prove the properties for the primal direction set, the dual direction set is similar.
\parag{Proof of \ref{lemitem:dir_compare_scales}}
By submodularity (\Cref{lem:relative_covering_ineq}),
\begin{align*}
    f(0,t+s,t\sep x, y) + f(0,t,0\sep x, y) \leq f(0, t+s,0\sep x, y) + f(0, t, t\sep x, y) + o(1)
\end{align*}
and rearranging gives the inequality.

\parag{Proof of \ref{lemitem:dir_lower_bd}}
\Cref{lem:f_is_lip} implies $f(0, t, 0\sep x, y) \geq f(0, t, t\sep x, y) - t-o(1)$. 

\parag{Proof of \ref{lemitem:dir_lip}} The Lipschitz properties and monotonicity in the $t$ variable follows from Lipschitz and monotonicity of $f$ (\Cref{lem:f_is_lip}). Monotonicity in the $x$-variable follows from submodularity (\Cref{lem:relative_covering_ineq}),
\begin{align*}
    f(0,t,0\sep x, y) - f(0,0,0\sep x, y) \geq f(x',t,x'\sep x, y) - f(x',0,x'\sep x, y)-o(1).
\end{align*}
\end{proof}

\subsection{Limiting branching function}\label{subsec:limiting_branching_func}
It is helpful to consider limiting branching functions to avoid $o(1)$ errors. 

\begin{definition}
Let $\mc L$ be the class of functions $f: \ms D \to \R_{\geq 0}$ which are limits of branching functions. To be precise, let $\{\mb X_{j,k}\}_{j,k=0}^{\infty}$ be a sequence of sets in $\Omega$ which are $(m_j, T_{j,k})$-uniform where $m_j \to \infty$ and for each $j$, $T_{j,k}\to \infty$ as $k\to \infty$. Let $f_j$ be a subsequential limit of the branching functions $f_{\mb X_{j,k}}$ as $k\to \infty$. Extend $f_j$ from a Lipschitz function on $\ms D_{m_j}$ to a Lipschitz function on $\ms D$, and let $f$ be a subsequential limit of the functions $f_j$ as $j\to \infty$. The set $\mc L$ consists of all the functions $f$ arising like this. 
\end{definition}
\bigskip\\
Here are some properties of $\mc L$. 
\begin{enumerate}[label=(\arabic*)]
    \item The class $\mc L$ is closed under uniform limits. 

    \item Any $f \in \mc L$ is $1$-Lipschitz and monotonic,
    \begin{equation}\label{eq:f_lip}
        0 \leq f(x+x',y+y',z+z') - f(x,y,z)\leq x'+y'+z'.
    \end{equation} \label{lemitem:classL_lip_mon}
    
    \item  Any $f \in \mc L$ satisfies the submodularity inequality from \Cref{lem:relative_covering_ineq}. Let $a,b \in \ms D$ be given and let $\max(a,b)$ and $\min(a,b)$ denote the coordinate-wise $\max$ and $\min$ of the tuples. Assume $\min(a,b) \in \ms D$.
    Then
    \begin{align*}
        f(\max(a,b)) + f(\min(a,b)) \leq f(a) + f(b).
    \end{align*}  \label{lemitem:classL_rel_covering}
    \item Given an $f \in \mc L$ we can make sense of the quantities $d(t\sep x, y)$ and $d^\vee(t\sep x,y)$, namely
    \begin{align*}
        d(t\sep x,y) = f(0,t,0;x,y) = f(x, y+t, x+y) - f(x,y),\\
        d^\vee(t\sep x,y) = f(t,0,0;x,y) = f(x+t, y, x+y) - f(x,y),
    \end{align*}
    where we write $f(x, y) = f(x, y, x+y)$. 
    The direction set inequalities from \Cref{lem:dir_set_props} hold without $o(1)$ errors by combining Lipschitz, monotonicity, and submodularity.

    \item Let $(x_0, y_0) \in \ms D$ and $w \in (0,1-x_0-y_0]$. We denote 
    \begin{align*}
    Q_w(x,y) = \{x,y\, :\, x \geq x_0, y \geq y_0,\, (x-x_0)+(y-y_0) \leq w\}.
    \end{align*}
    Set 
    \begin{align*}
        f_Q(x,y,z) = \frac{1}{w}f(wx,wy,wz\sep x_0,y_0)\qquad \text{for $(x,y,z)\in \ms D$.}
    \end{align*}
    Just like $f(x, y, z\sep x_0, y_0)$ corresponds to blowing up $\mb X$ into a $\delta^{x_0}\times \delta^{x_0+y_0}$ rectangle, $f_Q(x,y,z)$ corresponds to blowing up into a $\delta^{x_0}\times \delta^{x_0+y_0}$ rectangle and then blurring below scale $\delta^w$. If $f \in \mc L$ then $f_Q \in \mc L$ as well.

    To prove this, we have to understand how covering numbers behave under rescaling. Let $\mb X \subset \Omega$ be an $(m, T)$-uniform set. Let $(x_0,y_0,z_0) \in \ms D_m$, and let $\mb R \in \mc R_{\delta^{x_0}\times \delta^{x_0+y_0}\times \delta^{y_0}}$ intersect $\mb X$ significantly, meaning $|\mb X\cap \mb R| \geq \frac{1}{C} \ms M_{\delta^{x_0}\times \delta^{x_0+y_0}\times \delta^{y_0}}(\mb X)$. Let $m' = m(1-x_0-y_0)$, set $\mb X' = \psi_{\mb R}(\mb X\cap \mb R)$, and extract an $(m', T)$-uniform subset $\mb X'' \subset \mb X'$ with $|\mb X''| \geq \frac{1}{K}|\mb X'|$. 
    \begin{lemma}\label{lem:covering_number_rescaling}
    Let $\mb X$, $\mb X'$, and $\mb X''$ be as above. 
    For any $(x,y,z) \in \ms D_{m'}$,
    \begin{align*}
        |\mb X''|_{\delta^{x}\times \delta^{z}\times \delta^{y}} \sim_{K^7} \frac{|\mb X|_{\delta^{x_0+x}\times \delta^{x_0+y_0+z}\times \delta^{y_0+y}}}{|\mb X|_{\delta^{x_0}\times \delta^{x_0+y_0}\times \delta^{y_0}}}.
    \end{align*}
    By taking logs with base $\delta' = \delta^{1-x_0-y_0}$, we find
    \begin{align*}
        f_{\mb X''}(x,y,z) = \frac{1}{1-x_0-y_0}f_{\mb X}((1-x_0-y_0)x,y(1-x_0-y_0),(1-x_0-y_0)z; x_0,y_0) + O\Bigl(\frac{\log K}{\log \delta'}\Bigr). 
    \end{align*}
    The big-$O$ term is bounded above by $\frac{1}{1-x_0-y_0}o_{\delta}(1)$.
    \end{lemma}
    Note that the branching function $f_{\mb X''}$ satisfies
    \begin{align*}
        |\mb X''|_{(\delta')^x\times (\delta')^z\times (\delta')^y} = (\delta')^{-f_{\mb X''}(x,y,z)},
    \end{align*}
    where $\delta' = \delta^{1-x_0-y_0}$.
    \begin{proof}
    By \Cref{lem:branching_struct_subrect}, $\mb X'$ is weakly $K^3$-uniform on scales $\{2^{-jT}\}_{j=0}^{m'}$, and for $(x,y,z) \in \ms D_{m'}$,
    \begin{equation*}
        |\mb X'|_{\delta^x\times \delta^z\times \delta^y} \sim_{K^3} \frac{|\mb X|_{\delta^{x_0+x}\times \delta^{x_0+y_0+z}\times \delta^{y_0+y}}}{|\mb X|_{\delta^{x}\times \delta^{x+y}\times \delta^y}}.
    \end{equation*}
    Obviously $|\mb X''|_{\delta^x\times \delta^z\times \delta^y} \leq |\mb X'|_{\delta^x\times \delta^z\times \delta^y}$. Using weak uniformity of $\mb X'$, we can prove a matching lower bound,
    \begin{align*}
        |\mb X''|_{\delta^x\times \delta^z\times \delta^y} &\geq \frac{|\mb X''|}{\ms M_{\delta^x\times \delta^z\times \delta^y}(\mb X'')}\gtrsim K^{-1} \frac{|\mb X'|}{\ms M_{\delta^x\times \delta^z\times \delta^y}(\mb X'')} \\ 
        &\gtrsim K^{-4} \frac{|\mb X'|_{\delta^x\times \delta^z\times \delta^y} \ms M_{\delta^x\times \delta^z\times \delta^y}(\mb X')}{\ms M_{\delta^x\times \delta^z\times \delta^y}(\mb X'')} \gtrsim K^{-4} |\mb X'|_{\ms M_{\delta^x\times \delta^z\times \delta^y}(\mb X'')}.
    \end{align*}
    This completes our estimate for the covering numbers of $\mb X''$, and the branching function bound follows by taking logs. 
    \end{proof}

    Using \Cref{lem:covering_number_rescaling}, we can prove that if $f \in \mc L$ then $f_Q\in \mc L$ as well. Let $\{\mb X_{j,k}\}_{j,k=0}^{\infty}$ be a sequence of sets having $f$ as their only limiting branching function in the sense described above. 
    Let $\mb R_{j,k} \in \mc R_{\delta^{x_0}\times \delta^{x_0+y_0}\times \delta^{y_0}}$ intersect $\mb X_{j,k}$ significantly, and set $\mb X_{j,k}' = \psi_{\mb R_{j,k}}(\mb X_{j,k}\cap \mb R_{j,k})$. 
    Let $\mb X_{j,k}'' \subset \mb X_{j,k}'$ be $(m_j', T_{j,k})$-uniform where we choose $m_j' < m_j$ to be the largest integer so that $2^{-m_j' T_{j,k}} \geq 2^{-wm_j T_{j,k}}$. 
    By \Cref{lem:covering_number_rescaling}, $f_Q$ is the limiting branching function of $\{\mb X_{j,k}''\}_{j,k=0}^{\infty}$.\label{item:limiting_branching_func_blowup}

    \item Ren and Wang's Furstenberg set estimate \cite[Theorem 4.1]{RenWang2023} can be phrased in terms of $\mc L$. Let $f \in \mc L$ and suppose 
    \begin{align*}
        d(y\sep 1,0) &\geq s y  \qquad \text{for $0 \leq y \leq 1$,} \\ 
        f(x, 0) &\geq t x  \qquad \text{for $0 \leq x \leq 1$.}
    \end{align*}
    Then 
    \begin{align*}
        f(0, 1) &\geq \min\{t,(s+t)/2,1\} +  d(1\sep 1,0).
    \end{align*}
    See \S\ref{sec:furstenberg_pf_phase_space} for a proof. \label{exampleitem:info_L_furstenberg}
\end{enumerate}

\section{A two step method for incidence lower bounds}\label{sec:two_step_inc_lower_bd}
\subsection{Initial estimate and inductive step for uniform sets}
In this section we prove bounds for the normalized incidence count
\begin{equation*}
    B(w; \mb X) = \frac{I(w; \mb X)}{w|\mb X|^2}
\end{equation*}
using the high-low inequality. 
Below we abbreviate $B(w) = B(w; \mb X)$. 

Recall that if $P$ and $L$ are a point and a line set, potentially counted with multiplicity, the high-low inequality (\Cref{thm:high_low_ineq}) states 
\begin{align*}
 |B(w; P, L) - B(w/2; P, L)| \lesssim \Bigl(\frac{\ms M_{w\times w}(P)}{|P|}  \frac{\ms M_{1\times w}(L)}{|L|}w^{-3}\Bigr)^{1/2}.
\end{align*}
Let $P[\mb X]$ and $L[\mb X]$ be the point and line sets of the phase space set $\mb X$, counted with multiplicity so that
\begin{align*}
    |P[\mb X]| = |L[\mb X]| = |\mb X|\qquad \text{and}\qquad I(w; \mb X) = I(w; P[\mb X], L[\mb X]).
\end{align*}
By applying the high-low inequality to $P[\mb X]$ and $L[\mb X]$ and using \eqref{eq:covering_num_comp_PL} to estimate the concentration numbers thereof,
\begin{align}
    |B(w; \mb X) - B(w/2; \mb X)| &\lesssim \Bigl(\frac{\ms M_{w\times w}(P[\mb X])}{|P[\mb X]|}  \frac{\ms M_{1\times w}(L[\mb X])}{|L[\mb X]|}w^{-3}\Bigr)^{1/2} \nonumber \\ 
    &\lesssim \Bigl(\frac{\ms M_{w\times w\times 1}(\mb X)}{|\mb X|}\frac{\ms M_{1\times w\times w}(\mb X)}{|\mb X|}w^{-3}\Bigr)^{1/2}. \label{eq:HighLowPhaseSpace}
\end{align}

Here is a version of the high-low estimate adapted uniform sets in configuration space. 
\begin{prop}\label{prop:phase_space_high_low}
Suppose $\mb X$ is $K$-uniform at scales $\{1,w\}$. Then for any dyadic $A = 2^i$, 
\begin{align*}
    |B(w) - B(w/A)| &\lesssim K A^{3/2} (|\mb X|_{w\times w\times 1}^{-1} |\mb X|_{1\times w\times w}^{-1} w^{-3})^{1/2}, \\ 
    |B(w) - B(Aw)| &\lesssim KA^{1/2} (|\mb X|_{w\times w\times 1}^{-1} |\mb X|_{1\times w\times w}^{-1} w^{-3})^{1/2}.
\end{align*}
\end{prop}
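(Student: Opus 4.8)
The plan is to deduce both estimates from the high-low inequality \Cref{thm:high_low_ineq} applied along a geometric sequence of scales, invoking the uniformity hypothesis only at scale $w$ itself and transferring information to the other scales by elementary covering arguments.

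\textbf{Dictionary at scale $w$.} First I would translate the quantities in \Cref{thm:high_low_ineq} into phase-space language. Since $P=P[\mb X]$ and $L=L[\mb X]$ both have cardinality $|\mb X|$, and since $\ms M_{w\times w}(P)\sim\ms M_{w\times w\times1}(\mb X)$ and $\ms M_{1\times w}(L)\sim\ms M_{1\times w\times w}(\mb X)$ by \eqref{eq:covering_num_comp_PL}, the $K$-uniformity of $\mb X$ at the admissible scales $w\times w\times1$ and $1\times w\times w$ combines with weak uniformity \eqref{eq:weak_uniform} and the trivial bound $|\mb X|\le\ms M_{u\times v\times w}(\mb X)\,|\mb X|_{u\times v\times w}$ to give
\[
\frac{\ms M_{w\times w}(P)}{|P|}\sim_K\frac{1}{|\mb X|_{w\times w\times1}},\qquad
\frac{\ms M_{1\times w}(L)}{|L|}\sim_K\frac{1}{|\mb X|_{1\times w\times w}}.
\]
Writing $E=\bigl(|\mb X|_{w\times w\times1}^{-1}\,|\mb X|_{1\times w\times w}^{-1}\,w^{-3}\bigr)^{1/2}$ for the target error, \Cref{thm:high_low_ineq} at scale $w$ already reads $|B(w)-B(w/2)|\lesssim KE$, so it remains to telescope across scales.

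\textbf{Downward chaining.} For the first bound I would write $A=2^j$, telescope $|B(w)-B(w/A)|\le\sum_{i=0}^{j-1}|B(w/2^i)-B(w/2^{i+1})|$, and apply \Cref{thm:high_low_ineq} at scale $w/2^i$ to the $i$-th term. Because a metric ball of radius $w/2^i$ lies inside one of radius $w$, we have $\ms M_{(w/2^i)\times(w/2^i)}(P)\le\ms M_{w\times w}(P)$ and $\ms M_{1\times(w/2^i)}(L)\le\ms M_{1\times w}(L)$, so only the factor $(w/2^i)^{-3}=2^{3i}w^{-3}$ changes and the $i$-th term is $\lesssim K\,2^{3i/2}E$. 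Summing the geometric series gives $|B(w)-B(w/A)|\lesssim KA^{3/2}E$.

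\textbf{Upward chaining.} For the second bound I would telescope the other way, $|B(w)-B(Aw)|\le\sum_{i=0}^{j-1}|B(2^iw)-B(2^{i+1}w)|$, and apply \Cref{thm:high_low_ineq} at scale $W_i=2^{i+1}w$. Now a square of side $W_i$ is covered by $O((W_i/w)^2)$ squares of side $w$ and a radius-$W_i$ ball in the (two-dimensional) line metric by $O((W_i/w)^2)$ radius-$w$ balls, so $\ms M_{W_i\times W_i}(P)\lesssim(W_i/w)^2\ms M_{w\times w}(P)$ and $\ms M_{1\times W_i}(L)\lesssim(W_i/w)^2\ms M_{1\times w}(L)$. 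Combining with the dictionary, and using $(W_i/w)^4W_i^{-3}=2^{i+1}w^{-3}$, the $i$-th term is $\lesssim K\,2^{(i+1)/2}E$; summing gives $|B(w)-B(Aw)|\lesssim KA^{1/2}E\le KA^2E$. The only place demanding care is this last estimate, where the growth of $\ms M_{W\times W}(P)$ and $\ms M_{1\times W}(L)$ as $W$ increases past $w$ must be balanced against the decay of $W^{-3}$; everything else is bookkeeping, and crucially the uniformity hypothesis is cashed in only at the single scale $w$, all other scales being compared to $w$ by monotonicity or covering.
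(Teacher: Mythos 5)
Your proposal is correct and follows essentially the same route as the paper: apply \Cref{thm:high_low_ineq} at each dyadic scale, compare the concentration numbers at those scales to the ones at scale $w$ (monotonicity going down, an $A^2$-covering bound going up), convert $\ms M/|\mb X|$ to $K/|\mb X|_{\cdot}$ via \eqref{eq:covering_num_comp_PL} and weak uniformity \eqref{eq:weak_uniform} at scale $w$ only, and sum the geometric series. Your upward chaining in fact yields the sharper factor $A^{1/2}$, which of course implies the stated $A^2$ bound.
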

\begin{proof}
To obtain the first inequality, use \eqref{eq:HighLowPhaseSpace} and \eqref{eq:weak_uniform} to find
\begin{align*}
    |B(2^{-j}w) - B(2^{-j-1}w)| &\lesssim 2^{\frac{3}{2}(j+1)}\Bigl(\frac{\ms M_{w\times w\times 1}(\mb X)}{|\mb X|}\frac{\ms M_{1\times w\times w}(\mb X)}{|\mb X|}w^{-3}\Bigr)^{1/2} \\ 
    &\lesssim 2^{\frac{3}{2}(j+1)} K(|\mb X|_{w\times w\times 1}^{-1} |\mb X|_{1\times w\times w}^{-1}w^{-3})^{1/2}
\end{align*}
and sum the geometric series. To obtain the second inequality, we use $\ms M_{2^jw\times 2^jw\times 1}(\mb X) \lesssim 2^{2j} \ms M_{w\times w\times 1}(\mb X)$ and $\ms M_{1\times 2^jw\times 2^jw}(\mb X) \lesssim 2^{2j} \ms M_{1\times w\times w}(\mb X)$. Again using \eqref{eq:covering_num_comp_PL} and \eqref{eq:weak_uniform}, 
\begin{align*}
    |B(2^jw) - B(2^{j+1}w)| &\lesssim \Bigl(\frac{\ms M_{2^{j+1}w\times 2^{j+1}w}(P[\mb X])}{|P|}\frac{\ms M_{1\times 2^{j+1}w}(L[\mb X])}{|L|}(2^jw)^{-3}\Bigr)^{1/2}, \\  
    &\lesssim 2^{\frac{1}{2}j} \Bigl(\frac{\ms M_{w\times w}(P[\mb X])}{|P|}\frac{\ms M_{1\times w}(L[\mb X])}{|L|} w^{-3}\Bigr)^{1/2}
\end{align*}
and we sum the geometric series. 
\end{proof}

We use the following initial estimate.
\begin{prop}\label{prop:phase_space_initial_estimate}
Suppose that $\mb X$ is $K$-uniform at scales $\{1,\eta\}$. Then 
\begin{align*}
    B(\eta) \gtrsim K^{-2} \frac{|\mb X|_{\eta\times\eta\times\eta}}{\eta |\mb X|_{\eta\times\eta\times 1}|\mb X|_{1\times \eta\times \eta}}.
\end{align*}
\end{prop}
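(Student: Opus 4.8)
The plan is to exploit the (approximate) factorization $\mb R_{\eta\times\eta\times\eta}(\omega)\approx \mb R_{\eta\times\eta\times 1}(\omega)\cap \mb R_{1\times\eta\times\eta}(\omega)$ of a phase-space $\eta$-box into a point cell and a line cell. If $\omega$ lies in such a box, then pairing $p_\omega$ with \emph{any} point of $\mb X$ in the surrounding point cell, and with \emph{any} line $\ell_{\omega'}$ where $\omega'$ lies in the surrounding line cell, yields an incidence at scale $\lesssim\eta$; and $K$-uniformity forces both of those cells to be heavily populated. Concretely, for each box $\mb R\in\mc R_{\eta\times\eta\times\eta}(\mb X)$ I would fix a point $\omega_{\mb R}\in\mb X\cap\mb R$ and set $Q_{\mb R}=\mb R_{\eta\times\eta\times 1}(\omega_{\mb R})$ and $L_{\mb R}=\mb R_{1\times\eta\times\eta}(\omega_{\mb R})$. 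Since $\mb X$ is $K$-uniform at the scales $\{1,\eta\}$ — which include the admissible scales $\eta\times\eta\times 1$ and $1\times\eta\times\eta$ — and $\omega_{\mb R}\in\mb X$, the uniformity inequality \eqref{eq:uniformity_rect_around_pt_defn} gives
\begin{align*}
 |\mb X\cap Q_{\mb R}|\ \geq\ \tfrac1K\,\ms M_{\eta\times\eta\times 1}(\mb X), \qquad |\mb X\cap L_{\mb R}|\ \geq\ \tfrac1K\,\ms M_{1\times\eta\times\eta}(\mb X).
\end{align*}

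The first step of the argument is to check that every $\omega\in\mb X\cap Q_{\mb R}$ and $\omega'\in\mb X\cap L_{\mb R}$ satisfy $d_{\square\times 1\times\square}(\omega'\to\omega)\lesssim\eta$, hence $d(p_\omega,\ell_{\omega'})\lesssim\eta$ by \eqref{eq:incidence_value_metric}. Membership of $\omega$ in $Q_{\mb R}$ gives $|a_\omega-a_{\omega_{\mb R}}|\le\eta$ and $d_{\square\times 1\times\square}(\omega_{\mb R}\to\omega)\le\eta$; membership of $\omega'$ in $L_{\mb R}$ gives $|c_{\omega'}-c_{\omega_{\mb R}}|\le\eta$ and $d_{\square\times 1\times\square}(\omega_{\mb R}\to\omega')\le\eta$; and one combines these via the approximate symmetry and transitivity of \Cref{lem:approx_sym_trans}, the correction terms arising from asymmetry and non-transitivity being $\lesssim\eta$ (they are products involving at least one $\eta$-small factor). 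Passing to a small universal constant multiple of $\eta$ — which by \Cref{lem:covering_props}\ref{lemitem:covering_bd} changes the covering numbers appearing in the conclusion only by universal factors — one may assume the manufactured pairs have $d(p_\omega,\ell_{\omega'})\le\eta/2$, so that each of them contributes $1$ to the smoothed count $I(\eta;\mb X)$.

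Next I would run a bounded-overlap count. Summing $|\mb X\cap Q_{\mb R}|\cdot|\mb X\cap L_{\mb R}|$ over all $\mb R\in\mc R_{\eta\times\eta\times\eta}(\mb X)$ overcounts incident pairs, but only by $O(1)$: if a pair $(\omega,\omega')$ arises from $\mb R$, then by approximate symmetry $\omega_{\mb R}$ lies within $O(\eta)$ of $\omega$ in the metric $d_{\eta\times\eta\times 1}$ and within $O(\eta)$ of $\omega'$ in $d_{1\times\eta\times\eta}$, which confines $(a_{\omega_{\mb R}},b_{\omega_{\mb R}})$ to an $O(\eta)$-ball about $(a_\omega,b_\omega)$ and $c_{\omega_{\mb R}}$ to an $O(\eta)$-interval about $c_{\omega'}$; such an $\omega_{\mb R}$, and hence the box $\mb R$, has only $O(1)$ possibilities. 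Since $|\mc R_{\eta\times\eta\times\eta}(\mb X)|=|\mb X|_{\eta\times\eta\times\eta}$, this yields
\begin{align*}
 I(\eta;\mb X)\ \gtrsim\ \frac1{K^2}\,|\mb X|_{\eta\times\eta\times\eta}\;\ms M_{\eta\times\eta\times 1}(\mb X)\;\ms M_{1\times\eta\times\eta}(\mb X).
\end{align*}
Finally, the trivial covering bounds $\ms M_{\eta\times\eta\times 1}(\mb X)\geq |\mb X|/|\mb X|_{\eta\times\eta\times 1}$ and $\ms M_{1\times\eta\times\eta}(\mb X)\geq |\mb X|/|\mb X|_{1\times\eta\times\eta}$ (each of these box families covers $\mb X$) turn this into $I(\eta;\mb X)\gtrsim K^{-2}|\mb X|^2\,|\mb X|_{\eta\times\eta\times\eta}\bigl(|\mb X|_{\eta\times\eta\times 1}|\mb X|_{1\times\eta\times\eta}\bigr)^{-1}$, and dividing by $\eta|\mb X|^2$ gives the asserted lower bound for $B(\eta)$.

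I expect the only real work to be in the second paragraph: the bookkeeping with the asymmetric distances $d_{u\times v\times w}$ needed both to certify that the manufactured point–line pairs are genuine $\lesssim\eta$-incidences and to control the overlap in the third paragraph, together with the routine (but unavoidable) adjustment of the incidence scale by a universal constant so that these incidences register in $I(\eta;\mb X)$. Everything else — the two invocations of uniformity and the covering-number inequalities — is already packaged in \S\ref{sec:space_pt_line_pairs}.
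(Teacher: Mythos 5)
Your proposal is correct and follows essentially the same route as the paper: manufacture incidences by pairing the point cell $\mb R_{\eta\times\eta\times 1}$ with the line cell $\mb R_{1\times\eta\times\eta}$ around a representative of each $\eta\times\eta\times\eta$ box, use $K$-uniformity twice to populate both cells, and verify via approximate symmetry/transitivity that the pairs register in $I(\eta)$ after a universal constant shrink of the cells (the paper's $2^{-10}\cdot\mb R$). The only cosmetic difference is how multiplicity is handled — the paper indexes by a maximal $10$-separated set in $d_{\eta\times\eta\times\eta}$ and shows the pair families are genuinely disjoint, whereas you index by all boxes of the cover and bound the overlap by $O(1)$; both are valid.
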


The right hand side can be written in terms of direction sets or dual direction sets,
\begin{align*}
    \frac{|\mb X|_{\eta\times\eta\times\eta}}{\eta |\mb X|_{\eta\times\eta\times 1}|\mb X|_{1\times \eta\times \eta}} = \frac{\Dir(\eta\sep \eta\times \eta \times 1)}{\eta |\mb X|_{1\times \eta\times \eta}} =  \frac{\Dir^{\vee}(\eta\sep 1\times \eta \times \eta)}{\eta |\mb X|_{\eta\times \eta\times 1}}
\end{align*}
where 
\begin{align*}
    \Dir(\eta\sep \eta\times \eta\times 1) = \frac{|\mb X|_{\eta\times\eta\times \eta}}{|\mb X|_{\eta\times \eta\times 1}} = \text{The $\eta$-covering number of the directions spanned inside an $\eta\times \eta$ square}
\end{align*}
and similarly $\Dir^{\vee}(\eta\sep 1\times \eta\times \eta)$ is the $\eta$-covering number of the $x$-coordinates spanned inside a $1\times \eta$ tube.
\begin{proof}
Let $\mc P$ be a maximal $10$-separated set of points in $\mb X$ under the $d_{\eta\times\eta\times\eta}$ metric. There are $\gtrsim |\mb X|_{\eta\times \eta\times \eta}$ points in $\mc P$. 
For each $\omega \in \mc P$, let 
\begin{align*}
    \mb A^{\omega} &= \mb X \cap 2^{-10}\cdot \mb R_{\eta\times \eta\times 1}(\omega), \\ 
    \mb B^{\omega} &= \mb X \cap 2^{-10}\cdot \mb R_{1\times \eta\times \eta}(\omega).  
\end{align*}
For any $\omega_1 \in\mb A^{\omega}$ and $\omega_2\in \mb B^{\omega}$, we have 
\begin{align*}
    d_{\square\times 1\times \square}(\omega_2\to \omega_1) &\leq 2^{-5}\eta
\end{align*}
by approximate transitivity and symmetry. 
Thus $d(p_{\omega_1}, \ell_{\omega_2}) \leq \eta/16$ and every $(\omega_1, \omega_2)\in \mb A^{\omega}\times \mb B^{\omega}$ contributes $1$ to the smoothed incidence count $I(\eta)$. These are all distinct incidences because for $\omega, \omega' \in \mc P$ distinct, either $\mb A^{\omega} \cap \mb A^{\omega'} = \emptyset$ or $\mb B^{\omega}\cap \mb B^{\omega'} = \emptyset$. Indeed, otherwise by symmetry and transitivity $d_{\eta\times \eta\times 1}(\omega \to \omega')\le 2^{-7}$ and $d_{1\times \eta\times \eta}(\omega \to \omega')\le 2^{-7}$ which together contradict $d_{\eta\times \eta\times \eta}(\omega\to \omega') \ge 10$.
We have 
\begin{align*}
    |\mb A^{\omega}| &\gtrsim K^{-1} \frac{|\mb X|}{|\mb X|_{\eta\times \eta\times 1}}, \\ 
    |\mb B^{\omega}| &\gtrsim K^{-1} \frac{|\mb X|}{|\mb X|_{1\times \eta\times \eta}},
\end{align*}
so overall
\begin{align*}
    I(\eta) \gtrsim K^{-2} |\mb X|^2 \frac{|\mb X|_{\eta\times\eta\times\eta}}{|\mb X|_{\eta\times\eta\times 1}|\mb X|_{1\times\eta\times\eta}}.
\end{align*}
\end{proof}

\subsection{Lower bounds for incidences criterion}
Combining the above propositions we obtain the following. Let $\mb X$ be $K$-uniform at scales $\{2^{-jT}\}_{j=0}^m$ and let $f$ be the branching function. Define 
\begin{align}
    b(t\sep x, y) &= f(t, t, t\sep x, y) - (f(t, 0\sep x, y) + f(0, t\sep x, y) - t), \\ 
    e(s\sep x, y) &= \frac{1}{2}(f(s, 0\sep x, y) + f(0, s\sep x, y) - 3s). \label{eq:error_func_defn}
\end{align}
We have 
\begin{equation}\label{eq:b_from_dir}
\begin{split}
    b(t\sep x, y) &= d(t\sep x+t,y) - (f(0, t\sep x, y) - t)  \geq d(t\sep x+t,y) - d(t\sep x,y) - o(1), \\ 
    b(t\sep x, y) &= d^{\vee}(t\sep x,y+t) - (f(t, 0\sep x, y) - t) \geq d^{\vee}(t\sep x,y+t) - d^{\vee}(t\sep x,y) - o(1).
\end{split}
\end{equation}
\Cref{prop:phase_space_initial_estimate} says $B(\delta^t) \geq \delta^{-b(t) + o(1)}$ (note that $b(t)$ is typically a negative number). \Cref{prop:phase_space_high_low}, the high-low inequality, essentially says that if $t' > t$ and $b(t) + e(s) > 0$ for $t \leq s \leq t'$, then $B(\delta^{t'}) \geq \delta^{-b(t)+o(1)}$ as well. 

Motivated by this discussion we say a triple $(t\sep x, y)$ is \textit{$(c_1,c_2)$-effective} for the branching function $f$ if $c_{2} \leq 1/3$, $\max\{t,x,y\} \leq c_2$, and 
\begin{equation}\label{eq:effectiveness_ineq}
    b(t\sep x, y) + e(s\sep x, y) \geq c_1 \qquad \text{for all $t \leq s \leq 1-(x+y)$}. 
\end{equation}
We also assume $t,x,y \in m^{-1}\Z_{\geq 0}$ because the domain of $f$ is $\ms D_m$. 
\begin{prop}\label{prop:effective_incidence_lower_bd}
Let $c_1,c_2 \in (0,1/5)$. There is some large enough integer $m_0 = m_0(c_1) > 0$ so that for every $m > m_0$ and $T > T_0(m, c_1, c_2)$ the following holds. Suppose that $\mb X$ is $(m, T)$-uniform and let $f = f_{\mb X}$ be the branching function. Suppose there is some triple $(t\sep x, y)$ which is $(c_1, c_2)$-effective for $f_{\mb X}$. Then 
\begin{align*}
    B(\delta; \mb X) \geq \delta^{20c_2},\qquad \delta = 2^{-mT}. 
\end{align*}
\end{prop}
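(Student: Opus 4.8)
The plan is to blow up $\mb X$ into the phase-space rectangle witnessing the effectiveness of $(t\sep x,y)$, run the two-step mechanism — initial estimate plus high-low telescoping — inside the blowup, and then transfer the resulting bound back to $\mb X$. Concretely, I would choose a dyadic $\delta^x\times\delta^{x+y}\times\delta^y$ rectangle $\mb R$ realizing $|\mb X\cap\mb R| = \ms M_{\delta^x\times\delta^{x+y}\times\delta^y}(\mb X)$, and pass to an $(m',T)$-uniform subset $\mbt X$ of $\psi_{\mb R}(\mb X\cap\mb R)$ with $m' = (1-x-y)m\in\Z$ (so the bottom scale of $\mbt X$ is $2^{-m'T}=\delta^{1-x-y}$; here I use \Cref{lem:exists_uniform_subset}, which only costs a $\delta^{-o(1)}$ factor in size). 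By $(m,T)$-uniformity of $\mb X$, \Cref{lem:branching_struct_subrect}, and the rescaling isometry \Cref{lem:rescaling_isometry}, the set $\mbt X$ satisfies $|\mbt X|\gtrsim \delta^{f(x,y)+o(1)}|\mb X|$ and has branching function $f_{\mbt X}(\cdot) = f(\cdot\sep x,y)+o(1)$. Since $\psi_{\mb R}$ dilates $d_{\square\times1\times\square}$ by $\delta^{-(x+y)}$ and all slopes involved stay in $[-1,1]$ (so the slope normalization in \eqref{eq:incidence_value_metric} is bounded), every incidence of $\mb X$ lying in $\mb R$ at scale $\delta$ is, after rescaling, an incidence of $\mbt X$ at scale $\sim\delta^{1-x-y}$; hence $I(\delta\sep\mb X)\gtrsim I(\delta^{1-x-y}\sep\mbt X)$ and
\[
B(\delta\sep\mb X)\gtrsim \delta^{-(x+y)}\bigl(|\mbt X|/|\mb X|\bigr)^2 B(\delta^{1-x-y}\sep\mbt X)\gtrsim \delta^{2f(x,y)-x-y+o(1)}\,B(\delta^{1-x-y}\sep\mbt X).
\]

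It therefore suffices to show $B(\delta^{1-x-y}\sep\mbt X)\ge \delta^{-b(t\sep x,y)+o(1)}$. First I would apply the initial estimate \Cref{prop:phase_space_initial_estimate} to $\mbt X$ at scale $\delta^t$; rewriting the covering numbers via $f_{\mbt X}=f(\cdot\sep x,y)+o(1)$ and using \eqref{eq:b_from_dir} gives $B(\delta^t\sep\mbt X)\gtrsim \delta^{-b(t\sep x,y)+o(1)}$. Then I would telescope the high-low inequality \Cref{prop:phase_space_high_low} down from $\delta^t$ to $\delta^{1-x-y}$, one $m$-grid block at a time: applying it with $w=\delta^{j/m}$ and $A=2^T=\delta^{-1/m}$ for $tm\le j<m'$ yields
\[
|B(\delta^{j/m}\sep\mbt X)-B(\delta^{(j+1)/m}\sep\mbt X)|\lesssim K\,2^{3T/2}\bigl(|\mbt X|_{\delta^{j/m}\times\delta^{j/m}\times1}^{-1}|\mbt X|_{1\times\delta^{j/m}\times\delta^{j/m}}^{-1}\delta^{-3j/m}\bigr)^{1/2}=K\,\delta^{e(j/m\sep x,y)-\frac{3}{2m}+o(1)}.
\]
Summing the $\le m$ blocks and using effectiveness \eqref{eq:effectiveness_ineq}, which gives $e(s\sep x,y)\ge c_1-b(t\sep x,y)$ for $t\le s\le 1-(x+y)$, the total error is $\lesssim mK\,\delta^{c_1-\frac{3}{2m}-b(t\sep x,y)+o(1)}$. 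Taking $m_0$ large enough that $\frac{3}{2m}<\frac{c_1}{2}$ for $m>m_0$, and then $T>T_0(m,c_1,c_2)$, the prefactor $mK\delta^{c_1-3/(2m)}$ is at most $\frac12$, so the total error is at most $\frac12 B(\delta^t\sep\mbt X)$ and $B(\delta^{1-x-y}\sep\mbt X)\ge \frac12 B(\delta^t\sep\mbt X)\ge \delta^{-b(t\sep x,y)+o(1)}$.

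Combining the two displays, $B(\delta\sep\mb X)\gtrsim \delta^{2f(x,y)-x-y-b(t\sep x,y)+o(1)}$. Finally I would bound the exponent using $\max\{t,x,y\}\le c_2$: by \Cref{lem:f_is_lip} we have $0\le f(x,y)=f(x,y,x+y)\le 2(x+y)\le 4c_2$, and from \eqref{eq:b_from_dir} together with the Lipschitz estimates of \Cref{lem:f_is_lip} and \Cref{lem:dir_set_props} we get $-b(t\sep x,y)\le t\le c_2$. Hence $2f(x,y)-x-y-b(t\sep x,y)+o(1)\le 8c_2+c_2+o(1)\le 10c_2$, so for $T$ large (so the implicit constants and $o(1)$ errors cost at most $\delta^{-c_2}$) we conclude $B(\delta\sep\mb X)\ge \delta^{20c_2}$, with room to spare.

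The main obstacle is not this assembly, which is essentially bookkeeping once \Cref{prop:phase_space_initial_estimate} and \Cref{prop:phase_space_high_low} are in hand, but getting the blowup right: checking that the $(m',T)$-uniform subset $\mbt X$ genuinely has branching function $f(\cdot\sep x,y)+o(1)$ and enough uniformity at the scales $\delta^{j/m}$ used above (the delicate point being that \Cref{lem:branching_struct_subrect} gives only weak uniformity, which is why one passes to a fresh uniform subset), and pinning down the precise relation between $I(\delta\sep\mb X)$ and $I(\delta^{1-x-y}\sep\mbt X)$ in the presence of the smoothed kernel $\eta$ and the slope normalization \eqref{eq:incidence_value_metric}. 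The one genuinely quantitative choice is forcing $m_0>3/c_1$, so that the $2^{3T/2}$ loss incurred per $m$-grid block in the high-low telescoping stays below the effectiveness budget $c_1$.
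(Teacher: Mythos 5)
Your proposal is correct and follows essentially the same route as the paper: blow up into the maximizing $\delta^x\times\delta^{x+y}\times\delta^y$ rectangle, extract a fresh uniform subset to upgrade the weak uniformity from \Cref{lem:branching_struct_subrect}, run \Cref{prop:phase_space_initial_estimate} at scale $\delta^t$ and telescope \Cref{prop:phase_space_high_low} down to the bottom scale using effectiveness, then transfer back and bound the exponent by multiples of $c_2$. The only detail to tighten is the one you already flagged: because of the smoothing kernel and the slope normalization, the clean comparison is $I(\delta\sep\mb X)\geq I(c\delta^{1-x-y}\sep\mbt X)$ for a small absolute constant $c$ rather than at scale exactly $\delta^{1-x-y}$, so one telescopes down to $c\delta^{1-x-y}$ (absorbing the extra step $|B(\delta^{1-x-y})-B(c\delta^{1-x-y})|$ via the second inequality of \Cref{prop:phase_space_high_low}), exactly as the paper does.
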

\begin{proof}
Let $\mb R \in \mc R_{\delta^{x}\times \delta^{x+y}\times \delta^{y}}$ have $|\mb X \cap \mb R| \gtrsim \ms M_{\delta^{x}\times \delta^{x+y}\times \delta^y}(\mb X)$. Let $\mbt X' = \psi_{\mb R}(\mb X\cap \mb R)$. Using \Cref{lem:exists_uniform_subset}, let $\mbt X\subset \mbt X'$ be a subset which is $K$-uniform on scales $\{2^{-jT}\}_{j=0}^{m(1-x-y)}$ and such that $|\mbt X| \gtrsim K^{-1} |\mbt X'|$. By \Cref{lem:covering_number_rescaling}, for $(x',y',z')\in \ms D_{m(1-x-y)}$,
\begin{align*}
    |\mbt X|_{\delta^{x'}\times \delta^{z'}\times \delta^{y'}} &\sim_{K^7} \frac{|\mb X|_{\delta^{x+x'}\times \delta^{x+y+z'}\times\delta^{y+y'}}}{|\mb X|_{\delta^x\times\delta^{x+y}\times\delta^y}}.
\end{align*}
By \Cref{prop:phase_space_initial_estimate}, 
\begin{align*}
    B(\delta^t; \mbt X) &\gtrsim K^{-2} \frac{|\mbt X|_{\delta^t\times\delta^t\times \delta^t}}{\delta^t |\mbt X|_{\delta^t\times\delta^t\times1}|\mbt X|_{1\times\delta^t\times\delta^t}} \\ 
    &\gtrsim K^{-20} \delta^{-b(t\sep x, y)}. 
\end{align*}
By \Cref{prop:phase_space_high_low}, for any fixed $c>0$,
\begin{align*}
    |B(\delta^t; \mbt X) - B(c\delta^{1-x-y}; \mbt X)| &\leq \sum_{j=mt}^{m(1-x-y)} |B(2^{-jT}; \mbt X) - B(2^{-(j+1)T}; \mbt X)|+ |B(\delta^{1-x-y}; \mbt X) - B(c\delta^{1-x-y}; \mbt X)|  \\ 
    &\lesssim 2^{\frac{3}{2}T} K^{20} \sum_{j=mt}^{m(1-x-y)} \delta^{e(j/m\sep x,y)}.
\end{align*}
Using the hypothesis that $(t\sep x, y)$ is effective, 
\begin{align*}
    B(c\delta^{1-x-y}; \mbt X) &\geq C^{-1}K^{-20} \delta^{-b(t\sep x, y)} - 2^{\frac{3}{2}T}CK^{20} m\sup_{t \leq s \leq 1-x-y} \delta^{e(s; x, y)} \\ 
    &\ge \delta^{-b(t; x, y)}(C^{-1}K^{-20} - 2^{\frac{3}{2}T}CK^{20}m\delta^{c_1}). 
\end{align*}
Notice that $2^T = \delta^{-1/m}$, so if $m > 3c_1^{-1}$, then as $\delta \to 0$ with $m$ fixed the term in parentheses is $\geq\delta^{o(1)}$. Because $t,x,y \leq c_2$, we have $b(t; x, y) \geq -10c_2$ by \Cref{lem:f_is_lip}, so
\[
B(c\delta^{1-x-y}\sep \mbt X) \ge \delta^{11c_2} \qquad \text{if $\delta$ is sufficiently small.}
\]
Finally, we lower bound $I(\delta\sep \mb X)$ as follows. The smoothed incidence count is given by
\begin{align*}
    I(c\delta^{1-x-y}\sep \mbt X) = \sum_{\omega, \omega' \in \mbt X} \eta(d(p_{\omega}, \ell_{\omega'})/(c\delta^{1-x-y})).
\end{align*}
Using the rescaling isometry,
\begin{align*}
    d(p_{\omega}, \ell_{\omega'}) &\geq \frac{1}{2}d_{\square\times 1\times \square}(\omega' \to \omega)  \\ 
    &= \frac{1}{2}\delta^{-(x+y)}d_{\square\times 1\times \square}(\psi_{\mb R}^{-1}(\omega') \to \psi_{\mb R}^{-1}(\omega)) \\ 
    &\geq \frac{1}{2} \delta^{-(x+y)}d(p_{\psi_{\mb R}^{-1}(\omega')}, \ell_{\psi_{\mb R}^{-1}(\omega)}). 
\end{align*}
Thus for $c < 1/2$,
\begin{align*}
    I(c\delta^{1-x-y}\sep \mbt X) &\leq \sum_{\omega, \omega' \in \mbt X} \eta((1/2c)d(p_{\psi_{\mb R}^{-1}(\omega')}, \ell_{\psi_{\mb R}^{-1}(\omega)})/\delta) \\ 
    &\leq I(\delta; \mb X),
\end{align*}
and we obtain the desired lower bound on $I(\delta; \mb X)$. 
\end{proof}

\subsection{Limiting lower bounds criterion}\label{subsec:limiting_lower_bounds}
For any limiting branching function $f \in \mc L$ we can make sense of the quantities $b(t\sep x, y)$, $e(s\sep x, y)$, and of the triple $(t\sep x,y)$ being effective. Recall that $(t\sep x, y)$ is $(c_1,c_2)$-effective if $c_2 \leq 1/3$, $\max\{t,x,y\} \leq c_2$, and 
\begin{equation*}
    b(t\sep x, y) + e(s\sep x, y) \geq c_1 \qquad \text{for all $t \leq s \leq 1-(x+y)$}.
\end{equation*}
We denote by $\mc L^{good(c_2)}\subset \mc L$ the set of $f$ that have a $(c_1, c_2)$-effective triple for some $c_1 > 0$, and $\mc L^{good} = \bigcap_{c_2 > 0} \mc L^{good(c_2)}$. We define $\mc L^{bad(c_2)}$ to be the complement of $\mc L^{good(c_2)}$ in $\mc L$, and $\mc L^{bad}$ to be the complement of $\mc L^{good}$ in $\mc L$. 

The set $\mc L^{good(c_2)}$ is the union over all choices $c_1 > 0$ and $t,x,y \in [0,c_2]$ of the functions $f \in \mc L$ satisfying
\begin{align}\label{eq:strict_effective}
    b(t\sep x, y) + e(s\sep x, y) > c_1 \qquad \text{for all $t \leq s \leq 1-(x+y)$}.
\end{align}
This is an open condition, so $\mc L^{good(c_2)}$ is an open set. The complement $\mc L^{bad(c_2)}$ is closed for every $c_2 > 0$. 

Let $\{(\mb X_k, \delta_k)\}_{k=1}^{\infty}$ be a sequence of pairs where $\mb X_k \subset \Omega$ and $\delta_k \to 0$. For each $m\geq 1$ let $T_{m,k}\geq 1$ be the smallest integer so $2^{-mT_{m,k}} \leq \delta_k$. Let $\mb X_{m,k}\subset \mb X_k$ be an $(m, T_{m,k})$-uniform subset with $|\mb X_{m,k}| \ge K^{-1} |\mb X_k|$, where $K = K(m, T_{m, k})$ as in \Cref{lem:exists_uniform_subset}. Let $f_{m}$ be a subsequential limit of $f_{m,k}$ as $k\to \infty$ and let $f\in \mc L$ be a subsequential limit of $f_m$ as $m\to \infty$. We say $f\in \mc L$ is a limiting branching function of the sequence $\{(\mb X_k, \delta_k)\}_{k=1}^{\infty}$. By compactness in the space of Lipschitz functions any sequence of pairs has a limiting branching function. 

Here is another way to think about limiting branching functions. A function $f \in \mc L$ is a limiting branching function for the sequence if for all $\varepsilon > 0$ and $m > m_0(\varepsilon)$, there is a subsequence $\{(\mb X_j, \delta_j)\}_{j=1}^{\infty}$ satisfying the following. Letting $T_j$ be the smallest integer so $2^{-mT_j} \leq \delta_j$, for each $j \geq 1$ there exists an $(m, T_j)$-uniform subset $\mb X_j' \subset \mb X_j$ such that $\|f_{\mb X_j'} - f\|_{L^{\infty}(\ms D_m)} < \varepsilon$. 
\begin{prop}\label{prop:limiting_inc_lower_bd}
Let $\{(\mb X_k, \delta_k)\}_{k=1}^{\infty}$ be a sequence so that $\delta_k \to 0$ and any limiting branching function for this sequence is in $\mc L^{good}$. For any $\varepsilon > 0$, if $k > k_0(\varepsilon)$ then
\begin{align*}
    B(\delta_k\sep \mb X_k) \geq \delta_k^{\varepsilon}.
\end{align*}
\end{prop}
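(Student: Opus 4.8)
The plan is to argue by contradiction, using compactness to extract a limiting branching function that the hypothesis forces to lie in $\mc L^{good}$, and then pushing the conclusion of \Cref{prop:effective_incidence_lower_bd} back down to the original sequence. Suppose the statement fails for some $\varepsilon>0$. Shrinking $\varepsilon$ only strengthens the conclusion, so we may assume $\varepsilon<1/10$; then there is a subsequence, still denoted $\{(\mb X_k,\delta_k)\}$, along which $B(\delta_k\sep\mb X_k)<\delta_k^{\varepsilon}$ for every $k$. By compactness in the uniform topology this subsequence has a limiting branching function $f\in\mc L$, and by hypothesis $f\in\mc L^{good}$.

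Fix $c_2:=\varepsilon/100$. Since $f\in\mc L^{good}\subseteq\mc L^{good(c_2)}$, there is a $(c_1,c_2)$-effective triple $(t\sep x,y)$ for $f$ with some $c_1>0$; as the inequality $b+e\ge c_1$ persists when $c_1$ is replaced by any smaller positive number, we may assume $c_1<1/5$. The next step is to replace this triple by a nearby one in $m^{-1}\Z^3$ and to replace $f$ by an actual branching function. For a fixed Lipschitz $f$ the quantities $b(t\sep x,y)$ and $e(s\sep x,y)$ are fixed finite linear combinations of $O(1)$ values of $f$, hence Lipschitz in $(t,x,y,s)$ and stable under sup-norm perturbations of $f$. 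Rounding $x,y$ \emph{up} and $t$ to a nearby multiple of $1/m$ (rounding $x,y$ up guarantees that the $s$-range $t\le s\le 1-(x+y)$ in \eqref{eq:effectiveness_ineq} only shrinks, so no new constraints appear, and the remaining changes are absorbed by Lipschitz continuity of $b$ and $e$), we obtain, for all sufficiently large $m$, a triple $(t_m\sep x_m,y_m)\in m^{-1}\Z^3$ that is $(c_1/2,\,2c_2)$-effective for $f$.

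Now I would invoke the second description of limiting branching functions given above, with $\varepsilon':=c_1/40$: for every sufficiently large $m$ there is a subsequence $\{(\mb X_j,\delta_j)\}$ of our sequence, with $T_j$ the least integer so that $2^{-mT_j}\le\delta_j$, together with $(m,T_j)$-uniform subsets $\mb X_j'\subseteq\mb X_j$ satisfying $|\mb X_j'|\ge K_j^{-1}|\mb X_j|$ and $\|f_{\mb X_j'}-f\|_{L^{\infty}(\ms D_m)}<\varepsilon'$, where $K_j=(Cm^3\log(1/\delta_j))^{10m^3+1}$ as in \Cref{lem:exists_uniform_subset}. By the stability noted above, $(t_m\sep x_m,y_m)$ is then $(c_1/4,\,2c_2)$-effective for the genuine branching function $f_{\mb X_j'}$. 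Since $\varepsilon<1/10$ we have $c_1/4,\,2c_2\in(0,1/5)$, and $T_j\to\infty$ because $\delta_j\to0$, so for $j$ large \Cref{prop:effective_incidence_lower_bd} applies to $\mb X_j'$ and yields, with $\delta:=2^{-mT_j}$,
\[
B(\delta\sep\mb X_j')\ \ge\ \delta^{\,20\cdot(2c_2)}\ =\ \delta^{\,2\varepsilon/5}.
\]

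It remains to transfer this to $(\delta_j,\mb X_j)$. We have $\delta\ge 2^{-m}\delta_j$, $\mb X_j'\subseteq\mb X_j$, and $w\mapsto I(w;P,L)$ is non-decreasing (immediate from $\eta$ being non-increasing on $[0,\infty)$), so
\[
B(\delta_j\sep\mb X_j)=\frac{I(\delta_j\sep\mb X_j)}{\delta_j|\mb X_j|^2}\ \ge\ \frac{I(\delta\sep\mb X_j')}{\delta_j|\mb X_j|^2}\ =\ \frac{\delta\,|\mb X_j'|^2}{\delta_j\,|\mb X_j|^2}\,B(\delta\sep\mb X_j')\ \ge\ 2^{-m}K_j^{-2}\,\delta^{\,2\varepsilon/5}\ \ge\ 2^{-2m}K_j^{-2}\,\delta_j^{\,2\varepsilon/5}.
\]
Since $K_j$ is polylogarithmic in $1/\delta_j$ while $2\varepsilon/5<\varepsilon$, the right-hand side exceeds $\delta_j^{\varepsilon}$ once $j$ is large enough, contradicting $B(\delta_j\sep\mb X_j)<\delta_j^{\varepsilon}$. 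I expect the only genuinely delicate point to be the perturbation bookkeeping of the middle two paragraphs — checking that discretizing the effective triple and then replacing $f$ by $f_{\mb X_j'}$ degrade the constants $c_1,c_2$ only by bounded amounts (which reduces to the observation that $b$ and $e$ depend on boundedly many values of the branching function), and that the losses $K_j$, $2^{-2m}$, and the scale gap $\delta/\delta_j$ are all swallowed by the polynomial surplus $\delta_j^{2\varepsilon/5-\varepsilon}$ for large $j$; everything substantive is already packaged in \Cref{prop:effective_incidence_lower_bd} and the construction of $\mc L$.
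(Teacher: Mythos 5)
Your proposal is correct and follows essentially the same route as the paper: argue by contradiction, extract a limiting branching function $f\in\mc L^{good}$, perturb its effective triple to a nearby triple in $m^{-1}\Z^3$ that remains effective (with slightly degraded constants) for the branching functions $f_{\mb X_j'}$ of uniform subsets close to $f$, apply \Cref{prop:effective_incidence_lower_bd}, and absorb the losses from $K_j$, the scale gap $\delta/\delta_j$, and the subset passage into the polynomial surplus. Your explicit final transfer computation and the choice to round $x,y$ upward are only cosmetic variations on the paper's bookkeeping.
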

\begin{proof}
Suppose the conclusion does not hold. After passing to a subsequence we may assume that for some $\varepsilon > 0$, $B(\delta_k\sep \mb X_k) \leq \delta_k^{\varepsilon}$ for all $k\geq 1$. 

Let $f \in \mc L$ be a limiting branching function of the sequence $\{(\mb X_k, \delta_k)\}_{k=1}^{\infty}$. By hypothesis $f \in \mc L^{good}$, so for any $c_2 \in (0,1/5)$ there is some $c_1 > 0$ and a $(c_1, c_2)$-effective triple $(t\sep x, y)$ for $f$. 

Fix a $(c_1,c_2)$-effective triple $(t\sep x, y)$ for $f$. Let $\zeta = c_1/1000$. For any $m > m_0(\zeta)$, there is a subsequence of pairs $\{(\mb X_{j}, \delta_{j})\}_{j=1}^{\infty}$ satisfying the following. Letting $T_j$ be the smallest integer so $2^{-mT_j} < \delta_{j}$, there is an $(m, T_j)$-uniform subset $\mb X_j' \subset \mb X_j$ so that $\| f_{\mb X_j'} - f \|_{L^{\infty}(\ms D_m)} \leq \zeta$. Let $t' \in m^{-1}\Z_{\geq 0}$ be the largest number $\leq t$, and similarly for $x'$ and $y'$. For $s' \in [t', 1-(x'+y')]$, let $s \in [t, (1-x-y)]$ be the closest number to $s'$ in $m^{-1}\Z$. Let $b(t\sep x, y)$ and $e(s\sep x, y)$ be the initial estimate and error functions corresponding to $f$. Then if we pick $m > 1/\zeta$,
\begin{align*}
    |(b(t'\sep x', y') + e(s'\sep x',y')) - (b(t\sep x, y) + e(s\sep x, y))| \leq 20/m < c_1 / 50
\end{align*}
using the Lipschitz property. We have 
\begin{align*}
    |(b_{f_{\mb X_j'}}(t'\sep x', y') + e_{f_{\mb X_j'}}(s'\sep x',y')) - (b(t'\sep x', y') + e(s'\sep x',y'))| \leq 20\| f_{\mb X'} - f \|_{L^{\infty}(\ms D_m)} < c_1/50. 
\end{align*}
Combining these estimates, the triple $(t'\sep x',y')$ is $(c_1/2, c_2)$-effective for $f_{\mb X_j'}$ for all $j \geq 1$. 

Choose $m > m_0$ from \Cref{prop:effective_incidence_lower_bd}. By \Cref{prop:effective_incidence_lower_bd}, for $j$ large enough, 
\begin{align*}
    B(\delta_j'; \mb X_j') \geq (\delta_j')^{20c_2},\qquad \delta_j' = 2^{-mT_j}. 
\end{align*}
Because $2^{-T_j}\delta_j \leq \delta_j' \leq \delta_j$ and $|\mb X_j'| \geq K^{-1} |\mb X_j|$ where $K = K(m_j, T)$ is the uniformity constant, 
\begin{align*}
    B(\delta\sep \mb X_j) \geq \delta_j^{20c_2+o_j(1)}.
\end{align*}
Choosing $c_2 < \varepsilon / 100$ and $j$ large enough gives a contradiction. 
\end{proof}

\section{Proof of main results}\label{sec:pf_main_results}

\subsection{Proof of \Cref{thm:incidence_lower_bd}}\label{subsec:pf_thm_inc_lower_bd}

First we recall the definition of a $(\delta, \alpha, \beta, C)$-set, which we defined in \cref{subsec:intro_statement_main_thm} before building up the notation of \cref{sec:space_pt_line_pairs}.

\begin{definition}
A set $\mb X\subset \Omega$ is a $(\delta, \alpha, \beta, C)$-set if it is $\delta$-separated in the $d_{1\times 1\times 1}$ distance (this is equivalent to the $\R^3$ metric) and satisfies the following Frostman condition. For any $u, w \in (0,1]$ with $uw \geq \delta$ and any rectangle $\mb R = \mb R_{u\times uw\times w}(\omega)$, we have
\begin{align*}
    |\mb X\cap \mb R| \leq Cu^{\alpha} w^{\beta}\, |\mb X|. 
\end{align*}     
\end{definition}

If $\mb X$ is a $(\delta, \alpha, \beta, \delta^{-\eta})$-set which is $(m, T)$ uniform, then $f_{\mb X}(x,y) \geq \alpha x + \beta y - \eta - o(1)$. We denote by $\mc L_{\alpha, \beta}$ the functions in $\mc L$ which additionally satisfy 
\begin{align*}
    f(x,y) \geq \alpha x + \beta y.
\end{align*}
The class of Lipschitz functions $\mc L_{\alpha, \beta}$ captures the branching structure of $(\alpha, \beta)$-Frostman sets. The following is our main theorem about branching functions. Recall from \cref{subsec:limiting_lower_bounds} that $\mc L^{good}$ is the limiting branching functions for which we can prove an incindence lower bound.
\begin{theorem}\label{thm:lip_func_theorem}
If $\alpha + \beta > 3$ then $\mc L_{\alpha,\beta} \subset \mc L^{good}$. 
\end{theorem}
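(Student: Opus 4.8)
The plan is to follow the two-case strategy outlined in the proof sketch (\S\ref{subsec:pf_sketch_analysis_lip}): first reduce to the situation where $f$ has a whole line of $(\alpha,\beta)$-good scales emanating from the origin, and then produce an effective triple in that reduced setting. For the reduction, given $f \in \mc L_{\alpha,\beta}$ we want to find some scale $(x_0,y_0)$ and a direction so that, after blowing up (using property \ref{item:limiting_branching_func_blowup} of $\mc L$, which keeps us inside $\mc L$) and possibly rescaling, the resulting function has $f(x',y'\sep\cdot) \ge \alpha x' + \beta y'$ along an entire ray $\{(x,mx)\}$ with $m \in (0,1]$ — or, after applying point-line duality (swapping the roles of $x$ and $y$, equivalently passing from $d$ to $d^\vee$), slope $m \ge 1$. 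The existence of such a good line should come from a pigeonholing/compactness argument on the Lipschitz function $f$: since $f$ is $1$-Lipschitz, monotone, submodular and dominates $\alpha x + \beta y$, one can iterate the blowup operation and extract a limit along which the Frostman inequality is saturated to first order in some fixed direction at every scale. This is the content of \S\ref{sec:reduce_lip_func_good_line}, which I would invoke.

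Once there is a horizontal line of good scales (Case 1), the argument is short and I would carry it out exactly as in the sketch: set $\rho = (\alpha+\beta-3)/50$, $t = \rho/50$, $m = \lfloor 0.1/t\rfloor$, and telescope
\begin{align*}
\sum_{j=0}^{m-1}\bigl(d(t\sep jt,0) - d(t\sep (j+1)t,0)\bigr) = d(t\sep 0,0) - d(t\sep mt,0) \le t,
\end{align*}
where each summand is nonnegative by monotonicity of $d$ in the $x$-slot (Lemma \ref{lem:dir_set_props}\ref{lemitem:dir_lip}). So some $j$ gives $d(t\sep jt,0) - d(t\sep (j+1)t,0) \le t/m < \rho t$, i.e.\ $(t\sep jt,0)$ is direction stable; and it is automatically high-low regular with the same $\rho$ because $(jt,0)$ is an $(\alpha,\beta)$-good scale, which via \eqref{eq:b_from_dir} and the definition of $e$ forces $e(s\sep jt,0) \ge \tfrac12(\alpha+\beta-3)(s) \ge \rho s$ for $s \ge t$. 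Combining, $b(t\sep jt,0) + e(s\sep jt,0) \ge -\rho t + \rho s - o(1) > 0$ for $s\ge t$, so after adjusting constants the triple is $(c_1,c_2)$-effective with $c_1,c_2$ depending only on $\alpha,\beta$, and we are done by definition of $\mc L^{good}$.

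The substantive case is Case 2, a diagonal line of good scales (general slope $m\in(0,1]$ reduces to slope $1$ after an affine change of the $(x,y)$-variables, and slope $>1$ to slope $1$ by duality). Here I would run the density argument from the sketch verbatim: work on $\ms B = \{(x,t) : x\in[0,0.1],\, t\in(0,0.1]\}$ with the scale-invariant measure $\mu = t^{-1}\,dx\,dt$ and its normalized restrictions $\mu_\varepsilon$ to $\{t\ge\varepsilon\}$. One shows (i) for small enough $\rho$ the set $E_{hlr}$ of $(x,t)$ giving a high-low-regular triple $(t\sep x,x+t)$ has $\mu_\varepsilon(E_{hlr}) \ge c - o_\varepsilon(1)$, proved one $x$-coordinate at a time by taking successive minima of the slope function $s_x(t) := f(0,t\sep x,x)/t$ and using that near a slope-minimal $t$ the second-difference cancellation $f(t,t\sep x,x) - 2f(0,t\sep x,x) + f(0,2t\sep x,x) \ge (\alpha+\beta-3)t$ survives; and (ii) $\mu_\varepsilon(E_{ds}^c) = o_\varepsilon(1)$, proved by bounding $\int t^{-1}\bigl(d(t\sep x,x+t) - d(t\sep x+t,x+t)\bigr)d\mu_\varepsilon$ using the direction inequality $d(2t\sep x,x) \ge d(t\sep x,x) + d(t\sep x+t,x+t) - o(1)$ (Lemma \ref{lem:dir_set_props}\ref{lemitem:dir_compare_scales}) to replace the integrand by $d(2t\sep x,x) - 2d(t\sep x,x)$ up to $o_\varepsilon(1)$, then integrating out $x$ and $t$ by a change of variables exploiting that $d(\cdot\sep x,x)$ is $1$-Lipschitz and bounded. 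Since $\mu_\varepsilon(E_{hlr}) + \mu_\varepsilon(E_{ds}) > 1$ for suitable $\rho$ and small $\varepsilon$, the two sets meet, producing an effective triple, hence $f\in\mc L^{good}$. \textbf{The main obstacle} I anticipate is making the density lower bound for $E_{hlr}$ genuinely uniform in $x$ and robust against the roughness of $s_x(t)$ — the successive-minima selection of slope-minimal scales must be quantified so that it survives integration against $\mu_\varepsilon$ and the limit $\varepsilon\to 0$ — together with carefully tracking the $o_\varepsilon(1)$ errors through the change of variables in step (ii) so they do not swamp the constant $c$.
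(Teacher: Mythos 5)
Your overall architecture coincides with the paper's: reduce to an $(\alpha,\beta)$-good line via microscopic blowups, treat a horizontal (dually, vertical) good line by the telescoping direction-stability argument, and treat a diagonal good line by the density argument for $\mu = t^{-1}\,dx\,dt$, comparing the high-low regular set (via slope-minimal scales and the second-difference cancellation) with the direction-stable set (via the direction-set inequality). However, one step as written would fail: the claim that a good line of slope $m\in(0,1]$ ``reduces to slope $1$ after an affine change of the $(x,y)$-variables.'' The quantities defining effectiveness are not covariant under anisotropic reparametrization: setting $g(x,y)=f(x,my)$ degrades the goodness constant in the $y$-direction from $\beta$ to $m\beta$ (so $\alpha+m\beta$ may drop below $3$), does not preserve the error functional $e(s\sep x,y)=\frac{1}{2}(f(s,0\sep x,y)+f(0,s\sep x,y)-3s)$ that the high-low criterion is stated in, and gives no reason that $g\in\mc L$. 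The paper instead keeps $m$ explicit, with candidate triples $(t\sep x,mx+mt)$ and slope function $s_x(t)=t^{-1}(f(x,mx+t)-f(x,mx))$; the hypothesis $m\le 1$ is used at exactly one point, in the bound $(t'-mt)s_x(mt)\ge (t'-mt)\beta$, which needs $t'-mt\ge 0$. So your Case 2 must be rerun with $m$ carried through (slopes $>1$ then follow by duality, as you say), not ``verbatim'' at slope $1$.

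A second gap is that the reduction to a good line is not actually argued: you invoke \S\ref{sec:reduce_lip_func_good_line}, which is part of the proof of this very theorem. The content you would need is (i) a construction (maximal/greedy points of $f-\alpha'x-\beta'y$) of a monotone Lipschitz curve of $0$-good points whose parameter set has positive measure, (ii) a Rademacher and Lebesgue-point selection so that a microscopic blowup at a well-chosen good point straightens this curve into an honest $(\alpha',\beta')$-good line, and, crucially, (iii) the zooming-in mechanism of \Cref{lem:zooming_in}: an effective triple for a blowup $f_{Q_w(x_0,y_0)}$ only produces an effective triple for $f$ itself because the center $(x_0,y_0)$ is chosen to be an $(\alpha',\beta')$-good point, so that $e(s\sep x_0,y_0)\gtrsim s$ at all scales above the blowup scale. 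Your sketch (``iterate the blowup\dots extract a limit\dots saturated to first order'') omits this transfer step, and without it, showing that a microscopic blowup lies in $\mc L^{good}$ says nothing about $f$; one also needs the limiting argument in $\alpha'\to\alpha$, $\beta'\to\beta$ to recover the full $(\alpha,\beta)$-good line.
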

We combine \Cref{thm:lip_func_theorem} with \Cref{prop:limiting_inc_lower_bd} to prove our main incidence lower bound result \Cref{thm:incidence_lower_bd}, which we restate below. 
\begin{theorem*}
Let $\alpha, \beta \in [1,2]$ satisfy $\alpha+\beta > 3$, and let $\varepsilon > 0$. There exists $\eta = \eta(\alpha, \beta, \varepsilon) > 0$ such that the following holds for all  $\delta < \delta_0(\alpha, \beta, \varepsilon)$. Suppose $\mb X \subset \Omega$ is an $(\delta, \alpha, \beta, \delta^{-\eta})$-set. Then 
\begin{align*}
    I(\delta; \mb X) \geq \delta^{1+\varepsilon}|\mb X|^2.
\end{align*}
\end{theorem*}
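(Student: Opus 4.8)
The plan is to deduce this theorem from \Cref{thm:lip_func_theorem} and \Cref{prop:limiting_inc_lower_bd}; given the machinery of \cref{sec:space_pt_line_pairs} and \cref{sec:two_step_inc_lower_bd}, the argument is a short compactness/contradiction step, with the substance carried entirely by \Cref{thm:lip_func_theorem}. I would argue by contradiction: if the theorem fails, then for some fixed $\varepsilon>0$, negating the remaining quantifiers, for every $k\ge 1$ there exist $\delta_k<1/k$ and a $(\delta_k,\alpha,\beta,\delta_k^{-1/k})$-set $\mb X_k\subset\Omega$ with $I(\delta_k;\mb X_k)<\delta_k^{1+\varepsilon}|\mb X_k|^2$, i.e.\ $B(\delta_k;\mb X_k)<\delta_k^{\varepsilon}$. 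This produces a sequence $\{(\mb X_k,\delta_k)\}_{k=1}^\infty$ with $\delta_k\to 0$ and Frostman exponent $\eta_k=1/k\to 0$.

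The main point is to check that every limiting branching function $f\in\mc L$ of this sequence belongs to $\mc L_{\alpha,\beta}$. Recall from \cref{subsec:limiting_lower_bounds} that such an $f$ is produced by choosing $(m,T_{m,k})$-uniform subsets $\mb X_{m,k}\subset\mb X_k$ with $|\mb X_{m,k}|\ge K^{-1}|\mb X_k|$ (where $K=K(m,T_{m,k})$ is polylogarithmic in $1/\delta_k$ for fixed $m$), taking $k\to\infty$, and then $m\to\infty$. The $(\delta_k,\alpha,\beta,\delta_k^{-1/k})$-set hypothesis says $\ms M_{\delta_k^x\times\delta_k^{x+y}\times\delta_k^y}(\mb X_k)\le\delta_k^{-1/k}\delta_k^{\alpha x+\beta y}|\mb X_k|$ for all $(x,y)\in\ms D$; combining this with $\ms M_{u\times v\times w}(\mb X_{m,k})\le\ms M_{u\times v\times w}(\mb X_k)$, with the trivial bound $|X|\le\ms M_{u\times v\times w}(X)\,|X|_{u\times v\times w}$, and with $K^{-1}=\delta_k^{o(1)}$ yields
\[
|\mb X_{m,k}|_{\delta_k^x\times\delta_k^{x+y}\times\delta_k^y}\ \ge\ \frac{|\mb X_{m,k}|}{\ms M_{\delta_k^x\times\delta_k^{x+y}\times\delta_k^y}(\mb X_{m,k})}\ \ge\ \delta_k^{1/k+o(1)}\,\delta_k^{-\alpha x-\beta y},
\]
so $f_{\mb X_{m,k}}(x,y)\ge\alpha x+\beta y-1/k-o(1)$. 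Letting $k\to\infty$ and then $m\to\infty$ removes both error terms, so $f(x,y)\ge\alpha x+\beta y$ on all of $\ms D$, i.e.\ $f\in\mc L_{\alpha,\beta}$.

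Since $\alpha+\beta>3$, \Cref{thm:lip_func_theorem} gives $\mc L_{\alpha,\beta}\subset\mc L^{good}$, hence every limiting branching function of $\{(\mb X_k,\delta_k)\}$ lies in $\mc L^{good}$. Then \Cref{prop:limiting_inc_lower_bd}, applied with $\varepsilon/2$ in place of $\varepsilon$, forces $B(\delta_k;\mb X_k)\ge\delta_k^{\varepsilon/2}>\delta_k^{\varepsilon}$ for all large $k$, contradicting the choice of $\mb X_k$; this proves the theorem, with an ineffective threshold $\eta(\alpha,\beta,\varepsilon)$ coming from the diagonalization. The only delicate point in this deduction is the translation of the $(\delta,\alpha,\beta,C)$-set hypothesis into the pointwise bound $f(x,y)\ge\alpha x+\beta y$ for the limiting branching function --- everything else is routine bookkeeping --- whereas the genuine difficulty of the theorem is concentrated in \Cref{thm:lip_func_theorem}, proved in \cref{sec:pf_lip_func_thm} via the Lipschitz-function analysis sketched in \cref{subsec:pf_sketch_analysis_lip}.
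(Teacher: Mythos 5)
Your proposal is correct and follows essentially the same route as the paper: the paper also argues by contradiction, extracts a sequence of $(\delta_k,\alpha,\beta,\delta_k^{-\eta_k})$-sets with $B(\delta_k\sep\mb X_k)\le\delta_k^{\varepsilon}$, notes that every limiting branching function lies in $\mc L_{\alpha,\beta}$, and concludes via \Cref{thm:lip_func_theorem} and \Cref{prop:limiting_inc_lower_bd}. The only difference is that you spell out the translation of the Frostman hypothesis into $f(x,y)\ge\alpha x+\beta y$ for the limiting branching function, which the paper simply asserts (in the remark preceding the definition of $\mc L_{\alpha,\beta}$), and your computation of that step is correct.
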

\begin{proof}
Suppose the conclusion fails. Then for some $\varepsilon > 0$ there is a sequence $\eta_k \to 0$ and $\delta_k \to 0$ and a sequence of $(\delta_k, \alpha, \beta, \delta^{-\eta_k})$ sets $\mb X_k \subset \Omega$ such that $B(\delta_k\sep \mb X_k) \leq \delta^{\varepsilon}$. Any limiting branching function of this sequence lives in $\mc L_{\alpha, \beta}$, so \Cref{prop:limiting_inc_lower_bd} gives a contradiction. 
\end{proof}

\subsection{Proof of \Cref{thm:exists_extra_incidence}}\label{subsec:pf_exists_extra_inc}
For the reader's convenience we restate \Cref{thm:exists_extra_incidence} below. 
\begin{theorem*}
Fix $t \in [1, 2]$ and $s \in [0,1]$ such that $2t+s>3$. There exists $\eta(t, s) > 0$ such that the following holds for all $\delta < \delta_0(t, s)$. 
Let $P \subset [0,1]^2$ be a set of $\delta^{-t}$ many points. For each $p \in P$ let $\T_p$ be a $(\delta, s, \delta^{-\eta})$-set of $\delta$-tubes through $p$ and let $\T = \bigsqcup_{p} \T_p$. Then there is some nontrivial incidence between $P$ and $\T$, meaning there is a point $p \in P$ and a tube $T \in \T \setminus \T_{p}$ so that $p \in T$.  
\end{theorem*}

We proceed by contradiction. 
Fix $t\in [1,2]$ and $s\in [0,1]$ such that $2t+s > 3$, and let $\eta(s,t) \in (0,1/10)$ be a small constant to be chosen later.  
Let $P \subset [-1,1]^2$ be a set of $\tilde \delta^{-t}$-many points (we use the notation $\tilde \delta$ because we will modify $\tilde \delta\to \delta$ for the bulk of the proof). 
For each $p \in P$, let $\T_p$ be a $(\tilde \delta, s, \tilde \delta^{-\eta})$ set of $\tilde \delta$-tubes through $p$, and suppose there is no nontrivial incidence between $P$ and $\T = \bigsqcup_p \T_p$.  For $\eta$ and $\tilde \delta$ small enough in terms of $s$ and $t$ we will obtain a contradiction. 

After passing to a constant size subset we may assume all the tubes have slope in $[-1,1]$ relative to the $x$-axis.
By \Cref{lem:katz_tao_subset}, for each $p\in P$ there exists a subset $\T_p' \subset \T_p$ such that $|\T_p'| = c\tilde \delta^{-s+\eta}$ and 
\begin{align}
    |\slope(\T_p') \cap I| \lesssim (w/\tilde \delta)^s \qquad \text{for all intervals $I\subset [-1,1]$ with length $w\in [\tilde \delta, 1]$.}
\end{align}

Let $m = \lceil \eta^{-1} \rceil$, and let $T \geq 0$ be the smallest integer such that $2^{-mT} \leq \tilde \delta$. Set $\delta = 2^{-mT}$. 
Using our new choice $\delta \in [2^{-T}\tilde \delta, \tilde \delta]$, we find
\begin{align*}
    |P| &\geq (2^T \delta)^{-t} \geq \delta^{-t+t/m}  \geq \delta^{-t+2\eta}, \\ 
    |\T_p'| &\geq c(2^T \delta)^{-s+\eta} \geq c \delta^{-s+2\eta},
\end{align*}
and 
\begin{align}
    |\slope(\T_p') \cap I| \lesssim (w/\delta)^s \qquad \text{for all intervals $I\subset [-1,1]$ with length $w\in [\delta, 1]$.}\label{eq:KatzTaoCondTp}
\end{align}

We may construct a set in phase space from $P$ and $\T$. 
Let $p \in P$ be a point and let $T$ be a tube through $p$, meaning that the central line $\ell_T$ passes through $p$. We associate to this pair the point $\omega \in \Omega$ where $p_{\omega} = p$ and $\ell_{\omega} = \ell_T$. For each $p\in P$, let $\mb X_p \subset \Omega$ be the subset associated to all the point-tube pairs $\{p \in T\}_{T\in \T_p'}$. We set $\mb X = \bigsqcup_p \mb X_p$. Notice that 
\begin{equation*}
    c\delta^{-t-s+4\eta} \leq |\mb X| \leq \delta^{-t-s}.
\end{equation*}

Let $\mb X' \subset \mb X$ be an $(m, T)$-uniform subset with $|\mb X'| \geq K^{-1} |\mb X|$, where $K = K(m, T)$ as in \Cref{lem:exists_uniform_subset}, and let $f: \ms D_m \to \R_{\geq 0}$ be the branching function of $\mb X'$. In what follows, $o(1)$ denotes a quantity that goes to zero as $\delta \to 0$ with $s,t,\eta$ fixed. 

\noindent \textbf{Establishing separation and regularity of $\mb X$.}
Because we assumed there are no nontrivial incidences, for any $\omega \in \mb X_p$ and $\omega' \in \mb X_{p'}$ with $p\neq p'$, 
\begin{align}\label{eq:incidence_sep_X}
    d_{\square\times 1\times \square}(\omega \to \omega') \geq \delta / 20
\end{align}
where we add a factor of $20$ to convert from Euclidean point-line distance to vertical point-line distance, and use $\delta \leq \tilde \delta$. Because each tube set $\T_p$ is $\delta$-separated, for $\omega, \omega' \in \mb X_p$ we have 
\begin{align}\label{eq:Xp_slope_sep}
    d_{\square\times \square\times 1}(\omega \to \omega') \geq \delta / 20
\end{align}
where we add the factor of 20 to convert from the line metric \eqref{eq:line_metric_all_slope} in the introduction to the $d_{\square\times \square\times 1}$ distance, which measures the difference in slopes. Combining \eqref{eq:incidence_sep_X} and \eqref{eq:Xp_slope_sep}, for any distinct $\omega, \omega' \in \mb X$
\begin{equation}\label{eq:sep_X_box11}
    d_{\square\times 1\times 1}(\omega \to \omega') \geq \delta / 20.
\end{equation}
Because of the Katz-Tao condition \eqref{eq:KatzTaoCondTp}, for any $w\in [\delta, 1]$ and $\mb R \in \mc R_{1\times 1\times w}$,
\begin{equation}\label{eq:sreg_Xp}
    |\mb X_p\cap \mb R| \lesssim (w/\delta)^s.
\end{equation}
Now let $u, w \in [\delta, 1]$ satisfy $uw < \delta$, and let $\mb R \in \mc R_{u\times \delta\times w}$ be a rectangle. Because of \eqref{eq:incidence_sep_X} there are $\lesssim 1$ many $p\in P$ where $\mb X_p$ intersects $\mb R$, so \eqref{eq:sreg_Xp} implies
\begin{equation}\label{eq:X_intersect_R}
    |\mb X\cap \mb R| \lesssim (w/\delta)^s. 
\end{equation}

The concentration inequality \eqref{eq:X_intersect_R} implies that 
\begin{equation}\label{eq:concentration_Xp}
    \ms M_{\delta^{1-y}\times \delta \times \delta^{y}}(\mb X')\leq  \delta^{s(y-1)-o(1)}.
\end{equation}
The concentration numbers of $\mb X'$ are related to the branching function $f$ by
\begin{align*}
    \ms M_{\delta^x\times \delta^z\times \delta^y}(\mb X') \geq \frac{|\mb X'|}{|\mb X'|_{\delta^x\times \delta^z\times \delta^y}} = |\mb X'| \delta^{f(x,y,z)}\qquad \text{for $(x,y,z) \in \ms D_m$.}
\end{align*}
We have $|\mb X| \geq c\delta^{-t-s+4\eta}$ and $|\mb X'|\geq \frac{1}{K} |\mb X|$, so 
\begin{equation*}
    \ms M_{\delta^x\times \delta^z\times \delta^y}(\mb X') \geq \delta^{-t-s+f(x,y,z)+4\eta+o(1)},
\end{equation*}
and combining the above inequality with \eqref{eq:concentration_Xp} yields 
\begin{equation}\label{eq:f_at_critical}
    f(1-y,y) \geq t+sy - 4\eta - o(1)\qquad \text{for $y \in \frac{1}{m}\Z \cap [0,1]$.}
\end{equation}
We split into two cases depending on whether $s+t>2$ or $s+t\leq 2$. 

\medskip
\noindent \textbf{Case 1: $s+t>2$.}
Equation \eqref{eq:f_at_critical} implies that $f(0,1) \geq t+s-4\eta-o(1)$. However, the Lipschitz property of $f$ (\Cref{lem:f_is_lip}) implies $f(0,1) \leq 2+o(1)$. If we choose $\eta < (t+s-2)/4$ and $\delta$ sufficiently small, these inequalities form a contradiction. 

\medskip
\noindent \textbf{Case 2: $s+t\leq 2$.}

\noindent \textbf{Blowing up into a regular rectangle.}
For any $(x,y) \in \ms D_m$, combine \eqref{eq:f_at_critical} with the Lipschitz property (\Cref{lem:f_is_lip}) to find
\begin{equation}\label{eq:f_lower_bd_1}
    f(x,y) \geq f(x,1-x) - 2(1-x-y) - o(1) \geq (t+s-2) + (2-s)x + 2y - 4\eta - o(1).
\end{equation}
Fix
\begin{align*}
    \rho = \frac{1}{2}\min\{(2t+s-3)/2, t-1\} > 0.
\end{align*}
This is where we use the hypothesis that $2t+s>3$. It follows from $2t+s>3$ that $t>1$. Let $\alpha = t-\rho$ and $\beta = t+s-\rho$. Because we chose $\rho < (2t+s-3)/2$, we find $\alpha +\beta > 3$. We selected this choice of $\alpha$ and $\beta$ so that when $x+y=1$, \eqref{eq:f_at_critical} yields
\begin{equation*}
    f(x,y) \geq \alpha x + \beta y + \rho(x+y) - 4\eta - o(1).
\end{equation*}
Choose $(x_0,y_0) \in \ms D_m$ to minimize the quantity 
\begin{align}\label{eq:functional_minimize_f}
    f(x_0, y_0) - \alpha x_0 - \beta y_0.
\end{align}
This minimum value is less than $o(1)$ because $f(0,0) \leq o(1)$. By applying \eqref{eq:f_lower_bd_1} and plugging in the values of $\alpha, \beta$ we find
\begin{align*}
    f(x_0, y_0) - \alpha x_0 - \beta y_0 \geq -(2-t-s)(1-x_0-y_0) + \rho(x_0+y_0) - 4\eta - o(1).
\end{align*}
Using that the left hand side is $\leq o(1)$ and rearranging, we find
\begin{align*}
    1-x_0-y_0 \geq \frac{\rho - 4\eta -o(1)}{\rho + 2-t-s}.
\end{align*}
The denominator is always $\leq 2$, and if $\eta$ and $\delta$ are chosen small enough the numerator is $\geq \rho/2$, so 
\begin{align*}
    x_0+y_0 \leq 1 - \rho / 4 \qquad \text{if $\delta, \eta$ are small enough.}
\end{align*}
From now on we assume $\delta, \eta$ are small enough that the above holds. 
Let $\mb R \in \mc R_{\delta^{x_0}\times \delta^{x_0+y_0}\times \delta^{y_0}}$ be a rectangle containing nearly the maximum number of points, 
\begin{align*}
    |\mb X'\cap \mb R| \geq \frac{1}{CK} \delta^{f(x_0,y_0)}|\mb X'|. 
\end{align*}
Set $\mb X_1 = \psi_{\mb R}(\mb X'\cap \mb R)$. Let 
\begin{align*}
    m_1 &= m(1-x_0-y_0) \in \Z_{\geq 0},  \\  
    \delta_1 &= 2^{-m_1T} = \delta^{1-x_0-y_0} \leq \delta^{\rho/4}.
\end{align*}
Produce an $(m_1, T)$-uniform subset $\mb X_1' \subset \mb X_1$ with $|\mb X_1'| \geq \frac{1}{K} |\mb X_1|$. As we remarked after \Cref{lem:exists_uniform_subset}, we may assume 
\begin{itemize}
    \item There is some number $r > 0$ such that for every cube $\mb Q \in \mc D_{2^{-40}\delta_1}(\mb X_1')$, $|\mb X_1'\cap \mb Q| \in [r/2, r]$. 
    \item No two dyadic cubes in $\mc D_{2^{-40}\delta_1}(\mb X_1')$ are adjacent (including diagonal adjacencies). 
    \item For every $\omega \in \mb X_1'$ and admissible scale $u\times v\times w$ in $\{2^{-jT}\}_{j=0}^{m_1}$,
    \begin{align*}
        \sum_{\mb Q \in \mc D_{2^{-40}\delta_1}, \mb Q \subset \mb R_{u\times v\times w}(\omega)} |\mb X_1'\cap \mb Q| \geq \frac{1}{K} \ms M_{u\times v\times w}(\mb X_1'). 
    \end{align*}
\end{itemize}
Let $\mb X_1''$ be obtained by selecting one point of $\mb X_1'$ in every dyadic cube $\mc D_{2^{-40}\delta_1}(\mb X_1')$. By the first property above, $|\mb X_1''| \in [|\mb X_1'|/r, 2|\mb X_1'|/r]$.
By the second property, $\mb X_1''$ is $2^{-40}\delta_1$-separated in the $\R^3$ metric. By the third property, for every $\omega \in \mb X_1'$ and admissible scale $u\times v\times w$ in $\{2^{-jT}\}_{j=0}^{m_1}$,
\begin{align*}
    |\mb X_1''\cap \mb R_{u\times v\times w}(\mb \omega)| \geq \frac{1}{Kr} \ms M_{u\times v\times w}(\mb X_1'). 
\end{align*}
Finally, for any $\omega \in \R^3$
\begin{align*}
    |\mb X_1''\cap \mb R_{u\times v\times w}(\mb \omega)| &\leq \#\{\mb Q \in \mc D_{2^{-40}\delta_1}\, :\, \mb Q \cap \mb R_{u\times v\times w}(\mb \omega) \neq \emptyset\}  \\ 
    &\leq \frac{1}{r/2}\sum_{\mb Q \in \mc D_{2^{-40}\delta_1}, \mb Q \cap \mb R_{u\times v\times w}(\omega) \neq \emptyset} |\mb X_1'\cap \mb Q| \\ 
    &\lesssim \frac{1}{r} \ms M_{u\times v\times w}(\mb X_1'). 
\end{align*}
These upper and lower bounds together show that $\mb X_1''$ is $CK$-uniform on the same set of scales as $\mb X_1'$, and the covering numbers are the same up to factors of $CK^2$ because for any $(x,y,z) \in \ms D_{m_1}$,
\begin{align*}
    |\mb X_1''|_{\delta^{x}\times \delta^z\times \delta^y} \sim_K \frac{|\mb X_1''|}{\ms M_{\delta^{x}\times \delta^z\times \delta^y}(\mb X_1'')} \sim_{K} \frac{|\mb X_1'|}{\ms M_{\delta^{x}\times \delta^z\times \delta^y}(\mb X_1')} \sim_{K^2} |\mb X_1'|_{\delta^{x}\times \delta^z\times \delta^y},
\end{align*}
thus
\begin{equation*}
    f_{\mb X_1''}(x,y,z) = f_{\mb X_1'}(x,y,z) \pm o(1). 
\end{equation*}
By applying \Cref{lem:covering_number_rescaling} to $\mb X_1'$, $f_{\mb X_1''}$ can be written in terms of $f = f_{\mb X'}$ as follows,
\begin{equation}\label{eq:branching_func_X1pp}
    f_{\mb X_1''}(x,y,z) = \frac{1}{1-x_0-y_0}f((1-x_0-y_0)x, (1-x_0-y_0)y, (1-x_0-y_0)z; x_0,y_0) \pm o(1). 
\end{equation}
Because $(x_0, y_0)$ was chosen to minimize \eqref{eq:functional_minimize_f}, $f(x, y; x_0, y_0) \geq \alpha x + \beta y$ whenever the inputs are defined. Using the above formula for $f_{\mb X_1''}(x,y,z)$, 
\begin{align}\label{eq:branching_func_lower_bd_applied}
    f_{\mb X_1''}(x,y) \geq \alpha x + \beta y - o(1)\qquad \text{for all $(x,y) \in \ms D_{m_1}.$}
\end{align}
It follows that 
\begin{align*}
    \ms M_{\delta_1^x\times \delta_1^{x+y}\times \delta_1^z}(\mb X_1'') \lesssim \frac{1}{K} \delta_1^{f_{\mb X_1'}(x,y,z)} |\mb X_{1}''| \leq \delta_1^{\alpha x + \beta y - o(1)}|\mb X_1''|. 
\end{align*}
For any $u, w \leq 1$ with $uw \geq \delta_1$, let $(x,y) \in \ms D_m$ be maximal such that $\delta_1^x \geq u$ and $\delta_1^y \geq w$. Then $\delta_1^x\leq \delta_1^{-1/m} u \leq \delta_1^{\eta} u$ and $\delta_1^y \leq \delta_1^{-1/m} w \leq \delta_1^{\eta}w$, so $\mb X_1''$ satisfies the Frostman condition
\begin{align*}
    \ms M_{u\times uw\times w}(\mb X_1'') &\leq \delta_1^{\alpha x + \beta y-o(1)}|\mb X_1''| \\ 
    &\leq u^{\alpha} w^{\beta} \delta_1^{-o(1)-(\alpha+\beta)\eta} |\mb X_1''|. 
\end{align*}
If we only assume $ uw \geq \delta_1 / A $ for some constant $ A $, the inequality remains valid because the constant factor loss introduced by  $ A $ is absorbed into the $ \delta_1^{-o(1)} $ term. Consequently, for sufficiently large constants $ A $, $ \mathbf{X}_1'' $ is a $(\delta_1 / A, \alpha, \beta, \delta^{-\eta_1})$-set, where $ \eta_1 = (\alpha + \beta)\eta + o(1) $. The factor of $ A $ is necessary to ensure that $ \mathbf{X}_1'' $ is $ 1 $-separated in the $ d_{\delta_1 / A \times \delta_1 / A \times \delta_1 / A} $ distance, rather than being $ 2^{-40} \delta_1 $-separated in the Euclidean distance.

\medskip
\noindent \textbf{Applying \Cref{thm:incidence_lower_bd}.}
Fix $\varepsilon = (t-1-\rho)/2$ (this is where we use $\rho < t-1$) and a large constant $A$. For $\eta < \eta_0(s,t)$ and $\delta < \delta_0(s,t,\eta)$, we are guaranteed that 
\begin{itemize}
    \item $\alpha+\beta>3$ because $\rho < (2t+s-3)/2$,
    \item $\delta_1 < \delta_0(\alpha, \beta, \varepsilon)$ from \Cref{thm:incidence_lower_bd},
    \item $\mb X_1''$ is a $(\delta_1/A, \alpha, \beta, \delta^{-\eta_1})$-set where $\eta_1 < \eta(\alpha, \beta, \varepsilon)$ from \Cref{thm:incidence_lower_bd}.
\end{itemize}
By \Cref{thm:incidence_lower_bd}, 
\begin{align}\label{eq:applied_inc_lower_bd}
    I(\delta_1/A; \mb X_1'') &\geq \delta_1^{1+\varepsilon} |\mb X_1''|^2.
\end{align}
We can compare this lower bound to an upper bound coming from the no extra incidences hypothesis. Suppose $\omega, \omega' \in \mb X_1''$ come from distinct base points $p , p' \in P$.  By \eqref{eq:incidence_sep_X},
\begin{align*}
    d_{\square\times 1\times \square}(\psi_{\mb R}^{-1}(\omega)\to \psi_{\mb R}^{-1}(\omega')) \geq \delta / 20. 
\end{align*}
By the rescaling isometry,
\begin{align*}
    d_{\square\times 1\times \square}(\psi_{\mb R}^{-1}(\omega)\to \psi_{\mb R}^{-1}(\omega')) = \delta^{x_0+y_0}d_{\square\times 1\times \square}(\omega \to \omega'),
\end{align*}
so in fact
\begin{align*}
    d_{\square\times 1\times \square}(\omega\to \omega') \gtrsim \delta^{1-x_0-y_0} \gtrsim \delta_1,
\end{align*}
and if $A$ is chosen large enough, the pair $(\omega, \omega')$ contributes nothing to the incidence count $I(\delta_1/A; \mb X_1'')$. For each $\omega \in \mb X_1''$, the number of points in $\mb X_1''$ coming from the same base point is at most $\ms M_{\delta_1\times \delta_1\times 1}(\mb X_1'')$. Thus 
\begin{align}
    I(\delta_1/A; \mb X_1'') &\lesssim |\mb X_1''|\, \ms M_{\delta_1\times \delta_1\times 1}(\mb X_1'') \nonumber \\ 
    &\lesssim|\mb X_1''|^2 \delta_1^{f_{\mb X_1''}(1, 0)-o(1)}. \label{eq:upper_bd_inc_in_rect}
\end{align}
By \eqref{eq:branching_func_lower_bd_applied}, $f_{\mb X_1''}(1,0) \geq \alpha - o(1) = t-\rho - o(1)$.
Comparing the upper bound \eqref{eq:upper_bd_inc_in_rect} with the lower bound \eqref{eq:applied_inc_lower_bd}, we find 
\begin{align*}
    \frac{\text{Upper bound \eqref{eq:upper_bd_inc_in_rect} for incidences}}{\text{Lower bound \eqref{eq:applied_inc_lower_bd} for incidences}} \lesssim \delta_1^{f_{\mb X_1''}(1, 0) - 1 - \varepsilon- o(1)} \leq \delta^{(\rho/4)(t - 1 -  \rho - \varepsilon) - o(1)}
\end{align*}
Because we chose $\varepsilon < t-1-\rho$, as long as the $o(1)$ term is small enough and $\delta$ is small enough the above inequality yields a contradiction. 

\subsection{Proof of \Cref{thm:inc_lower_bd_t_set}}
Fix $t \in [1,2]$ and $s \in [0,1]$ such that $t+s > 2$. Let $\varepsilon > 0$ be arbitrary and $\eta = \eta(\varepsilon) > 0$ small. Consider a $(\delta, t, \delta^{-\eta})$-set of points $P \subset [0,1]^2$ along with a $(\delta, s, \delta^{-\eta})$-set of tubes $\T_p$ through each $p \in P$, and set $\T = \bigsqcup_{p\in P} \T_p$. We would like to show $I(P, \T) \gtrsim \delta^{1+\varepsilon}|P|\, |\T|$. As in the proof of \Cref{thm:exists_extra_incidence}, let $\mb X \subset \Omega$ be the set in configuration space corresponding to $P, \T$ (see \cref{subsec:pf_exists_extra_inc}).

As discussed in \cref{sec:pf_sketch}, if we assumed an Ahlfors-David regularity condition on $P$ rather than just a Frostman condition, it would follow that $\mb X$ is a $(\delta, t,s+1, C)$-set, and the incidence lower bound would follow from \Cref{thm:incidence_lower_bd}. Moreover, we would be in the special case of \cref{sec:horizontal_good_line} where the entire horizontal line is $(t,s+1)$-good. 
Morally we can reduce to the case where $P$ is AD-regular. To execute this reduction we use limiting Lipschitz functions.

Let $\mb X_k\subset \Omega$ be a sequence of configurations as described above with parameters $\delta_k \to 0$ and $\eta_k \to 0$. Let $f\in \mc L$ be a subsequential limit of branching functions of uniform subsets of $\mb X_k$. We must have 
\begin{align}
    f(x, 0) &\geq tx \qquad \text{for all $0 \leq x \leq 1$}, \label{eq:f_points_t_frost} \\  
    d(y\sep 1, 0) &\geq sy \qquad \text{for all $0 \leq y \leq 1$}. \label{eq:f_lines_s_frost}
\end{align}
Let $\mc L_{t;s}\subset \mc L$ be the limiting branching functions that satisfy \eqref{eq:f_points_t_frost} and \eqref{eq:f_lines_s_frost}. The combination of \Cref{prop:limiting_inc_lower_bd} and the following proposition proves \Cref{thm:inc_lower_bd_t_set}.
\begin{prop}
If $s+t > 2$ then $\mc L_{t;s}\subset \mc L^{good}$.
\end{prop}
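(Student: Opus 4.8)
The plan is to deduce this from \Cref{thm:lip_func_theorem} by reducing to the case of an Ahlfors--David regular point configuration, where (as discussed in \cref{sec:pf_sketch}) $\mb X$ lands in $\mc L_{t, s+1}$. Observe first that since $t\in[1,2]$, $s\in[0,1]$ and $t+s>2$ we automatically have $t>1$ and $s>0$; fix a small $\varepsilon>0$ with $t-\varepsilon>1$ and $(t-\varepsilon)+(s+1)>3$ (possible since $t+s+1>3$). Throughout, ``blowup'' and ``truncation'' refer to the operations of \Cref{item:limiting_branching_func_blowup}, under which $\mc L$ is closed.

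\textbf{The regular model.} Suppose $g\in\mc L$ has \emph{linear} horizontal branching, $g(x,0)=\tau x$ for all $x\in[0,1]$ with $\tau\ge t-\varepsilon>1$, and satisfies $d(y\sep 1,0)\ge s y$ for $0\le y\le 1$. I claim $g\in\mc L_{\tau,s+1}$. For any horizontal scale $(x,0)$, the blowup $g(\cdot\sep x,0)$ again has horizontal branching of slope $\tau$, and its direction function $d(y'\sep 0,0\sep x,0)=d(y'\sep x,0)\ge d(y'\sep 1,0)\ge s y'$ by monotonicity of $d$ in its first positional argument (\Cref{lem:dir_set_props}(iii), which for $g\in\mc L$ holds without $o(1)$ by property \Cref{lemitem:classL_rel_covering}). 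Feeding these two bounds into the Furstenberg set estimate \Cref{exampleitem:info_L_furstenberg} with $\tau>1$ and $\tfrac{\tau+s}{2}>1$ gives $g(0,1\sep x,0)\ge 1+s$; truncating at intermediate scales upgrades this to $g(x',y'\sep x,0)\ge \tau x'+(s+1)y'$ for all admissible $(x',y')$. As $x$ was arbitrary this is exactly $g\in\mc L_{\tau,s+1}$, and $\tau+s+1>3$, so $g\in\mc L^{good}$ by \Cref{thm:lip_func_theorem}; in particular, for every $c_2'>0$ the function $g$ has a $(c_1,c_2')$-effective triple.

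\textbf{The reduction.} Now let $f\in\mc L_{t;s}$ be arbitrary and fix $c_2\in(0,1/5)$; we must produce a $(c_1,c_2)$-effective triple for $f$. The function $\phi(x):=f(x,0)$ is monotone, $2$-Lipschitz (\Cref{lem:f_is_lip}) and satisfies $\phi(x)\ge tx$, but need not be linear; we linearize it by blowing up at a point of differentiability. Since $\int_0^{c}\phi'=\phi(c)\ge tc$ for every $c>0$, the set $\{x\in(0,c_2/2):\phi'(x)\ge t-\varepsilon\}$ has positive measure, so we may choose a Lebesgue point $x_0\in(0,c_2/2)$ of $\phi'$ with $\tau:=\phi'(x_0)\ge t-\varepsilon$. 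Then $r^{-1}\big(\phi(x_0+rx)-\phi(x_0)\big)\to\tau x$ uniformly on $x\in[0,1]$ as $r\to 0^+$. Consider the blowups of $f$ at $(x_0,0)$ zoomed into windows of shrinking size $r$, and let $g\in\mc L$ be a subsequential limit. Then $g$ has $g(x,0)=\tau x$ on $[0,1]$, and it inherits $d(y\sep 1,0)\ge d(y\sep x_0,0)\ge sy$ (the direction bound for $f$ only improves under a horizontal blowup, and the truncation only restricts the range of scales, not the bound). Thus $g$ satisfies the hypotheses of the regular model, so $g\in\mc L^{good}$, and a $(c_1,c_2/2)$-effective triple $(t'\sep x',y')$ for $g$ lifts to one for $f$: the blowup composition rules give $b(t'\sep x',y')$ for $g$ equal to $b(t'\sep x_0+x',y')$ for $f$, likewise for $e$, and with matching ranges of intermediate scales, so $(t'\sep x_0+x',y')$ is $(c_1,c_2)$-effective for $f$ since $x_0+x'<c_2/2+c_2/2=c_2$. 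As $c_2$ was arbitrary, $f\in\mc L^{good}$, and together with \Cref{prop:limiting_inc_lower_bd} this proves \Cref{thm:inc_lower_bd_t_set}.

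\textbf{Main obstacle.} The crux is the reduction: one must produce, \emph{at an arbitrarily small scale}, a blowup whose horizontal branching is genuinely linear --- so that the Furstenberg estimate can be applied inside every sub-square, not merely at the top scale --- while preserving the per-point direction bound $d(y\sep 1,0)\ge sy$ and the inequality $\tau+s>2$. The Lebesgue-point device achieves all three simultaneously, the essential combinatorial input being that $\phi'$ has average at least $t$ on every initial interval $[0,c]$, which forces differentiability points with $\phi'\ge t-\varepsilon$ arbitrarily close to $0$. The remaining work is bookkeeping that must nevertheless be done carefully: checking that the direction lower bound, the Furstenberg conclusion, and the effective triple all transform correctly under the blowup, truncation, and limiting operations and that no $o(1)$ errors accumulate --- which is exactly why the argument is carried out in the space $\mc L$ of limiting branching functions rather than for branching functions of finite sets.
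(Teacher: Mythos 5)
Your overall architecture is the same as the paper's: pick a point on the horizontal axis, take a microscopic blowup to linearize the horizontal branching at slope $t_*\geq t'>1$, inherit $d(y\sep 1,0)\geq sy$ by monotonicity of $d$ in $x$ (\Cref{lem:dir_set_props}), apply the Furstenberg estimate at every horizontal sub-scale to conclude the blowup lies in $\mc L_{t_*,s+1}$, and finish with \Cref{thm:lip_func_theorem} (indeed the horizontal-good-line case, \cref{sec:horizontal_good_line}). However, the transfer between $f$ and its microscopic blowup $g$ — the step you dismiss as bookkeeping — contains a genuine gap. Since $g=\lim_j f_{Q_{r_j}(x_0,0)}$ with $r_j\to 0$, an effective triple $(t'\sep x',y')$ for $g$ corresponds in $f$ to the rescaled triple $(r_jt'\sep x_0+r_jx', r_jy')$, not to $(t'\sep x_0+x',y')$, and the ranges of intermediate scales do \emph{not} match: effectiveness for $f$ (see \eqref{eq:effectiveness_ineq}) requires $b+e\geq c_1$ for all $s$ up to $1-(x_0+r_j(x'+y'))\approx 1-x_0$, while effectiveness of $g$ only controls scales inside the window, $s\leq r_j(1-x'-y')$. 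Bridging the coarse scales between $r_j$ and $1$ is exactly the content of \Cref{lem:zooming_in}, whose hypothesis $e(s\sep x_0,0)\geq rw$ for $s\in[w/2,1-x_0]$ you neither invoke nor verify; without a positive lower bound on the high-low error at those coarse scales the lift simply fails.

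Moreover, your choice of $x_0$ as a Lebesgue point of $\phi'$ with $\phi'(x_0)\geq t-\varepsilon$ gives only the \emph{microscopic} linearity $g(x,0)=\tau x$; it does not give the coarse-scale growth $f(x',0\sep x_0,0)\geq t'x'$ for all $x'\leq 1-x_0$ (the slope of $\phi$ to the right of $x_0$ at unit scales may drop well below $t'$ while $\phi(x)\geq tx$ still holds). That coarse-scale growth is precisely what is needed to verify the zooming-in hypothesis: combined with a coarse-scale application of the Furstenberg bound (which also needs the growth condition at $x_0$, not just at the origin) it yields $e(s\sep x_0,0)\geq \tfrac12(t'+s-2)s>0$, and only then can badness/goodness be transported across the blowup. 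The paper secures both properties simultaneously by working with the set $E=\{x: f(y,0)\geq f(x,0)+t'(y-x)\ \forall y\geq x\}$, proving $|E\cap[0,a]|\gtrsim (t-t')a$, and choosing $x_*\in E$ at which $f(\cdot,0)$ is differentiable; you should replace the Lebesgue-point selection by this (or an equivalent "record point") argument and route the transfer through \Cref{lem:zooming_in} with its hypothesis checked. (A small further slip: your displayed chain $d(y\sep 1,0)\geq d(y\sep x_0,0)$ has the monotonicity backwards — $d$ is non-increasing in $x$, which is what actually saves you — and what is really needed is $d_f(r_jy\sep x_0+r_j,0)\geq d_f(r_jy\sep 1,0)$.)
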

\begin{proof}
Suppose by way of contradiction that for some $c_2 > 0$, $\tilde f \in \mc L_{t; s} \cap \mc L^{bad(c_2)}$. We will locate an $x_* \in [0,c_2/100]$, take blowups of $\tilde f$ near $(x_*, 0)$, and use these blowups to produce an $f \in \mc L_{t; s}\cap \mc L^{bad}$ which is easier to analyze. 

\medskip 
\noindent \textbf{Locating a point $(x_*, 0)$ to blowup near.}
Choose some $t' \in (1,t)$ such that $t'+s>2$. Let
\begin{align*}
    E = \{x\in [0,1]\, :\, \tilde f(y, 0) \geq \tilde f(x,0) + t'(y-x) \text{ for all } y \in [x,1]\}. 
\end{align*}
We consider the following non-decreasing sequence $\{x_j\}_{j=0}^{\infty} \subset E$,
\begin{align*}
    x_j = \sup\, \{x\in [0,1]\, :\, \tilde f(x,0) - t' x \leq j\delta\}.
\end{align*}
Let $a < 1$. By \eqref{eq:f_points_t_frost}, if $j \leq a(t-t')/\delta$ then $x_j \leq a$. Also, $\tilde f(1, 0) - t' > j\delta$, so using continuity of $\tilde f$, $$\tilde f(x_j, 0) - t' x_j = j\delta.$$ Using the fact that $x \to \tilde f(x,0)$ is $2$-Lipschitz, for $j \leq a(t-t')/\delta - 1$
\begin{align*}
    \delta = |(\tilde f(x_j, 0) - t' x_j) - (\tilde f(x_{j+1}, 0) - t' x_{j+1})| \leq (2+t')|x_j-x_{j+1}| \leq 4 |x_j-x_{j+1}| 
\end{align*}
Thus $\{x_j\}_{j=0}^{\lfloor a(t-t')/\delta \rfloor}$ is a $\delta/4$-separated set contained in $E\cap [0,a]$, so the $\delta$-covering number $|E\cap [0,a]|_{\delta}$ is at least $\frac{1}{C}(a(t-t')/\delta - 1)$. 
Because $E$ is a closed set, the Lebesgue measure is lower bounded by 
\begin{align*}
    |E\cap [0,a]| \gtrsim \limsup_{\delta \to 0} \delta a|E\cap [0,a]|_{\delta} \gtrsim a(t-t'). 
\end{align*}
By Rademacher's theorem \cite[Theorem 7.20]{rudin}, for almost every $x \in [0,1]$, $\tilde f(x,0)$ is differentiable at $x$. Choose some $x_* \in [0,c_2/100]\cap E$ that is differentiable. Because $x_* \in E$, 
\begin{equation*}
    \frac{d}{dx}\tilde f(x_*, 0) = t_* \geq t'.
\end{equation*} 

\medskip 
\noindent \textbf{Lower bounds on $\tilde f$ near $(x_*, 0)$.}
By the definition of $E$, 
\begin{equation}\label{eq:tilde_f_lower_bd_x}
    \tilde f(x_*+u, 0) \geq \tilde f(x_*, 0) + t' u. 
\end{equation}
Next we use the Furstenberg set estimate \cref{subsec:limiting_branching_func} \cref{exampleitem:info_L_furstenberg} to bound $f(x_*, u)$ from below. Actually, we use the regime where $s+t>2$ and $t>1$, which is due to Fu and Ren \cite{fu2021incidence}. For $w \in (0, 1-x_*]$, let 
\begin{align*}
    f_{w}(x,y,z) &= \frac{1}{w}\tilde f(w x,  w y, w z\sep x_*, 0).
\end{align*}
By \cref{subsec:limiting_branching_func} \cref{item:limiting_branching_func_blowup}, $f_w$ lies in $\mc L$. By \eqref{eq:tilde_f_lower_bd_x}, 
\begin{align*}
    f_w(x, 0) \geq t' x\qquad \text{for $x \in [0,1]$.}
\end{align*}
The direction numbers are bounded below by  
\begin{align*}
    d_{f_w}(y; 1, 0) &= f_w(1,y,1) - f_w(1,0,1) \\ 
    &= \frac{1}{w}(\tilde f(w, wy, w \sep x_*, 0) - \tilde f(w, 0, w\sep x_*, 0) \\  
    &= \frac{1}{w} d_{\tilde f}(wy\sep x_*+w, 0) \\ 
    &\geq d_{\tilde f}(wy; 1, 0) \geq sy. 
\end{align*}
where in the last line we use monotonicity of $d_{\tilde f}(y\sep x, 0)$ in the $x$-variable, see \Cref{lem:dir_set_props}\ref{lemitem:dir_lip}. Because $t'+s>2$ and $t' > 1$, the Furstenberg set estimate (\cref{subsec:limiting_branching_func} \cref{exampleitem:info_L_furstenberg}) implies 
\begin{align*}
    f_w(0,1) \geq 1+s, 
\end{align*}
which implies
\begin{align}\label{eq:tilde_f_lower_bd_y}
    \tilde f(x_*, u) \geq \tilde f(x_*, 0) + u(1+s)\qquad \text{for $u \in [0, 1-x_*]$.}
\end{align}
Combining \eqref{eq:tilde_f_lower_bd_x} and \eqref{eq:tilde_f_lower_bd_y}, we estimate the high-low error function by
\begin{align}\label{eq:error_func_f_tilde}
    e_{\tilde f}(u\sep x_*, 0) = \frac{1}{2}(\tilde f(x_*+u) + \tilde f(x_*, u) - 2f(x_*, 0) - 3u) \geq \frac{t'+s-2}{2}u.
\end{align}

\medskip 
\noindent \textbf{Microscopic blowup.} 
The high-low error function estimate \eqref{eq:error_func_f_tilde} allows us to apply \Cref{lem:zooming_in} with the choice $r = (t'+s-2)/4$ to show $f_w \in \mc L^{bad}$.
Suppose $f_w \in \mc L^{good(c_2')}$ for $c_2' = (t'+s-2)/50$. By \Cref{lem:zooming_in}, this implies that $\tilde f \in \mc L^{good(x_* + c_2' w)}$. Because $x_* \leq c_2/100$, if $w \leq c_2 / 100$ then $\tilde f \in \mc L^{good(c_2)}$, contradicting our hypothesis that $\tilde f \in \mc L^{bad(c_2)}$. Overall, we find 
\begin{align*}
    f_w \in \mc L^{bad(c_2')} \qquad \text{for all $w \in (0,c_2/100]$}.
\end{align*}
Let $w_j \to 0$ be a sequence along which $f_{w_j}$ converges uniformly to some branching function $f \in \mc L$. Because $\mc L^{bad(c_2')}$ is a closed set (see the discussion at the beginning of \cref{subsec:limiting_lower_bounds}) and $f_w \in \mc L^{bad(c_2')}$ for sufficiently small $w$, we find $f \in \mc L^{bad(c_2)}$. 

We will prove that $f \in \mc L_{t_*, s+1}$. 
First of all, because $x \to \tilde f(x,0)$ is differentiable at $x = x_*$ with derivative $t_*$, 
\begin{equation}\label{eq:f_AD_reg}
    f(x, 0) = t_* x \qquad \text{for $x \in [0,1].$}
\end{equation}
Let $x_0 \in [0,1]$, and let $w \in (0, 1-x_0]$. Consider the blowup 
\begin{align*}
    f_{Q_{w}(x_0, 0)}(x,y,z) = \frac{1}{w} f(wx,wy,wz\sep x_0, 0). 
\end{align*}
By \eqref{eq:f_AD_reg}, 
\begin{align*}
    f_{Q_w(x_0, 0)}(x, 0) = t_* x.
\end{align*}
It is important for this estimate that $f(x, 0) = t_* x$, rather than $f(x, 0) \geq t_* x$. 
By writing $f$ as a limit of the branching functions $f_{w_j}$, we bound the direction numbers of $f_{Q_w(x_0, 0)}$ from below using the direction numbers of $\tilde f$,
\begin{align*}
    d_{f_{Q_w(x_0, 0)}}(y; 1, 0)  &= \frac{1}{w}d_f(wy; x_0+w,0) \\ 
    &= \lim_{j\to \infty} \frac{1}{w} d_{f_{w_j}}(wy; x_0+w, 0) \\ 
    &= \lim_{j\to \infty} \frac{1}{ww_j} d_{\tilde f}(w_j wy; x_*+w_j(x_0+w), 0). 
\end{align*}
By monotonicity of the direction function in the $x$-variable (see \Cref{lem:dir_set_props}\ref{lemitem:dir_lip}),
\begin{align*}
    d_{\tilde f}(w w_j y; x_*+w_j(x_0+w), 0) \geq d_{\tilde f}(ww_j y\sep 1, 0) \geq s w w_j y,
\end{align*}
so $d_{f_{Q_w(x_0, 0)}}(y; 1, 0) \geq sy$. Because $t'+s>2$ and $t' > 1$, the Furstenberg set estimate (\cref{subsec:limiting_branching_func} \cref{exampleitem:info_L_furstenberg}) implies 
\begin{align*}
    f_{Q_w(x_0, 0)}(0, 1) \geq 1+s.
\end{align*}
For any $(x_0, y_0) \in \ms D$, take $w = y_0$ and apply the above estimate to obtain 
\begin{align*}
    f(x_0,y_0) &= f(x_0,0) + y_0 f_{Q_{y_0}(x_0,0)}(0,1) \\ 
     &\geq t_* x_0 + (s+1)y_0. 
\end{align*}
Thus $f \in \mc L_{t_*,s+1}$, and by \Cref{thm:incidence_lower_bd}, $f \in \mc L^{good}$, yielding a contradiction. 
\end{proof}

\begin{remark}
In the above proof we land in the special case \cref{sec:horizontal_good_line} of \Cref{thm:lip_func_theorem} because $f(x, y\sep x_0, 0) \geq t_* x + (s+1)y$.
\end{remark}

\section{Proof of Lipschitz function theorem}\label{sec:pf_lip_func_thm}

\subsection{Reductions of the Lipschitz function theorem}\label{sec:reduce_lip_func_good_line}
Using uniform subsets, the basic initial estimate, and the high-low method, we reduced the incidence lower bound problem \Cref{thm:incidence_lower_bd} to the Lipschitz function problem from \Cref{thm:lip_func_theorem}.
Before proving \Cref{thm:lip_func_theorem}, we make a few more reductions to simplify the problem. 

The next Lemma relates effective triples for a branching function $f$ to effective triples for a blowup of $f$. See \cref{subsec:limiting_branching_func} \cref{item:limiting_branching_func_blowup} for the definition of the blowup and \cref{subsec:limiting_lower_bounds} for the definition of $\mc L^{good}$, $\mc L^{bad}$. 

\begin{lemma}[Zooming in]\label{lem:zooming_in}
Let $f \in \mc L$, $x_0,y_0 \in [0, 1/5]$, and $w \in (0, 1-x_0-y_0]$ satisfy $e(s\sep x_0, y_0) \geq r w$ for $s \in [w/2, 1-x_0-y_0]$ and some $r>0$. 
Let 
\begin{align*}
    f_{Q_w(x_0, y_0)}(x,y,z) = \frac{1}{w} f(wx,wy,wz\sep x_0, y_0)
\end{align*}
be the blowup of $f$ near $(x_0, y_0)$ with scale $w$. 
Suppose $f_{Q_w(x_0,y_0)} \in \mc L^{good(c_2)}$ for some $c_2 < r/10$. Then $f \in \mc L^{good(\max\{x_0,y_0\}+c_2w)}$.
\end{lemma}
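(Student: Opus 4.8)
The plan is to start from the $(c_1,c_2)$-effective triple that the hypothesis $f_{Q_w(x_0,y_0)}\in\mc L^{good(c_2)}$ supplies for the blowup $f_Q := f_{Q_w(x_0,y_0)}$ (this blowup lies in $\mc L$ by the blowup-closure property of $\mc L$ recorded in \cref{subsec:limiting_branching_func}, so $\mc L^{good(c_2)}$ makes sense for it), and to rescale that triple to an effective triple for $f$ itself. If $(t'\sep x',y')$ is $(c_1,c_2)$-effective for $f_Q$, the candidate triple for $f$ will be $(wt'\sep x_1,y_1)$ with $x_1 = x_0+wx'$ and $y_1 = y_0+wy'$. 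The bound $\max\{wt',x_1,y_1\}\le\max\{x_0,y_0\}+c_2w$ is immediate from $\max\{t',x',y'\}\le c_2$ and $x_0,y_0\ge0$, and $x_1+y_1\le x_0+y_0+w\le1$, so this is a legitimate triple; the content is in verifying the effectiveness inequality.

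First I would record the two elementary identities that drive everything. Unwinding $f_Q(x,y,z)=\tfrac{1}{w}f(wx,wy,wz\sep x_0,y_0)$ and the definition of $f(\cdot\sep\cdot)$ gives
\[
f_Q(a,b,c\sep x',y') = \tfrac{1}{w}\,f(wa,wb,wc\sep x_1,y_1),
\qquad
f(a,b\sep x_1,y_1) = f(wx'+a,\,wy'+b\sep x_0,y_0) - f(wx',wy'\sep x_0,y_0).
\]
From the first identity and the convention $f(a,b\sep\cdot)=f(a,b,a+b\sep\cdot)$ one gets, directly from the definitions of $b$ and $e$,
\[
b_f(wt'\sep x_1,y_1) = w\,b_{f_Q}(t'\sep x',y'),
\qquad
e_f(ws'\sep x_1,y_1) = w\,e_{f_Q}(s'\sep x',y').
\]
Multiplying the effectiveness inequality of $f_Q$ by $w$ therefore gives $b_f(wt'\sep x_1,y_1)+e_f(s\sep x_1,y_1)\ge wc_1$ for all $s\in[wt',\,w(1-x'-y')]$, i.e. the effectiveness inequality for $f$ already holds on the "microscopic" part of the target range $[wt',\,1-x_1-y_1]$.

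The remaining part $s\in[w(1-x'-y'),\,1-x_1-y_1]$ is where the hypothesis $e(s\sep x_0,y_0)\ge rw$ is used, and I expect this to be the only substantive point. Using the second identity above together with monotonicity of $f$ and the $1$-Lipschitz bound $f(wx',wy'\sep x_0,y_0)\le 2w(x'+y')$, I would show $e_f(s\sep x_1,y_1)\ge e(s\sep x_0,y_0)-2w(x'+y')$ for $s\le1-x_1-y_1$. Evaluating the hypothesis at $s=w/2$ against the trivial bound $e(w/2\sep x_0,y_0)\le w/4$ forces $r\le1/4$, hence $c_2<r/10\le1/40$ and $x'+y'\le2c_2<1/20$; so on this part $s\ge w(1-x'-y')>w/2$ and $s\le1-x_1-y_1\le1-x_0-y_0$, the hypothesis applies, and $e_f(s\sep x_1,y_1)\ge rw-4wc_2$. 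Plugging $s'=t'$ into the $f_Q$-effectiveness inequality and using $e_{f_Q}(t'\sep x',y')\le t'/2\le c_2/2$ gives $b_{f_Q}(t'\sep x',y')\ge c_1-c_2/2>-c_2/2$, so $b_f(wt'\sep x_1,y_1)>-wc_2/2$. Adding, $b_f(wt'\sep x_1,y_1)+e_f(s\sep x_1,y_1)>w(r-\tfrac{9}{2}c_2)>0$ there. The two sub-ranges cover $[wt',\,1-x_1-y_1]$ exactly because $w\le1-x_0-y_0$ forces $w(1-x'-y')\le1-x_1-y_1$; so $(wt'\sep x_1,y_1)$ is $(\tilde c_1,\ \max\{x_0,y_0\}+c_2w)$-effective for $f$ with $\tilde c_1=\min\{wc_1,\ w(r-\tfrac{9}{2}c_2)\}>0$, which is precisely $f\in\mc L^{good(\max\{x_0,y_0\}+c_2w)}$.

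The main obstacle is this macroscopic sub-range: changing the base point from $(x_0,y_0)$ to $(x_1,y_1)=(x_0+wx',y_0+wy')$ degrades $e$ by $O(w(x'+y'))=O(wc_2)$, while $b$ at the rescaled scale $\delta^{wt'}$ can be as negative as $-O(wc_2)$, so one genuinely needs the quantitative separation $c_2<r/10$ (and the induced $r\le1/4$) to absorb both losses. The only bookkeeping to watch is that the microscopic and macroscopic sub-ranges glue without a gap, which is exactly the hypothesis $w\le1-x_0-y_0$; everything else is routine manipulation of the definitions of $b$, $e$, and the blowup $f_Q$.
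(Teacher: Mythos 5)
Your proof is correct and follows essentially the same route as the paper: rescale the $(c_1,c_2)$-effective triple for $f_{Q_w(x_0,y_0)}$ to the triple $(wt'\sep x_0+wx',y_0+wy')$ for $f$, use the exact rescaling of $b$ and $e$ on the range $s\le w(1-x'-y')$, and on the remaining range combine a crude lower bound on $b$ with the hypothesis $e(s\sep x_0,y_0)\ge rw$ and the $O(wc_2)$ Lipschitz loss from shifting the base point. The only (harmless) deviations are that you extract $b_{f_Q}(t'\sep x',y')\ge c_1-c_2/2$ from the effectiveness inequality at $s'=t'$ where the paper just uses the trivial bound $b\ge -wt$, and that you explicitly verify $w(1-x'-y')\ge w/2$ (via $r\le 1/4$) so the hypothesis on $e(\cdot\sep x_0,y_0)$ applies, a point the paper leaves implicit.
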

\begin{proof}
Suppose $(t\sep x, y)$ is $(c_1,c_2)$-effective for $f_{Q_w(x_0,y_0)}$ (see (\ref{eq:effectiveness_ineq}) for the definition of effective triples). Then 
\begin{align*}
    b(wt\sep x_0+wx,y_0+wy) + e(s\sep x_0+wx,y_0+wy) \geq wc_1
\end{align*}
for $s \in [wt, w(1-x-y)]$. By (\ref{eq:b_from_dir}) and \Cref{lem:dir_set_props}\ref{lemitem:dir_lip} we have the trivial lower bound $b(wt\sep x_0+wx,y_0+wy) \ge -wt$.
For $s \in [w(1-x-y), 1-(x_0+wx)-(y_0+wy)]$ by the assumption $e(s\sep x_0, y_0) \ge rw$ and \eqref{eq:f_lip} we have
\begin{align*}
    b(wt\sep x_0+wx,y_0+wy) + e(s\sep x_0+wx,y_0+wy) &\geq -wt + r w - 4wc_2 \geq (r - 5c_2)w. 
\end{align*}
Thus if $c_2 < r/10$ the triple $(wt\sep x_0+wx,y_0+wy)$ is $(c_1',c_2')$-effective for $f$ where $c_1' = \min\{wc_1, wr/2\}$ and $c_2' = \max\{x_0,y_0\} + c_2w$. 
\end{proof}

\begin{definition}
Let $f(x,y)$ be a Lipschitz function of two variables. We say $(x,y)$ is $(\alpha, \beta)$-good for $f$ if 
\begin{align*}
    f(x+x',y+y') - f(x,y) \geq \alpha x' + \beta y' 
\end{align*}
for all $x',y' \geq 0$ such that the left hand side is defined. 
\end{definition}

\begin{definition}
We say $f \in \mc L$ has an $(\alpha,\beta)$-good line if there is some $q \in [-1,1]$ and a line segment $\tau = \{(t+qt, t-qt)\, :\, 0 \leq t \leq 1/10\}$ so that every point in $\tau$ is $(\alpha, \beta)$-good for $f$.
\end{definition}

\begin{definition}
Let $f \in \mc L$ and $(x_0, y_0) \in \ms D$. Consider the sequence of blowups $f_{Q_w(x_0, y_0)}$ as $w \to 0$. If $\tilde f$ is a subsequential limit of these blowups, we say $\tilde f$ is a \textit{microscopic blowup} of $f$ at $(x_0,y_0)$.
\end{definition}
\mbox{}\bigskip\\
The plan is to use microscopic blowups to prove that if $\mc L_{\alpha, \beta}\cap \mc L^{bad}$ is nonempty, then for any $\alpha' < \alpha$ and $\beta' < \beta$ satisfying $\alpha'+\beta' > 3$, there is some $f \in \mc L^{bad}$ with an $(\alpha', \beta')$-good line. 
The next lemma locates lots of good pairs. We state the lemma in terms of a function of two variables to emphasize that it does not have to do with direction sets.

\begin{lemma}\label{lem:finding_good_points}
Let $\alpha,\beta > 0$.
Let $f(x,y)$ be a Lipschitz function with constant $C_{Lip}$ on a closed planar domain that includes $[0,1/5]\times [0,1/5]$. Suppose that $f$ satisfies the lower bound 
\begin{equation}\label{eq:f_lower_bd_good_pts_lem}
    f(x,y) \geq \alpha x + \beta y. 
\end{equation}
Then there exists a $1$-Lipschitz function $p: [0,1/10] \to [-1,1]$ with $p(0) = 0$ and a closed set $E \subset [0,1/10]$ such that 
\begin{align*}
    \text{$(t+p(t), t-p(t))$ is $0$-good for every $t \in E$.}
\end{align*}
Moreover, 
\begin{equation}\label{eq:setE_msr_lower_bd}
    |E\cap [0,s]| \gtrsim \frac{\min\{\alpha, \beta\}}{C_{Lip}}s\qquad \text{for all $s \leq 1/10$.}
\end{equation}
Here $0$-good means $f(x+x',y+y') \geq f(x,y)$ for all $x',y'\geq 0$ such that the inputs are defined.
\end{lemma}
When we say $f$ is $C_{Lip}$-Lipschitz, we mean 
\begin{align*}
    |f(x,y) - f(x', y')| \leq C_{Lip} (|x-x'| + |y-y'|). 
\end{align*}
\begin{proof}
Say $(x', y') \preceq (x,y)$ if $x' \leq x$ and $y' \leq y$. Say $(x',y') \prec (x,y)$ if $(x',y') \preceq(x,y)$ and $(x',y') \neq (x,y)$. 

Let $v = \min\{\alpha,\beta\}/10$. 
For every dyadic integer $m = 2^k$ and $\delta = v/m$, we will construct a discrete approximation $p_{\delta}$ to $p$ and a discrete approximation $E_{\delta}$ to $E$. Then we will take a limit as $\delta \to 0$ to obtain the result.

\medskip 
\noindent\textbf{Constructing a discrete approximation.}
Let $m = 2^k$ be a dyadic integer and let $\delta = v/m$. Construct a sequence of points $(x_j, y_j)_{j=0}^{m-1}$ as follows. Set $(x_0, y_0) = (0,0)$ and for $j \in \{1,\ldots, m-1\}$, set $(x_j,y _j)$ to be a maximal point under the $\preceq$ partial order such that 
\begin{itemize}
    \item $(x_j, y_j) \succeq (x_{j-1}, y_{j-1})$
    \item $(x_j,y_j)\in \mathrm{Domain}(f)$
    \item $f(x_j, y_j) \leq j \delta$. 
\end{itemize}
Because $(x_j, y_j)$ is chosen to be a maximal point in a sublevel set, $(x_j, y_j)$ is $0$-good. 
By the lower bound \eqref{eq:f_lower_bd_good_pts_lem}, $j\delta \geq f(x_j, y_j)\geq \max\{x_j,y_j\}\cdot \min\{\alpha, \beta\}$, so for all $0 \leq j \leq m-1$
\begin{align*}
    \max\{x_j, y_j\} \leq \frac{j\delta}{\min\{\alpha, \beta\}} \leq \frac{1}{10}.
\end{align*}
As a consequence, $(x_j, y_j) \prec (1/5, 1/5)$. The lower bound on $f$ gives 
\begin{align*}
    f(1/5, 1/5) \geq (\alpha+\beta)/5\geq 2m\delta > j\delta.
\end{align*}
If $f(x_j, y_j) < j\delta$ then by continuity of $f$, there would be some point $(x,y)$ on the straight line connecting $(x_j, y_j)$ to $(1/5,1/5)$ where $f(x,y) = j\delta$, contradicting maximality of the point $(x_j, y_j)$ in the partial order `$\preceq$'. Thus
\begin{align}\label{eq:fxj_eq}
    f(x_j, y_j) = j\delta.
\end{align}
Let 
\begin{align*}
    h_j^{\delta} &= (x_j+y_j)/2, \\ 
    E^{\delta} &= \{h_0^{\delta}, \ldots, h_{m-1}^{\delta}\} \subset [0,1/10]. 
\end{align*}
We prove separation between the points $h_j^{\delta}$ using the fact that $f$ is Lipschitz. Because $f$ is Lipschitz and $(x_j,y_j)_{j=0}^{m-1}$ is monotonic under $\preceq$,
\begin{align*}
    \delta = |f(x_{j+1}, y_{j+1}) - f(x_j, y_j)| \leq C_{Lip} ((x_{j+1} - x_j) + (y_{j+1} - y_j)) = 2C_{Lip}(h^{\delta}_{j+1} - h^{\delta}_j).
\end{align*}
This separation implies that the $\delta$-covering number of $E^{\delta}$ is lower bounded by 
\begin{align}\label{eq:Edelta_covering_num}
    |E^{\delta}|_{\delta} \gtrsim \frac{m}{C_{Lip}} \gtrsim \frac{\min\{\alpha,\beta\}}{C_{Lip}} \delta^{-1}. 
\end{align}
Now let $p^{\delta}: E^{\delta} \to [-1,1]$ be defined by 
\begin{align*}
    p^{\delta}(h_j^{\delta}) = (x_j-y_j)/2. 
\end{align*}
We can write $(x_j, y_j)$ as 
\begin{align}\label{eq:write_x_as_p}
    (x_j, y_j) = (h_j^{\delta}+p^{\delta}(h_j^{\delta}), h_j^{\delta}-p^{\delta}(h_j^{\delta})).
\end{align}
We can verify that $p^{\delta}$ is Lipschitz with Lipschitz constant $1$, 
\begin{align*}
    |p^{\delta}(h_{j+k}^{\delta}) - p^{\delta}(h_j^{\delta})| &= \frac{1}{2} |(x_{j+k} - x_j) - (y_{j+k} - y_j)| \\ 
    &\leq \frac{1}{2} (x_{j+k} - x_j) + \frac{1}{2} (y_{j+k} - y_j)\\ 
    &= h_{j+k}^{\delta} - h_j^{\delta}. 
\end{align*}
Extend $p^{\delta}$ to a piecewise linear function $p^{\delta,\circ}: [0,1/10]\to [-1,1]$ (extend by a constant for inputs $t \geq h_{m-1}$). For this choice of extension, $p^{\delta,\circ}$ is also Lipschitz with Lipschitz constant $1$.

\medskip 
\noindent \textbf{Taking a limit as $\delta \to 0$.}
Let $\delta_{k}$ be a subsequence of values of $\delta$ along which $p^{\delta_{k},\circ}$ converges uniformly to a $1$-Lipschitz function $p: [0,1/10]\to [-1,1]$. Let $E \subset [0,1/10]$ be the set of limit points of $E^{\delta_{k}}$ as $k \to \infty$,
\begin{align*}
    E = \bigcap_{\ell=1}^{\infty} \overline{\bigcup_{k = \ell}^{\infty} E^{\delta_{k}}}.
\end{align*}
We claim that $p$, $E$ satisfy the conclusion of the Lemma. 

First we show $(t+p(t), t-p(t))$ is $0$-good for every $t \in E$. Let $t \in E$, and write 
\begin{align*}
    t = \lim_{\ell\to \infty} t^{\delta_{k_{\ell}}},\qquad \text{$\delta_{k_{\ell}}$ is a subsequence and $t^{\delta_{k_{\ell}}} \in E^{\delta_{k_{\ell}}}$.}
\end{align*}
For each $\ell$, the point
\begin{align*}
    (t^{\delta_{k_{\ell}}} + p^{\delta_{k_{\ell}}}(t^{\delta_{k_{\ell}}}), t^{\delta_{k_{\ell}}} - p^{\delta_{k_{\ell}}}(t^{\delta_{k_{\ell}}}))
\end{align*}
is $0$-good by \eqref{eq:write_x_as_p}. Taking a limit as $\ell\to \infty$,
\begin{align*}
    (t+p(t), t-p(t)) = \lim_{\ell\to \infty} (t^{\delta_{k_{\ell}}} + p^{\delta_{k_{\ell}}}(t^{\delta_{k_{\ell}}}), t^{\delta_{k_{\ell}}} - p^{\delta_{k_{\ell}}}(t^{\delta_{k_{\ell}}})). 
\end{align*}
The set of $0$-good points is closed, so $(t+p(t), t-p(t))$ is $0$-good as well. 

Next we show a lower bound on the Lebesgue measure of $E$. In order to do so, we lower bound the Lebesgue measure of the $w$-neighborhoods $N_w(E)$ for all $w > 0$. By the lower bound on the $\delta$-covering number of $E^{\delta}$ (Eq. \eqref{eq:Edelta_covering_num}), if $\delta < w$ then 
\begin{align*}
    |E^{\delta}|_w \gtrsim \frac{\delta}{w}|E^{\delta}|_\delta  \gtrsim w^{-1} \frac{\min\{\alpha,\beta\}}{C_{Lip}}.
\end{align*}
The Lebesgue measure of the $w$-neighborhood is at least
\begin{align*}
    |N_w(E^{\delta})| \gtrsim w|E^{\delta}|_w \gtrsim \frac{\min\{\alpha,\beta\}}{C_{Lip}}. 
\end{align*}
For any $w> 0$ and any $\ell > 0$, 
\begin{align*}
    \Bigl|N_w\Bigl(\bigcup_{k = \ell}^{\infty} E^{\delta_{k}}\Bigr)\Bigr| \gtrsim \frac{\min\{\alpha,\beta\}}{C_{Lip}},
\end{align*}
so taking an intersection over all $w > 0$ gives a lower bound on the measure of the closure,
\begin{align*}
    \Bigl|\overline{\bigcup_{k = \ell}^{\infty} E^{\delta_{k}}} \Bigr| \gtrsim \frac{\min\{\alpha,\beta\}}{C_{Lip}}. 
\end{align*}
Taking an intersection over all $\ell \geq 1$  gives a lower bound on the measure of $E$, 
\begin{equation*}
    |E| \gtrsim \frac{\min\{\alpha,\beta\}}{C_{Lip}}.
\end{equation*}
\end{proof}

\begin{lemma}\label{lem:finding_microscopic_blowup}
Let $f \in \mc L_{\alpha,\beta}$ and choose $\alpha' < \alpha $, $\beta' < \beta$, $c_2 \in (0, 1/5)$. Then there is some $(x_0, y_0) \in [0,c_2]^2$ that is an $(\alpha',\beta')$-good point for $f$ and such that any microscopic blowup $\tilde f$ of $f$ at $(x_0, y_0)$ has an $(\alpha',\beta')$-good line.
\end{lemma}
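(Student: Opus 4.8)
The plan is to reduce to \Cref{lem:finding_good_points} by subtracting a linear function, and then to extract a good \emph{line} from a microscopic blowup taken at a Lebesgue density point of the resulting good set.

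\textbf{Setting up the good set.} First I would replace $f$ by $g(x,y) = f(x,y) - \alpha' x - \beta' y$. Since $f(x,y) \ge \alpha x + \beta y$, we have $g(x,y) \ge \min\{\alpha-\alpha',\beta-\beta'\}(x+y) \ge 0$, and $g$ is Lipschitz with the same constant as $f$ on the two–variable slice (at most $2$, since $f$ is $1$-Lipschitz in each of its three coordinates). The key observation is that $(x,y)$ is $(\alpha',\beta')$-good for $f$ if and only if it is $0$-good for $g$. Applying \Cref{lem:finding_good_points} to $g$ with the parameters $\alpha-\alpha' > 0$, $\beta-\beta' > 0$ yields a $1$-Lipschitz function $p \colon [0,1/10] \to [-1,1]$ with $p(0)=0$ and a closed set $E \subset [0,1/10]$ with $|E\cap[0,s]| \gtrsim \min\{\alpha-\alpha',\beta-\beta'\}\,s$ for all $s \le 1/10$, such that the curve $\phi(t) := (t+p(t),\, t-p(t))$ is $0$-good for $g$ — hence $(\alpha',\beta')$-good for $f$ — at every $t \in E$. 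Since $p$ is $1$-Lipschitz, $\phi$ has nonnegative increments, lands in $[0,1/10]^2$, and $\phi(t) \to 0$ as $t \to 0^+$.

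\textbf{Choosing the base point.} As $E$ has positive measure in every neighborhood of $0$, the Lebesgue density theorem gives that almost every point of $E$ has density $1$ in $E$ (in particular right-density $1$), and a Lipschitz function of one variable is differentiable almost everywhere. Choose $t_* \in E$ small enough that $(x_0,y_0) := \phi(t_*) \in [0,c_2]^2$ (possible because $\phi(t)\to 0$ and using the bound on $\phi$ from \Cref{lem:finding_good_points}), that $t_*$ is a density-$1$ point of $E$, and that $p'(t_*) =: q \in [-1,1]$ exists. Then $(x_0,y_0)$ is $(\alpha',\beta')$-good for $f$ because $t_* \in E$.

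\textbf{The good line in the blowup.} Let $\tilde f$ be any microscopic blowup of $f$ at $(x_0,y_0)$, say $f_{Q_{w_j}(x_0,y_0)} \to \tilde f$ uniformly with $w_j \to 0$. From the defining identity $f_{Q_w(x_0,y_0)}(x,y) = \tfrac1w\bigl(f(x_0+wx,\,y_0+wy) - f(x_0,y_0)\bigr)$ one checks directly that if $(a,b) \succeq (x_0,y_0)$ is $(\alpha',\beta')$-good for $f$, then $\tfrac1w(a-x_0,\,b-y_0)$ is $(\alpha',\beta')$-good for $f_{Q_w(x_0,y_0)}$. Now fix $r \in (0,1/10]$. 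Because $t_*$ is a density-$1$ point of $E$, for each $j$ there is $t_j \in E$ with $t_j - t_* = r w_j + o(w_j)$; then by differentiability of $p$ at $t_*$,
\[
\frac{1}{w_j}\bigl(\phi(t_j)-\phi(t_*)\bigr) = \Bigl(\frac{t_j-t_*}{w_j}+\frac{p(t_j)-p(t_*)}{w_j},\ \frac{t_j-t_*}{w_j}-\frac{p(t_j)-p(t_*)}{w_j}\Bigr) \longrightarrow \bigl((1+q)r,\,(1-q)r\bigr),
\]
and each term on the left is an $(\alpha',\beta')$-good point of $f_{Q_{w_j}(x_0,y_0)}$. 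Since $(\alpha',\beta')$-goodness is a closed condition that is preserved under uniform convergence of the functions together with convergence of the base points, the limit point $((1+q)r,(1-q)r)$ is $(\alpha',\beta')$-good for $\tilde f$; the case $r=0$ follows by closedness of the set of good points. Reparametrizing with $t' = r$ and $q' = q$, we conclude that $\{(t'+q't',\,t'-q't') : 0 \le t' \le 1/10\}$ is an $(\alpha',\beta')$-good line for $\tilde f$.

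I expect the main difficulty to be the last step, where two limits must be handled simultaneously: the blowup scale $w_j \to 0$, and, within scale $w_j$, the selection of a good parameter $t_j \in E$ landing near $rw_j$. It is precisely the density-$1$ property of $t_*$ inside $E$ — not merely the positive lower density supplied by \Cref{lem:finding_good_points} — that makes the rescaled good points fill out the entire segment, and one has to verify that the $(\alpha',\beta')$-goodness genuinely survives the passage to the limit function $\tilde f$.
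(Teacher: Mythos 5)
Your proposal is correct and follows essentially the same route as the paper's proof: subtract the linear function $\alpha'x+\beta'y$, apply \Cref{lem:finding_good_points}, pick $t_*$ a Lebesgue density point of $E$ at which $p$ is differentiable, and pass the rescaled good points $\tfrac{1}{w_j}(\phi(t_j)-\phi(t_*))$ to the limit to obtain the good line with slope parameter $q=p'(t_*)$. Your limit $((1+q)r,(1-q)r)$ is the correct one (the paper's displayed limit has the coordinates' signs transposed, an evident typo), and your explicit choice of $t_*$ small to ensure $(x_0,y_0)\in[0,c_2]^2$ fills in a detail the paper leaves implicit.
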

\begin{proof}
Start by applying \Cref{lem:finding_good_points} to the function $f'(x, y) = f(x, y) - \alpha' x-\beta' y$ with parameters $\alpha-\alpha', \beta-\beta'>0$. We pick $(x_0, y_0)$ by analyzing the $1$-Lipschitz function $p$. By Rademacher's theorem in one dimension \cite[Theorem 7.20]{rudin}, $p$ is differentiable at almost every $t \in [0,1/10]$. Also, by Lebesgue's differentiation theorem almost every $t \in E$ is a Lebesgue point for $E$, meaning 
\begin{align*}
    \lim_{h\to 0} \frac{|E\cap [t, t+h]|}{h} = 1. 
\end{align*}
Pick some $t_0\in E$ which is a Lebesgue point and for which $q = p'(t_0)$ exists. Let $(x_0, y_0) = (t_0+p(t_0), t_0-p(t_0))$. The set of points 
\begin{align*}
    \{(t+(p(t_0+wt)-p(t_0))/w, t-(p(t_0+wt)-p(t_0))/w)\, :\, t_0+wt \in E\}
\end{align*}
are all $(\alpha', \beta')$-good for $f_{Q_w(x_0,y_0)}$. Let $\tilde f$ be a microscopic limit at $(x_0, y_0)$ along a sequence of scales $w_j \to 0$. We claim that $\{(t+qt,t-qt)\, :\, 0\leq t \leq 1/10\}$ consists of $(\alpha', \beta')$ good points. Because $t_0$ is a Lebesgue point for $E$, for any $t \in [0,1/10]$, there is a sequence $t_j \to t$ such that $t_0+w_jt_j \in E$ for all $j\geq 1$. The points $(t+(p(t_0+w_jt)-p(t_0))/w_j, t-(p(t_0+w_jt)-p(t_0))/w_j)$ converge to $(t+qt, t-qt)$, so that is a good point for $\tilde f$.
\end{proof}

\begin{lemma}\label{lem:reduce_to_good_line_case}
Let $\alpha + \beta > 3$. If $\mc L_{\alpha, \beta}\cap \mc L^{bad}$ is nonempty then for every choice of $\alpha' < \alpha$ and $\beta' < \beta$ such that $\alpha'+\beta' > 3$, there is some $f \in \mc L^{bad}$ with an $(\alpha',\beta')$-good line.
\end{lemma}
\begin{proof}
Suppose $f \in \mc L_{\alpha, \beta} \cap \mc L^{bad(c_2)}$ for some $c_2 > 0$. Let $\alpha' < \alpha$ and $\beta' < \beta$ satisfy $\alpha'+\beta'>3$, and let $c_0 = 3-\alpha'-\beta'$. By \Cref{lem:finding_microscopic_blowup}, there is some $(x_0, y_0) \in [0,c_2/20]^2$ which is $(\alpha', \beta')$-good for $f$ and such that every microscopic blowup of $f$ at $(x_0, y_0)$ has an $(\alpha', \beta')$-good line. 

We will show that any microscopic blowup at $(x_0, y_0)$ lies in $\mc L^{bad}$. For each $w \in (0, 1-x_0-y_0]$ let 
\begin{align*}
    f_{Q_w(x_0, y_0)}(x,y,z) = \frac{1}{w}f(wx, wy,wz; x_0, y_0) 
\end{align*}
be the blowup at $(x_0, y_0)$ with scale $w$. We would like to apply \Cref{lem:zooming_in} to prove $f_{Q_w(x_0, y_0)} \in \mc L^{bad(c_0/40)}$. In order to apply \Cref{lem:zooming_in}, we estimate the error function of $f$ near $(x_0, y_0)$ using the hypothesis that $(x_0, y_0)$ is $(\alpha', \beta')$-good,
\begin{align*}
    e(s; x_0, y_0) = \frac{1}{2}(f(s, 0; x_0, y_0) + f(0, s; x_0, y_0) - 3s) \geq \frac{1}{2}c_0 s\qquad \text{for all $s \in [0,1-x_0-y_0]$.}
\end{align*}
Suppose by way of contradiction that $f_{Q_w(x_0, y_0)} \in \mc L^{good(c_0/40)}$. By \Cref{lem:zooming_in}, $f \in \mc L^{good(\max\{x_0,y_0\}+wc_0/40)}$, so $f \in \mc L^{good(c_2/20+wc_0/20)}$. For small enough $w$, this contradicts the hypothesis that $f \in \mc L^{bad(c_2)}$, so we learn $f_{Q_w(x_0, y_0)} \in \mc L^{bad(c_0/40)}$ for small enough $w$. 

Let $w_j \to 0$ be a subsequence along which $f_{Q_{w_j}(x_0, y_0)}$ converges uniformly to some branching function $\tilde f \in \mc L$. Because $\mc L^{bad(c_0/40)}$ is a closed set (see the discussion at the beginning of \cref{subsec:limiting_lower_bounds}), $\tilde f \in \mc L^{bad(c_0/40)}$ as well. Because we chose $(x_0, y_0)$ using \Cref{lem:finding_microscopic_blowup}, $\tilde f$ has an $(\alpha', \beta')$-good line. 
\end{proof}

Thanks to \Cref{lem:reduce_to_good_line_case}, in order to prove \Cref{thm:lip_func_theorem} it remains to show that every $f \in \mc L_{\alpha,\beta}$ with an $(\alpha, \beta)$-good line 
\begin{align*}
    \{(t+qt,t-qt)\, :\, 0 \leq t \leq 1/10\},\qquad q \in [-1,1]
\end{align*}
lies in $\mc L^{good}$. Recall this means proving that for every $c_2 \in (0,1/3)$, there is some $c_1 > 0$ so that $f$ has a $(c_1,c_2)$-effective triple. 
We split into cases depending on $q = 1$, $q = -1$, $q\in [0,1)$, $q \in (-1, 0]$. See \cref{subsec:pf_sketch_analysis_lip} for a proof sketch. We define $c_0 := \alpha+\beta-3$.

\subsection{Horizontal good line, i.e. $q = 1$}\label{sec:horizontal_good_line}
Suppose that $(x, 0)$ is $(\alpha,\beta)$-good for $f$ for all $0 \leq x \leq 1/10$. Fix a small $t > 0$ to be chosen later. Our candidate triples will be of the form $(t\sep x, 0)$ for $x \in [0,1/10]$. 

Say $x \in [0,1/20]$ is \textit{$\eta$-directionally stable} if 
\begin{align*}
    d(t\sep x, 0) - d(t\sep x+t, 0) \leq \eta t. 
\end{align*}
Notice that $d(t\sep x, 0) - d(t\sep x+t,0) \geq 0$ for all $x$. Let $E_{ds} \subset [0,1/10]$ be the directionally stable set of $x$-values. We have 
\begin{align*}
    \int_0^{c_2} t^{-1}(d(t\sep x, 0) - d(t\sep x+t,0))\, dx \geq \eta |E_{ds}^c\cap [0,c_2]|. 
\end{align*}
On the other hand, as long as $t < c_2$,
\begin{align*}
     \int_0^{c_2} t^{-1}(d(t\sep x, 0) - d(t\sep x+t,0))\, dx = t^{-1} \int_0^t d(t\sep x, 0) dx- t^{-1}\int_{c_2}^{c_2+t} d(t\sep x, 0)dx \leq t
\end{align*}
using $d(t\sep x, 0) \leq t$. 
Thus
\begin{align*}
    |E_{ds}^c\cap [0,c_2]| \leq t/\eta. 
\end{align*}
As long as $t < \eta c_2$ there is some directionally stable $x \in [0,c_2]$. Because $x$ is $\eta$-directionally stable, (\ref{eq:b_from_dir}) implies $b(t\sep x, 0) \geq -\eta t$. Because $(x, 0)$ is $(\alpha, \beta)$-good, $e(s\sep x, 0) \geq \frac{1}{2}c_0 s$. 
It follows that 
\begin{align*}
    b(t\sep x, 0) + e(s\sep x, 0) \geq (c_0/2 - \eta)t \qquad \text{for all $s \geq t$ where the inputs are defined}.
\end{align*}
If we choose $\eta = c_0/4$ and $t = c_0c_2/8$ then there is some $x \in [0,c_2]$ such that $(t\sep x, 0)$ is $(c_0^2c_2/32, c_2)$-effective. 

\subsection{Vertical good line, i.e. $q=-1$}
This case is identical to the horizontal line case, except we look for triples of the form $(t\sep 0, y)$ instead of $(t\sep x, 0)$ and we use dual direction stability
\begin{equation*}
    d^{\vee}(t\sep 0, y) - d^{\vee}(t\sep 0, y+t) \leq \eta t
\end{equation*}
instead of direction stability. 

\subsection{Diagonal good line with slope $\leq 1$, i.e. $q \in [0, 1)$}
Let $m \in (0,1]$ and suppose that $(x, mx)$ is $(\alpha, \beta)$-good for $f$ for all $0 \leq x \leq 1/10$. Our candidate effective triples are of the form $(t\sep x, mx+mt)$. We parameterize these candidates by the set 
\begin{equation*}
    \ms B = \{(x,t)\, :\, x, t \in [0,c_2']\},\qquad c_2' = c_2/20. 
\end{equation*}
Let 
\begin{itemize}
    \item The directionally stable set $E_{ds}^{\rho}\subset \ms B$ be the pairs $(x,t)$ such that 
    \begin{align*}
        d(t\sep x,mx+mt) - d(t\sep x+t,mx+mt) \leq \rho t. 
    \end{align*}
    \item The high-low regular set $E_{hlr}^{\rho}\subset \ms B$ be the pairs $(x,t)$ such that 
    \begin{align*}
        e(s\sep x,mx+mt) \geq 2\rho t \qquad \text{for all $t \leq s \leq 1-(x+mx+mt)$}.
    \end{align*}
\end{itemize}
If $(x,t) \in E_{ds}^{\rho}\cap E_{hlr}^{\rho}$ then $(t\sep x,mx+mt)$ is $(\rho t, c_2)$-effective. 
We equip $\ms B$ with the measure 
\begin{align*}
    \mu = \frac{dxdt}{t},
\end{align*}
and let 
\begin{align*}
 \mu_{\varepsilon} = \frac{1_{t\geq \varepsilon}}{\mu\{(x,t)\, :\, t\geq \varepsilon\}}\mu
\end{align*}
be the normalized probability measure obtained by cutting off $\mu$ to $t\geq \varepsilon$. The normalization constant is equal to 
\begin{align*}
    \mu\{(x,t)\, :\, t\geq \varepsilon\} = c_2' \cdot \log(c_2'/\varepsilon). 
\end{align*}

\begin{lemma}\label{lem:HLR_density}
Choose $\rho = c_0/8$. There are constants $c(c_0,c_2), c'(c_0, c_2) > 0$ such that for all $\varepsilon < c_2'/10$, 
\begin{align*}
    \mu_{\varepsilon}(E_{hlr}^{c_0/8}) \geq c(c_0,c_2) - \frac{c'(c_0,c_2)}{\log \varepsilon^{-1}}.
\end{align*}
\end{lemma}

\begin{lemma}\label{lem:DS_density}
There exists $c(c_{2},m)>0$ such that for all $\rho > 0$ and $\varepsilon < c_2'/10$,
\begin{align*}
    \mu_{\varepsilon}(E_{ds}^{\rho}) \geq 1 - \rho^{-1} \frac{c(c_2, m)}{\log \varepsilon^{-1}}. 
\end{align*}
\end{lemma}
By combining these lemmas we can find an effective triple. 
Choose $\rho = c_0/8$. \Cref{lem:HLR_density} and \Cref{lem:DS_density} imply that if $\varepsilon < \varepsilon_0(c_0, c_2, m)$, then $\mu_{\varepsilon}(E_{hlr}^{c_0/8}) + \mu_{\varepsilon}(E_{ds}^{c_0/8}) > 1$. It follows that there is some $(x,t) \in E_{hlr}^{c_0/8} \cap E_{ds}^{c_0/8}$. The triple $(t\sep x, mx+mt)$ is $(c_0t/8, c_2)$-effective, so $f \in \mc L^{good(c_2)}$. It remains to prove \Cref{lem:HLR_density} and \Cref{lem:DS_density}. 

Towards proving \Cref{lem:HLR_density}, we define the slope function on $\ms B$ 
\begin{equation*}
    s_x(t) = t^{-1}(f(x,mx+t) - f(x,mx)). 
\end{equation*}
Using the hypothesis that $(x, mx)$ is $(\alpha, \beta)$-good along with the fact that $t\to f(x,t)$ is $2$-Lipschitz,
\begin{equation*}
    \beta \leq s_x(t) \leq 2 \qquad \text{for all $(x,t) \in \ms B$.}
\end{equation*}  
\begin{definition}
Let $\gamma \leq 1$ and $H \geq 1$ be parameters. 
A pair $(x,t) \in \ms B$ is $(\gamma, H)$-\textit{slope minimal} if $t \leq c_2'/(H+1)$ and 
\begin{equation*}
    s_x(mt+t') \geq s_x(mt) - \gamma \qquad \text{for all $t \leq t' \leq Ht$.}
\end{equation*}
we require $t \leq c_2'/(H+1)$ so all inputs lie in the domain $\ms B$. 
\end{definition}
We will take $\gamma = c_0/4$ and $H = 10/c_0$. 
\begin{claim}\label{claim:sm_implies_hlr}
Let $\gamma \leq c_0/4$ and $H \geq 10/c_0$. If a pair $(x,t)$ is $(\gamma, H)$-slope minimal, then $(x,t) \in E_{hlr}^{c_0/8}$. 
\end{claim}
\begin{proof}
In order to prove $(x,t) \in E_{hlr}^{c_0/8}$, we have to show a lower bound on the high-low error function. Recall that the error function is defined by 
\begin{align}\label{eq:error_func_sm}
    e(t'\sep x, mx+mt) = \frac{1}{2}[f(0, t'; x, mx+mt) + f(t', 0; x, mx+mt)  - 3t']. 
\end{align}
We start by estimating $e$ for scales $t \leq t' \leq Ht$.  We estimate the first term in the high-low error function using slope minimality,
\begin{align}
    f(0, t'; x, mx+mt) &= (mt+t') s_x(mt+t') - (mt)s_x(mt) \nonumber \\ 
    &\geq t' s_x(mt+t') + (mt)(s_x(mt+t') - s_x(mt)) \nonumber \\ 
    &\geq t' s_x(mt) - t' \gamma - mt \gamma \nonumber \\ 
    &\geq t' s_x(mt) - 2t' \gamma. \label{eq:sm_hlr_first}
\end{align}
The third line uses slope-minimality, and the last line uses the fact that $m \leq 1$ and $t' \geq t$. We estimate the second term in the high-low error function using the fact that $(x, mx)$ is $(\alpha, \beta)$-good,
\begin{align}
    f(t', 0; x,mx+mt) &= [f(x+t', mx+mt) - f(x, mx)] - [(f(x, mx+mt) - f(x, mx)]\nonumber \\ 
    &\geq (\alpha t' + \beta mt) - (mt) s_x(mt). \label{eq:sm_hlr_second}
\end{align}
Combining \eqref{eq:sm_hlr_first} and \eqref{eq:sm_hlr_second}, we estimate the error function by 
\begin{align*}
    e(t'; x, mx+mt) \geq \frac{1}{2}[(t'-mt)s_x(mt) + t'\alpha + \beta mt - 3t'] - t'\gamma. 
\end{align*}
Because $m\leq 1$, $(t'-mt)\geq 0$, so $(t'-mt)s_x(mt)\geq (t'-mt)\beta$. This estimate is the crucial place where we use the hypothesis $m \leq 1$. For $t \leq t' \leq Ht$, 
\begin{align*}
    e(t'; x, mx+mt) &\geq \frac{1}{2}[(t'-mt)\beta + t'\alpha + \beta mt - 3t'] - t'\gamma  \nonumber \\ 
    &\geq \frac{1}{2}c_0t' - t' \gamma && \text{(because $\alpha+\beta=3+c_0$)}\nonumber \\ 
    &\geq t(c_0/2-\gamma) 
\end{align*}
If $\gamma \leq c_0/4$, then 
\begin{equation}\label{eq:small_tp_sm_hlr}
    e(t'; x, mx+mt) \geq t c_0/4 \qquad \text{for $t \leq t' \leq Ht$.}
\end{equation}
Now we estimate the error function for $t' \geq Ht$. In this region we just use the fact that $(x, mx)$ is an $(\alpha, \beta)$-good scale and $f(x,y)$ is $2$-Lipschitz. We have 
\begin{align*}
    f(0,t'; x, mx+mt) &= [f(x,mx+mt+t') - f(x,mx)]- [f(x, mx+mt) - f(x, mx)] \\ 
    &\geq \beta t' - 2mt
\end{align*}
and 
\begin{align*}
    f(t', 0; x, mx+mt) &= [f(x+t', mx+mt) - f(x, mx)] - [f(x, mx+mt) - f(x, mx)] \\  
    &\geq \alpha t' - 2mt. 
\end{align*}
Combining these two estimates, the error function is bounded by 
\begin{align*}
    e(t'; x, mx+mt) &\geq \frac{1}{2}[\alpha t' + \beta t' - 3t'] - 2mt \\ 
    &\geq \frac{1}{2}c_0 t' - 2mt. 
\end{align*}
If $t' \geq Ht$ then 
\begin{align*}
    e(t'; x, mx+mt) &\geq t( c_0 H/2 - 2m) \geq t(c_0 H/2 - 2). 
\end{align*}
If $H \geq 10/c_0$ then $c_0H/2 - 2 \geq c_0/4$ (using that $c_0 \leq 1$), so 
\begin{align}\label{eq:large_tp_sm_hlr}
    e(t'; x, mx+mt) \geq t c_0/4 && \text{for $t' \geq Ht$.}
\end{align}
Combining \eqref{eq:small_tp_sm_hlr} and \eqref{eq:large_tp_sm_hlr} proves that $(x,t) \in E_{hlr}^{c_0/8}$. 
\end{proof}

We establish some regularity of $t \to s_x(t)$. By the definition of the slope function, $ts_x(t) = f(0, t; x, mx)$. Because $f$ is Lipschitz and monotonic \eqref{eq:f_lip},
\begin{align*}
    f(0, t; x, mx) \leq f(0, t+h; x, mx) \leq f(0, t; x, mx) + 2h,
\end{align*}
so 
\begin{align*}
    ts_x(t) \leq (t+h) s_x(t+h) \leq ts_x(t) + 2h,
\end{align*}
and by setting $h = (r-1)t$ for some $r > 1$ we get 
\begin{align}\label{eq:slope_func_regularity}
    \frac{1}{r} s_x(t) \leq s_x(rt) \leq \frac{1}{r} s_x(t) + \frac{2(r-1)}{r}. 
\end{align}
We use regularity of the function $t\to s_x(t)$ to find whole intervals of slope minimal pairs. 

\begin{claim}\label{claim:interval_slope_minimal}
Let $\gamma \leq 1$ and $H\geq 1$. Suppose $(x,t)$ is $(\gamma, H)$-slope minimal and $t \leq c_2'/(2H+1)$. Then for all $\tilde t \in [t, (1+\gamma/100)t]$, the pair $(x, \tilde t)$ is $(2\gamma,H)$-slope minimal. 
\end{claim}
\begin{proof}
Let $\tilde t = r t$ where $r \in [1, 1+\gamma/100]$. First of all, $\tilde t \leq c_2'/(H+1)$. 
The hypothesis that $(x, t)$ is $(\gamma, H)$-slope minimal implies that
\begin{align*}
    s_x((m+u)t) - s_x(mt) \geq -\gamma \qquad \text{for all $u\in [1, H]$.}
\end{align*}
In order to prove $(x, \tilde t)$ is $(2\gamma, H)$-slope minimal we must show 
\begin{align*}
    s_x((m+u)rt ) - s_x(mrt) &\geq -2\gamma \qquad \text{for all $u\in [1, H]$.}
\end{align*}
By the lower bound in \eqref{eq:slope_func_regularity}, 
\begin{align*}
    s_x((m+u)rt) &\geq \frac{1}{r}s_x((m+u)t) \geq (1-\gamma/100) s_x((m+u)t) \geq s_x((m+u)t) - \gamma/50. 
\end{align*}
By the upper bound in \eqref{eq:slope_func_regularity},
\begin{align*}
    s_x(mrt) \leq s_x(mt) + \frac{2(r-1)}{r} \leq s_x(mt) + \gamma/50. 
\end{align*}
Combining these two estimates yields $(2\gamma, H)$-slope minimality,
\begin{align*}
     s_x((m+u)rt ) - s_x(mrt) &\geq s_x((m+u)t) - s_x(mt) - \gamma / 25 \geq -2\gamma. 
\end{align*}
\end{proof}

The following claim shows that for each fixed value of $x$, the values of $t$ such that $(x,t)$ is slope minimal are abundant on a logarithmic scale. 
\begin{claim}\label{claim:many_sm_pairs}
Let $\gamma \leq 1$ and $H \geq 1$, and set $R = (H+1)^{\lceil 2/\gamma \rceil}$. For any $x \in [0,c_2']$ and $t_0 \in (0,c_2'/(2HR)]$, there is some $t \in [t_0, Rt_0]$ such that $(x, t)$ is $(\gamma, H)$-slope minimal. 
\end{claim}
\begin{proof}
Let $k = \lceil2/\gamma\rceil$ and $R = (H+1)^k$.
Suppose by way of contradiction that for all $t \in [t_0, Rt_0]$, $(x, t)$ is not $(\gamma, H)$-slope minimal. 

Since $(x, t_0)$ is not $(\gamma, H)$-slope minimal, there is some $t_1 \in [mt_0+t_0,mt_0+Ht_0]$ such that $s_x(t_1) \leq s_x(mt_0) - \gamma$. In particular, $t_1 \in [t_0, (H+1)t_0]$. Since $(x, t_1)$ is not slope minimal, there is some $t_2 \in [t_0, (H+1)^2 t_0]$ such that  $s_x(t_2) \leq s_x(mt_0) - 2\gamma$. Continuing in this way, for every $0 \leq j\leq k$ there is some $t_j \in [t_0, (H+1)^j t_0]$ such that $s_x(t_j) \leq s_x(mt_0) - j\gamma$. However, $s_x(mt_0) \leq 2$ and $k\gamma > 2$, yielding a contradiction when $j = k$. 
\end{proof}

We combine the previous three claims to prove \Cref{lem:HLR_density}. 
\begin{proof}[Proof of \Cref{lem:HLR_density}]
For any $\gamma\leq 1$ and $H\geq 1$, let 
\begin{align*}
    E_{sm}^{(\gamma,H)} = \{(x,t) \in \ms B\, :\, (x,t) \text{ is $(\gamma, H)$-slope minimal}\}.
\end{align*}
We will show that for any such $(\gamma, H)$ and $\varepsilon < c_2'/10$,
\begin{align}\label{eq:sm_density_lower_bd}
    \mu_{\varepsilon}(E_{sm}^{(\gamma, H)}) \geq c(c_2, \gamma, H) - \frac{c'(c_2, \gamma, H)}{\log \varepsilon^{-1}}
\end{align}
for some constants $c(c_2, \gamma, H), c'(c_2, \gamma, H) > 0$.
By \Cref{claim:sm_implies_hlr}, $E_{sm}^{(c_0/4, 10/c_0)} \subset E_{hlr}^{c_0/8}$, so choosing $\gamma = c_0/4$ and $H = 10/c_0$ proves \Cref{lem:HLR_density}.

We work one $x$-coordinate at a time. Let $R = R(\gamma/2, H)$ from \Cref{claim:many_sm_pairs} (we assume without loss of generality $R \geq 10)$. For each $j\geq 0$, use \Cref{claim:many_sm_pairs} to find a point $t_j^{(x)} \in \Bigl[R^{-2j-1} \frac{c_2'}{3H}, R^{-2j} \frac{c_2'}{3H}\Bigr]$ such that $(x, t_j^{(x)})$ is $(\gamma/2, H)$-slope minimal.
By \Cref{claim:interval_slope_minimal}, for each $j\geq 0$
\begin{align}\label{eq:intervals_in_sm}
    \{x\}\times [t_j^{(x)}, (1+\gamma/100) t_j^{(x)}] \subset E_{sm}^{(\gamma, H)},
\end{align}
and the above sets are disjoint for all $j\geq 0$. We use Fubini's theorem to disintegrate the measure of $E_{sm}^{(\gamma, H)}$ as 
\begin{align}\label{eq:lower_bd_msr_disintegrated}
    \int 1_{E_{sm}^{(\gamma, H)}}(x,t)\, d\mu_{\varepsilon}(x,t) = \frac{1}{c_2'\cdot \log(c_2'/\varepsilon)}\int_0^{c_2'} \Bigl(\int_{\varepsilon}^{c_2'} 1_{E_{sm,x}^{(\gamma, H)}}(x,t)\, \frac{dt}{t}\Bigr)\, dx. 
\end{align}
For each $x \in [0,c_2']$ we estimate the inner integral using \eqref{eq:intervals_in_sm},
\begin{align}
    \int_{\varepsilon}^{c_2'} 1_{E_{sm}^{(\gamma, H)}}(x,t)\, \frac{dt}{t} &\geq \sum_{j\geq 0 \text{ s.t } t_j^{(x)} \geq \varepsilon}  \int_{t_j^{(x)}}^{(1+\gamma/100)t_j^{(x)}}\, \frac{dt}{t} \nonumber \\ 
    &\geq \log(1+\gamma/100) \#\{j\geq 0\, :\, t_j^{(x)} \geq \varepsilon\}. \label{eq:lower_bound_disintigrate}
\end{align}
By the lower bound $t_j^{(x)} \geq R^{-2j-1} c_2'/3H$, 
\begin{equation}
    \#\{j\geq 0\, :\, t_j^{(x)} \geq \varepsilon\} \geq \Bigl\lfloor \frac{\log c_2'/(3H\varepsilon)}{2\log R}-1\Bigr\rfloor \geq c(c_2, \gamma, H)(\log \varepsilon^{-1} - c'(c_2, \gamma, H)). \label{eq:lower_bd_num_j}
\end{equation}
Combining \eqref{eq:lower_bd_msr_disintegrated}, \eqref{eq:lower_bound_disintigrate}, and \eqref{eq:lower_bd_num_j} yields 
\begin{align*}
    \mu_{\varepsilon}(E_{sm}^{(\gamma, H)}) \geq c(c_2, \gamma, H) - \frac{c'(c_2, \gamma, H)}{\log \varepsilon^{-1}}
\end{align*}
proving \eqref{eq:sm_density_lower_bd}.
\end{proof}

Now we prove \Cref{lem:DS_density}, the lower bound on $\mu_{\varepsilon}(E_{ds}^{\rho})$, which completes the proof that there exists a $(c_1, c_2)$-effective triple for some $c_1 > 0$.

\begin{proof}[Proof of \Cref{lem:DS_density}]
We will evaluate direction numbers on the good line, so we use the short hand
\begin{align*}
    d(t\sep x) = d(t\sep x, mx). 
\end{align*}
Define the $x$-averaged direction number 
\begin{align*}
    \bar d(t) &= \frac{1}{c_2'}\int_0^{c_2'} d(t\sep x)\, dx. 
\end{align*}
The pair $(x,t)$ is $\rho$-directionally stable if 
\begin{align*}
    d(t\sep x, mx+mt) - d(t\sep x+t,mx+mt) \leq \rho t. 
\end{align*}
The direction set inequality \Cref{lem:dir_set_props}\ref{lemitem:dir_compare_scales} gives
\begin{align*}
    d((m+1)t\sep x) \geq d(mt\sep x) + d(t\sep x, mx+mt),
\end{align*}
and rearranging gives
\begin{align}\label{eq:Fnonnegative}
    0 \leq d(t\sep x,mx+mt) - d(t\sep x+t) \leq d((m+1)t\sep x) - d(mt\sep x) - d(t\sep x+t). 
\end{align}
The right hand side has the advantage that it only involves directions on the good line. Let 
\begin{align*}
    F(x,t) = \frac{1}{t}[d((m+1)t\sep x) - d(mt\sep x) - d(t\sep x+t)].
\end{align*}
By (\ref{eq:Fnonnegative}), $F(x, t)\ge 0$, and if $(x,t)\in (E_{ds}^{\rho})^c$ then $F(x,t) \geq \rho$. We evaluate $\int F(x,t) d\mu_{\varepsilon}$ in two steps. First, for each fixed $t \in [0, c_2']$,
\begin{align}
    \frac{1}{c_2'} \int_0^{c_2'} F(x, t)\, dx &= \frac{1}{t}[\bar d((m+1)t) - \bar d(mt) - \bar d(t)] + \frac{1}{tc_2'}\Bigl(\int_0^{t} d(t\sep x)dx - \int_{c_2'}^{c_2'+t} d(t\sep x)dx\Bigr)\nonumber \\ 
    &= \frac{1}{t}[\bar d((m+1)t) - \bar d(mt) - \bar d(t)] + O(t/c_2)\label{eq:approx_id_each_x}
\end{align}
using $d(t\sep x) \leq t$ to estimate the error term. 
Now we average over $t$ and rescale the integral to get for any fixed $\alpha > 0$ that 
\begin{align}
    \Bigl(\log \frac{c_2'}{\varepsilon}\Bigr)^{-1}\int_{\varepsilon}^{c_2'} \bar d(\alpha t)\, \frac{dt}{t^2} &= \alpha\, \Bigl(\log \frac{c_2'}{\varepsilon}\Bigr)^{-1}\int_{\alpha \varepsilon}^{\alpha c_2'} \bar d(t)\, \frac{dt}{t^2}\nonumber \\ 
    &= \alpha\, \Bigl(\log \frac{c_2'}{\varepsilon}\Bigr)^{-1}\int_{\varepsilon}^{c_2'} \bar d(t)\, \frac{dt}{t^2} + \mathrm{Err(\alpha)}. \label{eq:approx_id_alpha}
\end{align}
We will choose $\alpha \in \{m,m+1\}$ in the sequel.
If $\alpha \leq 1$ then
\begin{align*}
    \mathrm{Err}(\alpha) &= \alpha\, \Bigl(\log \frac{c_2'}{\varepsilon}\Bigr)^{-1} \int_{\alpha \varepsilon}^{\varepsilon} \bar d(t)\, \frac{dt}{t^2} - \alpha\, \Bigl(\log \frac{c_2'}{\varepsilon}\Bigr)^{-1} \int_{\alpha c_2'}^{c_2'} \frac{dt}{t^2}
\end{align*}
so
\begin{align*}
    |\mathrm{Err}(\alpha)| &\leq \alpha\, \Bigl(\log \frac{c_2'}{\varepsilon}\Bigr)^{-1} \int_{\alpha \varepsilon}^{\varepsilon}  \frac{dt}{t}  \\ 
    &\leq  \alpha\log \alpha^{-1}\, \Bigl(\log \frac{c_2'}{\varepsilon}\Bigr)^{-1} . 
\end{align*}
If $\alpha \geq 1$ a similar calculation gives the same result but with $\log \alpha^{-1}$ replaced with $\log \alpha$, so in either case 
\begin{align*}
    |\mathrm{Err}(\alpha)| \leq \alpha|\log \alpha|\, \Bigl(\log \frac{c_2'}{\varepsilon}\Bigr)^{-1}. 
\end{align*}
Combining the approximate identity \eqref{eq:approx_id_each_x} with the approximate identity \eqref{eq:approx_id_alpha} for $\alpha \in \{m,m+1\}$, we find
\begin{align*}
    \int F(x,t)\, d\mu_{\varepsilon} &= \Bigl(\log \frac{c_2'}{\varepsilon}\Bigr)^{-1} \int_0^{c_2'} \Bigl( \frac{1}{c_2'} \int_0^{c_2'} F(x,t)\, dx \Bigr)\, \frac{dt}{t} \\ 
    &= \Bigl(\log \frac{c_2'}{\varepsilon}\Bigr)^{-1} \int_0^{c_2'} \Bigl( \frac{1}{t}[\bar d((m+1)t) - \bar d(mt) - \bar d(t)] +O(t/c_2)\Bigr)\, \frac{dt}{t}  \\ 
    &\leq  \Err(m) + \Err(m+1) + \Bigl(\log \frac{c_2'}{\varepsilon}\Bigr)^{-1}O(1/c_2),
\end{align*}
the main terms cancel and we are left with only error terms. We estimate the error terms by
\begin{align*}
     \Err(m) + \Err(m+1) + \Bigl(\log \frac{c_2'}{\varepsilon}\Bigr)^{-1}O(1/c_2) &\leq C \Bigl(\log \frac{c_2'}{\varepsilon}\Bigr)^{-1} (\log m^{-1}+1/c_2) \\ 
    &\leq \frac{c(c_2, m)}{\log \varepsilon^{-1}}.
\end{align*}
On the other hand,
\begin{align*}
    \int F(x,t)\, d\mu_{\varepsilon} \geq \rho\, \mu_{\varepsilon}((E_{ds}^{\rho})^c), 
\end{align*}
so 
\begin{align*}
   \mu_{\varepsilon}(E_{ds}^{\rho}) \geq 1-\rho^{-1} \frac{c(c_2, m)}{\log \varepsilon^{-1}}. 
\end{align*}
\end{proof}

\subsection{Diagonal good line with slope $\geq 1$, i.e. $q \in (-1, 0]$}
This case is the same as the slope $\leq 1$ case except we switch the roles of $x$ and $y$ and the roles of $d$ and $d^{\vee}$.

\appendix 

\section{Miscellaneous proofs}\label{sec:misc_proofs}

\subsection{Proof of the high-low inequality}\label{sec:high_low_proof}

For the reader's convenience we prove the high-low estimate \Cref{thm:high_low_ineq}. The proof is the same as in \cite[\S 3.1]{CPZ}. Notice that the proof allows for $P$ and $L$ to be sets with multiplicity. 

Let $P = \{x_1, \ldots, x_n\}$ be a set of points in $[-1,1]^2$ and $L = \{\ell_1, \ldots, \ell_m\}$ a set of lines. 
Let
\begin{align*}
    f_w = \sum_{\ell \in L} \frac{1}{w} 1_{\T_{\ell}(w)},\quad g = \sum_{p \in P} \delta_p. 
\end{align*}
Then 
\begin{equation*}
    \langle f_w, g\rangle = w^{-1}\, \#\{(p, \ell) \in P \times L\, :\, d(p, \ell) \leq w/2\}. 
\end{equation*}
We are ready to give the smoothed definition of incidences.
Let $\chi: \R^2 \to \R_{\geq 0}$ be a fixed radially symmetric bump function with $\int \chi = 1$ and $\supp \chi \subset B_{1/50}$. Let $\chi_w(x) = w^{-2}\chi(w^{-1}x)$. We define the smoothed version of incidences by convolving $f_w$ with $\chi_w * \chi_{w/2}$, 
\begin{equation}\label{eq:defn_smoothed_incidences_inner_prod}
    I(w; P, L) := w\, \langle \chi_w * \chi_{w/2} * f_w, g\rangle. 
\end{equation}
Let 
\begin{equation*}
    \eta(t) = \int_{-1/2}^{1/2} \int_{-\infty}^{\infty}(\chi * \chi_{1/2})(t+r, s)\, ds\, dr. 
\end{equation*}
Notice that $\eta|_{[-1/2+1/10, 1/2-1/10]} = 1$ and $\supp \eta \subset [-1/2-1/10, 1/2+1/10]$. 
Because $\chi$ was chosen radially symmetric, we see that 
\begin{equation*}
    I(w; P, L) = \sum_{(p, \ell) \in P \times L} \eta(w^{-1}d(p, \ell))
\end{equation*}
as we defined in \eqref{eq:smoothed_incidence_count}. 
We have 
\begin{equation}
    B(w) = \frac{I(w)}{w\, |P|\, |L|} = \frac{1}{|P|\, |L|}\langle \chi_w * \chi_{w/2} * f_w, g\rangle,
\end{equation}
hence
\begin{align*}
    B(w) - B(w/2) &= \frac{1}{|P|\, |L|}\, \langle \chi_{w} * \chi_{w/2} *  f_w  - \chi_{w/2} * \chi_{w/4} * f_{w/2} , g\rangle \\ 
    &= \frac{1}{|P|\, |L|}\, \langle \chi_{w/2} * (\chi_{w} * f_w  - \chi_{w/4} * f_{w/2}) , g\rangle.
\end{align*}
We convolved $f_w$ with $\chi_w * \chi_{w/2}$ in (\ref{eq:defn_smoothed_incidences_inner_prod}) rather than just $\chi_w$ in order to factor out $\chi_{w/2}$ in this equation. Putting the convolution on the other side and using Cauchy-Schwarz,
\begin{equation}
    |B(w) - B(w/2)| \leq \frac{1}{|P|\, |L|}\, \| \chi_w * f_w - \chi_{w/4} * f_{w/2} \|_{L^2([-2,2]^2)}\, \| \chi_{w/2} * g \|_{L^2([-2,2]^2)}. 
\end{equation}
We estimate the term involving $g$ using the quantity $\ms M_{w\times w}(P)$, 
\begin{align*}
    \| \chi_{w/10} * g \|_2^2 \leq \| g \|_1 \| \chi_{w/10} * g \|_{\infty} \lesssim w^{-2} |P|\,\ms M_{w\times w}(P).
\end{align*}
We estimate the term involving $f$ using orthogonality and the quantity $\ms M_{1\times w}(L)$. 
Let
\begin{align*}
    \Phi_{\ell} &= \chi_w * w^{-1}1_{\T_{\ell}(w)} - \chi_{w/4} *  (w/2)^{-1}1_{\T_{\ell}(w/2)}\\ 
    \sum_{\ell \in L} \Phi_{\ell} &= \chi_w * f_w - \chi_{w/4} * f_{w/2}.
\end{align*}
We expand the $L^2$ norm as a sum of truncated inner products. Let $\psi: \R^2 \to [0,1]$ be a fixed smooth bump function with $\psi|_{[-2,2]^2} = 1$ and $\supp \psi \subset [-3,3]^2$. We have
\begin{equation*}
    \Bigl \| \sum_{\ell \in L} \Phi_{\ell} \Bigr \|_{L^2([-2,2]^2)}^2 \leq \sum_{\ell_1, \ell_2} \left|\int \Phi_{\ell_1}(x)\Phi_{\ell_2}(x)\, \psi(x) dx\right|.
\end{equation*}
For any $\ell_1, \ell_2$ we have the estimate 
\begin{align*}
    \left|\int \Phi_{\ell_1}(x)\Phi_{\ell_2}(x)\, \psi(x) dx\right| \leq \| \Phi_{\ell_1} \|_{L^2([-3,3]^2)}\| \Phi_{\ell_2} \|_{L^2([-3,3]^2)} \lesssim w^{-1}. 
\end{align*}
Now suppose $d(\ell_1, \ell_2)  \geq 1000w$ in the line metric \eqref{eq:line_metric_all_slope}. Either $\T_{2w}(\ell_1) \cap \T_{2w}(\ell_2) \cap [-3,3]^2 = \emptyset$, or $|\theta(\ell_1) - \theta(\ell_2)| \gtrsim d(\ell_1, \ell_2)$. In the first case 
\begin{align*}
    \int \Phi_{\ell_1}(x)\Phi_{\ell_2}(x)\, \psi(x) dx  = 0. 
\end{align*}
In the second case let $R = \T_{2w}(\ell_1) \cap \T_{2w}(\ell_2)$ and let $x_R = \ell_1 \cap \ell_2$ be the center of $R$. Let $\theta$ be the angle between $\ell_1$ and $\ell_2$. Notice that $R$ is a parallelogram which is contained in a rectangle with width $w$ and length $2w/\sin \theta$. Using the fact that $\Phi_{\ell_1}$ and $\Phi_{\ell_2}$ are orthogonal in $\R^2$ and re-centering to $R$, 
\begin{align*}
    \int \Phi_{\ell_1}(x)\Phi_{\ell_2}(x)\, \psi(x) dx = \int (\Phi_{\ell_1}\Phi_{\ell_2})(x_R+y)(\psi(x_R+y) - \psi(x_R))\, dy. 
\end{align*}
Because $\psi$ is a fixed smooth function we have the linear approximation
\begin{align*}
    \psi(x_R+y) - \psi(x_R) = \nabla \psi(x_R)\cdot y + O(|y|^2).
\end{align*}
The integral against the linear part vanishes due to the symmetry
\begin{align*}
    \Phi_{\ell_1}(x_R+y) = \Phi_{\ell_1}(x_R-y)\quad \text{and}\quad \Phi_{\ell_2}(x_R+y) = \Phi_{\ell_2}(x_R-y).
\end{align*}
The $O(|y|^2)$ term is bounded by $C \cdot \mathrm{Diameter}(R)^2$, so
\begin{equation*}
    \left|\int \Phi_{\ell_1}(x) \Phi_{\ell_2}(x)\, \psi(x) dx\right| \lesssim \Area(R)\, \mathrm{Diameter}(R)^2\, w^{-2} \lesssim (w^2\theta^{-1})\, (w\theta^{-1})^2\, w^{-2} = w^2\theta^{-3}.
\end{equation*}
Overall we find 
\begin{align*}
    \left|\int \Phi_{\ell_1}(x) \Phi_{\ell_2}(x)\, \psi(x) dx\right| \lesssim \min\{w^{-1}, w^2/d(\ell_1, \ell_2)^3\}. 
\end{align*}
We find 
\begin{align*}
    \sum_{\ell_2\in L} \Bigl|\int \Phi_{\ell_1}(x)\Phi_{\ell_2}(x)\psi(x)\, dx\Bigr| &\lesssim w^{-1} \#\{\ell_2\, :\, d(\ell_1, \ell_2) \leq 1000w\} + \sum_{w \leq 2^{-j} \leq 100}  w^22^{3j} \#\{\ell_2\, :\, d(\ell_1, \ell_2) \leq 2^{-j}\}  \\ 
    &\lesssim w^{-1} \ms M_{1\times w}(L) + w^2 \sum_{w \leq 2^{j} \leq 100}2^{-3j} \ms M_{1\times 2^{j}}(L) \\ 
    &\lesssim w^{-1} \ms M_{1\times w}(L) + w^2 \sum_{w \leq 2^{j} \leq 100}2^{-3j} \ms M_{1\times w}(L)(2^{j}/w)^2 \\ 
    &\lesssim w^{-1} \ms M_{1\times w}(L).
\end{align*}
Putting everything together, 
\begin{align*}
    |B(w) - B(w/2)| \lesssim \Bigl(\frac{\ms M_{w\times w}(P)}{|P|} \frac{\ms M_{1\times w}(L)}{|L|}w^{-3}\Bigr)^{1/2}.
\end{align*}

\subsection{Furstenberg in phase space}\label{sec:furstenberg_pf_phase_space}
We start by stating Ren and Wang's discretized Furstenberg set estimate \cite[Theorem 4.1]{RenWang2023} in their language.

\begin{definition}
\begin{itemize}
    \item For $\delta \in 2^{\Z}$, let $\mc D_{\delta}(\R^2) = \{[a\delta, (a+1)\delta]\times [b\delta, (b+1)\delta]\, :\, a,b \in \Z\}$ be the dyadic cubes in $\R^2$ of side length $\delta$. For $\mc P \subset \R^2$ denote 
    \begin{align*}
        \mc D_{\delta}(\mc P) = \{Q\in \mc D_{\delta}(\R^2)\, :\, Q \cap \mc P \neq \emptyset\}. 
    \end{align*}
    We denote $\mc D_{\delta} = \mc D_{\delta}([0,1)^2)$. 

    \item For a bounded set $P \subset \R^2$ and $\delta \in 2^{-\N}$, define the dyadic $\delta$-covering number 
    \begin{align*}
        |P|_{\delta} := |\mc D_{\delta}(P)|. 
    \end{align*}

    \item Let $\mc P \subset \mc D_{\delta}$ be a set of dyadic cubes and $P = \cup \mc P$. We say $\mc P$ is an $(\delta, s, C)$-set if 
    \begin{align*}
        |P\cap B(x, r)|_{\delta} \leq C r^s |P|_{\delta}.
    \end{align*}
\end{itemize}
\end{definition}

\begin{definition}
    \begin{itemize}
        \item Let $\delta \in 2^{-\N}$. A dyadic $\delta$-tube is a set of the form $T = \bigcup_{x\in p} \mb D(x)$ where $p \in \mc D_{\delta}([-1,1]\times \R)$ and $\mb D$ is the point-line duality map 
        \begin{align*}
            \mb D(a,b) = \{(x,y)\in \R^2\, :\, y = ax+b\} \subset \R^2.
        \end{align*}
        We abbreviate $\mb D(p) = \bigcup_{x\in p}\mb D(x)$. The collection of all dyadic $\delta$-tubes is denoted $\mc T^{\delta}$. 

        \item For $L$ a set of lines in $\R^2$, let 
        \begin{align*}
            \mc T^{\delta}(L) = \{T \in \mc T^{\delta}\, :\, T\cap L \neq \emptyset\}. 
        \end{align*}
        and define the covering number 
        \begin{align*}
            |L|_{\delta} = |\mc T^{\delta}(L)|.
        \end{align*}

        \item A finite collection of dyadic $\delta$-tubes $\{\mb D(p)\}_{p\in \mc P}$ is called a $(\delta, s, C)$-set if $\mc P$ is a $(\delta, s, C)$-set. 
    \end{itemize}
\end{definition}

\begin{definition}
Fix $\delta \in 2^{-\N}$, $s\in [0,1]$, $C > 0$, $M \in \N$. We say a pair $\mc P \subset \mc D_{\delta}$, $\mc T\subset \mc T^{\delta}$ is a $(\delta, s, C, M)$-nice configuration if for every $p \in \mc P$ there exists a $(\delta, s, C)$-set $\mc T(p) \subset \mc T$ with $|\mc T(p)| \sim M$ and such that $T\cap p\neq \emptyset$ for all $T \in \mc T(p)$. 
\end{definition}

The following is \cite[Theorem 4.1]{RenWang2023}.
\begin{theorem}\label{thm:RenWangFurstenberg}
For every $\varepsilon > 0$ there exists $\eta = \eta(\varepsilon, s, t) > 0$ such that the following holds for any $\delta < \delta_0(s, t, \varepsilon)$. Let $(\mc P, \mc T)$ be a $(\delta, s, \delta^{-\eta}, M)$-nice configuration with $s \in (0, 1]$, $\mc P$ is a $(\delta, t, \delta^{-\eta})$-set, and $t \in (0, 2]$. Then 
\begin{align*}
    |\mc T|_{\delta} \gtrsim_{\varepsilon} \delta^{-\min\{t,(s+t)/2,1\}+\varepsilon} M. 
\end{align*}
\end{theorem}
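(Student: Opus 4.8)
The statement to be proved is precisely the Ren--Wang resolution of the Furstenberg set conjecture in the plane, so the plan is to follow the strategy of \cite{RenWang2023}; in practice, since \Cref{thm:RenWangFurstenberg} is quoted verbatim from that paper, one simply cites it, but here is the route by which it is proved there. It is cleanest to pass to the dual picture: under point--line duality the cubes $p \in \mc P$ become a $(\delta, t, \delta^{-\eta})$-set of lines $\widetilde{\mc P}$, the tubes in $\mc T(p)$ become a $(\delta, s, \delta^{-\eta})$-set of $\sim M$ points lying on the dual line $\widetilde p$, and $|\mc T|_\delta$ equals the $\delta$-covering number of $Y := \bigcup_{p} \widetilde{\mc T(p)}$. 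So the goal becomes the usual Furstenberg bound: a $(\delta, t)$-set of lines, each carrying a $(\delta, s)$-set of $\sim M$ points, forces $|Y|_\delta \gtrsim_\varepsilon \delta^{-\min\{t,(s+t)/2,1\}+\varepsilon} M$. The argument has three ingredients: (i) a reduction to a multiscale-regular configuration; (ii) an induction on scales producing the caps $u=t$ and $u=1$; and (iii) a single-scale gain, coming from radial projections, producing the cap $u=(s+t)/2$.

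For (i) I would pigeonhole at all scales $2^{-jT}$, $0\le j\le n$ with $\delta = 2^{-n}$, to pass to sub-configurations for which $\widetilde{\mc P}$ and each $\widetilde{\mc T(p)}$ have well-defined branching functions, exactly as in the uniformization of \Cref{sec:space_pt_line_pairs} (in particular \Cref{lem:exists_uniform_subset}). This costs only a $\delta^{o(1)}$ factor and, crucially, lets me coarsen to an intermediate scale $\rho$ or zoom into a $\rho$-tube while controlling the Frostman dimensions at every scale. For (ii) I would induct on $n$: assuming the bound (with the same $\varepsilon$-loss) at all scales $<n$, fix $\rho=2^{-m}$, coarsen the configuration to scale $\rho$ (the lines still form a $(\rho,t)$-set and each point set a $(\rho,s)$-set), apply the inductive hypothesis at scale $\rho$, then zoom into a typical $\rho$-tube, rescale it to $[0,1]^2$, and apply the inductive hypothesis at scale $\delta/\rho$ to the induced configuration; multiplying the two bounds and optimizing over $\rho$ and the branching data — using submodularity of the branching functions as in \Cref{lem:relative_covering_ineq} — recovers the ``disjointness'' bound $u=t$ and the ``planarity'' bound $u=1$.

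The genuinely non-inductive step is (iii), and it is the main obstacle. After the pigeonholing one may assume $Y$ is not concentrated in a $\delta^{o(1)}$-neighbourhood of a single line — otherwise one restricts to that neighbourhood, rescales, and wins by the induction of (ii). Under this non-concentration the set of directions of the lines in $\widetilde{\mc P}$ is, up to constants, $\bigcup_{y\in Y}\pi_y\big(Y\setminus B(y,\delta^{o(1)})\big)$, a union of radial projections of $Y$ from base points $y$. Here I would invoke the Orponen--Shmerkin--Wang radial-projection / ABC estimate: a $(\delta,t)$-set that is not line-concentrated, projected radially from a $(\delta,s)$-set of base points, has average $\delta$-covering number $\gtrsim \delta^{-\min\{(s+t)/2,\,1\}+\varepsilon}$. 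Feeding this into a Cauchy--Schwarz / popularity argument on the incidence graph between $\widetilde{\mc P}$ and $Y$ (each line carries $\sim M$ points, and each direction is realized by a controlled number of lines) upgrades the bound to $|Y|_\delta \gtrsim \delta^{-(s+t)/2+\varepsilon} M$, completing the three cases.

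The hard parts are therefore: the OSW radial-projection estimate itself, which is deep and rests on Bourgain's projection theorem together with the sum--product / ABC bounds, and must be used in its sharp $\delta$-discretized form with the correct non-concentration hypothesis; and the bookkeeping that makes (i)--(iii) combine to exactly $\min\{t,(s+t)/2,1\}$ with only a $\delta^{-\varepsilon}$ total loss, which is a linear-programming argument on the branching functions and is where most of the work of \cite{RenWang2023} lies. I expect the combination of these two — invoking the projection theorem sharply and threading the multiscale induction without loss — to be the crux, exactly as in the original proof.
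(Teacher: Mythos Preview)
The paper does not prove \Cref{thm:RenWangFurstenberg} at all: it is stated in the appendix with the preface ``The following is \cite[Theorem 4.1]{RenWang2023}'' and is used as a black box to derive \Cref{prop:furstenberg_quantitative}. You correctly identify this at the outset of your proposal, so as far as comparison with the paper goes there is nothing more to say --- the paper's ``proof'' is a citation, and your proposal agrees.

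The remainder of your proposal is a sketch of how Ren--Wang themselves establish the result, which is outside the scope of the present paper. As a high-level summary of \cite{RenWang2023} it is reasonable: the reduction to regular (uniform) configurations, the multi-scale induction, and the single-scale gain coming from the Orponen--Shmerkin--Wang radial projection theorem are indeed the main pillars. One small caveat is that the way the three exponents $t$, $(s+t)/2$, $1$ emerge in \cite{RenWang2023} is not quite via three separate caps handled by separate arguments as you describe, but rather through a more intertwined bootstrap on a single function of the branching data; still, your description captures the essential inputs correctly.
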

We are ready to state a phase-space version of the Furstenberg set estimate.
\begin{prop}\label{prop:furstenberg_quantitative}
For any $\varepsilon > 0$ there exists $\eta = \eta(\varepsilon, s, t) > 0$ such that the following holds for any $m > m_0(\varepsilon, s, t)$ and $T > T_0(m, \varepsilon, s, t)$. Let $\mb X \subset \Omega$ be $(m, T)$-uniform with branching function $f$, and suppose
\begin{align*}
    d(y\sep 1,0) &\geq s y - \eta \qquad \text{for $0 \leq y \leq 1$,} \\ 
    f(x, 0) &\geq t x - \eta \qquad \text{for $0 \leq x \leq 1$.}
\end{align*}
Then 
\begin{align*}
    f(0, 1) &\geq \min\{t,(s+t)/2,1\} +  d(1\sep 1,0) - \varepsilon.
\end{align*}
\end{prop}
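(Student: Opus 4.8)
The idea is that the two hypotheses on $f$ say precisely that the point set $P[\mb X]$ is a $(\delta,t,\delta^{-O(\eta)})$-set and that, inside every $\delta$-ball, the lines of $\mb X$ through the points sitting there form a $(\delta,s,\delta^{-O(\eta)})$-nice configuration in the sense of \Cref{thm:RenWangFurstenberg}; once this dictionary is set up we invoke Ren and Wang's estimate as a black box and read the conclusion back in phase-space language.

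First I would record the dictionary between branching numbers and covering numbers. By \eqref{eq:covering_num_comp_PL} we have $|P[\mb X]|_{\delta^x}\sim|\mb X|_{\delta^x\times\delta^x\times1}=\delta^{-f(x,0)}$ and $|L[\mb X]|_{\delta}\sim|\mb X|_{1\times\delta\times\delta}=\delta^{-f(0,1)}$, and $(m,T)$-uniformity together with \Cref{lem:branching_struct_subrect} shows that the relevant relative covering numbers around any $\omega\in\mb X$ agree with the corresponding quotients of the $|\mb X|_{\bullet}$ up to a factor $K=\delta^{-o(1)}$. The two consequences we need are: the number of $\delta$-cubes of $P[\mb X]$ in a $\delta^x$-ball is $\sim_K|\mb X|_{\delta\times\delta\times1}/|\mb X|_{\delta^x\times\delta^x\times1}=\delta^{-(f(1,0)-f(x,0))}$, so $P[\mb X]$ is $(\delta,t,C)$-Frostman at scale $\delta^x$ exactly when $f(x,0)\ge tx-\log_{1/\delta}C-o(1)$; and the number of $\delta$-scale directions inside a $\delta^w$-window of directions inside a fixed $\delta$-square is $\sim_K|\mb X|_{\delta\times\delta\times\delta}/|\mb X|_{\delta\times\delta\times\delta^w}=\delta^{-(d(1;1,0)-d(w;1,0))}$, so the direction set there is $(\delta,s,C)$-Frostman at scale $\delta^w$ exactly when $d(w;1,0)\ge sw-\log_{1/\delta}C-o(1)$. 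Hence, choosing $\eta$ small and $m$ large and $T=T_0(m,\varepsilon,s,t)$ large (so that all uniformity losses are at most $\delta^{-\eta}$), the hypotheses imply: $\mc P:=\mc D_\delta(P[\mb X])$ is a $(\delta,t,\delta^{-O(\eta)})$-set; and, with $M:=\delta^{-d(1;1,0)}$, for each $p\in\mc P$ there are $\sim_K M$ direction-classes among the lines $\ell_{\omega'}$, $\omega'\in\mb X$, $p_{\omega'}\in p$, and choosing one dyadic $\delta$-tube per class we obtain a $(\delta,s,\delta^{-O(\eta)})$-set $\mc T(p)$ with $|\mc T(p)|\sim M$ whose tubes meet $p$ (each passes through a point of $p$). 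Thus $(\mc P,\mc T)$ with $\mc T=\bigcup_p\mc T(p)$ is a $(\delta,s,\delta^{-O(\eta)},M)$-nice configuration.

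Then I would apply \Cref{thm:RenWangFurstenberg} with $\varepsilon$ there replaced by $\varepsilon/2$: for $\eta$ and $\delta$ small this gives $|\mc T|_\delta\gtrsim_\varepsilon\delta^{-\min\{t,(s+t)/2,1\}+\varepsilon/2}M$. Finally I translate back. Every tube of $\mc T$ lies within $O(\delta)$ of a line $\ell_{\omega'}$, $\omega'\in\mb X$, while conversely every $\ell_\omega$, $\omega\in\mb X$, lies within $O(\delta)$ of a tube of $\mc T(p(\omega))$; hence $|\mc T|_\delta\sim|L[\mb X]|_\delta=\delta^{-f(0,1)+o(1)}$. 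Combining,
\[
\delta^{-f(0,1)}\ \gtrsim_\varepsilon\ \delta^{-\min\{t,(s+t)/2,1\}-d(1;1,0)+\varepsilon/2+o(1)},
\]
and taking $\log_{1/\delta}$ and absorbing the $o(1)$ and the $\varepsilon$-dependent constant into $\varepsilon$ (legitimate once $m$ and $T$ are large) yields $f(0,1)\ge\min\{t,(s+t)/2,1\}+d(1;1,0)-\varepsilon$, as claimed.

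The step I expect to be the main obstacle is not any individual inequality but the bookkeeping that glues them: verifying that $(m,T)$-uniformity really does supply the cardinality $M$ and the Frostman constants $\delta^{-O(\eta)}$ uniformly over all cubes $p$ (not merely on average), and that the passage between dyadic cubes/tubes and the $\mc R_{u\times v\times w}$-coverings introduces only losses that can be absorbed into $\eta$ and $\varepsilon$. This is routine but is where essentially all of the work lies.
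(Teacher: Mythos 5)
Your proposal is correct and follows essentially the same route as the paper's proof: both translate the $(m,T)$-uniform phase-space hypotheses into a $(\delta,t,\delta^{-O(\eta)})$-set of $\delta$-cubes together with, for each cube, a $(\delta,s,\delta^{-O(\eta)})$-set of $\sim\delta^{-d(1\sep 1,0)}$ dyadic tubes through it (using \Cref{lem:branching_struct_subrect} and uniformity for the Frostman constants), then apply \Cref{thm:RenWangFurstenberg} and convert $|\mc T|_\delta\sim|\mb X|_{1\times\delta\times\delta}=\delta^{-f(0,1)}$ back into the claimed inequality. The only detail you gloss over — ensuring the dyadic tubes genuinely meet the cubes — is handled in the paper by working at two slightly separated scales ($\delta=2^{50}\cdot 2^{-mT}$), exactly the kind of bookkeeping you flagged.
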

\begin{proof}
Let $\delta_0 = 2^{-mT}$ and let $\delta = 2^{50}\cdot 2^{-mT}$. Let 
\begin{align*}
    \mc P &= \mc P_{\delta}(P[\mb X]), \\ 
    \mc T &= \mc T^{\delta}(L[\mb X]). 
\end{align*}
We have 
\begin{align}
    |\mc P|_{\delta} &\sim |\mb X|_{\delta\times \delta\times 1} \\ 
    |\mc T|_{\delta} &\sim |\mb X|_{1\times \delta\times \delta}\label{eq:delta_covering_num_T_set}
\end{align}
First of all, $\mc P$ is a $(\delta, t, \delta^{-\eta-o(1)})$-set because using uniformity and the lower bound on $f(a,0)$,
\begin{align*}
    \frac{|\mc P\cap B(x, \delta^a)|_{\delta}}{|\mc P|_{\delta}} \lesssim_K \frac{|\mb X|_{\delta^a\times \delta^a\times 1}}{|\mb X|_{\delta\times \delta\times 1}} \lesssim \delta^{ta-\eta-o(1)}.
\end{align*}
Next, consider a $p \in \mc P$ which contains $p_{\omega}$ for some $\omega \in \mb X$. Let $\mb R = \mb R_{\delta_0\times \delta_0\times 1}(\omega)$ and $\mb X_{\omega} = \mb X \cap \mb R$. 
For $T$ a dyadic $\delta_0$-tube, let $T^{\delta}$ be the unique dyadic $\delta$-parent. Let 
\begin{align*}
    \mc T(p) = \{T^{\delta}\, :\, T \in \mc T^{\delta_0}(L[\mb X_{\omega}])\}. 
\end{align*}
Every $T \in \mc T(p)$ intersects $p$ because $\delta_0$ is small enough compared to $\delta$. We have 
\begin{align*}
    |\mc T(p)|_{\delta} &\sim |\mb X_{\omega}|_{1\times 1\times \delta} \sim \delta^{-d(1\sep 1, 0)\pm o(1)}.
\end{align*}
Using uniformity and \Cref{lem:branching_struct_subrect},
\begin{align*}
    \frac{|\mc T(p)\cap B(\ell, \delta^t)|_{\delta}}{|\mc T(p)|_{\delta}} \lesssim \frac{|\mb X_{\omega}|_{1\times 1\times r}}{|\mb X_{\omega}|_{1\times \delta\times \delta}} \sim \delta^{d(t\sep 1, 0) \pm o(1) }\lesssim \delta^{tx-\eta-o(1)}.
\end{align*}
Thus $(\mc P, \mc T)$ is a $(\delta, s, \delta^{-\eta-o(1)}, \delta^{-d(1\sep 1, 0) + o(1)})$-nice configuration. The result then follows from applying \Cref{thm:RenWangFurstenberg} and \eqref{eq:delta_covering_num_T_set}.
\end{proof}

\subsection{Extracting a Katz-Tao subset of a Frostman set}
Let $P \subset [-1,1]$ be a $(\delta, s, C)$-set, meaning $P$ is $\delta$-separated and satisfies 
\begin{align*}
    |P\cap B_w(x)| \leq Cw^s |P|\qquad \text{for all balls $B_w(x)$ with $w > \delta$.}
\end{align*}
The following Lemma is well known, we include a proof for the reader's convenience. 
\begin{lemma}\label{lem:katz_tao_subset}
If $P \subset [-1,1]$ is a $(\delta, s, C)$-set, then there is a subset $P' \subset P$ with $|P'| \geq  \frac{1}{6}C^{-1}\delta^{-s}$ satisfying 
\begin{align*}
    |P'\cap B(x,w)| \leq 4(w/\delta)^s \qquad \text{for all intervals $B(x,w)$ with $w \in [\delta, 1]$.}
\end{align*}
We say $P'$ is a $(\delta, s, C)$ Katz-Tao set. 
\end{lemma}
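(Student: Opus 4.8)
The plan is to sort the points of $P$ into increasing order and then keep only every $r$-th point, for a carefully chosen integer spacing $r$. The single observation that makes this work is that, in the sorted order, $P\cap I$ is a \emph{contiguous block} of indices for any interval $I$; hence an arithmetic-progression subset of $P$ meets $I$ in at most $|P\cap I|/r+1$ points, and the Frostman hypothesis controls $|P\cap I|$.

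Concretely, I would first record the trivial lower bound $|P|\ge 2^{-s}C^{-1}\delta^{-s}\ge\tfrac12 C^{-1}\delta^{-s}$ (recall $s\in[0,1]$), obtained by applying the hypothesis to the ball $B_{2\delta}(x_0)$ for any $x_0\in P$. If $\tfrac16 C^{-1}\delta^{-s}\le 1$, equivalently $\delta^{-s}\le 6C$, I would dispose of the statement at once by taking $P'$ to be a single point of $P$: then $|P'|=1\ge\tfrac16 C^{-1}\delta^{-s}$ and $|P'\cap B(x,w)|\le 1\le 4(w/\delta)^s$ for all $w\ge\delta$. So from now on assume $\delta^{-s}>6C$. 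Writing $M=|P|$ and $x_1<\cdots<x_M$ for the points of $P$, set $r=\lceil CM\delta^s\rceil$ and $P'=\{x_i : r\mid i\}$.

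For the Katz--Tao bound, fix an interval $B(x,w)$ with $w>\delta$; the Frostman hypothesis gives $|P\cap B(x,w)|\le Cw^sM$, and since $\{i: x_i\in B(x,w)\}$ is a block of consecutive integers it contains at most $|P\cap B(x,w)|/r+1$ multiples of $r$, so
\[
|P'\cap B(x,w)|\le \frac{Cw^sM}{r}+1\le (w/\delta)^s+1\le 4(w/\delta)^s,
\]
using $r\ge CM\delta^s$ and $w>\delta$. For $w\in[\delta,2\delta]$ I would instead apply this to $B(x,2\delta)\supseteq B(x,w)$, obtaining $|P'\cap B(x,w)|\le 2^s+1\le 4\le 4(w/\delta)^s$. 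For the cardinality: if $CM\delta^s<1$ then $r=1$, $P'=P$, and $|P'|=M\ge\tfrac12 C^{-1}\delta^{-s}\ge \tfrac16 C^{-1}\delta^{-s}$; if $CM\delta^s\ge1$ then $r\le 2CM\delta^s$, so $|P'|=\lfloor M/r\rfloor\ge \tfrac{1}{2C}\delta^{-s}-1\ge\tfrac16 C^{-1}\delta^{-s}$, the last inequality because $\delta^{-s}>6C$ forces $\tfrac1{3C}\delta^{-s}>2>1$.

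I do not expect a genuine obstacle here: the entire argument rests on the elementary contiguous-block observation, and everything else is constant-chasing. The only point requiring a little care is the degenerate regime $\delta^{-s}=O(C)$, where the promised cardinality $\tfrac16 C^{-1}\delta^{-s}$ is $O(1)$ and the arithmetic-progression construction can collapse; that is why I would peel off that case at the very start.
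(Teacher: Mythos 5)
Your proof is correct, but it takes a genuinely different route from the paper's. The paper argues greedily: it repeatedly extracts from the remaining set a \emph{minimal} interval $B_{w_j}(p_j)$ containing at least $C\delta^s|P|$ points, keeps the center $p_j$, deletes the interval, and then verifies the Katz--Tao bound by packing the disjoint extracted intervals of width $\le 2w$ into $B_{3w}(x)$ and invoking the Frostman hypothesis once more (plus a separate count of the at most one surviving center whose interval was wider than $2w$). Your argument instead sorts $P$ along the line and keeps every $r$-th point with $r=\lceil C|P|\delta^s\rceil$, so the Katz--Tao bound falls out of the single observation that an interval meets the sorted sequence in a contiguous index block, which contains at most $|P\cap I|/r+1$ multiples of $r$; your constant-chasing (the degenerate regime $\delta^{-s}\le 6C$, the $r=1$ case, and the $w\in[\delta,2\delta]$ endpoint where the Frostman hypothesis is only assumed for radii exceeding $\delta$) is all handled correctly. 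What each approach buys: yours is shorter and entirely mechanical, but it leans irreducibly on the total order of $[-1,1]$ (intervals are order-convex), whereas the paper's greedy extraction uses one-dimensionality only in the mild form ``shrinking a minimal interval drops at most two points'' and adapts with only constant losses to higher-dimensional or more general metric settings. Since the lemma is stated for $P\subset[-1,1]$ and is applied in the paper only to slope sets, your proof fully suffices for the intended use.
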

\begin{proof}
First of all, if $p \in P$ then because $P$ is $\delta$-separated 
\begin{align*}
    1 = |P\cap B(p, \delta)| \leq C\delta^{s} |P|
\end{align*}
implying $|P| \geq C^{-1}\delta^{-s}$. 

Set $P_1 = P$ and let $B_{w_1}(p_1)$ be the smallest interval centered at some $p_1 \in P_1$ so that $|P_1\cap B_{w_1}(p_1)| \geq C\delta^{s} |P|$. Let $P_2 = P \setminus B_{w_1}(p_1)$. Then choose $B_{w_2}(p_2)$ to be the smallest interval so that $|P_2\cap B_{w_2}(p_2)| \geq C\delta^{s}|P|$. Because $|P_j\cap B_{w_j}(p_j)| \leq C \delta^s|P|+2 \leq 3C\delta^s|P|$, we have $|P_{j+1}| \geq |P| - 3C\delta^s k |P|$ and we can continue this process for $m = \lceil \frac{1}{6}C^{-1}\delta^{-s}\rceil$ steps. 
Set $P' = \{p_1,\ldots, p_m\}$. Let $B_w(x)$ be an interval with $w \geq \delta$. Then 
\begin{align*}
    |P\cap B_{3w}(x)| \geq \sum_{p_j \in B_w(x), w_j \leq 2w, } |P_j\cap B_{w_j}(p_j)| \geq C\delta^s|P| \#\{p_j \in B_w(x)\text{ with } w_j \leq 2w\}
\end{align*}
So rearranging
\begin{align*}
    \#\{p_j \in B_w(x)\text{ with } w_j \leq 2w\} \leq 3 (w/\delta)^s. 
\end{align*}
Also, 
\begin{align*}
    \#\{p_j\in B_w(x)\text{ with } w_j \geq 2w\} \leq 1
\end{align*}
so overall we find 
\begin{align*}
    |P'\cap B_w(x)| \leq 3(w/\delta)^s + 1 \leq 4(w/\delta)^s. 
\end{align*}
\end{proof}

\section{A Connection to Heilbronn's problem for \texorpdfstring{$k$}{k}-gons}\label{sec:heilbronn_kgon}

In this section we discuss potential applications of our point-line setup to a generalization of Heilbronn's triangle problem. Let $k \ge 3$ be a fixed integer and define a function $\Delta_k(n)$ to be the smallest $\Delta >0$ such that any set of $n$ points in the unit square contains $k$ points whose convex hull has area at most $\Delta$. If $k=3$ then we get the original Heilbronn's triangle problem.

The best known lower and upper bounds for $k\ge 4$ can be summarized as follows:
\begin{equation}\label{eq:deltak}
n^{-1 - \frac{1}{k-2}} \log^{\frac{1}{k-2}} n\lesssim \Delta_k(n) \lesssim_k n^{-1}.    
\end{equation}

The upper bound follows from a simple pigeonhole argument and any asymptotic improvement of this bound would be interesting. The only non-trivial result we are aware of is a slight improvement of the implied constant in front of $n^{-1}$ \cite{kgons-constant}. The lower bound appears in \cite{lefmann2008distributions} and is based on the semi-random method introduced by Koml\'os, Pintz and Szemer\'edi \cite{komlos1982lower}.

Our machinery does not appear to be sufficient to make progress on this problem. However, if one can obtain sufficiently larger exponents in place of 2/3 in \Cref{cor:dist_P_L}, then it would lead to polynomial improvements over the trivial upper bound (\ref{eq:deltak}). The goal of this section is to explain the connection between our point-line incidence problem and the function $\Delta_k(n)$.

\definition{ For $\gamma \in [1/2, 1]$, let $\operatorname{PL}(\gamma)$ denote the following assertion. For any $\varepsilon >0$ and $n > n_0(\varepsilon)$ and for any collection $\{p_j \in \ell_j\}_{j=1}^n$ of point-line pairs with $p_j\in [0,1]^2$, there is $j\neq k$ for which 
\[
d(p_j, \ell_k) \le n^{-\gamma+\varepsilon}.
\]
}

We show that assuming $\operatorname{PL}(\gamma)$ for $\gamma$ sufficiently close to $1$ leads to bounds of the form $\Delta_k(n) \lesssim n^{-1-\beta}$. Here is our main inductive claim:

\begin{prop}\label{prop:kgon}
    Let $\gamma \in [1/2, 1]$, $k\ge 3$, $\beta \in [0,1]$ and assume that $\operatorname{PL}(\gamma)$ holds and we have the bound $\Delta_k(n) \lesssim n^{-1-\beta+o(1)}$. If $\gamma \ge \frac{1+\beta}{1+2\beta}$ then we have $\Delta_{k+1}(n) \lesssim n^{-1-\beta' +o(1)}$ with $\beta' = \gamma \frac{1+2\beta}{2+2\beta} - \frac{1}{2}$.
\end{prop}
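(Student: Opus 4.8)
The plan is to run the standard Heilbronn-type induction: start with a set $P$ of $n$ points in $[0,1]^2$, use the hypothesis $\Delta_k(n)\lesssim n^{-1-\beta+o(1)}$ to find many small $k$-gons, encode each such $k$-gon as a point-line pair, and then invoke $\operatorname{PL}(\gamma)$ to produce one extra incidence, which upgrades one of the $k$-gons to a $(k+1)$-gon of controlled area. The only real choice is how to balance two competing scales: the side length $u$ of the small ``clusters'' in which we locate the $k$-gons, and the accuracy $w$ of the tube we draw through the $k$-gon. This balancing is what produces the exponent $\beta'=\gamma\frac{1+2\beta}{2+2\beta}-\tfrac12$ and the condition $\gamma\ge\frac{1+\beta}{1+2\beta}$.

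Here are the steps in order. First, fix a parameter $u=n^{-\sigma}$ for $\sigma$ to be chosen, and by pigeonhole partition $[0,1]^2$ into $\sim u^{-2}$ squares of side $u$; greedily extract $m\sim n$ disjoint such squares $Q_1,\dots,Q_m$ each containing $\sim nu^2$ points of $P$ (discard squares that are too sparse, which costs only a constant fraction). Second, inside each $Q_i$, rescale by $\psi_{Q_i}:Q_i\to[0,1]^2$; the $\sim nu^2$ rescaled points form a configuration to which the hypothesis $\Delta_k\lesssim N^{-1-\beta+o(1)}$ with $N\sim nu^2$ applies, yielding $k$ points in $Q_i$ whose convex hull, back in $[0,1]^2$, has area $A_i\lesssim u^2\,(nu^2)^{-1-\beta+o(1)}$. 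Third — the encoding step — for each $i$ pick one of these $k$-gons, pick one of its edges (say with longest projection), and let $\ell_i$ be the line through that edge; let $p_i$ be one of the remaining $k-2$ vertices of the $k$-gon (any one works, but it is cleanest to pick a designated one). This gives $m\sim n$ point-line pairs $\{p_i\in\ell_i\}$ with all $p_i\in[0,1]^2$. Fourth, apply $\operatorname{PL}(\gamma)$: there exist $i\ne j$ with $d(p_i,\ell_j)\le n^{-\gamma+\varepsilon}$. Fifth — the upgrading step — the point $p_i$ together with the two endpoints of the chosen edge of the $k$-gon in $Q_j$ forms a triangle of area $\lesssim L_j\cdot n^{-\gamma+\varepsilon}$, where $L_j\lesssim u$ is the length of that edge; adjoining $p_i$ to the $k$-gon in $Q_j$ produces a convex polygon on $k+1$ vertices (after discarding a vertex if convexity fails, one can instead take the convex hull of the $k+1$ points, which still has $\ge k+1$ extreme points generically — this edge case needs a short argument, see below) whose area is at most $A_j + C\,u\,n^{-\gamma+\varepsilon}\lesssim u^2(nu^2)^{-1-\beta+o(1)} + u\,n^{-\gamma+\varepsilon}$.

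Sixth, optimize. Writing $u=n^{-\sigma}$, the two terms are $n^{-(2\sigma)-(1-2\sigma)(1+\beta)+o(1)}=n^{-(1+\beta)+2\sigma\beta+o(1)}$ and $n^{-\sigma-\gamma+\varepsilon}$; setting the exponents equal gives $1+\beta-2\sigma\beta=\sigma+\gamma$, i.e. $\sigma=\frac{1+\beta-\gamma}{1+2\beta}$, and then the common exponent works out to $1+\beta-2\sigma\beta = 1+\beta'$ with $\beta'=\gamma\frac{1+2\beta}{2+2\beta}-\tfrac12$ after simplification; one checks $\sigma\in[0,1/2]$ (so that $nu^2\ge 1$ and the squares are nondegenerate) precisely when $\gamma\ge\frac{1+\beta}{1+2\beta}$, and that $\beta'\ge\beta$ under the same condition, so the induction genuinely progresses. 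The $o(1)$ and $\varepsilon$ losses are harmless and get absorbed into the stated $n^{-1-\beta'+o(1)}$.

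The main obstacle I expect is the \emph{upgrading step}: adjoining the extra point $p_i$ to a $k$-gon does not automatically yield a convex $(k+1)$-gon, since $p_i$ might lie inside, or its insertion might make one of the old vertices non-extreme. The clean fix is not to insist on ``adjoining'' but to observe that the relevant quantity for $\Delta_{k+1}$ is the minimum area convex hull of \emph{any} $k+1$ of the points; so it suffices to exhibit $k+1$ points whose convex hull is small. If $p_i$ is a vertex of $\mathrm{conv}(\text{$k$-gon}\cup\{p_i\})$ we are done; if not, $p_i$ lies in the $k$-gon and is within distance $n^{-\gamma+\varepsilon}$ of the line through one edge $\{a,b\}$, so $p_i$ lies in a thin sliver near that edge, and then $\{a,b,p_i\}$ plus $k-2$ of the other original vertices has convex hull of area at most (original area) $+\,O(u\cdot n^{-\gamma+\varepsilon})$ by a direct decomposition into the original $k$-gon minus a thin triangle plus a thin triangle — a routine but slightly fiddly planar-geometry estimate. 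A secondary technical point is ensuring the ``longest-projection edge'' has length $\gtrsim$ (diameter of the $k$-gon)$/k$, so that $L_j$ and $u$ are comparable up to a $k$-dependent constant; this follows since the $k$-gon sits in $Q_j$ of diameter $\lesssim u$ and, conversely, one can assume its diameter is $\gtrsim (nu^2)^{-1/2}$, but in fact for the bound we only need $L_j\lesssim u$, which is immediate, so this point causes no trouble in the direction we need.
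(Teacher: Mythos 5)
There is a genuine gap, and it sits exactly where the paper has to work hardest: your upgrading step. $\operatorname{PL}(\gamma)$ only guarantees that $p_i$ is within $n^{-\gamma+\varepsilon}$ of the \emph{line} $\ell_j$, not of the chosen edge segment; since you apply $\operatorname{PL}(\gamma)$ to a single global configuration, $p_i$ typically lies in a different square and can be at distance $\sim 1$ from the $k$-gon along $\ell_j$. The convex hull of the $k+1$ points then contains, besides the thin triangle over the edge (which is indeed $\lesssim L_j\,n^{-\gamma+\varepsilon}$), a long sliver of area of order (along-line distance)$\times$(extent $h$ of the $k$-gon perpendicular to $\ell_j$). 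Nothing bounds $h$ better than $h\lesssim A_j/L_j$, so this term can be of size $\sim A_j/L_j$, which is polynomially larger than both $A_j$ and $u\,n^{-\gamma}$; your claimed bound $A_j+Cu\,n^{-\gamma+\varepsilon}$ simply omits it, and the "routine planar-geometry" fix you sketch only works when $p_i$ projects onto the edge segment. A second, independent problem is the pair count: with one $k$-gon per $u$-square you have at most $u^{-2}=n^{2\sigma}$ point-line pairs (and your optimal $\sigma<1/2$), so the claim that you extract $m\sim n$ such squares is impossible and $\operatorname{PL}(\gamma)$ only yields $d(p_i,\ell_j)\le (n^{2\sigma})^{-\gamma+\varepsilon}$, not $n^{-\gamma+\varepsilon}$. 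A symptom that something is off: your two balanced terms give the exponent $\frac{1+\beta+2\beta\gamma}{1+2\beta}$, which does \emph{not} simplify to $1+\beta'=\frac{1+\beta+\gamma+2\gamma\beta}{2(1+\beta)}$ — it is strictly larger, i.e.\ your bookkeeping "proves" more than the proposition, because the dominant error terms are missing.

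For comparison, the paper's proof is built precisely to kill the sliver term. It records, around many points $p$ (not one per square), a $k$-rich \emph{rectangle} $R_p$ with sides $a_p\le b_p\le w$ and $a_pb_p\lesssim w^2m^{-1-\beta+o(1)}$, pigeonholes so that $a_p=a$, $b_p=b$ are uniform, and takes $\ell_p$ along the \emph{long} side, so the perpendicular extent of $R_p$ from $\ell_p$ is the short side $a$. Then $\operatorname{PL}(\gamma)$ is applied only inside $u$-squares $Q'$ (after rescaling), so the along-line displacement is $\le u$ and contributes $\lesssim ua$, while the perpendicular displacement contributes $\lesssim u|P'\cap Q'|^{-\gamma}b$; finally $u$ is chosen as a function of $b/a$ and the cluster scale $w$ is optimized separately. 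This two-scale, aspect-ratio-sensitive bookkeeping is exactly what your single-scale, global application of $\operatorname{PL}(\gamma)$ cannot reproduce, so the argument needs to be restructured along those lines rather than patched.
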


\begin{proof}
    Let $P \subset [0,1]^2$ be a set of size $n$. Let $w = n^{-\frac{1}{2} +\frac{\gamma}{2(1+\beta)}}$ be a parameter and cover $[0,1]^2$ by disjoint $w$-squares $Q$ and denote $P_Q = P \cap Q$. By dyadic pigeonholing, we can find some $m \in [w^2 n, n]$ and a collection of $w$-squares $\mc Q$ so that $|\mc Q| \gtrsim \frac{n}{m \log n}$ and $|P_Q| \sim m$ for every $Q \in \mc Q$.
    
    Fix $Q \in \mc Q$. Apply the the definition of $\Delta_k$ to every $|P_Q|/2$-element subset of the set $P_Q$, rescaled back to the unit square $[0,1]^2$. After doing the inverse rescaling, it follows that for at least $|P_Q|/2$ points $p \in P_Q$ there exists some rectangle $R_p$ centered at $p$ such that $|R_p \cap P_Q|\ge k$ and $R_p$ has sides $a_p, b_p$ satisfying:
    \begin{equation}\label{eq:propab}
    a_p b_p \sim w^2 m^{-1-\beta+o(1)}, ~ a_p \le b_p \le w.    
    \end{equation}
    So by passing to an appropriate subset $P'_Q \subset P_Q$ and applying dyadic pigeonholing, we may assume that such a rectangle $R_p$ exists for every $p \in P'_Q$ and, moreover there are some $a, b$ such that $a_p = a$ and $b_p = b$ for all $p \in P'_Q$ and $Q \in \mc Q$. Furthermore, we may also assume that for every $p \in P'_Q$ there exists a set $S_p \subset P_Q \setminus P'_Q$ of size $k-1$ such that $S_p \subset R_p$ and the sets $S_p$, $p \in P' = \bigcup P'_Q$ are pairwise disjoint.

    For each point $p \in P'$ let $\ell_p$ be the line passing through $p$ in the direction of the longer side of $R_p$.
    Let $u = n^{-1/2} (b/a)^{\frac{1}{2\gamma}}$ be a parameter and for any $u$-square $Q'$, consider the rescaling map $\psi: Q' \rightarrow [0,1]^2$ and apply the property $\operatorname{PL}(\gamma)$ to the point-line configuration 
    \[
    \{ \psi(p) \in \psi(\ell_p), ~p \in P' \cap Q' \}.
    \]
    This produces a pair of points $p' \neq p \in P' \cap Q'$ such that
    \[
    d( p', \ell_p )\lesssim u |P' \cap Q'|^{-\gamma+o(1)},
    \]
    provided that $|P' \cap Q'| > 1$. Let $R'$ be the smallest rectangle containing $p'$ and $R_p$. By construction we have $\{p, p'\} \cup S_p \subset R'$ and since $S_p$ was chosen to be disjoint from $P'$ we get $|R' \cap P|\ge k+1$. It remains to bound the area of $R'$.
    
    Change the coordinate system so that $p = (0, 0)$, $R_p = [-a/2,a/2] \times [-b/2, b/2]$ and denote $p' = (x, y)$ (for simplicity assume that $x,y\ge 0$). Then $\ell_p$ is the Y-axes, and $p, p' \in Q'$ implies that $x, y \lesssim u$. We also have 
    \[
    x = d(p', \ell_p) \lesssim u |P' \cap Q'|^{-\gamma+o(1)}.
    \]
    Let $K = \max\{1, x/a, y/b\}$, then one can check that
    \[
    |R'| \sim K |R| \sim K a b \lesssim \max\{ab, x b, a y\}.
    \]
    Using the estimates $ab \sim w^2 m^{-1-\beta+o(1)}$, $m \gtrsim w^2 n$ and $|P'\cap Q'| \gtrsim u^2 n^{1+o(1)}$ (which holds since $u \gtrsim n^{-1/2+o(1)}$) and the bounds on $x$ and $y$ we obtain
    \begin{equation}\label{eq:R}
    |R'| \lesssim \max\{ w^{-2\beta} n^{-1-\beta+o(1)}, u^{1-2\gamma} n^{-\gamma} b, u a \}.    
    \end{equation}
    Recall that we chose $w = n^{-\frac{1}{2} +\frac{\gamma}{2(1+\beta)}}$ and given $b \in [w^{-\beta} n^{-\frac{1+\beta}{2}}, w]$ we defined $u = n^{-1/2} (b/a)^{\frac{1}{2\gamma}}$. The choice of $w$ implies that $u \in [n^{-1/2}, 1]$ holds so it is valid for our estimate. Taking such $u$ makes the last two terms in (\ref{eq:R}) agree and so we obtain
    \[
    |R'| \lesssim \max\{w^{-2\beta}n^{-1-\beta}, u^{1-2\gamma} n^{-\gamma} b\} \lesssim w^{-2\beta}n^{-1-\beta} + n^{-\frac{2\gamma-1}{2}} (ab)^{\frac{2\gamma-1}{2\gamma}} b^{\frac{1-\gamma}{\gamma}}
    \]
    the second term is the largest when $b = w$, in which case we get $u=1$ (by the definition of $w$) which simplifies the bound to
    \[
    |R'| \lesssim w^{-2\beta}n^{-1-\beta} + w n^{-\gamma} \lesssim n^{-1 - \frac{\beta\gamma}{1+\beta}} + n^{-\frac{1+\beta+\gamma+2\gamma\beta}{2(1+\beta)}}.
    \]
    The second term beats the trivial bound $\Delta_{k+1}(n) \lesssim n^{-1}$ if and only if $\gamma > \frac{1+\beta}{1+2\beta}$. The restriction $\gamma, \beta \le 1$ and a brief computation show that the second term is always larger than the first, leading to the claimed estimate on $\Delta_{k+1}(n)$.
\end{proof}

Let us list some corollaries of \Cref{prop:kgon}:
\begin{itemize}
    \item Since $\operatorname{PL}(\gamma)$ implies the bound $\Delta_3(n) \lesssim n^{-\frac{1}{2}-\gamma}$ for the usual Heilbronn's triangle function, one can check that this and \Cref{prop:kgon} imply a non-trivial bound $\Delta_{4}(n)\lesssim n^{-1-\varepsilon}$ assuming $\operatorname{PL}(\gamma)$ for any $\gamma > \gamma_4$, where $\gamma_4 = \frac{1+\sqrt{5}}{4} \approx 0.809$.
    \item Similarly, for any $k \ge 5$ let us define $\gamma_k$ as the `smallest' value so that $\operatorname{PL}(\gamma)$ for $\gamma>\gamma_k$ implies $\Delta_{k}(n) \lesssim n^{-1-\varepsilon}$. A straightforward computation then shows that $1-\gamma_k$ decays roughly as $2^{-k}$ and that the best possible bound coming from this method (i.e. if we assume $\operatorname{PL}(1)$) would give 
    \[
    \Delta_{k}(n)\lesssim n^{ - 1 -\frac{ 1}{2(2^{k-2}-1)}+o(1)}.
    \]
\end{itemize}

Any statement $\operatorname{PL}(\gamma)$ for $\gamma > 2/3$ seems hard and would be interesting. However, we believe that \Cref{prop:kgon} gives a natural line of attack on Heilbronn's problem for $k$-gons and is another motivation to study the point-line incidence problem.

\bibliographystyle{amsplain0.bst}
\bibliography{main}

\end{document}